\newtheorem{te}{Theorem}[section]
\newtheorem{prop}[te]{Proposition}
\newtheorem*{prop2}{Proposition \ref{floor7}}
\newtheorem{co}[te]{Corollary}
\newtheorem{lemme}[te]{Lemma}
\theoremstyle{definition}
\newtheorem{de}[te]{Definition}
\newtheorem{ex}[te]{Example}
\theoremstyle{remark}
\newtheorem{rque}[te]{Remark}
\newenvironment{proof4}{\noindent\textit{Proof of Proposition \ref{cyclique}.~}}{\hfill$\square$\bigbreak} 
\newenvironment{proof6}{\noindent\textit{Proof of Corollary \ref{cyclique2}.~}}{\hfill$\square$\bigbreak}
\newenvironment{proof5}{\noindent\textit{Proof of Proposition \ref{isom}.~}}{\hfill$\square$\bigbreak} 
\newenvironment{proof8}{\noindent\textit{Proof of Lemma \ref{truc2bis}.~}}{\hfill$\square$\bigbreak} 
\newenvironment{proof9}{\noindent\textit{Sketch of proof.~}}{\hfill$\square$\bigbreak} 
\newenvironment{proof11}{\noindent\textit{Proof of Lemma \ref{lemme3}.~}}{\hfill$\square$\bigbreak} 
\newenvironment{proof12}{\noindent\textit{Proof of Lemma \ref{lemme4}.~}}{\hfill$\square$\bigbreak} 
\newenvironment{proof13}{\noindent\textit{Proof of Lemma \ref{lemme5}.~}}{\hfill$\square$\bigbreak} 
\newenvironment{proof15}{\noindent\textit{Proof of Proposition \ref{disjonctionbis}.~}}{\hfill$\square$\bigbreak} 
\newenvironment{proof17}{\noindent\textit{Proof of Theorem \ref{hypcubulation}.~}}{\hfill$\square$\bigbreak}
\newenvironment{proof18}{\noindent\textit{Proof of Proposition \ref{propcubu}.~}}{\hfill$\square$\bigbreak}
\newenvironment{proof100}{\noindent\textit{Proof of Proposition \ref{100}.~}}{\hfill$\square$\bigbreak} 
\newenvironment{proof102}{\noindent\textit{Proof of Lemma \ref{102}.~}}{\hfill$\square$\bigbreak}  
\newlength{\plarg}
\title{Hyperbolicity and cubulability are preserved under elementary equivalence}
\author{Simon André}
\date{\today}
\begin{document}
\clearpage
\begin{minipage}{\linewidth}
\begin{abstract}
The following properties are preserved under elementary equivalence, among finitely generated groups: being hyperbolic (possibly with torsion), being hyperbolic and cubulable, and being a subgroup of a hyperbolic group. In other words, if a finitely generated group $G$ has the same first-order theory as a group possessing one of the previous property, then $G$ enjoys this property as well.
\end{abstract}
\maketitle
\end{minipage}

\section{Introduction}

In the middle of the twentieth century, Tarski asked whether all non-abelian finitely generated free groups satisfy the same first-order theory. In \cite{Sel06}, Sela answered Tarski's question in the positive (see also the work of Kharlampovich and Myasnikov, \cite{KM06}) and provided a complete characterization of finitely generated groups with the same first-order theory as the free group $F_2$. Sela extended his work to give in \cite{Sel09} a classification of torsion-free hyperbolic groups up to elementary equivalence. In addition, Sela proved the following striking theorem (see \cite{Sel09}, Theorem 7.10).

\begin{te}[Sela]\label{te1}A finitely generated group with the same first-order theory as a torsion-free hyperbolic group is itself torsion-free hyperbolic.
\end{te}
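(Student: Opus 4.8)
\smallskip

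The plan is to exploit the equality $\mathrm{Th}(G)=\mathrm{Th}(\Gamma)$ in two stages of very different difficulty: a soft use of the universal part of the theory to constrain the coarse algebraic shape of $G$, and a deep use of the full first-order theory to eliminate the only obstruction to hyperbolicity that remains.

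First I would observe that $G$ is torsion-free: for every integer $n\geq 2$ the sentence $\forall x\,(x^{n}=1\rightarrow x=1)$ holds in $\Gamma$, hence in $G$, and being torsion-free is precisely the conjunction of these sentences. Next, since $G$ is finitely generated and $G\equiv\Gamma$, the group $G$ embeds into an ultrapower of $\Gamma$, so $G$ is a $\Gamma$-limit group (finitely generated and discriminated by $\Gamma$); it also inherits from $\Gamma$ the first-order properties of being commutative-transitive and CSA. By the structure theory of limit groups over a torsion-free hyperbolic group, $G$ is then finitely presented and hyperbolic relative to the family of its maximal non-cyclic abelian subgroups, each of which is free abelian of rank at least $2$. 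Consequently $G$ is hyperbolic if and only if it contains no copy of $\mathbb{Z}^{2}$, and the whole question reduces to establishing this.

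This is the serious part, and it cannot be settled by exhibiting a single first-order sentence, since $\mathbb{Z}^{2}$ is itself a $\Gamma$-limit group: it embeds into ultrapowers of $\Gamma$, so the universal theory of $\Gamma$ does not detect it. Here the plan is to run Sela's analysis of the elementary theory of $\Gamma$. Assume for contradiction that $\mathbb{Z}^{2}\leq G$; then the cyclic (or abelian) JSJ decomposition of $G$ carries this $\mathbb{Z}^{2}$ inside an abelian or quadratically-hanging vertex group, and in particular the modular automorphism group of $G$ is infinite. Using a sequence of homomorphisms $G\to\Gamma$ that discriminates $G$, the associated Bestvina--Paulin action of $G$ on a real tree, and the Rips machine together with the shortening argument, one shows that, up to precomposition with modular automorphisms, every homomorphism $G\to\Gamma$ factors through a proper quotient of $G$. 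On the other hand, attaching to the abelian vertex a Sela ``test sequence'' yields a $\forall\exists$ sentence which is true in $\Gamma$ --- here one invokes that $\Gamma$ is equationally noetherian and the structure of its Makanin--Razborov diagram --- but whose validity in $G$ would produce a ``formal solution'' contradicting the factorisation just obtained, unless the vertex group was already cyclic. Equivalently, and this is how Sela organises it, one classifies the finitely generated models of $\mathrm{Th}(\Gamma)$ as iterated extensions of $\Gamma$ by surface pieces and free factors only, with no free-abelian piece ever occurring, so that every such model --- $G$ among them --- is built from $\Gamma$ by operations preserving torsion-freeness and hyperbolicity. I expect this last stage, which rests on Sela's Diophantine geometry over $\Gamma$ (the tree of stratified sets, formal solutions, and the classification of the $\forall\exists$-theory), to be by far the main obstacle; the torsion-freeness and the reduction to ``no $\mathbb{Z}^{2}$'' are soft by comparison.
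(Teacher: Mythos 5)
Your opening observations are correct and match the paper's: torsion-freeness follows from the universal sentences $\forall x\,(x^n=1\Rightarrow x=1)$, and $G$ is a $\Gamma$-limit group. But there is a genuine gap at the reduction step. You assert that a $\Gamma$-limit group over a torsion-free hyperbolic $\Gamma$ is hyperbolic relative to its non-cyclic abelian subgroups, so that "$G$ is hyperbolic if and only if it contains no copy of $\mathbb{Z}^2$." This is true when $\Gamma$ is a free group (or, as the paper notes, locally hyperbolic), but it is \emph{false} for general torsion-free hyperbolic $\Gamma$. By Brady's example (\cite{Bra99}, which the paper cites precisely for this reason), there is a torsion-free hyperbolic group $\Gamma$ and a finitely presented subgroup $H\le\Gamma$ that is not hyperbolic; $H$ is a $\Gamma$-limit group, is torsion-free, and contains no $\mathbb{Z}^2$, yet is not hyperbolic. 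So absence of $\mathbb{Z}^2$ does not close the argument. The paper flags exactly this obstacle: "At this stage, we cannot conclude that $G$ is hyperbolic, because hyperbolicity is not inherited by finitely presented subgroups."

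You also invert the relative difficulty of the two remaining steps. Ruling out $\mathbb{Z}^2$ in $G$ is in fact the \emph{soft} part: the paper does it with $\forall\exists$-sentences alone (Lemma \ref{QFA}, Propositions \ref{définissable} and \ref{cyclique}, Corollary \ref{cyclique2}), using definability of maximal abelian subgroups in a $K$-CSA group and a pigeonhole criterion for cyclicity; no Rips machine or formal solutions are needed for that. The genuinely hard step — the one that replaces your faulty reduction — is what you only gesture at in your final sentence: Sela's structure theorem (Theorem \ref{intro3} in this paper) that $G$ and $\Gamma$ are hyperbolic towers over isomorphic subgroups $G'\simeq\Gamma'$. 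Once that is in hand, hyperbolicity of $G$ follows from hyperbolicity of $G'\simeq\Gamma'\le\Gamma$ and the Bestvina--Feighn combination theorem, with no appeal to an ``absence of $\mathbb{Z}^2$ implies hyperbolic'' principle. You should drop the relative-hyperbolicity reduction entirely and instead carry the tower structure all the way: the correct conclusion is not merely that $G$ lacks $\mathbb{Z}^2$, but that $G$ is assembled by hyperbolic floors over a one-ended subgroup of $\Gamma$, and such an assembly is automatically hyperbolic.
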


Recall that a finitely generated group is called hyperbolic (in the sense of Gromov) if its Cayley graph (with respect to any finite generating set) is a hyperbolic metric space: there exists a constant $\delta> 0$ such that any geodesic triangle is $\delta$-slim, meaning that any point that lies on a side of the triangle is at distance at most $\delta$ from the union of the two other sides. At first glance, there doesn't seem to be any reason that hyperbolicity is preserved under elementary equivalence. Sela's theorem above is thus particularly remarkable, and it is natural to ask whether it remains valid if we allow hyperbolic groups to have torsion. We answer this question positively.

\begin{te}\label{te2}A finitely generated group with the same first-order theory as a hyperbolic group is itself hyperbolic.
\end{te}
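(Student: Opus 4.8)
The plan is to deduce the hyperbolicity of $G$ from the model-theoretic hypothesis by going through the structure theory of limit groups over hyperbolic groups; the guiding principle is that although hyperbolicity is not itself a first-order property, every \emph{obstruction} to hyperbolicity that can occur in this situation is visible at a bounded scale and is therefore detected by a first-order formula, which can then be transported from $G$ to $H$. First I would set up the limit-group picture: since $G$ is finitely generated and satisfies the same theory as $H$, it elementarily embeds into a non-principal ultrapower $H^{\mathcal U}$; choosing a finite generating tuple $\bar g$ of $G$ and lifting it to sequences in $H$ yields homomorphisms $\varphi_k\colon G\to H$ converging (in the space of marked groups) to $G$, so that $G$ is a finitely generated $H$-\emph{limit group}. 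I would use the known facts that finitely generated limit groups over a hyperbolic group are finitely presented (this rests on equational Noetherianity of hyperbolic groups, including the torsion case) and hence, by Dunwoody accessibility, split as finite graphs of groups with finite edge groups and with finite or one-ended vertex groups.

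Next I would reduce to the one-ended case. Using the torsion analogue of Sela's comparison of the Grushko/Stallings decompositions of elementarily equivalent groups, the one-ended factors of $G$ in its Stallings--Dunwoody decomposition are elementarily equivalent to the corresponding one-ended factors of $H$; the latter are quasiconvex in $H$ and hence hyperbolic. Since a finite graph of hyperbolic groups with finite edge groups is hyperbolic, it suffices to prove the theorem when $G$ and $H$ are one-ended.

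The heart of the argument, and the step I expect to be the main obstacle, is a dichotomy for one-ended $H$-limit groups: either $G$ is hyperbolic (indeed a hyperbolic tower over $H$), or $G$ contains one of a short list of \emph{forbidden configurations} --- a subgroup isomorphic to $\mathbb{Z}^2$ or to a Baumslag--Solitar group $\mathrm{BS}(p,q)$, or a more refined degeneracy of its canonical virtually cyclic (JSJ) splitting. To establish this I would run the Rips machine for the $\mathbb{R}$-tree arising from the limit action (Guirardel's version, which here must be handled with non-trivial arc and tripod stabilizers and with the torsion-sensitive shortening argument), peel off surface-type and virtually cyclic pieces, and argue by well-founded induction along the resulting Makanin--Razborov resolution over $H$ --- its finiteness again using equational Noetherianity --- that the rigid pieces are, up to finite kernels, subgroups of $H$ and therefore hyperbolic; then $G$ is built from hyperbolic vertex groups over virtually cyclic edge groups, and a combination theorem (Bestvina--Feighn, Dahmani) makes $G$ hyperbolic unless the edges are attached with a degeneracy --- which is precisely what produces a $\mathbb{Z}^2$ or Baumslag--Solitar subgroup. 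The delicate points are the torsion refinements throughout: $\mathbb{R}$-trees with torsion, the JSJ over virtually cyclic subgroups in the presence of torsion, and the combination theorem in that generality.

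Finally I would carry out the transfer. In a $\delta$-hyperbolic group the order of finite subgroups and the arithmetic of commuting or conjugate powers are bounded by a constant depending only on $\delta$ and on the size of the generating set: if $tg^pt^{-1}=g^q$ with $g$ of infinite order then $|p|=|q|$, commuting pairs are essentially pairs of powers of a common element, and so on. Hence, for the specific $p,q$ and the specific bound $N$ arising from the configuration, $H$ satisfies an explicit first-order sentence ruling it out (for instance ``$\forall t,g\colon tg^pt^{-1}=g^q\Rightarrow g^{N!}=1$'' when $|p|\neq|q|$, or the analogous sentence bounding commuting pairs). Since $G\equiv H$, the group $G$ satisfies the same sentence, contradicting the presence of the forbidden subgroup. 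Thus the dichotomy forces $G$ to be hyperbolic, completing the proof.
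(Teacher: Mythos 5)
The central gap is the dichotomy you assert in the third paragraph: that a one-ended $H$-limit group is either a hyperbolic tower over $H$ or contains a forbidden subgroup from a short explicit list (a $\mathbb{Z}^2$, a Baumslag--Solitar group, or a ``JSJ degeneracy''). This dichotomy is neither established by your sketch nor available from the tools you cite. For limit groups over a free group, Sela proved (\cite{Sel01}, Corollary 4.4) that the absence of $\mathbb{Z}^2$ implies hyperbolicity, and the paper's Remark \ref{remarque} observes that this extends to locally hyperbolic base groups; but for a general hyperbolic group $H$ with torsion, no characterization of hyperbolicity of $H$-limit groups by a finite list of forbidden subgroups is proved in the paper, and it is precisely the absence of such a characterization that forces the paper's very different route. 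Your ``well-founded induction along the Makanin--Razborov resolution'' is exactly where the torsion difficulties bite: passing to the next level produces a group $H'$ such that (a) the map $G\to H'$ is not a retraction and $H'$ is neither a subgroup nor a quotient of $G$, and (b) the universal theory of a hyperbolic group is \emph{not} preserved under HNN extensions or amalgams over finite groups --- the paper's Example \ref{instable} shows that $F_2\times\mathbb{Z}/2\mathbb{Z}$ satisfies $\forall x\,\forall y\ (x^2=1)\Rightarrow(xy=yx)$ while $(F_2\times\mathbb{Z}/2\mathbb{Z})\ast\mathbb{Z}$ does not. These two obstructions to a naive descent are the reason the paper introduces quasi-floors and quasi-towers (Section~\ref{section5}) and, crucially, torsion-saturated groups (Section~\ref{section 4}); your proposal addresses neither.

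There are two further gaps. First, in the reduction to the one-ended case, your claim that the one-ended factors of the Stallings--Dunwoody splitting of $G$ are elementarily equivalent to (or even in bijection with) those of $H$ does not follow from $G\equiv H$; establishing a correspondence is the content of Propositions~\ref{disjonctionbis2bisbis} and \ref{isom}, which compare the one-ended factors of the \emph{quasi-cores} of $G$ and $H$ (not of $G$ and $H$ themselves) and rely on the co-Hopfian property together with a delicate combinatorial argument on the index maps $\tau$ and $\sigma$. Second, the first-order transfer the paper actually performs is much more aggressive than ruling out a $\mathbb{Z}^2$ or a $\mathrm{BS}(p,q)$: fragments of the shortening argument are encoded into $\forall\exists$-sentences satisfied by $H$ (Lemma~\ref{deltarelies}, Proposition~\ref{disjonctionbis}), and interpreting these in $G$ produces non-injective preretractions which, by Proposition~\ref{floor7}, yield strict quasi-floors. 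Your final paragraph uses only a tiny fragment of the hypothesis --- just enough to exclude $\mathbb{Z}^2$ and Baumslag--Solitar subgroups --- and that is not sufficient to reach the conclusion.
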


More precisely, if $\Gamma$ is a hyperbolic group and $G$ is a finitely generated group such that $\mathrm{Th}_{\forall\exists}(\Gamma) = \mathrm{Th}_{\forall\exists}(G)$ (meaning that $\Gamma$ and $G$ satisfy the same first-order sentences of the form $\forall x_1\ldots\forall x_m\exists y_1\ldots\exists y_n \ \varphi(x_1,\dots,x_m,y_1,\dots , y_n)$, where $\varphi$ is quantifier-free), then $G$ is hyperbolic.

Furthermore, we show that being a subgroup of a hyperbolic group is preserved under elementary equivalence, among finitely generated groups. More precisely, we prove the following theorem.

\begin{te}\label{sousgroupe2}Let $\Gamma$ be a group that embeds into a hyperbolic group, and let $G$ be a finitely generated group. If $\mathrm{Th}_{\forall\exists}(\Gamma)\subset\mathrm{Th}_{\forall\exists}(G)$, then $G$ embeds into a hyperbolic group.
\end{te}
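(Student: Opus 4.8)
The plan is to reduce the statement to Theorem~\ref{te2}: I would exhibit a finitely generated hyperbolic group $\widehat{H}$ containing $G$ as a subgroup, built out of the ambient hyperbolic group $H\supseteq\Gamma$ in a way dictated by the structure of $G$. First one unpacks the hypothesis. Write $G=\langle g_1,\dots,g_k\rangle$. The universal part $\mathrm{Th}_{\forall}(\Gamma)\subseteq\mathrm{Th}_{\forall}(G)$ of the hypothesis says that every finite portion of the multiplication table of $(g_1,\dots,g_k)$ is realised by some $k$-tuple in $\Gamma$; letting this portion grow, $G$ is a limit, in the space of marked groups, of finitely generated subgroups of $\Gamma$, i.e.\ $G$ is fully residually $\Gamma$, a ``$\Gamma$-limit group'' (but, in contrast with the setting of Theorem~\ref{te2}, one should not expect $G$ to be finitely presented, since subgroups of hyperbolic groups need not be). Each approximating subgroup lies inside a finitely generated subgroup of $H$, so one may work inside $H$ throughout. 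By itself this is much too weak — $\mathbb{Z}^{2}$ is fully residually free yet embeds in no hyperbolic group — so the genuinely $\forall\exists$ content of the hypothesis has to be used.

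The main step is to promote ``$\Gamma$-limit group'' to a structural statement. Every $\Gamma$-limit group embeds into a \emph{tower over $\Gamma$}: a group obtained from finitely many copies of $\Gamma$ by iterated amalgamated products and HNN extensions along virtually cyclic edge groups, the remaining vertex groups being free or surface groups, possibly with abelian floors — this is the analysis of limit groups over hyperbolic groups in the spirit of Sela and Reinfeldt--Weidmann (see \cite{Sel09}), together with the Rips--Sela shortening argument run over $H$. Here the $\forall\exists$-rigidity of $G$ does the decisive work: because $G$ satisfies every $\forall\exists$ sentence true in $\Gamma$, one cannot pass to a proper limit quotient realising the same sentences, and this forces the tower to be taken \emph{without abelian floors of rank $\ge 2$} — such a floor would create a $\mathbb{Z}^{2}$ subgroup, whereas the structure theory together with the $\forall\exists$ sentences inherited from $\Gamma$ forbids $G$ from containing $\mathbb{Z}^{2}$ (as it must, $\Gamma$ being a subgroup of a hyperbolic group and hence containing none). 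Proving this promotion, while keeping the torsion bookkeeping of the relevant JSJ and Rips decompositions under control, is the heart of the argument.

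Finally, in a tower witnessing $G\hookrightarrow T_{\Gamma}$, replace every copy of $\Gamma$ by the hyperbolic group $H\supseteq\Gamma$; this yields a finitely generated group $\widehat{H}\supseteq T_{\Gamma}\supseteq G$ which is a finite iterated amalgam of \emph{hyperbolic} groups (copies of $H$, together with free and surface groups) along virtually cyclic edge groups. Virtually cyclic subgroups of hyperbolic groups are automatically quasiconvex, so once the splitting of $\widehat{H}$ has been arranged to be acylindrical one may invoke the Bestvina--Feighn combination theorem (in the refinements accommodating torsion) to conclude that $\widehat{H}$ is hyperbolic; then $G\le\widehat{H}$ is the desired embedding (one could instead deduce hyperbolicity of $\widehat{H}$ from Theorem~\ref{te2} by checking $\mathrm{Th}_{\forall\exists}(\widehat{H})=\mathrm{Th}_{\forall\exists}(H)$). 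I expect two genuine obstacles: showing that $\forall\exists$-rigidity really does upgrade $G$ from a $\Gamma$-limit group to a subgroup of a tower over $\Gamma$ with no higher-rank abelian floor (the shortening argument, made more delicate by torsion), and ensuring that after the substitution $\Gamma\rightsquigarrow H$ the resulting splitting of $\widehat{H}$ remains acylindrical with quasiconvex edge groups, so that the combination theorem genuinely applies.
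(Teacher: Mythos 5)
Your high-level plan — embed $G$ in a tower built from the ambient hyperbolic group $H\supseteq\Gamma$ and apply a combination theorem — is in the spirit of the torsion-free theory, but the route you describe is not the one the paper takes, and it rests on a structural result that the paper explicitly identifies as unavailable in the presence of torsion. The paper goes in the opposite direction: it does not try to embed $G$ upward into an NTQ-style tower over $\Gamma$; instead it peels $G$ downward to a \emph{quasi-core} $G'$ (Proposition~\ref{truc2}), a group over which $G$ is a \emph{quasi-tower}. The crucial point, stressed at length in the introduction and in Section~\ref{section5}, is that in the presence of torsion one cannot in general turn the homomorphism produced by the shortening argument into a retraction, so the objects that arise are quasi-floors rather than hyperbolic floors: the map $r\colon G\to H$ is neither surjective nor split, $j\colon H\to G$ is not injective, and $H$ is neither a subgroup nor a quotient of $G$. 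This is exactly why the "tower over $\Gamma$" you invoke is not available off the shelf — the completed-resolution/NTQ embedding is not established for $\Omega$-limit groups with torsion in Reinfeldt--Weidmann, and the paper had to invent a replacement. There is also a logical slip in your reduction to $\mathbb{Z}^2$: an abelian floor of rank $\geq 2$ would create a $\mathbb{Z}^2$ in the ambient tower $T_\Gamma$, not in $G$ itself, and $\mathbb{Z}^2$-avoidance for $G$ (which the paper does establish in Corollary~\ref{cyclique2}) does not automatically propagate to the intermediate limit quotients that appear in a resolution, nor to the completed tower built from them; justifying that the tower can be \emph{chosen} to avoid higher-rank abelian floors is precisely the missing content.

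Your use of the $\forall\exists$ hypothesis is correspondingly weaker than the paper's. You spend it only to rule out $\mathbb{Z}^2$; the paper spends it much more heavily. In Propositions~\ref{disjonctionbis} and~\ref{disjonctionbis2} the shortening statement (Corollary~\ref{simple2}) is packaged as a genuine $\forall\exists$-sentence $\mu$ true in $\Gamma$, transferred to $G$, and then interpreted at the inclusion $j|_H$ to produce a non-injective \emph{preretraction} of a one-ended factor $H$ of the quasi-core; since the quasi-core is a quasi-prototype (no strict quasi-floor), Proposition~\ref{étage} forbids such a preretraction, which forces $H$ to embed into $\Gamma$. The final assembly is then as you envision — replace each such $H_k$ by $\Omega$ in a Stallings--Dunwoody splitting of $G'$, observe the result $\Omega'$ is hyperbolic, and climb back up the quasi-tower using Proposition~\ref{héritage} — but the engine driving the embedding $H_k\hookrightarrow\Gamma$ is this first-order transfer together with the quasi-floor dichotomy, not the $\mathbb{Z}^2$ argument. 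A version of your more direct route (shortening plus $\mathbb{Z}^2$-avoidance, without towers) is sketched in Remark~\ref{remarque} for the \emph{locally} hyperbolic case, but the paper does not claim it works here; for the general "subgroup of a hyperbolic group" statement it relies on the quasi-tower machinery and on the torsion-saturation technology of Section~\ref{section 4} (needed because HNN extensions over finite groups change the universal theory, Example~\ref{instable}). In short: the architecture you propose is not wrong as a heuristic, but the two steps you flag as "genuine obstacles" are exactly where the argument does not exist as you have stated it, and the paper's resolution is to introduce quasi-towers and use the $\forall\exists$ hypothesis in a stronger, sentence-transfer way.
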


Recall that $\mathrm{CAT}(0)$ cube complexes are a particular class of $\mathrm{CAT}(0)$ spaces (see \cite{Sag14} for an introduction) and that a group is called cubulable if it admits a proper and cocompact action by isometries on a $\mathrm{CAT}(0)$ cube complex. Groups which are both hyperbolic and cubulable have remarkable properties and play a leading role in the proof of the virtually Haken conjecture (see \cite{Ago13}). By using a result of Hsu and Wise, we prove:

\begin{te}\label{corollaire}Let $\Gamma$ be a hyperbolic group and $G$ a finitely generated group. Suppose that $\mathrm{Th}_{\forall\exists}(\Gamma) = \mathrm{Th}_{\forall\exists}(G)$. Then $\Gamma$ is cubulable if and only if $G$ is cubulable.
\end{te}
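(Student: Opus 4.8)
The plan is to reduce to a one-directional transfer statement using Theorem \ref{te2}, and then to encode a cubulation of a hyperbolic group into its $\forall\exists$-theory. First, by Theorem \ref{te2} the hypothesis forces $G$ to be hyperbolic, so both $\Gamma$ and $G$ are hyperbolic. Since $\mathrm{Th}_{\forall\exists}(\Gamma)=\mathrm{Th}_{\forall\exists}(G)$ is symmetric, it is enough to prove the following: if $H$ is a hyperbolic cubulable group and $H'$ is a hyperbolic group with $\mathrm{Th}_{\forall\exists}(H)\subseteq\mathrm{Th}_{\forall\exists}(H')$, then $H'$ is cubulable. One then applies this with $(H,H')=(\Gamma,G)$ for one implication and with $(H,H')=(G,\Gamma)$ for the other, using in the second case that $G$ is hyperbolic by Theorem \ref{te2}.

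For the transfer statement, the idea is to use a cubulation criterion for hyperbolic groups: combining Sageev's construction with the boundary criterion of Bergeron--Wise and the cocompactness result of Hsu and Wise, a hyperbolic group $H$ is cubulable if and only if there is a finite family $C_1,\dots,C_k$ of quasiconvex codimension-one subgroups of $H$ whose family of walls (the $H$-translates of the subsets bounded by the $C_i$) separates pairs of points of $\partial H$ finely enough that Sageev's cube complex is proper and cocompact. Fix such a family, finite generating sets $S_i\subseteq H$ for the $C_i$, and a hyperbolicity constant $\delta$ for $H$; let $\kappa$ be a common quasiconvexity constant and $R$ a radius witnessing the codimension-one and separation conditions in the quantitative forms available in a $\delta$-hyperbolic group (these asymptotic conditions become conditions on balls of bounded radius once $\delta$, $\kappa$ and $k$ are fixed).

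The core step is to write a single $\forall\exists$-sentence $\sigma$, true in $H$, asserting the existence of tuples $(s_{i,1},\dots,s_{i,n})$ such that: (i) each generates a $\kappa$-quasiconvex subgroup of the ambient $\delta$-hyperbolic group, a property of relations among words of length bounded in terms of $\delta$ and $\kappa$, hence $\forall\exists$-expressible, exactly along the lines used to handle quasiconvex subgroups in the proofs of Theorems \ref{te2} and \ref{sousgroupe2}; (ii) each associated Schreier graph has two complementary components that are "deep" up to radius $R$ (codimension one); and (iii) the finitely many bounded-size wall configurations required by the Hsu--Wise cocompactness criterion are realized. Since $\mathrm{Th}_{\forall\exists}(H)\subseteq\mathrm{Th}_{\forall\exists}(H')$ we get $H'\models\sigma$, and since $H'$ is hyperbolic with the appropriate constant, the witnesses in $H'$ produce a finite family of quasiconvex codimension-one subgroups satisfying the same criterion; hence $H'$ acts properly and cocompactly on a $\mathrm{CAT}(0)$ cube complex, i.e. $H'$ is cubulable.

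The main obstacle is the faithful first-order transcription of the two genuinely global features of a cubulation: that each $C_i$ is truly codimension one, and that the whole family of walls yields a proper \emph{cocompact} Sageev complex rather than merely a proper action. This is precisely where hyperbolicity of $H$ and the Hsu--Wise criterion are used: they replace these asymptotic statements by the realizability of finitely many configurations inside balls of radius bounded a priori in terms of $\delta$, $\kappa$ and $k$, which is what makes $\sigma$ a legitimate $\forall\exists$-sentence and what guarantees that a witness tuple in $H'$ actually defines subgroups of $H'$ with the asserted properties, the defining relations and quasiconvexity being local conditions that transfer verbatim.
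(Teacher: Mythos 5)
Your reduction via Theorem \ref{te2} and symmetry is a good observation, but the core of your argument — encoding cubulability as a $\forall\exists$-sentence $\sigma$ once $\delta$, $\kappa$, $k$ and $R$ are fixed — has a genuine gap, and it is not the route the paper takes. The unsupported step is the claim that quasiconvexity, codimension-one-ness and the Sageev/Bergeron--Wise/Hsu--Wise wall conditions become "conditions on balls of bounded radius" and hence first-order. Quasiconvexity of $C=\langle s_1,\dots,s_n\rangle$ in $G$ says that every geodesic $[1,g]$, for every $g\in C$, stays $\kappa$-close to $C$; this quantifies over all words in the $s_i$, of unbounded length, and no local-to-global principle lets you replace it with a statement about words of length at most some $L(\delta,\kappa)$ (the local-to-global theorems for quasigeodesics presuppose an infinite local hypothesis, they do not convert quasiconvexity of a subgroup into a finite, ball-bounded condition). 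The codimension-one condition (more than one deep end of the coset graph) and the boundary-separation and cocompactness criteria are likewise intrinsically asymptotic, and neither Bergeron--Wise nor Hsu--Wise reduces them to finitely many configurations realizable in a ball of a priori bounded radius. You name this as "the main obstacle" and then assert it away; also, the paper's treatment of quasiconvex subgroups in the proofs of Theorems \ref{te2} and \ref{sousgroupe2} never expresses quasiconvexity first-order — it works via the shortening argument, JSJ decompositions and quasi-towers — so the precedent you invoke does not exist.

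The paper instead transfers cubulability structurally and never writes it as a sentence. Propositions \ref{héritage} and \ref{propcubu} show that being hyperbolic-and-cubulable passes between a group and any group it is a quasi-tower over; this is where Hsu--Wise (Proposition \ref{combicubu}) and Haglund's inheritance of cubulability by quasiconvex subgroups (Proposition \ref{hag}) enter. Proposition \ref{isom}, which is where all the first-order work via Propositions \ref{disjonctionbis} and \ref{disjonctionbis2bisbis} is concentrated, shows that the non-finite-by-orbifold one-ended Stallings--Dunwoody factors of the quasi-cores $G'$ of $G$ and $\Gamma'$ of $\Gamma$ are pairwise isomorphic. Assembling: $\Gamma$ cubulable implies $\Gamma'$ cubulable, implies its one-ended factors cubulable, implies the corresponding factors of $G'$ cubulable, implies $G'$ cubulable by the combination theorem, implies $G$ cubulable, and symmetrically. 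To salvage your approach you would need to establish that cubulability of a $\delta$-hyperbolic group by $k$ quasiconvex codimension-one subgroups with fixed constants is uniformly $\forall\exists$-definable; this is not a known result, it seems unlikely to be true, and the paper's quasi-core reduction exists precisely because such a direct encoding is unavailable.
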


\subsection*{Some remarks}Note that, in Theorem \ref{sousgroupe2}, we do not need to assume that $\Gamma$ is finitely generated. In contrast, in the theorems above, it is impossible to remove the assumption of finite generation on $G$. For instance, Szmielew proved in \cite{Szm55} that $\mathbb{Z}$ and $\mathbb{Z}\times\mathbb{Q}$ have the same first-order theory. Note also that it is not sufficient to assume that $\Gamma$ and $G$ satisfy the same universal sentences, i.e.\ of the form $\forall x_1\ldots\forall x_m \ \varphi(x_1,\dots,x_m)$, where $\varphi$ is quantifier-free. For example, the class of finitely generated groups satisfying the same universal sentences as $F_2$ appears to coincide with the well-known class of non-abelian limit groups, also known as finitely generated fully residually free non-abelian groups, and this class contains some non-hyperbolic groups, such as $F_2\ast\mathbb{Z}^2$.

In Theorem \ref{sousgroupe2} above, in the special case where $\Gamma$ is a free group, we can prove that $G$ is hyperbolic. Indeed, Sela proved in \cite{Sel01} that a limit group is hyperbolic if it does not contain $\mathbb{Z}^2$, and this holds for a finitely generated group $G$ such that $\mathrm{Th}_{\forall\exists}(F_2)\subset\mathrm{Th}_{\forall\exists}(G)$ (see \ref{cyclique2}). More generally, Sela's result remains true if we replace $F_2$ by a finitely generated locally hyperbolic group $\Gamma$ (that is, every finitely generated subgroup of $\Gamma$ is hyperbolic). If $\Gamma$ is such a group, it turns out that the hyperbolic group built in the proof of Theorem \ref{sousgroupe2}, in which $G$ embeds, is in fact locally hyperbolic. So $G$ is locally hyperbolic as well. As a consequence, the following theorem holds.

\begin{te}Let $\Gamma$ be a finitely generated locally hyperbolic group, and let $G$ be a finitely generated group. If $\mathrm{Th}_{\forall\exists}(\Gamma)\subset\mathrm{Th}_{\forall\exists}(G)$, then $G$ is a locally hyperbolic group.
\end{te}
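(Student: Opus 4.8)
The plan is to derive the statement from Theorem~\ref{sousgroupe2} by inspecting, more closely, the hyperbolic group it produces. Note first that a finitely generated locally hyperbolic group is itself hyperbolic, being a finitely generated subgroup of itself; in particular $\Gamma$ embeds into a hyperbolic group, so Theorem~\ref{sousgroupe2} applies and yields a hyperbolic group $H$ containing $G$. If $H$ can moreover be taken locally hyperbolic, the theorem follows immediately: any finitely generated subgroup of $G$ is a finitely generated subgroup of $H$, hence hyperbolic. So the whole statement reduces to the following assertion about the proof of Theorem~\ref{sousgroupe2}: when $\Gamma$ is locally hyperbolic, the hyperbolic group $H\supseteq G$ constructed there is locally hyperbolic.

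To establish this, I would rerun the proof of Theorem~\ref{sousgroupe2} with the ambient hyperbolic group taken to be $\Gamma$ itself (legitimate since, as just noted, $\Gamma$ is hyperbolic). Inspecting that construction, $H$ is built from finitely many building blocks by a finite sequence of amalgamated free products and HNN extensions over virtually cyclic subgroups, the hyperbolicity of the successive groups being ensured at each step by a combination theorem, the edge groups being quasiconvex as virtually cyclic subgroups of hyperbolic groups. The key point is that each building block is \emph{locally hyperbolic}: it is either a finitely generated subgroup of $\Gamma$ --- hyperbolic by local hyperbolicity of $\Gamma$ --- or one of the auxiliary pieces entering the construction, such as free groups and closed hyperbolic surface groups, which are locally hyperbolic too. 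One then shows, by induction on the number of steps, that every intermediate group is locally hyperbolic. For the inductive step, let $K$ be a finitely generated subgroup of an amalgam or HNN extension $L$ of locally hyperbolic groups over a virtually cyclic subgroup. If $K$ is elliptic in the Bass--Serre tree $T$ of $L$ it lies in a conjugate of a factor and is hyperbolic by induction; otherwise $K$ acts on its minimal invariant subtree of $T$ with finite quotient graph of groups, whose vertex groups are finitely generated subgroups of conjugates of the factors of $L$ --- hence hyperbolic --- and whose edge groups are subgroups of conjugates of a virtually cyclic group, hence virtually cyclic and quasiconvex; moreover the acylindricity of the action of $L$ on $T$ restricts to $K$. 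The same combination theorem then gives that $K$ is hyperbolic. Hence $H$ is locally hyperbolic, and therefore so is $G$.

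The crux is the claim at the end of the first paragraph: one must check that the construction of Theorem~\ref{sousgroupe2}, when $\Gamma$ is locally hyperbolic, uses only locally hyperbolic building blocks and only gluings that propagate local hyperbolicity in the sense above --- in particular that the acylindricity and edge-group quasiconvexity conditions underpinning the combination theorem are inherited by arbitrary finitely generated subgroups at every stage. Should that construction pass through relatively hyperbolic intermediate groups or through Dehn fillings rather than through plain acylindrical graphs of hyperbolic groups, the corresponding inheritance statement for finitely generated subgroups would need to be set up separately; once this is in place, the reduction of the first paragraph and the Bass--Serre descent of the second are routine.
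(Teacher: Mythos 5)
Your proposal matches the paper's argument: the paper proves the statement exactly by observing that, taking the ambient hyperbolic group $\Omega$ to be $\Gamma$ itself (which is legitimate since a finitely generated locally hyperbolic group is hyperbolic), the hyperbolic group $\Omega''$ produced in the proof of Theorem~\ref{sousgroupe} is locally hyperbolic, so $G$ is locally hyperbolic as a finitely generated subgroup. The worry you flag at the end does not materialize: the construction (via Proposition~\ref{héritage}) proceeds only through $2$-acylindrical centered graphs of groups with virtually cyclic, almost malnormal edge groups and vertex groups that are either the previously built (inductively locally hyperbolic) group or finite-by-orbifold groups (which are locally hyperbolic), together with Stallings--Dunwoody splittings over finite groups, so your Bass--Serre descent argument applies at each stage.
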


In the rest of the introduction, we give some details about the proofs of the previous theorems.

\subsection*{Strategy of proof}In the language of groups, a system of equations in $n$ variables is a conjunction of formulas of the form $w(x_1,\ldots,x_n)=1$, where $w(x_1,\ldots,x_n)$ stands for an element of the free group $F(x_1,\ldots ,x_n)$. Given a group $\Gamma$, a finite system of equations $\Sigma(x_1,\ldots ,x_n)=1$ admits a non-trivial solution in $\Gamma^n$ if and only if $\Gamma$ satisfies the first-order sentence $\exists x_1\ldots\exists x_n \ (\Sigma(x_1,\ldots,x_n)=1)\wedge ((x_1\neq 1) \vee \ldots \vee (x_n\neq 1))$, if and only if there exists a non-trivial homomorphism from $G_{\Sigma}$ to $\Gamma$, where $G_{\Sigma}=\langle s_1,\ldots ,s_n \ \vert \ \Sigma(s_1,\ldots,s_n)\rangle$. Hence, the study of the set $\mathrm{Hom}(G_{\Sigma},\Gamma)$, for any (finite) system of equations $\Sigma$, is a first step towards understanding the whole first-order theory of the group $\Gamma$. In the case where $\Gamma$ is a torsion-free hyperbolic group, Sela proved that there is a finite description of the set $\mathrm{Hom}(G,\Gamma)$ (called the Makanin-Razborov diagram), for any finitely generated group $G$. His work has subsequently been generalized by Reinfeldt and Weidmann to the case of a hyperbolic group possibly with torsion. Below is the basis of this description (see Theorems \ref{sela2} and \ref{sela2bis} for completeness).

\begin{te}[Sela, Reinfeldt-Weidmann]\label{introduction}Let $\Gamma$ be a hyperbolic group, and let $G$ be a finitely generated one-ended group. There exists a finite set $F\subset G\setminus\lbrace 1\rbrace$ such that, for every non-injective homomorphism $f\in\mathrm{Hom}(G,\Gamma)$, there exists a modular automorphism $\sigma$ of $G$ such that $\ker(f\circ\sigma)\cap F \neq \varnothing$. 
\end{te}

The modular group of $G$, denoted by $\mathrm{Mod}(G)$, is a subgroup of $\mathrm{Aut}(G)$ defined by means of the JSJ decomposition of $G$ (see Section \ref{25} for details). Its main feature is that each modular automorphism acts by conjugation on every non-abelian rigid vertex group of the JSJ decomposition of $G$.

This theorem will be our starting point. First, we will consider a particular case. Let $G=\langle s_1,\ldots ,s_n \ \vert \ \Sigma(s_1,\ldots,s_n)\rangle$ be a finitely presented one-ended group, and let $\Gamma$ be a hyperbolic group. Assume that $\mathrm{Th}_{\forall}(\Gamma)=\mathrm{Th}_{\forall}(G)$ (that is, $\Gamma$ and $G$ satisfy the same universal sentences). Suppose in addition that $G$ is a rigid group (meaning that $G$ does not split non-trivially over a virtually abelian group). Then the modular group of $G$ coincides with the group of inner automorphisms. As a consequence, Theorem \ref{introduction} provides us with a finite set $F=\lbrace w_1(s_1,\ldots ,s_n),\ldots ,w_k(s_1,\ldots,s_n)\rbrace\subset G\setminus\lbrace 1\rbrace$ such that every non-injective homomorphism $f\in\mathrm{Hom}(G,\Gamma)$ kills an element of $F$. We claim that $G$ embeds into $\Gamma$. Otherwise, every homomorphism from $G$ to $\Gamma$ kills an element of $F$; in other words, the group $\Gamma$ satisfies the following first-order sentence: \[\forall x_1\ldots\forall x_n \ (\Sigma(x_1,\ldots ,x_n)=1)\Rightarrow ((w_1(x_1,\ldots ,x_n)=1)\vee \ldots \vee (w_k(x_1,\ldots ,x_n)=1)).\] Since $\mathrm{Th}_{\forall}(\Gamma)=\mathrm{Th}_{\forall}(G)$, this sentence is true in $G$ as well. Taking $x_1=s_1,\ldots ,x_n=s_n$, the previous sentence means that an element of $F$ is trivial, contradicting the fact that $F\subset G\setminus\lbrace 1\rbrace$. So we have proved that $G$ embeds into $\Gamma$. 

At this stage, we cannot conclude that $G$ is hyperbolic, because hyperbolicity is not inherited by finitely presented subgroups (see \cite{Bra99}). However, assuming that the group $\Gamma$ is rigid as well, we can prove in the same way that $\Gamma$ embeds into $G$ (before doing so, it is necessary to observe that Theorem \ref{introduction} is still valid for subgroups of hyperbolic groups, see Corollary \ref{simple}). 

Let us summarize the previous discussion: given a rigid hyperbolic group $\Gamma$ and a rigid finitely presented group $G$, we have shown that if $\mathrm{Th}_{\forall}(\Gamma)=\mathrm{Th}_{\forall}(G)$, then $G$ and $\Gamma$ are isomorphic. In the case where $G$ is not finitely presented anymore, but only finitely generated, this result remains true because hyperbolic groups are equationally noetherian, which means that, for any infinite system of equations $\Sigma(x_1,\ldots,x_n)=1$, there exists a finite subsystem $\Sigma'(x_1,\ldots,x_n)=1$ such that $\Sigma$ and $\Sigma'$ have exactly the same solutions in $\Gamma^n$ (see \cite{RW14}).

What happens when $G$ is not assumed to be rigid anymore ? In this case, the modular group $\mathrm{Mod}(G)$ is in general infinite, and we cannot fully express Theorem \ref{introduction} by means of first-order logic, since the power of expression of first-order logic is not sufficient to express precomposition by an automorphism. The challenge is to express some fragments of Theorem \ref{introduction} that are enough to capture the hyperbolicity (or cubulability) of $\Gamma$ and prove the hyperbolicity of $G$ in turn. To do so, we shall follow a strategy comparable with that used by Perin in \cite{Per11}. 

If $G$ does not embed into $\Gamma$, the idea is to consider the following immediate corollary of Theorem \ref{introduction}: for every homomorphism $f : G\rightarrow \Gamma$, there exists a homomorphism $f' : G\rightarrow\Gamma$ that kills an element of $F$ and that coincides with $f$ in restriction to each non-abelian rigid vertex group of the JSJ splitting of $G$, up to conjugacy by an element of $\Gamma$. One can easily see that this statement is expressible by a first-order sentence satisfied by $\Gamma$ (see Section \ref{32} for details). Since $\Gamma$ and $G$ are elementarily equivalent, this sentence is satisfied by $G$ as well.

Now, taking for $f$ the identity of $G$, we get a special homomorphism $f'$ that kills an element of $F$ and coincides with the identity, up to conjugacy, in restriction to each non-abelian rigid vertex group of the JSJ splitting of $G$. In the torsion-free case, an important part of the work consists in transforming $f'$ into a retraction, which leads to the notion of a hyperbolic tower in the sense of Sela. In the presence of torsion, there are new difficulties and we do not know how to get a retraction (see further details below). This leads to the notion of a quasi-tower (that we introduce in Section \ref{section5}), which is more complicated than the notion of a hyperbolic tower. 

\subsection*{The torsion-free case}The proof that hyperbolicity and cubulability are first-order invariants among torsion-free finitely generated groups is based on the following result proved by Sela in \cite{Sel09} (though it is not explicitly stated).

\begin{te}[Sela]\label{intro3}Let $\Gamma$ be a non-elementary torsion-free hyperbolic group, and let $G$ be a finitely generated group with the same first-order theory as $\Gamma$. There exist a subgroup $\Gamma'$ of $\Gamma$ and a subgroup $G'$ of $G$ such that:
\begin{itemize}
\item[$\bullet$]$\Gamma'$ and $G'$ are isomorphic;
\item[$\bullet$]$\Gamma$ is a hyperbolic tower over $\Gamma'$;
\item[$\bullet$]$G$ is a hyperbolic tower over $G'$.
\end{itemize}
\end{te}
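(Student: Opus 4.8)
The plan is to run Sela's argument in three movements --- a reduction to the one-ended case, a ``peeling'' construction on the hyperbolic side, and a limit-group-plus-shortening construction on the side of $G$ --- the third being where the real work lies. I would first reduce to the case where $\Gamma$ (equivalently $G$) is one-ended: the number of one-ended factors in a Grushko decomposition and the rank of its free part are first-order invariants, so a hyperbolic tower over $\Gamma$ can be assembled from hyperbolic towers over the one-ended factors together with a free floor, and likewise for $G$; it therefore suffices to treat one-ended $\Gamma$. For such $\Gamma$ I would construct $\Gamma'$ by iteratively removing hyperbolic floors: whenever the current group $\Gamma_i$ is a hyperbolic tower over a proper non-elementary subgroup $\Gamma_{i+1}$, Sela's theorem that a hyperbolic-tower embedding is elementary gives that $\Gamma_{i+1}$ is elementarily equivalent to $\Gamma_i$, while the associated JSJ-complexity strictly decreases; after finitely many steps one reaches a group $\Gamma'=\Gamma_m$ that is its own elementary core, that is, not a proper hyperbolic tower over any subgroup. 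By construction $\Gamma$ is a hyperbolic tower over $\Gamma'$, and $\Gamma'$ is elementarily equivalent to $\Gamma$, hence to $G$; moreover $\Gamma'$ is a retract of $\Gamma$, hence quasiconvex, hence itself a non-elementary torsion-free hyperbolic group, in particular finitely presented and equationally noetherian.

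Now $G$ is elementarily equivalent to $\Gamma'$, with $\Gamma'$ finitely presented, torsion-free hyperbolic, and ``elementarily rigid'' in the above sense. Since $\Gamma'$ is equationally noetherian and $G$ is finitely generated with $\mathrm{Th}_{\forall}(G)=\mathrm{Th}_{\forall}(\Gamma')$, the group $G$ is a $\Gamma'$-limit group: there is a stable sequence $f_k\colon G\to\Gamma'$ discriminating $G$, i.e.\ eventually injective on any fixed ball of the Cayley graph. Passing to the Bestvina--Paulin limit yields an action of $G$ on an $\mathbb{R}$-tree, and Rips theory together with the JSJ machinery produces a canonical splitting of $G$ and its modular group $\mathrm{Mod}(G)$, which acts by conjugation on each non-abelian rigid vertex group. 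Theorem \ref{introduction}, applied to $\mathrm{Hom}(G,\Gamma')$ (legitimate since $\Gamma'$ embeds in a hyperbolic group), produces a finite set $F\subset G\setminus\{1\}$ detecting non-injectivity up to precomposition by $\mathrm{Mod}(G)$; its non-injective-homomorphism content can be transferred to $G$ through a sentence of $\mathrm{Th}_{\forall\exists}(\Gamma')=\mathrm{Th}_{\forall\exists}(G)$, along the lines of the strategy-of-proof discussion above.

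The core step is to feed the identity of $G$ into that transferred sentence: this produces an endomorphism of $G$ that kills an element of $F$ yet agrees, up to conjugacy in $G$, with the identity on every non-abelian rigid vertex group of the JSJ splitting. Running the shortening argument upgrades such an endomorphism to a retraction of $G$ onto a proper subgroup, exhibiting one hyperbolic floor of $G$; iterating while controlling the complexity exactly as on the $\Gamma$ side presents $G$ as a hyperbolic tower over a finitely generated subgroup $G'$ that is again its own elementary core, with $G'$ elementarily equivalent to $\Gamma'$. Finally one matches the two cores: using the Makanin--Razborov diagrams of $\mathrm{Hom}(G',\Gamma')$ and $\mathrm{Hom}(\Gamma',G')$ together with the shortening argument one obtains mutual injections $G'\hookrightarrow\Gamma'$ and $\Gamma'\hookrightarrow G'$ (a refinement of the rigid case of the introduction, since cores need not be rigid), and then co-Hopficity of one-ended torsion-free hyperbolic groups forces $G'\cong\Gamma'$, completing the proof.

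I expect the third movement to be the main obstacle. Turning the ``shortened'' endomorphisms that merely coincide up to conjugacy with the identity on the rigid vertex groups into genuine retractions onto an isomorphic copy of $\Gamma'$ is precisely where the full strength of Sela's shortening argument, the fine anatomy of the JSJ decomposition (surface-type versus rigid vertex groups and the corresponding generators of $\mathrm{Mod}(G)$), and a delicate termination argument all come in; and the final matching step requires care to ensure the subgroup produced is isomorphic to $\Gamma'$ on the nose rather than merely elementarily equivalent to it.
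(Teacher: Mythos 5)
The paper does not give its own proof of this theorem; it is cited as Sela's result, and what the paper does instead is prove a weaker torsion-admitting generalization (Proposition \ref{isom}) by a route that deliberately avoids some of the machinery you invoke. Your proposal is a high-level reconstruction of Sela's argument, and much of it is in the right spirit, but the opening reduction contains a genuine error that is fatal as written.

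You claim at the outset that ``the number of one-ended factors in a Grushko decomposition and the rank of its free part are first-order invariants,'' and you use this to reduce to the one-ended case. This is false, and it is false for the central examples the theory was built to handle: by Sela's solution to Tarski's problem, $F_2 \equiv F_3$, yet their free ranks differ; likewise a closed genus-$2$ surface group is one-ended and elementarily equivalent to $F_2$, which has no one-ended Grushko factor at all. What is invariant is the isomorphism type of the \emph{core}, not the Grushko profile of the group itself; the Grushko factors of $G$ must be matched to those of $\Gamma$ only after both have been peeled down to their cores, and even then the matching is an output of the Makanin--Razborov / shortening machinery (as in Proposition \ref{disjonctionbis2bisbis} and the bijection-by-periodicity argument of Proposition \ref{isom}), not a hypothesis one can impose upfront. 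A closely related gap appears at the end: even when $\Gamma$ and $G$ are one-ended, their cores $\Gamma'$ and $G'$ need not be --- a surface group retracts onto a free group --- so ``co-Hopficity of one-ended torsion-free hyperbolic groups forces $G'\cong\Gamma'$'' does not apply to $G'$ and $\Gamma'$ as wholes. One has to run the mutual-injection/co-Hopf argument on each one-ended Grushko factor of the cores and separately control the free parts, which is exactly the bookkeeping your upfront (incorrect) reduction was meant to avoid.

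Two further remarks on method, for comparison with the paper. You peel floors off $\Gamma$ by invoking Sela's theorem that a hyperbolic-floor embedding is elementary, so that at each stage $\Gamma_{i+1} \equiv \Gamma_i$, and then work against $\mathrm{Th}_{\forall\exists}(\Gamma')$. That theorem is correct in the torsion-free setting, but it is a deep input, and the paper's torsion-admitting argument (Proposition \ref{disjonctionbis}) deliberately circumvents it: it keeps $\mathrm{Th}_{\forall\exists}(\Gamma)$ fixed and pushes information down the tower by composing the maps $r_m, j_m$ and using $\Delta$-relatedness (Proposition \ref{prop reliés}), precisely because no analogue of the elementary-embedding theorem is available for quasi-floors. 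Your route is legitimate in the torsion-free case but should be flagged as relying on a result the paper's method avoids. Finally, ``running the shortening argument upgrades such an endomorphism to a retraction'' compresses the genuinely hard step: going from a non-injective endomorphism that is inner on rigid vertex groups to an actual hyperbolic floor requires analyzing the induced map on surface/orbifold pieces (pinched curves, complexity, and a fold/collapse argument as in Lemma \ref{magique} and Proposition \ref{???}); it is not another application of the shortening argument, which was already spent producing the finite test set $F$.
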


Hyperbolic towers have been introduced by Sela in \cite{Sel01} (see Definition 6.1) to solve Tarski's problem about the elementary equivalence of free groups (see also Kharlampovich and Myasnikov's NTQ groups). More generally, Sela used towers in \cite{Sel09} to classify those finitely generated groups with the same first-order theory as a given torsion-free hyperbolic group. Roughly speaking, a hyperbolic tower is a group obtained by successive addition of hyperbolic floors, and a hyperbolic floor is a group obtained by gluing a retracting surface to another group (see Example \ref{exempleintro} below). We refer the reader to \cite{Per11} for a precise definition of hyperbolic towers. 

\begin{ex}\label{exempleintro}Let $\Sigma$ be a surface with boundary (with Euler characteristic at most -2, or a punctured torus). Denote by $S$ its fundamental group and by $B_1,\ldots, B_n$ its boundary subgroups (well-defined up to conjugacy). Let $H$ be a group and let $h_1,\ldots ,h_n$ be elements of $H$ of infinite order. We define a graph of groups with two vertices labelled by $S$ and $H$, and $n$ edges between them identifying $B_i$ with $\langle h_i\rangle$ for each $1\leq i\leq n$. Call $G$ the fundamental group of this graph of groups. We say that $G$ is a hyperbolic floor over $H$ if there exists a retraction $r : G \rightarrow H$ (such that $r(S)$ is non-abelian).
\begin{figure}[h!]
\includegraphics[scale=0.45]{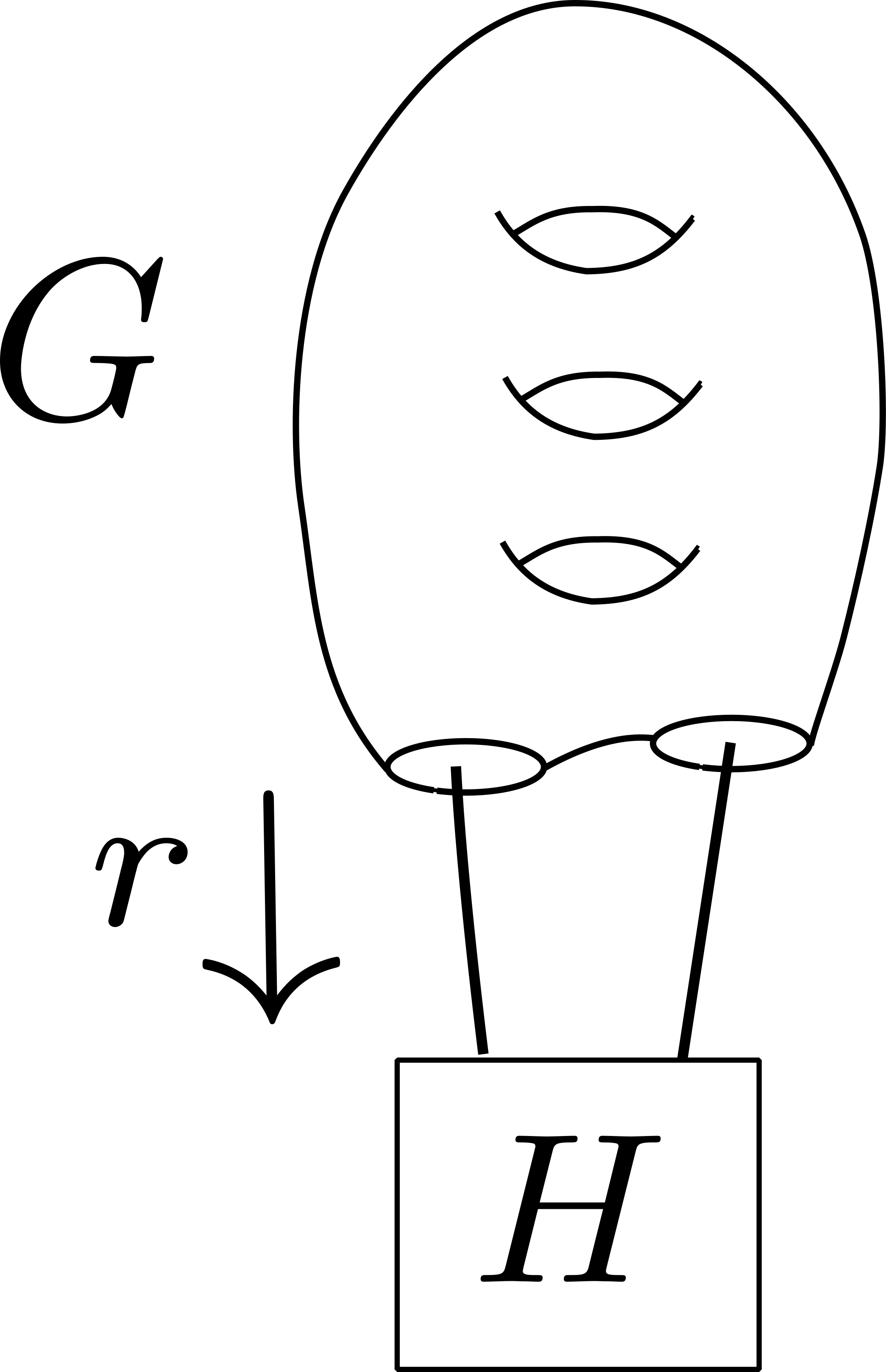}
\caption{The group $G$ is a hyperbolic floor over $H$ in the sense of Sela.}
\end{figure}
\end{ex}
In the previous example, if $G$ is hyperbolic, then $H$ is hyperbolic, as a retract of $G$. Conversely, if $H$ is hyperbolic, it follows from the Bestvina-Feighn combination theorem (see \cite{BF92}) that $G$ is hyperbolic as well. As a consequence, since a hyperbolic tower is a group obtained by successive addition of hyperbolic floors, the following holds: if a group $G$ is a hyperbolic tower over a group $H$, then $G$ is hyperbolic if and only if $H$ is hyperbolic. In the same way, using a combination theorem proved by Hsu and Wise (see \cite{HW15}), we can prove: if a group $G$ is a hyperbolic tower over a group $H$, then $G$ is hyperbolic and cubulable if and only if $H$ is hyperbolic and cubulable.

Now, the fact that hyperbolicity and cubulability are first-order invariants (among torsion-free finitely generated groups) follows immediately from Theorem \ref{intro3}.

\begin{figure}[!h]
\centering
\includegraphics[scale=0.45]{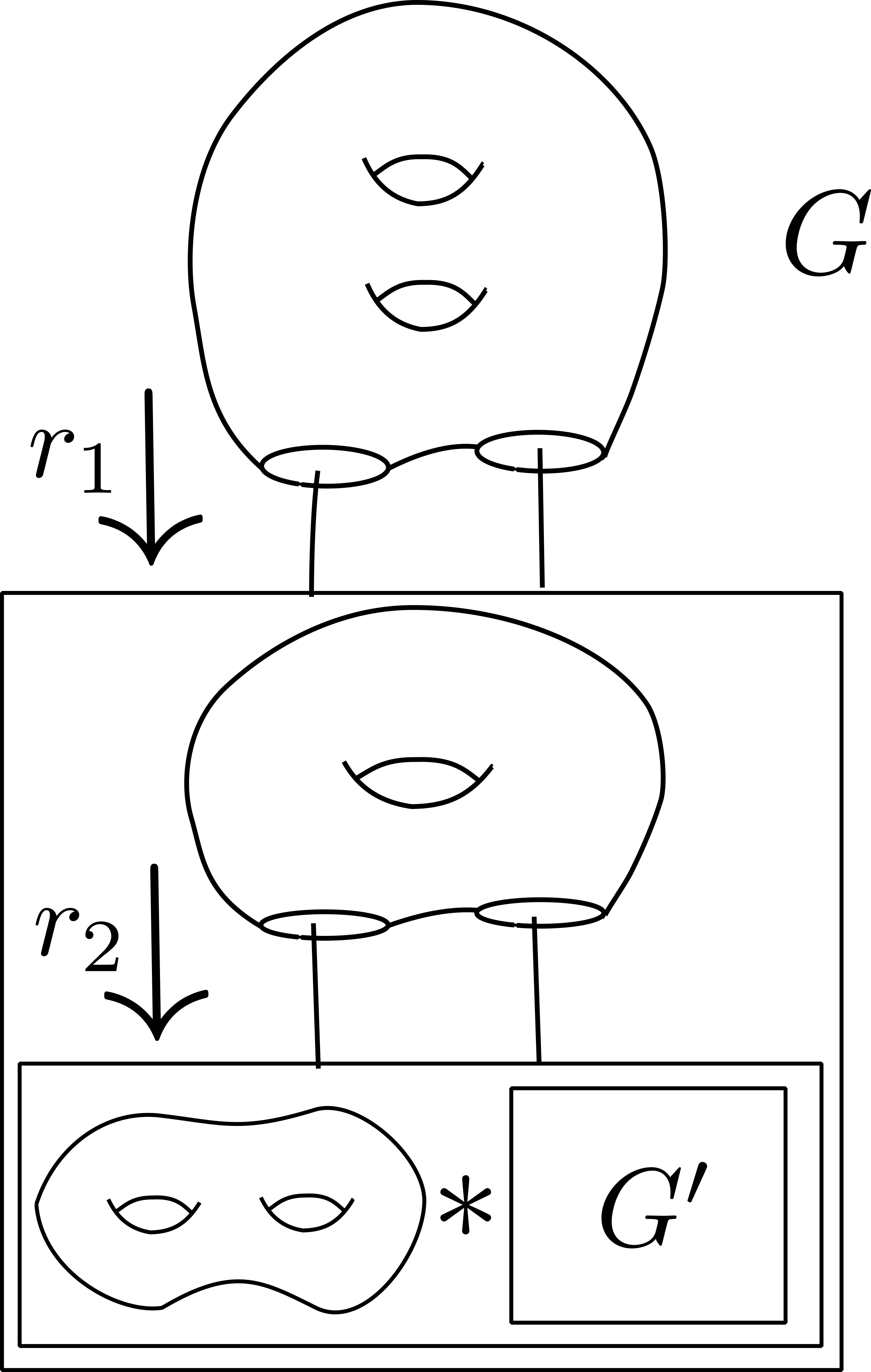}
\caption{The group $G$ is a hyperbolic tower over $G'$ in the sense of Sela.}
\label{cassimple2}
\end{figure}

\subsection*{A new phenomenon arising from the presence of torsion}To give a flavour of the influence of torsion, let us briefly describe a new phenomenon, which occurs only in the presence of torsion. In the case of torsion-free groups, performing a HNN extension over a finite group is the same as doing a free product with $\mathbb{Z}$, and it is well-known that if $G$ is a torsion-free non-elementary hyperbolic group, then $G$ and $G\ast\mathbb{Z}$ have the same universal theory (and even the same first-order theory, as a consequence of the work of Sela). By contrast, it turns out that the situation is very different in the presence of torsion: in general, performing a HNN extension over a finite subgroup, even trivial, modifies the universal theory of a hyperbolic group. Let us consider the following simple example: 

\begin{ex}\label{instable}Let $G=F_2\times\mathbb{Z}/2\mathbb{Z}$. Then the sentence $\forall x\forall y \ (x^2=1)\Rightarrow (xy=yx)$ is satisfied by $G$, but not by $G\ast\mathbb{Z}$. 
\end{ex}

This example shows that, in general, the class of groups with the same universal theory as a given hyperbolic group with torsion is not closed under HNN extensions and amalgams over finite groups. In Section \ref{section 4}, we deal with this problem by proving the following result.

\begin{te}\label{propintro}Every hyperbolic group $\Gamma$ embeds into a hyperbolic group $\overline{\Gamma}$ possessing the property that the class of $\overline{\Gamma}$-limit groups is closed under amalgamated free products and HNN extensions over finite groups.
\end{te}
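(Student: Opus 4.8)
The plan has three parts: a reduction to a residual‑finiteness statement, a ping‑pong argument, and the construction of $\overline{\Gamma}$.

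\emph{Reduction.} Since hyperbolic groups are equationally Noetherian, a finitely generated group is a $\overline{\Gamma}$‑limit group if and only if it is fully residually $\overline{\Gamma}$, i.e.\ for every finite set $S$ of nontrivial elements there is a homomorphism to $\overline{\Gamma}$ injective on $S$. I will use two elementary consequences: every finite subgroup of a $\overline{\Gamma}$‑limit group embeds into $\overline{\Gamma}$ (apply full residuality to the nontrivial elements of the finite subgroup), and a hyperbolic group has finitely many conjugacy classes of finite subgroups, hence finitely many conjugacy classes of subgroups isomorphic to a fixed finite group. So it suffices to produce a hyperbolic group $\overline{\Gamma}\supseteq\Gamma$ such that, whenever $A,B$ are finitely generated fully residually $\overline{\Gamma}$ groups sharing a common finite subgroup $C$, the amalgam $A\ast_C B$ and every HNN extension of $A$ over a finite subgroup are again fully residually $\overline{\Gamma}$.

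\emph{The key property and the ping‑pong.} I would build $\overline{\Gamma}$ so that every finite subgroup $C\leq\overline{\Gamma}$ is \emph{flexibly embedded}: (i) any two monomorphisms $C\hookrightarrow\overline{\Gamma}$ are conjugate in $\overline{\Gamma}$; and (ii) the centralizer $C_{\overline{\Gamma}}(C)$ is non‑elementary and contains loxodromic elements independent from any prescribed quasiconvex subgroup, so that conjugates of $C$ can be pushed arbitrarily far from any prescribed finite subset while remaining in Schottky position. Granting this, closure is proved by ping‑pong. Given $S\subset A\ast_C B\setminus\{1\}$, choose homomorphisms $f\colon A\to\overline{\Gamma}$, $g\colon B\to\overline{\Gamma}$ injective on the finitely many letters occurring in the reduced expressions of elements of $S$; by (i), after composing $f$ with an inner automorphism of $\overline{\Gamma}$, I may assume $f$ and $g$ agree on $C$, so they glue to a homomorphism $A\ast_C B\to\overline{\Gamma}$. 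Then replace $g$ by $\gamma g(\cdot)\gamma^{-1}$, where $\gamma$ is a high power of a deep loxodromic in $C_{\overline{\Gamma}}(f(C))$; since $\gamma$ centralizes $f(C)$ this still agrees with $f$ on $C$ and glues to a new homomorphism, and a normal‑form (Britton/amalgam) estimate in the hyperbolic group $\overline{\Gamma}$ shows that for $\gamma$ deep enough this homomorphism is nontrivial on every element of $S$. The HNN case is the same, the stable letter being sent to a deep loxodromic of the relevant centralizer. Equational Noetherianity then upgrades ``nontrivial on every finite set'' to ``$\overline{\Gamma}$‑limit group''.

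\emph{Construction of $\overline{\Gamma}$.} Starting from $\Gamma$, I would perform finitely many amalgamated products and HNN extensions over finite subgroups. Each such move keeps the group hyperbolic (a graph of hyperbolic groups with finite edge groups is hyperbolic) and does not enlarge the orders of finite subgroups, since every finite subgroup of the new group is conjugate into a vertex group. The moves are of two kinds: HNN extensions identifying isomorphic finite subgroups that lie in distinct conjugacy classes, and HNN extensions realizing each automorphism of each finite subgroup as a conjugation — so that after finitely many steps any two monomorphisms of a given finite group become conjugate, giving (i); and, for each conjugacy class of finite subgroup $C$, amalgamating a copy of $C\times F_2$ along $C$, which makes $C_{\overline{\Gamma}}(C)$ non‑elementary. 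Finally take a free product with $\mathbb{Z}$: this makes the finite radical trivial, so that every nontrivial finite subgroup has nontrivial conjugates, and one checks (using the free‑product structure) that (ii) holds. Because $\Gamma$ has only finitely many conjugacy classes of finite subgroups, each of bounded order, and the relevant orders never grow, the procedure terminates at a finitely generated hyperbolic group $\overline{\Gamma}\supseteq\Gamma$ that is flexibly embedded, hence has the required closure property; applied to $\Gamma=F_2\times\mathbb{Z}/2\mathbb{Z}$ this recovers $\overline{\Gamma}=(F_2\times\mathbb{Z}/2\mathbb{Z})\ast\mathbb{Z}$, which indeed cures the pathology of Example \ref{instable}.

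\emph{The main obstacle.} The delicate point is the construction: one must ensure that flexibility holds for \emph{every} finite subgroup of the \emph{final} group simultaneously — that the operations fattening centralizers and merging conjugacy classes do not create new, badly embedded finite subgroups, and that the process really closes up after finitely many steps. A secondary difficulty is making the normal‑form estimate in the ping‑pong uniform in the presence of torsion, where ``reduced word'' refers to cosets of finite edge groups rather than to a free basis.
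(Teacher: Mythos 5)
Your proposal follows essentially the same approach as the paper: the two conditions you impose on flexibly embedded finite subgroups are precisely the paper's notion of a torsion-saturated group (Definition~\ref{sat0}), the ping-pong with a deep loxodromic in the centralizer is the Britton/Baumslag argument of Lemmas~\ref{lemme3}--\ref{lemme5}, and the construction of $\overline{\Gamma}$ by HNN extensions over finite subgroups is the one-vertex graph of groups of Theorem~\ref{sat2}. The only real difference is in implementation: the paper makes your condition (ii) precise by demanding $M(g_n)=\langle g_n\rangle\times F$ exactly (which is what the Baumslag-lemma estimate uses), and gets a Schottky pair in each centralizer at once by simply \emph{doubling} every HNN edge, thereby avoiding the iterative fattening and the termination worry you flag as the main obstacle.
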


We will see that this result plays an important role in our proofs.

\subsection*{Quasi-towers}In order to prove our main theorems, we aim to generalize Theorem \ref{intro3} to the case where $\Gamma$ is a hyperbolic group possibly with torsion (see Theorem \ref{isom}). When looking for a generalization of hyperbolic towers in the presence of torsion, we are brought to perform HNN extensions over finite groups, and this leads to new difficulties, as illustrated by Example \ref{instable} above. In Section \ref{section5} we introduce quasi-floors and quasi-towers. It is important to stress that quasi-towers are more complicated than hyperbolic towers in the sense of Sela, the major complication being that if a group $G$ is a quasi-floor over a group $H$, then $H$ is neither a subgroup, nor a quotient of $G$. Roughly speaking, a quasi-tower is a group obtained by successive addition of quasi-floors, and Figure \ref{introduction} below illustrates what a quasi-floor is.

\begin{figure}[!h]
\centering
\includegraphics[scale=0.018]{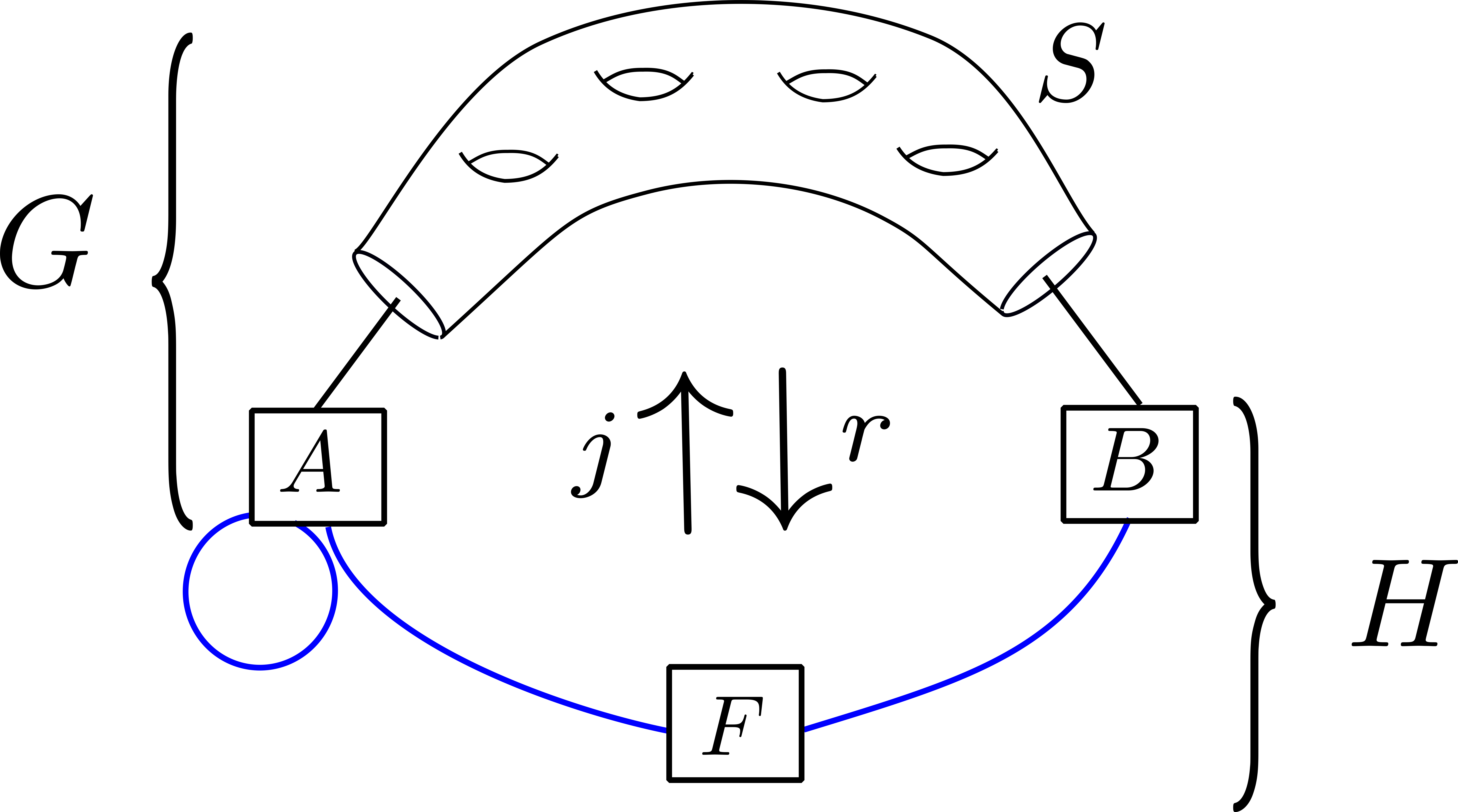}
\caption{The group $G$ is a quasi-floor over $H$. The groups $A$ and $B$ are one-ended, and $F$ is finite. There exist a homomorphism $r : G\rightarrow H$, maybe not surjective, and a homomorphism $j:H\rightarrow G$, maybe not injective. However, $r$ can be viewed as a "piecewise retraction", and $j$ can be viewed as a "piecewise inclusion". Indeed, $r\circ j$ is inner on $A$ and $B$, and $j$ is injective on $A$, $B$ and $F$. The groups $G$ and $H$ are subgroups of a bigger group $G'$ that retracts onto $H$ via an epimorphism $\rho : G' \rightarrow H$ such that $\rho_{\vert G}=r$.}
\label{cassimple2}
\end{figure}

Let us say a few words about the proof of Theorem \ref{intro3}, which we aim to generalize. Let $\Gamma$ be a torsion-free hyperbolic group, and let $G$ be a finitely generated group with the same first-order theory as $\Gamma$. First-order logic provides us with a sequence of hyperbolic floors \[G=G_0\underset{r_1}{\twoheadrightarrow} G_1\underset{r_2}{\twoheadrightarrow}\cdots \underset{r_n}{\twoheadrightarrow}G_n{\twoheadrightarrow}\cdots,\]and this sequence is necessarily finite thanks to the descending chain condition \ref{chaine} for $\Gamma$-limit groups. In the presence of torsion, we cannot use this theorem straightaway for two reasons:
\begin{itemize}
\item[$\bullet$]if a group $G$ is a quasi-floor over a group $H$, the homomorphism $r:G\rightarrow H$ associated with the quasi-floor structure is maybe not surjective,
\item[$\bullet$]and $\mathrm{Th}_{\forall}(G)$ is not equal to $\mathrm{Th}_{\forall}(H)$ in general, since the first-order theory with one quantifier is sensitive to amalgams and HNN extensions over finite groups (see Example \ref{instable}).
\end{itemize}
This second difficulty will be resolved by means of Theorem \ref{propintro}. The problem of non-surjectivity of $r$ will be solved by proving that if \[G=G_0\underset{r_1}{\rightarrow} G_1\underset{r_2}{\rightarrow}\cdots \underset{r_n}{\rightarrow}G_n{\rightarrow}\cdots\]is a sequence of quasi-floors, there exists a subgroup $H_n<G_n$, for each $n$, such that the restriction of $r_n$ to $H_n$ is an epimorphism onto $H_{n+1}$.

\subsection*{Acknowledgements}I am very grateful to my advisor Vincent Guirardel for his help, and for the time he spent carefully reading previous versions of this paper. I would also like to thank Chloé Perin for a talk she gave at "Session états de la recherche de la SMF", where she explained the proof of Theorem \ref{te1}. It has been an important source of inspiration.

\hypersetup{colorlinks=true, linkcolor=black}
\tableofcontents
\hypersetup{colorlinks=true, linkcolor=red}

\section{Preliminaries}

In this section we recall some facts and definitions about the elementary theory of groups, $\Gamma$-limit groups, $K$-$\mathrm{CSA}$ groups, JSJ decompositions. We also prove some results that will be useful in the sequel, and whose proofs are independent from the main body of the paper.

\subsection{The elementary theory of groups}

For detailed background, we refer the reader to \cite{Mar02}.

A first-order formula in the language of groups is a finite formula using the following symbols: $\forall$, $\exists$, $=$, $\wedge$, $\vee$, $\Rightarrow$, $\neq$, $1$ (standing for the identity element), ${}^{-1}$ (standing for the inverse), $\cdot$ (standing for the group multiplication) and variables $x,y,g,z\ldots$ which are to be interpreted as elements of a group. 

A variable is free if it is not bound by any quantifier $\forall$ or $\exists$. A sentence is a formula without free variables.

Given a formula $\varphi(x_1,\ldots ,x_p)$ with $p\geq 0$ free variables, and $p$ elements $g_1,\ldots ,g_p$ of a group $G$, we say that $\varphi(g_1,\ldots ,g_p)$ is satisfied by $G$ if its interpretation is true in $G$. This is denoted by $G\models \varphi(g_1,\ldots ,g_p)$.

The elementary theory of a group $G$, denoted by $\mathrm{Th}(G)$, is the collection of all sentences which are true in $G$. The universal-existential theory of $G$, denoted by $\mathrm{Th}_{\forall\exists}(G)$, is the collection of sentences true in $G$ which are of the form \[\forall x_1\ldots\forall x_p\exists y_1\ldots\exists y_q \ \varphi(x_1,\dots,x_p,y_1,\dots , y_q)\] where $p,q\geq 1$ and $\varphi$ is a quantifier-free formula with $p+q$ free variables. In the same way, we define the universal theory of $G$, denoted by $\mathrm{Th}_{\forall}(G)$, and its existential theory $\mathrm{Th}_{\exists}(G)$.

\subsection{$\Gamma$-limit groups}\label{22}

Let $\Gamma$ and $G$ be two groups. We say that $G$ is fully residually $\Gamma$ if, for every finite subset $F\subset G$, there exists a homomorphism $f : G \rightarrow \Gamma$ whose restriction to $F$ is injective. If $G$ is countable, then $G$ is fully residually $\Gamma$ if and only if there exists a sequence $(f_n)$ of homomorphisms from $G$ to $\Gamma$ such that, for every non-trivial element $g\in G$, $f_n(g)$ is non-trivial for every $n$ large enough. Such a sequence is called a discriminating sequence.

$\Gamma$-limit groups have been introduced by Sela in \cite{Sel09} to study $\mathrm{Hom}(G,\Gamma)$, where $\Gamma$ stands for a torsion-free hyperbolic group, and $G$ stands for a finitely generated group. Sela proved that $\Gamma$-limit groups are exactly those finitely generated fully residually $\Gamma$ groups. Reinfeldt and Weidmann generalized this result when $\Gamma$ is hyperbolic possibly with torsion and, more generally, when $\Gamma$ is equationally noetherian (see \cite{RW14}).

The following easy proposition builds a bridge between group theory and first-order logic.

\begin{prop}Let $G$ and $\Gamma$ be finitely generated groups. Suppose that $G$ is finitely presented or that $\Gamma$ is equationally noetherian. Then $G$ is fully residually $\Gamma$ if and only if $\mathrm{Th}_{\forall}(\Gamma)\subset\mathrm{Th}_{\forall}(G)$.
\end{prop}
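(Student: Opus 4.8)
The plan is to prove both implications directly from the definitions, using the finite-presentation or equational-noetherian hypothesis to pass between infinite and finite systems of equations.

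Suppose first that $G$ is fully residually $\Gamma$, and let $\forall x_1\ldots\forall x_m \ \varphi(x_1,\ldots,x_m)$ be a universal sentence true in $\Gamma$; we want it to be true in $G$. Fix a finite generating set $\{g_1,\ldots,g_k\}$ of $G$. Suppose for contradiction that the sentence fails in $G$, witnessed by a tuple $(h_1,\ldots,h_m)\in G^m$; writing each $h_i$ as a word in the $g_j$, the failure of the quantifier-free $\varphi$ at this tuple amounts to a finite list of equalities $u_\ell(g_1,\ldots,g_k)=1$ that hold in $G$, together with a finite list of inequalities $v_\ell(g_1,\ldots,g_k)\neq 1$ that hold in $G$ (after rewriting $\varphi$ in disjunctive normal form, one disjunct in the negation must be satisfied). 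Let $F$ be the finite set $\{v_\ell(g_1,\ldots,g_k)\}\subset G\setminus\{1\}$. By full residual $\Gamma$-ness, there is a homomorphism $f\colon G\to\Gamma$ injective on $F$, so the elements $f(v_\ell(g_1,\ldots,g_k))$ are all non-trivial in $\Gamma$, while the relations $u_\ell(f(g_1),\ldots,f(g_k))=1$ automatically hold. Hence the tuple obtained from $(f(h_1),\ldots,f(h_m))$ witnesses the failure of $\varphi$ in $\Gamma$, contradicting $\Gamma\models\forall\bar x\,\varphi$. Therefore the sentence holds in $G$, and $\mathrm{Th}_\forall(\Gamma)\subset\mathrm{Th}_\forall(G)$. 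Note this direction needs no extra hypothesis.

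Conversely, suppose $\mathrm{Th}_\forall(\Gamma)\subset\mathrm{Th}_\forall(G)$, and fix a finite generating set of $G$. We must produce, for every finite $F\subset G\setminus\{1\}$, a homomorphism $G\to\Gamma$ injective on $F$. Consider the (possibly infinite) set $\Sigma$ of relators of $G$ in the chosen generators and the finite set of ``forbidden'' words $w_1,\ldots,w_r$ representing the elements of $F$. If no homomorphism $G\to\Gamma$ were injective on $F$, then \emph{every} tuple in $\Gamma^k$ satisfying all relations in $\Sigma$ would kill some $w_i$; equivalently $\Gamma$ would satisfy the universal sentence $\forall x_1\ldots\forall x_k\ \bigl((\bigwedge_{\sigma\in\Sigma}\sigma=1)\Rightarrow(\bigvee_i w_i=1)\bigr)$. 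If $G$ is finitely presented we may take $\Sigma$ finite directly; if instead $\Gamma$ is equationally noetherian, the subsystem of $\Sigma$ cutting out $\mathrm{Hom}(G,\Gamma)$ inside $\Gamma^k$ can be replaced by a finite subsystem $\Sigma_0$ with the same solution set in $\Gamma^k$, so the displayed sentence with $\Sigma_0$ in place of $\Sigma$ is again a genuine first-order universal sentence true in $\Gamma$. By hypothesis this sentence is then true in $G$; evaluating it at the generating tuple $(g_1,\ldots,g_k)$, all relations of $G$ hold, so some $w_i(g_1,\ldots,g_k)=1$ in $G$, i.e. some element of $F$ is trivial — contradiction. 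Hence such an $f$ exists, and $G$ is fully residually $\Gamma$.

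The only subtle point — and the sole place either hypothesis is used — is the reduction of the infinite relator set $\Sigma$ to a finite one in the converse direction: a universal sentence must be a \emph{finite} formula, so we cannot literally conjoin infinitely many relators. This is handled precisely by finite presentability of $G$ (take $\Sigma$ finite) or by equational noetherianity of $\Gamma$ (shrink $\Sigma$ to a finite subsystem with the same zero set in $\Gamma^k$, which suffices since we only care about homomorphisms into $\Gamma$). Everything else is the routine translation between witnessing tuples, words in generators, and homomorphisms out of a presented group.
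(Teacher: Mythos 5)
Your proof is correct. The paper states this proposition without giving a proof (it calls it "easy"), and your argument is the standard and expected one: the forward direction by pulling back a failing tuple along a discriminating homomorphism, the converse by encoding the non-existence of the desired homomorphism as a universal sentence in the generators, using finite presentability or equational noetherianity of $\Gamma$ to make the antecedent a finite conjunction. One small imprecision worth tidying: ``fully residually $\Gamma$'' as defined in the paper asks for $f$ \emph{injective} on a given finite subset of $G$, whereas at both ends you actually work with homomorphisms that are \emph{non-trivial} on a finite subset of $G\setminus\{1\}$. These formulations are equivalent (injectivity on $F$ is non-triviality on the finite set $\{xy^{-1}: x\neq y\in F\}$; conversely, injectivity on $F\cup\{1\}$ forces $f(v)\neq 1$ for $v\in F$), but you invoke the equivalence silently — for instance, ``injective on $F$'' with $F\subset G\setminus\{1\}$ does not by itself give $f(v_\ell)\neq 1$; you want injectivity on $F\cup\{1\}$. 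It is a routine fix and does not affect the substance of the argument.
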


The proofs of our main results rely essentially on the following four theorems, proved by Sela for torsion-free hyperbolic groups and later generalized by Reinfeldt and Weidmann to the case of hyperbolic groups possibly with torsion.

\begin{te}[Sela, Reinfeldt-Weidmann] \label{sela2}Let $\Gamma$ be a hyperbolic group and let $G$ be a one-ended finitely generated group. There exist non trivial elements $g_1,\ldots , g_k$ of $G$ such that, for every non-injective homomorphism $f : G\rightarrow\Gamma$, there exist a modular automorphism $\sigma\in\mathrm{Mod}(G)$ and an integer $1\leq \ell \leq k$ such that $f\circ\sigma(g_{\ell})=1$.
\end{te}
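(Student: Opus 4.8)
The plan is to argue by contradiction, using the Bestvina--Paulin construction of a limiting $\mathbb{R}$-tree together with Sela's shortening argument, in the form adapted to the presence of torsion by Reinfeldt and Weidmann (\cite{Sel09, RW14}). Fix a finite generating set $S=\{s_1,\dots,s_m\}$ of $G$ and a word metric $|\cdot|$ on $\Gamma$, and for $f\colon G\to\Gamma$ put $\|f\|=\max_i|f(s_i)|$; call $f$ \emph{short} if $\|f\|\le\|\iota\circ f\circ\sigma\|$ for every $\sigma\in\mathrm{Mod}(G)$ and every inner automorphism $\iota$ of $\Gamma$. Suppose the conclusion fails. Exhausting $G\setminus\{1\}$ by finite subsets $F_1\subset F_2\subset\cdots$, the negation yields, for each $n$, a non-injective $f\colon G\to\Gamma$ with $\ker(f\circ\sigma)\cap F_n=\varnothing$ for all $\sigma\in\mathrm{Mod}(G)$; replacing $f$ by a shortest element $f_n$ of its orbit under $\mathrm{Mod}(G)$ and conjugation in $\Gamma$ (the set of lengths is a nonempty subset of $\mathbb{Z}_{\ge 0}$, so a minimum is attained) preserves non-injectivity and the property $\ker(f_n)\cap F_n=\varnothing$. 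If $\|f_n\|$ stayed bounded along a subsequence, then $f_n$ would take values in a fixed finite set of tuples, so some homomorphism $f$ would occur for infinitely many $n$, and $\ker(f)\cap F_n=\varnothing$ for all $n$ would force $\ker(f)=\{1\}$, contradicting non-injectivity; hence $\|f_n\|\to\infty$. As $f_n$ is short, this forces the minimal displacement $\lambda_n=\inf_x\max_i d\bigl(x,f_n(s_i)x\bigr)$ to tend to infinity as well.

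Rescaling the Cayley graph of $\Gamma$ by $1/\lambda_n$ and choosing basepoints $x_n$ that realize $\lambda_n$ up to an additive constant, the Bestvina--Paulin method produces, after extracting a subsequence, a nontrivial isometric action of $G$ on an $\mathbb{R}$-tree $T$ with no global fixed point. Since $\Gamma$ is hyperbolic and equationally noetherian, the limit action is stable and its stabilizers are controlled: tripod stabilizers are finite and arc stabilizers are virtually cyclic, the finite subgroups being the source of the extra difficulties in the torsion case, handled in \cite{RW14}. Because $G$ is one-ended it does not split over a finite subgroup, so $T$ is not reduced to a point, and applying the Rips machine to the stable action gives a nontrivial decomposition of $G$ as a graph of groups over virtually abelian subgroups, with pieces of surface, axial (abelian) and simplicial type. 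This decomposition is dominated by the JSJ decomposition of $G$ that defines $\mathrm{Mod}(G)$, so the geometric moves on $T$ used below are implemented by genuine elements of $\mathrm{Mod}(G)$.

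It then remains to run the shortening argument. The convergence $f_n\to(T,G)$ means that, for large $n$, the rescaled displacement of $f_n(S)$ is close to the displacement function of $G$ acting on $T$. Using the structure of $T$ — Dehn twists along the edges of the simplicial part, mapping-class/orbifold automorphisms on the surface-type vertices, and the abelian (unipotent) automorphisms on the axial parts — one constructs, for $n$ large, a modular automorphism $\sigma_n\in\mathrm{Mod}(G)$ and an inner automorphism $\iota_n$ of $\Gamma$ with $\|\iota_n\circ f_n\circ\sigma_n\|<\|f_n\|$. This contradicts the shortness of $f_n$, and the theorem follows.

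The main obstacle is this last step in the torsion setting. One must verify that the moves which strictly shorten the action on $T$ are genuinely realized by automorphisms in $\mathrm{Mod}(G)$ as defined via the JSJ decomposition, and that the length strictly decreases; this requires a case analysis by the type of the piece of $T$ being used (the surface case, shortened by twisting or pushing along curves; the axial case, shortened along the abelian direction; the simplicial case, shortened by edge Dehn twists), together with the bookkeeping forced by finite subgroups — surface-type vertices may carry orbifold structure, arc stabilizers are only virtually cyclic rather than cyclic, and the notion of a "small" subgroup must be interpreted appropriately. These are precisely the points where the Reinfeldt--Weidmann refinements of Sela's original arguments (\cite{RW14}) are needed; alternatively, one may simply invoke their theorem directly.
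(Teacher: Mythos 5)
The paper does not prove Theorem~\ref{sela2}: it cites it as a result of Sela (torsion-free) and Reinfeldt–Weidmann (general case), used as a black box; there is therefore no ``paper's own proof'' to compare against. Your sketch is a faithful outline of the actual proof strategy (Bestvina--Paulin convergence to an $\mathbb{R}$-tree, stability and control of arc/tripod stabilizers via hyperbolicity of $\Gamma$, Rips theory, and the shortening argument), and you correctly identify where the torsion difficulties lie (orbifold pieces, virtually cyclic rather than cyclic arc stabilizers, bookkeeping for finite subgroups). Since you yourself note that the crucial last step — that the shortening moves on $T$ are genuinely realized by elements of $\mathrm{Mod}(G)$, with the strict length decrease surviving the torsion-induced complications — is ``precisely where the Reinfeldt--Weidmann refinements are needed'' and one ``may simply invoke their theorem directly,'' what you have written is an honest survey of the known argument rather than a self-contained proof.

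One small inaccuracy: you write ``Because $G$ is one-ended it does not split over a finite subgroup, so $T$ is not reduced to a point.'' That $T$ is not a point is automatic from the Bestvina--Paulin normalization $\lambda_n\to\infty$, independently of one-endedness. The actual role of one-endedness is different: it guarantees that the splitting of $G$ extracted from the stable action on $T$ has infinite (virtually abelian) edge groups rather than finite ones, hence is dominated by the canonical $\mathcal{Z}$-JSJ splitting that defines $\mathrm{Mod}(G)$, so that the Dehn twists / orbifold automorphisms / axial moves you use really lie in $\mathrm{Mod}(G)$. If $G$ were not one-ended, the limit action could produce a splitting over a finite group, and the corresponding shortening move would not be modular in the sense of Section~\ref{262} — this is exactly why the theorem is stated for one-ended $G$ and why the paper first reduces to one-ended factors via Stallings--Dunwoody.
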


\begin{rque}In the case where $G$ is not a $\Gamma$-limit group, the result is obvious.
\end{rque}

The modular group $\mathrm{Mod}(G)$ is a subgroup of $\mathrm{Aut}(G)$ that will be defined in Section \ref{262}, by means of the JSJ decomposition. 

The following easy corollary will be useful in the sequel.

\begin{co}\label{simple}Let $\Gamma$ be a group that embeds into a hyperbolic group, and let $G$ be a one-ended finitely generated group. There exist non trivial elements $g_1,\ldots , g_k$ of $G$ such that, for every non-injective homomorphism $f : G\rightarrow\Gamma$, there exist a modular automorphism $\sigma\in\mathrm{Mod}(G)$ and an integer $1\leq \ell \leq k$ such that $f\circ\sigma(g_{\ell})=1$.
\end{co}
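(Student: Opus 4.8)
The plan is to deduce Corollary \ref{simple} from Theorem \ref{sela2} by reducing the case of a subgroup $\Gamma$ of a hyperbolic group $H$ to the case of the ambient hyperbolic group $H$ itself. So suppose $\Gamma\hookrightarrow H$ with $H$ hyperbolic, and let $G$ be a one-ended finitely generated group. Applying Theorem \ref{sela2} to the pair $(H,G)$ furnishes non-trivial elements $g_1,\ldots,g_k\in G$ with the stated property relative to homomorphisms $G\to H$. I claim these same elements work for $\Gamma$. Indeed, take any non-injective homomorphism $f:G\to\Gamma$. Composing with the inclusion $\iota:\Gamma\hookrightarrow H$ gives a homomorphism $\iota\circ f:G\to H$, which is still non-injective (its kernel is the same as that of $f$, since $\iota$ is injective). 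Theorem \ref{sela2} then provides $\sigma\in\mathrm{Mod}(G)$ and $1\leq\ell\leq k$ with $\iota\circ f\circ\sigma(g_\ell)=1$. Since $\iota$ is injective, this forces $f\circ\sigma(g_\ell)=1$, which is exactly what we want.

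The only genuinely delicate point is the one-endedness hypothesis needed to invoke Theorem \ref{sela2}, but this is immediate: $G$ is assumed one-ended in the statement of the corollary, so no extra work is required there. One should also note the degenerate case when $\mathrm{Hom}(G,\Gamma)$ contains no non-injective homomorphism at all (for instance if $G$ is not a $\Gamma$-limit group, or more precisely if every homomorphism $G\to\Gamma$ is injective): then the statement holds vacuously regardless of the choice of $g_1,\ldots,g_k$, so we may simply keep the elements produced by applying Theorem \ref{sela2} to $(H,G)$. In fact the elements $g_i$ obtained this way are non-trivial in $G$ by construction, so the conclusion is well-formed in every case.

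I do not anticipate a real obstacle here; the corollary is a formal consequence of Theorem \ref{sela2} together with the observation that precomposition/postcomposition with an injection does not change the kernel, and that the modular group $\mathrm{Mod}(G)$ depends only on $G$ and not on the target group. The one subtlety worth spelling out in the write-up is why $\iota\circ f$ being non-injective is equivalent to $f$ being non-injective — this is just $\ker(\iota\circ f)=\ker f$ when $\iota$ is injective — and why $\iota(f\circ\sigma(g_\ell))=1$ implies $f\circ\sigma(g_\ell)=1$, again by injectivity of $\iota$. Everything else is bookkeeping.
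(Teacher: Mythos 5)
Your proof is correct and matches the paper's argument essentially verbatim: embed $\Gamma$ into a hyperbolic group $\Omega$, apply Theorem \ref{sela2} to $(\Omega,G)$, and observe that postcomposition by the inclusion $\Gamma\hookrightarrow\Omega$ preserves non-injectivity and detects when $f\circ\sigma(g_\ell)=1$. Nothing to add.
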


\begin{proof}
Denote by $i$ an embedding of $\Gamma$ into a hyperbolic group $\Omega$. By the previous theorem, there exist non trivial elements $g_1,\ldots , g_k$ of $G$ such that every non-injective homomorphism from $G$ to $\Omega$ kills some $g_{\ell}$, up to precomposition by a modular automorphism. If $f: G \rightarrow \Gamma$ is a non-injective homomorphism, then $i\circ f$ is non-injective as well, so $i\circ f\circ \sigma(g_{\ell})=1$ for some $1\leq \ell \leq k$ and some $\sigma\in\mathrm{Mod}(G)$. Since $i$ is injective, we have $f\circ \sigma(g_{\ell})=1$.
\end{proof}

For every $\gamma\in \Gamma$, we write $\iota_{\gamma}$ for the inner automorphism $x\mapsto\gamma x\gamma^{-1}$.

\begin{te}[Sela, Reinfeldt-Weidmann] \label{sela2bis}Let $\Gamma$ be a hyperbolic group and let $G$ be a one-ended finitely generated group. Suppose that $G$ embeds into $\Gamma$. Then there exists a finite set $\lbrace \varphi_1,\ldots ,\varphi_{\ell}\rbrace$ of monomorphisms from $G$ into $\Gamma$ such that, for every monomorphism $f : G\rightarrow\Gamma$,  there exist a modular automorphism $\sigma\in\mathrm{Mod}(G)$, an integer $1\leq \ell \leq k$ and an element $\gamma\in\Gamma$ such that \[f=\iota_{\gamma}\circ \varphi_{\ell} \circ \sigma. \]
\end{te}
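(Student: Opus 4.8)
The plan is to derive this "monomorphism version" of the Makanin–Razborov basis from the non-injective version, Theorem \ref{sela2}. Let me describe the strategy. Since $G$ embeds into $\Gamma$, the set $\mathrm{Mon}(G,\Gamma)$ of monomorphisms is non-empty. The group $\Gamma$ acts on $\mathrm{Mon}(G,\Gamma)$ by post-composition with inner automorphisms, and $\mathrm{Mod}(G)$ acts by pre-composition; we want finitely many orbit representatives for the combined action $(\gamma,\sigma)\cdot f = \iota_\gamma\circ f\circ\sigma^{-1}$. The natural tool is a limit-group / shortening argument: suppose for contradiction that no finite set $\{\varphi_1,\dots,\varphi_\ell\}$ works. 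Then one can extract an infinite sequence $(f_n)$ of monomorphisms $G\to\Gamma$ that are pairwise inequivalent under the action of $\mathrm{Mod}(G)\times\mathrm{Inn}(\Gamma)$, and in particular no $f_n$ is equivalent to $f_m$ for $n\neq m$.

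The key step is then the following. After pre-composing each $f_n$ with a suitable modular automorphism $\sigma_n\in\mathrm{Mod}(G)$, we may assume that each $f_n$ is \emph{short} in its $\mathrm{Mod}(G)$-orbit, meaning that it minimizes $\max_i d_\Gamma(1, f_n\circ\sigma(g_i))$ over $\sigma\in\mathrm{Mod}(G)$ and over choice of conjugate, where $g_1,\dots$ is a fixed finite generating set of $G$. (This uses that $\Gamma$ is hyperbolic, hence that the displacement function is proper, so the minimum is attained.) Passing to a subsequence, the sequence $(f_n)$ converges in the Gromov–Hausdorff / Chabauty sense to an action of $G$ on a real tree, yielding a quotient $L = G/\underrightarrow{\ker}(f_n)$ which is a $\Gamma$-limit group; and because each $f_n$ is injective, the stable kernel is trivial, so $L \cong G$ and the limit action of $G$ on the real tree is faithful with the expected arc stabilizers. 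The shortening argument of Rips–Sela (in the form available for hyperbolic $\Gamma$ possibly with torsion, via Reinfeldt–Weidmann) now says: either the limit action is trivial (the sequence is, up to conjugacy, constant on a subsequence — contradicting pairwise inequivalence), or one can shorten $f_n$ by a modular automorphism for $n$ large, contradicting the choice of $f_n$ as short. This dichotomy is the main obstacle, and it is essentially the content of the cited work; the point is to apply it in the faithful-limit setting to conclude that $\mathrm{Mon}(G,\Gamma)$ consists of finitely many $\mathrm{Mod}(G)\times\mathrm{Inn}(\Gamma)$-orbits.

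Having obtained finitely many orbit representatives $\varphi_1,\dots,\varphi_\ell$ — which we may take to be monomorphisms since the orbit of a monomorphism consists of monomorphisms — the statement follows immediately: any monomorphism $f:G\to\Gamma$ lies in some orbit, so $f = \iota_\gamma\circ\varphi_j\circ\sigma$ for appropriate $\gamma\in\Gamma$, $1\le j\le\ell$, and $\sigma\in\mathrm{Mod}(G)$ (absorbing the inverse into the group $\mathrm{Mod}(G)$, which is a group). I expect the genuinely delicate point to be the properness/compactness input needed to extract a converging subsequence and run the shortening argument uniformly in the torsion case; but since Theorem \ref{sela2} and the underlying Reinfeldt–Weidmann machinery are already available to us, this reduces to a now-standard bookkeeping argument, and no new ideas beyond those are required.
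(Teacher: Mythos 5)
The paper does not prove this statement: it is cited, together with Theorems \ref{sela2}, \ref{chaine}, and \ref{typefini}, as one of the ``four theorems, proved by Sela for torsion-free hyperbolic groups and later generalized by Reinfeldt and Weidmann to the case of hyperbolic groups possibly with torsion,'' so there is no in-paper proof to compare against. Your sketch follows the standard shortening-argument route of those references, and the high-level plan is sound: reinterpret the statement as finiteness of $\mathrm{Mod}(G)\times\mathrm{Inn}(\Gamma)$-orbits in the set of monomorphisms; suppose not and extract a sequence of pairwise inequivalent short monomorphisms; run the Bestvina--Paulin degeneration; invoke the shortening dichotomy. Taking orbit representatives to be monomorphisms is legitimate since the $\mathrm{Mod}(G)\times\mathrm{Inn}(\Gamma)$-action preserves injectivity, and absorbing $\sigma^{-1}$ into $\mathrm{Mod}(G)$ is fine.

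One point where the sketch slides past real content: triviality of the stable kernel $\underrightarrow{\mathrm{ker}}(f_n)$ is indeed immediate from injectivity of each $f_n$, but that is not the same as faithfulness of the limit $G$-action on the $\mathbb{R}$-tree. Elements whose translation lengths in $\Gamma$ grow sublinearly relative to the scaling constants $\mu_n$ act trivially on the limit tree, and in the presence of torsion one must either rule this out or pass to the faithful quotient and check that the shortening automorphism it supplies lifts to a modular automorphism of $G$. That, together with verifying the stability and small arc-stabilizer hypotheses needed to run Rips theory and the shortening argument uniformly in the torsion case, is where the actual work of Reinfeldt--Weidmann sits; your sketch is correct to delegate it, but ``the limit action is faithful'' should not be presented as a one-line consequence of the $f_n$ being monomorphisms.
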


\begin{rque}In contrast with Theorem \ref{sela2}, the previous theorem does not extend to the case where $\Gamma$ is a subgroup of a hyperbolic group.
\end{rque}

\begin{te}[Sela, Reinfeldt-Weidmann]\label{chaine}Let $\Gamma$ be a hyperbolic group. Let $(G_n)$ be a sequence of $\Gamma$-limit groups. If $(f_n : G_n \rightarrow G_{n+1})$ is a sequence of epimorphisms, then $f_n$ is an isomorphism for $n$ sufficiently large.
\end{te}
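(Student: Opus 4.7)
The plan is to reduce the statement to the descending chain condition on algebraic sets in $\Gamma^k$ and to exploit equational noetherianity of $\Gamma$, which is valid for hyperbolic groups by Reinfeldt--Weidmann.

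First, I would fix a finite generating set $s_1,\ldots,s_k$ of $G_0$ and let $F=F(x_1,\ldots,x_k)$ be the free group on $k$ letters. Since every $f_n$ is an epimorphism, the tuple of images of $s_1,\ldots,s_k$ generates $G_n$, so one can write $G_n=F/N_n$ for an ascending chain of normal subgroups $N_0\subseteq N_1\subseteq N_2\subseteq\cdots$, in such a way that each $f_n$ is induced by the identity of $F$. Set $N_\infty=\bigcup_n N_n$. Because $\Gamma$ is equationally noetherian (hyperbolic groups have this property by Reinfeldt--Weidmann), there is a finite subset $S\subset N_\infty$ such that the variety $V(S)\subset \mathrm{Hom}(F,\Gamma)$ of homomorphisms killing $S$ coincides with $V(N_\infty)$. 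Choose $n_0$ large enough that $S\subset N_{n_0}$.

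Next, I would show that $\mathrm{Hom}(G_m,\Gamma)=\mathrm{Hom}(G_{n_0},\Gamma)$ as subsets of $\mathrm{Hom}(F,\Gamma)$ for every $m\geq n_0$. One inclusion is immediate since $G_m$ is a quotient of $G_{n_0}$. Conversely, if $\phi\in\mathrm{Hom}(F,\Gamma)$ factors through $G_{n_0}$, then $\phi$ kills $N_{n_0}\supseteq S$, hence $\phi\in V(S)=V(N_\infty)\subseteq V(N_m)$, so $\phi$ factors through $G_m$.

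Finally, I would upgrade this equality of homomorphism sets to the injectivity of $f_m\circ\cdots\circ f_{n_0}:G_{n_0}\twoheadrightarrow G_m$ using full residuality. Indeed, if some non-trivial $g\in G_{n_0}$ were sent to $1$ in $G_m$, full residuality of $G_{n_0}$ (which is a $\Gamma$-limit group) would provide a homomorphism $\phi:G_{n_0}\to\Gamma$ with $\phi(g)\neq 1$; but the equality established above says that $\phi$ factors through $G_m$, forcing $\phi(g)=1$, a contradiction. Therefore $G_{n_0}\to G_m$ is an isomorphism for every $m\geq n_0$, and it follows that $f_m$ is an isomorphism for $m\geq n_0$.

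The only real input is equational noetherianity; everything else is a routine translation between algebraic sets and quotients of the free group, together with the definition of $\Gamma$-limit group as a finitely generated fully residually $\Gamma$ group. The potential subtlety — which is not actually an obstacle here because of finite generation and the hypothesis that the $G_n$ are $\Gamma$-limit groups — is that one must make sure the chain $(N_n)$ lives in a single free group of fixed finite rank; the surjectivity of the $f_n$'s and the finite generation of $G_0$ take care of this at once.
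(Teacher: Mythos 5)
The paper does not give a proof of this theorem; it is cited as a black-box result of Sela and Reinfeldt--Weidmann. Your argument is correct and is essentially the standard proof of the descending chain condition from equational noetherianity: the two key ingredients you identify — that $\Gamma$ is equationally noetherian (Reinfeldt--Weidmann, \cite{RW14}) and that $\Gamma$-limit groups are (fully) residually $\Gamma$, which is recalled in Section 2.2 — are exactly what is needed, and the reduction to a chain of kernels in a fixed finite-rank free group via surjectivity and finite generation of $G_0$ is carried out correctly.

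One very minor point: in the final step you invoke \emph{full} residuality, but for a single nontrivial $g\in G_{n_0}$ plain residuality (which follows) already suffices; this is harmless. Also worth stating explicitly is the last inference: once $G_{n_0}\twoheadrightarrow G_m$ is shown to be an isomorphism for all $m\geq n_0$, taking $m=n_0$ gives that $f_{n_0}$ is an isomorphism, and then inductively each $f_m$ ($m\geq n_0$) is an isomorphism, since it is surjective and its precomposition with an isomorphism is injective. Your proof as written has this implicit, and it is fine.
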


\begin{te}[Sela, Reinfeldt-Weidmann]\label{typefini}Let $\Gamma$ be a hyperbolic group and $G$ a $\Gamma$-limit group. Then every abelian subgroup of $G$ is finitely generated.
\end{te}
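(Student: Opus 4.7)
The plan is to decompose $A$ into its torsion part and its torsion-free part, and bound each separately using that $\Gamma$ has bounded torsion and virtually cyclic centralizers of infinite-order elements.

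First I would handle torsion. Since $\Gamma$ is hyperbolic, it has only finitely many conjugacy classes of finite subgroups, so there is a uniform bound $K = K(\Gamma)$ on the order of any finite subgroup of $\Gamma$. Let $T(A)$ be the torsion subgroup of $A$, which is a subgroup of $A$ since $A$ is abelian. If $T(A)$ were infinite, pick $K+1$ distinct torsion elements $t_1, \ldots, t_{K+1} \in T(A)$. A discriminating sequence $f_n : G \to \Gamma$ (which exists because $G$ is a $\Gamma$-limit group) is injective on this finite set for $n$ large enough. The images $f_n(t_i)$ are then $K+1$ pairwise-commuting distinct torsion elements of $\Gamma$, generating a finite abelian subgroup of $\Gamma$ of order $> K$, contradicting the bound. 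Hence $|T(A)| \leq K$.

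Second, I would show that if $A$ contains some $g$ of infinite order, then the full centralizer $Z_G(g)$ is virtually cyclic, so that $A \subseteq Z_G(g)$ forces $A$ to be a subgroup of a finitely generated virtually cyclic group, hence itself virtually cyclic and finitely generated. The argument uses two ingredients: for a discriminating $f_n$ with $f_n(g)$ of infinite order, any $h \in Z_G(g)$ satisfies $f_n(h) \in Z_\Gamma(f_n(g))$, which is virtually cyclic with torsion of order $\leq K$. Hence for any $h_1, h_2 \in Z_G(g)$ one has $f_n([h_1,h_2]^K) = 1$ for all large $n$; by discrimination, $[h_1,h_2]^K = 1$ in $G$, so $Z_G(g)$ is abelian modulo torsion of bounded exponent. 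To upgrade this to "virtually cyclic" I would use the $K$-CSA structure of $\Gamma$-limit groups introduced in the next subsection (which encodes the torsion analogue of malnormality of maximal abelian subgroups), together with a bounded-divisibility observation: any infinite-order $a \in G$ admits an $n$-th root in $G$ only for $n$ in a finite set. This last fact follows because if $a = b_k^{n_k}$ with $n_k \to \infty$, then in the virtually cyclic envelope of $f_n(a)$ the integer height of $f_n(a)$ would have to be divisible by arbitrarily large $n_k$, forcing $f_n(a)$ to be torsion, contradicting discrimination.

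The main obstacle is the second step, specifically the upgrade from "$Z_G(g)$ is abelian-up-to-$K$-torsion-commutators" to "$Z_G(g)$ is virtually cyclic"; the direct commutator-and-discriminate argument only yields a weaker statement. Closing this gap genuinely requires the $K$-CSA machinery that the paper develops subsequently, rather than bare discrimination; without it, one could still control the rank of $A/T(A)$ but not rule out infinitely generated rank-$1$ subgroups such as $\mathbb{Z}[1/p]$. Once virtual cyclicity of $Z_G(g)$ is in hand, both the rank boundedness and the absence of infinite divisibility follow immediately, and the conclusion is the dichotomy: either $A$ is pure torsion and hence finite, or $A$ embeds into a virtually cyclic group and is therefore finitely generated.
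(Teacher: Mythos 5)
The second step of your proposal is built on a false lemma. You claim that for $g \in G$ of infinite order, the centralizer $Z_G(g)$ is virtually cyclic; this is not true for $\Gamma$-limit groups. Take $\Gamma = F_2$ (or any infinite hyperbolic group) and $G = \mathbb{Z}^n$ for $n \geq 2$: this is a $\Gamma$-limit group, yet $Z_G(g) = \mathbb{Z}^n$ for every nontrivial $g$, which is not virtually cyclic. You then misdiagnose this as a gap that the $K$-$\mathrm{CSA}$ machinery can close, but $K$-$\mathrm{CSA}$ only says $M(g)$ is $K$-virtually torsion-free abelian, which leaves the abelian rank unbounded; the paper does obtain virtual cyclicity of $M(g)$, but only in Proposition \ref{cyclique} and Corollary \ref{cyclique2} under the much stronger hypothesis $\mathrm{Th}_{\forall\exists}(\Gamma) \subset \mathrm{Th}_{\forall\exists}(G)$, which is not available here. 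Your step 1 (bounding $|T(A)|$ by the torsion bound of $\Gamma$ via a discriminating sequence) is correct, and your divisibility observation can be made to work for a fixed discriminating $f_n$ with $f_n(g)$ of infinite order (since translation lengths of infinite-order elements of a hyperbolic group are bounded away from $0$), but this only rules out infinitely divisible subgroups like $\mathbb{Z}[1/p]$ and says nothing about, say, a free abelian group of infinite rank; discrimination by itself cannot detect finite generation of an abelian subgroup.

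Note also that the paper does not prove Theorem \ref{typefini}: it is cited from Sela and from Reinfeldt--Weidmann, so there is no in-paper argument to compare against. The actual proofs use the structure theory of $\Gamma$-limit groups — the Bestvina--Paulin construction and stable actions on $\mathbb{R}$-trees, the Rips machine, JSJ decompositions over virtually abelian subgroups, and the descending chain condition on the analysis lattice — to show that a $\Gamma$-limit group sits at finite depth in a hierarchy whose pieces control abelian subgroups. This machinery is of a different order from the bare existence of a discriminating sequence, which is all your proposal invokes. In short, the proof is not merely incomplete: the key intermediate claim is false, and the right argument requires tools the proposal does not touch.
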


In the next section, we shall prove the following proposition, which is a consequence of the theorem above.

\begin{prop}Let $\Gamma$ be a hyperbolic group and $G$ a finitely generated group. Suppose that $\mathrm{Th}_{\forall\exists}(\Gamma)\subset\mathrm{Th}_{\forall\exists}(G)$. Then every abelian subgroup of $G$ is virtually cyclic.
\end{prop}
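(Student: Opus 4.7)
The plan is to proceed in three stages.

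First, I would deduce that $G$ is a $\Gamma$-limit group. Every universal sentence is trivially a universal-existential one (with an empty existential block), so the hypothesis $\mathrm{Th}_{\forall\exists}(\Gamma)\subset\mathrm{Th}_{\forall\exists}(G)$ implies $\mathrm{Th}_\forall(\Gamma)\subset\mathrm{Th}_\forall(G)$. Since $\Gamma$, being hyperbolic, is equationally noetherian (Reinfeldt--Weidmann) and $G$ is finitely generated, the proposition at the beginning of Section~\ref{22} linking full residuality to the universal theory yields that $G$ is fully residually $\Gamma$.

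Second, Theorem~\ref{typefini} applied to the $\Gamma$-limit group $G$ tells us that every abelian subgroup of $G$ is finitely generated, hence isomorphic to $\mathbb{Z}^r\times T$ with $T$ finite. It thus remains to establish that $r\leq 1$.

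For the final stage, I would argue by contradiction: if $G$ admits an abelian subgroup of rank at least $2$, it contains a copy of $\mathbb{Z}^2=\langle a,b\rangle$, and the goal becomes to produce a universal-existential sentence $\phi\in\mathrm{Th}_{\forall\exists}(\Gamma)$ violated in $G$ at the pair $(a,b)$. Such a $\phi$ should capture, in a bounded first-order manner, the fact that in the hyperbolic group $\Gamma$ any pair of commuting elements of infinite order lies in a virtually cyclic subgroup: the plan is to introduce existentially quantified witnesses---a common \emph{root} $z$ together with finitely many auxiliary torsion elements, whose orders are bounded by the uniform torsion bound of $\Gamma$---exhibiting $x$ and $y$ as short words in these witnesses, using the fact that in $\Gamma$ the centralizer of an infinite-order element is virtually cyclic of bounded ``complexity''.

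The main obstacle will be writing down this $\phi$ as an honest first-order sentence. The naive formulation ``there exist integers $n,m$, not both zero, with $x^n y^m=1$'' is not first-order in the language of groups, and no uniform bound on the exponents allows it to be replaced by a purely universal sentence: indeed, $\mathbb{Z}^2$ is itself fully residually $F_2$, so it satisfies the entire universal theory of $F_2$, and hence no universal sentence true in a non-elementary torsion-free hyperbolic group can rule out a $\mathbb{Z}^2$ subgroup. This is precisely why the hypothesis must be strengthened from $\mathrm{Th}_\forall$ to $\mathrm{Th}_{\forall\exists}$; making the existential block of $\phi$ effective---providing a witness $(z,f_1,f_2)$ that exhibits the commuting pair $(x,y)$ as collinear modulo bounded torsion inside the centralizer of $z$---is the delicate part of the argument.
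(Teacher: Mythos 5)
Your stages 1 and 2 are fine and match the paper's skeleton: $\mathrm{Th}_\forall(\Gamma)\subset\mathrm{Th}_\forall(G)$ plus equational noetherianity of $\Gamma$ makes $G$ a $\Gamma$-limit group, and Theorem~\ref{typefini} gives finite generation of abelian subgroups. The gap is exactly where you flag it: you never actually produce the $\forall\exists$-sentence, and the plan you sketch for doing so --- exhibit the commuting pair $(x,y)$ as ``short words'' in a common root $z$ and finitely many torsion witnesses of bounded order --- cannot work. An arbitrary element of a virtually cyclic group $\langle z\rangle\times F$ is $z^n f$ with $n$ unbounded, so ``$x$ is a short word in $z$ and $F$'' is false, and the length/exponent cannot be bounded uniformly; this is the same obstruction you correctly identify for the naive formula $x^n y^m=1$.

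The paper resolves it with three ingredients you are missing. First, $G$ and $\Gamma$ are $K$-$\mathrm{CSA}$ for a common $K$ (Proposition~\ref{universelle}), so the maximal virtually abelian subgroup $M(g)$ around an infinite-order element $g$ is definable by a \emph{quantifier-free} formula $\psi(\cdot,g)$ (Proposition~\ref{définissable}, via commutation of generators of all index-$\leq K$ subgroups of $F_2$). Second, inside $M(g)$ the subgroup $N(g)=\langle h^{K!} : h\in M(g)\rangle$ is torsion-free abelian and, since it is finitely generated, equals $N_r(g)=\{h_1^{K!}\cdots h_r^{K!} : h_i\in M(g)\}$ for a suitable $r$; this makes $N(g)$ definable by an \emph{existential} formula. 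Third, cyclicity of a finitely generated torsion-free abelian group is expressible by a fixed $\forall\exists$-sentence via the pigeonhole Lemma~\ref{QFA} (among any three elements, two differ by a square). Assembling these gives, for each $r$, a $\forall\exists$-sentence $\varphi_r$ true in every hyperbolic group (hence in $\Gamma$), asserting that $N_r(\gamma)$ is cyclic for every infinite-order $\gamma$. Transferring $\varphi_r$ to $G$ for the $r$ determined by $N(g)$ forces $N(g)$, and hence $M(g)$, to be virtually cyclic. Your proposal identifies the obstacle but not the workaround; the pigeonhole characterization of cyclicity and the quantifier-free definability of $M(g)$ are the missing ideas, not details to be filled in.
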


But before proving this proposition, we need to define $K$-$\mathrm{CSA}$ groups.

\subsection{Virtual cyclicity of abelian subgroups}

\subsubsection{$K$-$\mathrm{CSA}$ groups} 

A group is said to be $\mathrm{CSA}$ if its maximal abelian subgroups are malnormal. It is well-known that every torsion-free hyperbolic group is $\mathrm{CSA}$. However, it is not true anymore in the presence of torsion. To overcome this problem, Guirardel and Levitt defined $K$-$\mathrm{CSA}$ groups in \cite{GL16} (Definition 9.7).

\begin{de}\label{CSA}
\normalfont
A group $G$ is called a $K$-$\mathrm{CSA}$ group (where $K>0$) if the following conditions hold:
\begin{itemize}
\item[$\bullet$] every finite subgroup of $G$ has order bounded by above by $K$ (hence, an element $g$ has infinite order if and only if $g^{K!}\neq e$);
\item[$\bullet$] every element $g$ of infinite order is contained in a unique maximal virtually abelian subgroup of $G$, denoted by $M(g)$. Moreover $M(g)$ is $K$-virtually torsion-free abelian (i.e.\ $M(g)$ has a torsion-free abelian subgroup of index less than $K$);
\item[$\bullet$] $M(g)$ is equal to its normalizer.
\end{itemize}
\end{de}

We recall some useful facts about $K$-$\mathrm{CSA}$ groups. The proofs can be found in \cite{GL16}.

\begin{prop}\label{hyp}Every hyperbolic group is $K$-$\mathrm{CSA}$ for some $K>0$.
\end{prop}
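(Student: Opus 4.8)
The plan is to check, one by one, the three conditions in Definition \ref{CSA}, taking $K$ to be a suitable constant depending only on $\Gamma$; although the statement is essentially extracted from \cite{GL16}, it is cleaner to argue directly from classical facts about hyperbolic groups. First I would recall that a hyperbolic group $\Gamma$ has only finitely many conjugacy classes of finite subgroups, so there is an integer $K_1$ bounding the order of every finite subgroup of $\Gamma$. This already gives the first bullet as soon as $K\ge K_1$, and it makes the parenthetical remark automatic: an element is torsion if and only if it has order $\le K_1$, if and only if its $K!$-th power is trivial.

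Next, for $g$ of infinite order I would set $M(g):=E(g)$, the stabiliser in $\Gamma$ of the pair of fixed points $\{g^{+\infty},g^{-\infty}\}\subset\partial\Gamma$. It is classical that $E(g)$ is two-ended, contains $\langle g\rangle$, and is the unique maximal two-ended subgroup of $\Gamma$ containing $g$. Since $\Gamma$ is hyperbolic it contains no copy of $\mathbb Z^2$, so every infinite virtually abelian subgroup of $\Gamma$ is two-ended; hence any virtually abelian subgroup containing $g$ is contained in $E(g)$, and, $E(g)$ being itself virtually cyclic (hence virtually abelian), it is the unique maximal virtually abelian subgroup of $\Gamma$ containing $g$. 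This settles the first assertion of the second bullet.

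To obtain the rest of the second bullet I would use the structure of the two-ended group $E(g)$: its maximal finite normal subgroup $F$ satisfies $|F|\le K_1$, and $E(g)/F$ is isomorphic either to $\mathbb Z$ or to the infinite dihedral group $D_\infty$. In the first case the extension $1\to F\to E(g)\to\mathbb Z\to 1$ splits because $\mathbb Z$ is free, so $E(g)$ has an infinite cyclic subgroup of index $|F|\le K_1$; in the second case one pulls back the index-two subgroup $\mathbb Z<D_\infty$ and concludes that $E(g)$ has an infinite cyclic subgroup of index $\le 2K_1$. Thus $E(g)$ contains a torsion-free abelian subgroup of index $<K$ provided $K>2K_1$. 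Finally, for the third bullet, if $h\in\Gamma$ normalises $E(g)$ then $h$ carries the limit set of $E(g)$ onto that of $hE(g)h^{-1}=E(g)$; as the limit set of a two-ended group is precisely its pair of fixed points, this forces $h\{g^{+\infty},g^{-\infty}\}=\{g^{+\infty},g^{-\infty}\}$, i.e. $h\in E(g)$, so $M(g)=E(g)$ equals its own normaliser.

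Taking $K:=2K_1+1$ (or any larger integer) then yields that $\Gamma$ is $K$-$\mathrm{CSA}$. The only point that is not a verbatim well-known fact is the uniformity in the previous paragraph — controlling the index of an infinite cyclic subgroup of $E(g)$ by a constant independent of $g$ — and this is exactly where the bound $K_1$ on finite subgroups is combined with the splitting of extensions by $\mathbb Z$; everything else is routine hyperbolic geometry and may alternatively be quoted from \cite{GL16}.
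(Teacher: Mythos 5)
Your proof is correct. The paper does not give an argument for this proposition; it simply cites Guirardel--Levitt \cite{GL16} for the whole block of facts about $K$-$\mathrm{CSA}$ groups (and there, the proof for hyperbolic groups is essentially the one you give: take $M(g)=E(g)$, the stabiliser of the endpoint pair, use the bound on finite subgroups and the structure of two-ended groups to control the index of a cyclic subgroup, and read off self-normalisation from preservation of the limit set). Your choice $K=2K_1+1$ also correctly covers the requirement that $M(g)$ have a torsion-free abelian subgroup of index strictly less than $K$.
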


\begin{prop}\label{CSA2}Let $G$ be a $K$-$\mathrm{CSA}$ group.
\begin{enumerate}
\item If $g,h\in G$ have infinite order, the following conditions are equivalent:
\begin{enumerate}
\item $M(g)=M(h)$.
\item $g^{K!}$ and $h^{K!}$ commute.
\item $\langle g,h\rangle$ is virtually abelian.
\end{enumerate}
\item Let $H$ be an infinite virtually abelian subgroup of $G$. Then $H$ is contained in a unique maximal virtually abelian subgroup of $G$, denoted by $M(H)$. This group is almost malnormal: if $M(H)^g\cap M(H)$ is infinite, then $g$ belongs to $M(H)$. Moreover, for every element $h$ of $H$ of infinite order, $M(H)=M(h)$.
\end{enumerate}
\end{prop}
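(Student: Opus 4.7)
The plan is to leverage the \emph{maximality} built into the $K$-$\mathrm{CSA}$ definition: for each infinite-order $g$, $M(g)$ is the unique maximal virtually abelian subgroup containing $g$, so every virtually abelian subgroup of $G$ containing $g$ must lie in $M(g)$ (a chain of virtually abelian subgroups of $G$ containing $g$, all of which have torsion part bounded by $K$ and abelian part of index $<K$, has a virtually abelian union, so Zorn produces a maximal one that must equal $M(g)$ by uniqueness). With this principle in hand, both parts reduce to short manipulations.

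For part (1), I would prove (a)$\Leftrightarrow$(c) first and then (a)$\Leftrightarrow$(b). The implication (a)$\Rightarrow$(c) is immediate since $\langle g,h\rangle$ is a subgroup of the virtually abelian group $M(g)=M(h)$. For (c)$\Rightarrow$(a), the maximality principle gives $\langle g,h\rangle\subset M(g)$, so $h\in M(g)$; reapplying the principle to $M(g)$ viewed as a virtually abelian subgroup containing $h$ yields $M(g)\subset M(h)$, and symmetry finishes the job. For (a)$\Rightarrow$(b), let $A<M(g)$ be a torsion-free abelian subgroup of index $n<K$; the coset action $M(g)\to\mathrm{Sym}(M(g)/A)$ has image of order dividing $n!$ and kernel contained in $A$, so $x^{n!}\in A$ for every $x\in M(g)$, and since $n!\mid K!$ the elements $g^{K!},h^{K!}$ lie in the abelian group $A$ and commute. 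For (b)$\Rightarrow$(a), $\langle g^{K!},h^{K!}\rangle$ is abelian, so the already-proved (c)$\Rightarrow$(a) yields $M(g^{K!})=M(h^{K!})$; moreover $\langle g,g^{K!}\rangle=\langle g\rangle$ is abelian, so the same implication gives $M(g)=M(g^{K!})$ (and similarly for $h$), whence $M(g)=M(h)$.

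For part (2), I would first pick $h_0\in H$ of infinite order, which exists because $H$ is infinite while its torsion elements form a subgroup of order at most $K$. Since $H$ is a virtually abelian subgroup containing $h_0$, the maximality principle gives $H\subset M(h_0)$, and I set $M(H):=M(h_0)$. For any other infinite-order $h\in H$, part~(1) applied to $h$ and $h_0$ forces $M(h)=M(h_0)=M(H)$, which simultaneously proves the ``moreover'' statement and the uniqueness of $M(H)$ as the unique maximal virtually abelian subgroup containing $H$. For almost malnormality, suppose $M(H)\cap gM(H)g^{-1}$ is infinite; its torsion part is finite (bounded by $K$), so it contains an element $x$ of infinite order. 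Then $x$ and $gxg^{-1}$ both lie in $M(H)$ and have infinite order, so by the ``moreover'' statement $M(x)=M(H)=M(gxg^{-1})$. A short maximality argument shows $M(gxg^{-1})=gM(x)g^{-1}$, since conjugation carries maximal virtually abelian subgroups to maximal virtually abelian subgroups; hence $gM(H)g^{-1}=M(H)$, i.e.\ $g$ normalises $M(H)$, and the third bullet of Definition~\ref{CSA} delivers $g\in N(M(H))=M(H)$.

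The main subtlety I foresee is not conceptual but bookkeeping: the $K$-$\mathrm{CSA}$ axioms are phrased purely at the level of single infinite-order elements, whereas the proposition asks for statements about virtually abelian subgroups. The bridge is the observation that any virtually abelian subgroup containing an infinite-order element $g$ must be contained in $M(g)$; once this maximality interpretation is firmly established, the passages $M(g)=M(g^{K!})$ in (b)$\Rightarrow$(a) and $M(gxg^{-1})=gM(x)g^{-1}$ in the malnormality step follow without further work.
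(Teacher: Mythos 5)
The paper itself does not prove this proposition; it is quoted verbatim from Guirardel--Levitt (the commented-out citation in the source is \cite{GL16}, Lemma 9.8), so there is no internal proof to compare yours against. Judged on its own, your argument is essentially sound, and the overall architecture is the natural one: read ``unique maximal virtually abelian subgroup containing $g$'' as a maximum (i.e.\ $M(g)$ absorbs every virtually abelian subgroup containing $g$), prove (a)$\Leftrightarrow$(c) and (a)$\Leftrightarrow$(b), and deduce part (2) by choosing one infinite-order $h_0\in H$, setting $M(H):=M(h_0)$, and using equivariance of $g\mapsto M(g)$ under conjugation plus the normalizer axiom. The bounds $n!\mid K!$ in (a)$\Rightarrow$(b) and the final malnormality computation are correct.

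Two points deserve repair, one more serious than the other. First, the parenthetical Zorn justification of the maximality principle is circular: you assert that every virtually abelian subgroup of $G$ containing $g$ ``has abelian part of index $<K$,'' but that bound on the index only comes from knowing the subgroup sits inside $M(g)$, which is precisely what you are trying to establish. Without it, the union of a chain of virtually abelian subgroups need not be virtually abelian, so Zorn does not apply as stated. The cleanest fix is simply to read the $K$-$\mathrm{CSA}$ axiom the way the paper itself uses it (see the proof of Proposition \ref{définissable}, which invokes ``by maximality of $M(g)$''): $M(g)$ is by definition the largest virtually abelian subgroup containing $g$, not merely a maximal one. Alternatively one can drop the maximality principle entirely and derive (c)$\Rightarrow$(a), (b)$\Rightarrow$(a) from almost malnormality of $M(g)$ (which is GL16's version of the third axiom, slightly stronger than the ``self-normalizing'' form quoted here). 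Second, a small slip: the torsion elements of an infinite virtually abelian group $H$ need not form a subgroup (the infinite dihedral group is a counterexample), so your justification for the existence of an infinite-order element in $H$ is wrong as written; the correct argument is that an abelian finite-index subgroup of $H$ is infinite while its torsion subgroup has order at most $K$, hence contains an infinite-order element.
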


\begin{prop}\label{définissable}Let $G$ be a $K$-$\mathrm{CSA}$ group and $g$ an element of $G$ of infinite order. The subgroup $M(g)$ is definable without quantifiers with respect to $g$. In other words, there exists a first-order formula $\psi_K(x,y)$ without quantifiers such that \[M(g)=\lbrace h\in G \ \vert \ \psi_K(h,g)\rbrace.\]
\end{prop}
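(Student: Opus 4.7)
The plan is to exhibit an explicit quantifier-free formula. I would set
\[
\psi_K(x,y) \;:=\; \bigl[\,x\,y^{K!}\,x^{-1},\; y^{K!}\,\bigr] = 1,
\]
which is a single word equation in the symbols of group theory (note that $y^{K!}$ is just a fixed finite word) and hence quantifier-free.

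For the forward implication, suppose $h \in M(g)$. Pick a torsion-free abelian subgroup $A \leq M(g)$ of index $n<K$, provided by the second bullet of Definition \ref{CSA}. The coset space $M(g)/A$ has cardinality $n$, so for every $x \in M(g)$ the orbit of the trivial coset under right-multiplication by $x$ has size dividing $n$; thus $x^n \in A$. Since $n \leq K-1$, the integer $n$ divides $K!$, so $x^{K!} \in A$ for every $x \in M(g)$. Applying this to $x=g$ and to $x=hgh^{-1}$ (which lies in $M(g)$ because $h \in M(g)$), both $g^{K!}$ and $hg^{K!}h^{-1}$ lie in the abelian group $A$ and therefore commute, which is exactly $\psi_K(h,g)$.

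For the converse, assume $\psi_K(h,g)$ holds. The elements $g^{K!}$ and $hg^{K!}h^{-1}$ both have infinite order (as $g$ does), and by assumption they commute. Proposition \ref{CSA2}(1) then gives $M(g^{K!}) = M(hg^{K!}h^{-1})$. Applying the same proposition to the commuting pair $g, g^{K!}$ yields $M(g^{K!}) = M(g)$, while the uniqueness of the maximal virtually abelian supergroup gives $M(hg^{K!}h^{-1}) = h\,M(g^{K!})\,h^{-1} = h\,M(g)\,h^{-1}$. Combining, $h\,M(g)\,h^{-1} = M(g)$, so $h$ normalizes $M(g)$, and the self-normalization property (third bullet of Definition \ref{CSA}) forces $h \in M(g)$.

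The only non-automatic point is the arithmetic check $[M(g):A] \mid K!$, which is what makes $K!$-th powers of \emph{arbitrary} elements of $M(g)$ land in the abelian subgroup $A$. The cleverness of taking a conjugate $hg^{K!}h^{-1}$ rather than $h^{K!}$ is precisely what lets the formula also detect finite-order elements of $M(g)$, since a finite-order $h \notin M(g)$ satisfies $h^{K!}=1$ trivially but need not satisfy $\psi_K(h,g)$.
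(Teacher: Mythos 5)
Your proof is correct, and it takes a genuinely different route from the paper's. The paper first rewrites $M(g)$ as $\lbrace h \in G : \langle g,h\rangle \text{ is } K\text{-virtually abelian}\rbrace$ and then expresses $K$-virtual abelianness by a quantifier-free disjunction indexed by the finitely many subgroups of $F_2=\langle x,y\rangle$ of index at most $K$: for each such subgroup one requires the images of a fixed generating set to pairwise commute. This is correct but yields a bulky formula. You instead write down the single commutator identity $[\,x y^{K!} x^{-1},\, y^{K!}\,]=1$ and verify it directly from Definition \ref{CSA}: when $h \in M(g)$, both $g^{K!}$ and $(hgh^{-1})^{K!}$ land in the finite-index torsion-free abelian subgroup $A$ of $M(g)$, so they commute; conversely, the commutation forces $M(g)=hM(g)h^{-1}$ via Proposition \ref{CSA2}(1) together with the self-normalization property. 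Your remark that one must conjugate $g^{K!}$ by $h$ rather than take $h^{K!}$ is precisely what handles torsion elements of $G\setminus M(g)$, and the resulting formula is considerably more economical than the paper's. One small inaccuracy in your forward direction: the orbit of the trivial coset in $M(g)/A$ under $\langle x\rangle$ has size \emph{at most} $n$, not necessarily \emph{dividing} $n$ (cycle lengths of a permutation of $n$ points need not divide $n$), so the intermediate assertion $x^n\in A$ may fail. But the orbit size $k$ satisfies $k\le n< K$, hence $k\mid K!$, hence $x^{K!}=(x^{k})^{K!/k}\in A$, which is all you actually use. With that adjustment the argument is complete.
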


\begin{proof}
First, let us remark that $M(g)=\lbrace h\in G \ \vert \ \langle g,h\rangle \ \text{is K-virtually abelian}\rbrace$. Indeed, if $\langle g,h\rangle$ is $K$-virtually abelian, then $\langle g,h\rangle\subset M(g)$ by maximality of $M(g)$. Conversely, if $h\in M(g)$ then $\langle g,h\rangle$ is a subgroup of $M(g)$, which is $K$-virtually abelian, so $\langle g,h\rangle$ is $K$-virtually abelian. We now prove that there exists a first-order formula $\psi_K(x,y)$ with two free variables such that $\langle g,h\rangle$ is $K$-virtually abelian if and only if $\psi_K(g,h)$ is true in $G$. Let $\pi : F_2=\langle x,y\rangle\rightarrow G$ be the epimorphism sending $x$ to $g$ and $y$ to $h$. If $A$ is a subgroup of $\langle g,h\rangle$ of index less than $K$, there exists a subgroup $B$ of $\langle x,y\rangle$ of index less than $K$ such that $A=\pi(B)$. Denote by $H_1,\ldots ,H_n$ the $n$ subgroups of $F_2$, of index $\leq K$. For each $1\leq i \leq n$, let $(w_{i,j}(x,y))_{1\leq j\leq n_i}$ be a finite generating set of $H_i$. We can define $\psi_K(g,h)$ by\[\psi_K(g,h)=\bigvee_{i=1}^n\bigwedge_{k=1}^{n_i}\bigwedge_{\ell=1}^{n_i}[w_{i,k}(g,h),w_{i,\ell}(g,h)]=1.\]
\end{proof}

One can prove that the property $K$-$\mathrm{CSA}$ is defined by a set of universal formulas (see \cite{GL16}, Proposition 9.9). Since every hyperbolic group is $K$-$\mathrm{CSA}$ for some $K>0$ (see above), the following holds:

\begin{prop}\label{universelle}Let $\Gamma$ be a hyperbolic group. There exists a constant $K>0$ such that every $\Gamma$-limit group is $K$-$\mathrm{CSA}$.
\end{prop}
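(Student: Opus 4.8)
The plan is to combine three ingredients: that $\Gamma$ is itself $K$-$\mathrm{CSA}$ for a suitable $K$, that being $K$-$\mathrm{CSA}$ is a universally axiomatizable property, and that a $\Gamma$-limit group inherits every universal sentence that $\Gamma$ satisfies.

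First I would use Proposition \ref{hyp} to fix a constant $K>0$ such that $\Gamma$ is a $K$-$\mathrm{CSA}$ group. Then I would invoke the fact recalled just above (\cite{GL16}, Proposition 9.9): for this fixed $K$ there is a set $T_K$ of universal sentences in the language of groups whose models are exactly the $K$-$\mathrm{CSA}$ groups, so $T_K\subset\mathrm{Th}_{\forall}(\Gamma)$. Concretely, once $K$ is fixed, each clause of Definition \ref{CSA} is a $\forall$-statement: the bound on the order of finite subgroups, the equality of $M(g)$ with its normalizer, and the $K$-virtual torsion-freeness of $M(g)$ are all expressible without existential quantifiers precisely because $M(g)$ is itself defined without quantifiers with respect to $g$, via the formula $\psi_K$ of Proposition \ref{définissable}. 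Next, let $G$ be a $\Gamma$-limit group; by the theorem of Sela and Reinfeldt--Weidmann recalled in Section \ref{22}, $G$ is finitely generated and fully residually $\Gamma$. I would then check that $\mathrm{Th}_{\forall}(\Gamma)\subset\mathrm{Th}_{\forall}(G)$: given $\forall x_1\ldots\forall x_n\ \psi(x_1,\dots,x_n)\in\mathrm{Th}_{\forall}(\Gamma)$ with $\psi$ quantifier-free and a tuple $(g_1,\dots,g_n)$ in $G$, let $w_1,\dots,w_m$ be the group words occurring in $\psi$ and choose a homomorphism $f:G\to\Gamma$ injective on the finite set $\{1\}\cup\{w_i(g_1,\dots,g_n):1\le i\le m\}$; then $w_i(g_1,\dots,g_n)=1$ holds in $G$ if and only if $w_i(f(g_1),\dots,f(g_n))=1$ holds in $\Gamma$, so $\psi$ takes the same truth value on $(g_1,\dots,g_n)$ in $G$ as on $(f(g_1),\dots,f(g_n))$ in $\Gamma$, where it is true. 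Hence $T_K\subset\mathrm{Th}_{\forall}(G)$, which means $G$ is $K$-$\mathrm{CSA}$ for the same constant $K$.

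The heart of the argument is really the cited statement that $K$-$\mathrm{CSA}$ is universally axiomatizable: the delicate clauses are the uniqueness of the maximal virtually abelian subgroup containing a given infinite-order element and its self-normalizing property, and what makes them expressible without existential quantifiers is Proposition \ref{définissable}. I would therefore simply cite \cite{GL16} for that step; the remaining transfer of universal sentences along the defining homomorphisms of a fully residually $\Gamma$ group is routine.
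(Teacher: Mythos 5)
Your proof is correct and follows precisely the route the paper intends: fix $K$ with $\Gamma$ being $K$-$\mathrm{CSA}$ via Proposition \ref{hyp}, invoke \cite{GL16} Proposition 9.9 for universal axiomatizability, and transfer universal sentences to $G$ using that $\Gamma$-limit groups are fully residually $\Gamma$. The paper states the proposition as an immediate consequence of these three facts without writing out the transfer step; your detailed verification of $\mathrm{Th}_{\forall}(\Gamma)\subset\mathrm{Th}_{\forall}(G)$ via a discriminating homomorphism is exactly the routine check being elided.
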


\subsubsection{Abelian subgroups are virtually cyclic} First, recall the following well-known result:

\begin{lemme}\label{QFA}There exists a $\forall\exists$-sentence $\phi$ such that, if $G$ is a finitely generated torsion-free abelian group, $G\models\phi$ if and only if $G$ is cyclic.
\end{lemme}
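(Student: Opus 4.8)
The plan is to exhibit an explicit $\forall\exists$-sentence $\phi$ which expresses that the quotient $G/2G$ has at most two elements, and then to check that, for a finitely generated torsion-free abelian group, this condition is exactly cyclicity. The point is that "being cyclic" is not literally a property formulated in the language of groups, but the set of squares $\{z^2 \mid z\in G\}$ is a $0$-definable subgroup when $G$ is abelian (call it $2G$), and the size of the quotient $G/2G$ detects the rank. Concretely, by the structure theorem a finitely generated torsion-free abelian group is isomorphic to $\mathbb{Z}^n$ for some $n\geq 0$; then $2G$ corresponds to $2\mathbb{Z}^n$, so $G/2G\cong(\mathbb{Z}/2\mathbb{Z})^n$ has exactly $2^n$ elements, and $G$ is cyclic if and only if $n\leq 1$, if and only if $|G/2G|\leq 2$.

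Next I would write down the sentence, using the elementary facts that a group $H$ has at most two elements if and only if among any three of its elements two coincide (pigeonhole), and that two elements $a,b$ of an abelian group have the same image in $G/2G$ precisely when $ab^{-1}$ is a square. This gives
$$\phi:\qquad \forall a\,\forall b\,\forall c\ \exists z\ \big((ab^{-1}=z^{2})\vee(bc^{-1}=z^{2})\vee(ac^{-1}=z^{2})\big),$$
which is a $\forall\exists$-sentence since its matrix is quantifier-free; here pulling the single existential across the disjunction is harmless because $\exists z\,(P(z)\vee Q(z))$ is equivalent to $(\exists z\,P(z))\vee(\exists z\,Q(z))$, so $\phi$ really does assert that, for all $a,b,c$, one of $ab^{-1},\,bc^{-1},\,ac^{-1}$ lies in $2G$.

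Finally I would verify the equivalence. If $G$ is finitely generated torsion-free abelian, then $G\models\phi$ iff no three elements of $G$ have pairwise distinct images in $G/2G$, iff $|G/2G|\leq 2$, iff (by the computation above) $n\leq 1$, iff $G$ is cyclic; the trivial group is covered since $\phi$ then holds vacuously. There is no real obstacle here — the only subtlety worth flagging is that the equivalence "$G\models\phi\iff G$ cyclic" genuinely relies on all three hypotheses on $G$: without them, divisible groups such as $\mathbb{Q}$, or groups such as $\mathbb{Z}\times\mathbb{Q}$, satisfy $\phi$ without being cyclic, so it is the structure theorem for finitely generated abelian groups that is the one external ingredient, the rest being the pigeonhole reformulation of "$|H|\leq 2$" together with the observation that the squares form a $0$-definable subgroup.
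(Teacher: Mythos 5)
Your proof is correct and essentially identical to the paper's: the sentence you write down is exactly the paper's sentence $\forall x_1\forall x_2\forall x_3\exists x_4\ (x_1=x_2x_4^2)\vee(x_1=x_3x_4^2)\vee(x_2=x_3x_4^2)$ (rewritten multiplicatively), and both arguments use the pigeonhole observation that $\mathbb{Z}^n/2\mathbb{Z}^n$ has $2^n$ elements. Your remark about the trivial group, if anything, is slightly more careful than the paper, which says ``$n=1$'' where ``$n\leq 1$'' is meant.
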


\begin{proof}
Since ${\mathbb{Z}}^n/2\mathbb{Z}^n$ has $2^n$ elements, it results from the pigeonhole principle that the following $\forall\exists$-sentence is verified by $\mathbb{Z}^n$ if and only if $n=1$:\[\forall x_1\forall x_2\forall x_3\exists x_4 \ (x_1=x_2 x_4^2)\vee (x_1=x_3 x_4^2)\vee (x_2=x_3 x_4^2).\]\end{proof}

The following results will be important in the sequel.

\begin{prop}\label{cyclique}Let $\Gamma$ be a hyperbolic group and  $G$ a finitely generated group such that $\mathrm{Th}_{\forall\exists}(\Gamma)\subset\mathrm{Th}_{\forall\exists}(G)$. If $g$ is an element of $G$ of infinite order, then $M(g)$ is virtually cyclic.
\end{prop}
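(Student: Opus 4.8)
The plan is to combine Proposition \ref{définissable} (definability of $M(g)$ without quantifiers) with Lemma \ref{QFA} (a $\forall\exists$-sentence detecting cyclicity of torsion-free abelian groups) and the $K$-$\mathrm{CSA}$ structure of $\Gamma$-limit groups (Proposition \ref{universelle} together with Proposition \ref{CSA2}). First I would fix a constant $K>0$ such that every $\Gamma$-limit group is $K$-$\mathrm{CSA}$; note $G$ itself is a $\Gamma$-limit group, since $\mathrm{Th}_{\forall}(\Gamma)\subset\mathrm{Th}_{\forall}(G)$ (this uses that $\Gamma$, being hyperbolic, is equationally noetherian). So $G$ is $K$-$\mathrm{CSA}$, and for $g$ of infinite order, $M(g)$ is a $K$-virtually torsion-free abelian group; it is finitely generated by Theorem \ref{typefini} (or by the fact that $M(g)$ is virtually abelian and abelian subgroups of $\Gamma$-limit groups are finitely generated). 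Passing to a finite-index torsion-free abelian subgroup $A\leq M(g)$, we have $A\cong\mathbb{Z}^r$ for some $r$, and $M(g)$ is virtually cyclic iff $r\leq 1$.

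The heart of the argument is to express "$M(g)$ is virtually cyclic" by a $\forall\exists$-sentence true in $\Gamma$. Since $\Gamma$ is hyperbolic, every infinite virtually abelian subgroup of $\Gamma$ is virtually cyclic, so every subgroup of the form $M_\Gamma(\gamma)$ with $\gamma$ of infinite order is virtually cyclic; this is the semantic content we want to transfer. Using Proposition \ref{définissable}, there is a quantifier-free formula $\psi_K(x,y)$ with $M(g)=\{h : \psi_K(h,g)\}$ (in any $K$-$\mathrm{CSA}$ group). Now I take the $\forall\exists$-sentence $\phi$ of Lemma \ref{QFA}, which has the form $\forall x_1\forall x_2\forall x_3\exists x_4\ \theta(x_1,x_2,x_3,x_4)$ with $\theta$ quantifier-free, and I relativise it to the definable set $M(y)$ for a variable $y$ of infinite order. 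Concretely I form the sentence
\[
\forall y\ \Big( (y^{K!}\neq 1) \Rightarrow \forall x_1\forall x_2\forall x_3\ \big( (\psi_K(x_1,y)\wedge\psi_K(x_2,y)\wedge\psi_K(x_3,y))\Rightarrow \exists x_4\ (\psi_K(x_4,y)\wedge\theta(x_1,x_2,x_3,x_4))\big)\Big).
\]
This is logically equivalent to a $\forall\exists$-sentence (the inner $\exists x_4$ can be pulled out past the universal quantifiers and the condition $y^{K!}\neq1$ is quantifier-free, so after prenexing we get a $\forall^{*}\exists x_4$ sentence). I claim it holds in $\Gamma$: for any $\gamma\in\Gamma$ of infinite order, $M(\gamma)$ is virtually cyclic; its torsion-free finite-index abelian subgroup is $\mathbb{Z}$ or trivial; and one needs the elementary fact — which I would check directly from the pigeonhole argument in the proof of Lemma \ref{QFA} — that a finitely generated $K$-virtually-(torsion-free abelian) group satisfying $\phi$ internally, i.e. satisfying the relativised $\forall\exists$ statement above, must have torsion-free rank $\leq 1$, and conversely that a virtually cyclic group does satisfy the relativised statement. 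Since $\mathrm{Th}_{\forall\exists}(\Gamma)\subset\mathrm{Th}_{\forall\exists}(G)$, the sentence holds in $G$; applying it to $y=g$ shows $M(g)$, with its finite-index $\mathbb{Z}^r$, satisfies $\phi$ internally, forcing $r\leq 1$, i.e. $M(g)$ is virtually cyclic.

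The main obstacle I anticipate is the bookkeeping around Lemma \ref{QFA} in the virtually abelian (rather than genuinely abelian) setting: the sentence $\phi$ characterises cyclicity among \emph{torsion-free} abelian groups, but $M(g)$ is only $K$-virtually torsion-free abelian, so I must be careful that satisfaction of the relativised sentence genuinely pins down the rank of the torsion-free part and is not confused by the finite-order elements or by the finite-index subtlety. The cleanest fix is probably to relativise not to $M(y)$ itself but to the definable subgroup $\{h\in M(y) : h^{K!}\in \langle\text{something}\rangle\}$ or, more simply, to replace the variables $x_i$ ranging over $M(y)$ by $x_i^{K!}$, which land in the torsion-free part when $M(y)$ is abelian — but since $M(y)$ need only be virtually abelian one should first note, via Proposition \ref{CSA2}(1), that $\langle x_1^{K!},\dots\rangle$ is abelian whenever each $x_i\in M(y)$, and then run the pigeonhole count of Lemma \ref{QFA} inside this abelian subgroup. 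A second, more minor point is verifying carefully that the relativised formula is equivalent to a genuine $\forall\exists$-sentence (no stray existential quantifiers hidden inside $\psi_K$, which is quantifier-free by Proposition \ref{définissable}, so this is fine) and that the implication structure prenexes correctly; this is routine but must be done honestly.
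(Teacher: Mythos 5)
Your overall strategy — using $K$-$\mathrm{CSA}$, the quantifier-free definability of $M(g)$ from Proposition \ref{définissable}, and the $\forall\exists$ characterization of cyclicity from Lemma \ref{QFA} — is exactly the paper's. However, the displayed sentence, which relativises $\phi$ directly to the definable set $M(y)$, is genuinely \emph{false} in $\Gamma$, not merely in need of bookkeeping. Take $\Gamma = (\mathbb{Z}\times\mathbb{Z}/2\mathbb{Z})\ast\mathbb{Z}$, a hyperbolic group, and let $\gamma = c$ generate the $\mathbb{Z}$ factor of $\mathbb{Z}\times\mathbb{Z}/2\mathbb{Z} = \langle c\rangle\times\langle t\rangle$, so that $M(\gamma) = \langle c\rangle\times\langle t\rangle$. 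With $x_1 = t$, $x_2 = 1$, $x_3 = c$ (all in $M(\gamma)$), the three ratios $x_1x_2^{-1} = t$, $x_1x_3^{-1} = c^{-1}t$, $x_2x_3^{-1} = c^{-1}$ all lie outside the set of squares $\{c^{2m}\}_{m\in\mathbb{Z}}$ of $M(\gamma)$, so no $x_4\in M(\gamma)$ realises $\theta(x_1,x_2,x_3,x_4)$. Thus your claim that "a virtually cyclic group does satisfy the relativised statement" is simply wrong: Lemma \ref{QFA} characterizes cyclicity only among \emph{torsion-free} abelian groups, and the torsion in $M(\gamma)$ breaks the pigeonhole.

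Your fall-back fix — replace $x_i$ by $x_i^{K!}$ so the quantities land in the torsion-free part — is indeed the right idea and is what the paper does, but the paper adds one more refinement that you would need. The image set $N_1(g) := \{h^{K!} : h\in M(g)\}$ is in general only a generating \emph{set}, not a subgroup, of the torsion-free abelian group $N(g) := \langle h^{K!} : h\in M(g)\rangle$, so Lemma \ref{QFA}, which is a statement about \emph{groups}, cannot be applied to it off the shelf. The paper's solution is to set $N_r(g) := \{h_1^{K!}\cdots h_r^{K!} : h_i\in M(g)\}$ and to choose $r$ equal to the number of generators of the finitely generated abelian group $N(g)$, which forces $N_r(g) = N(g)$. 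Then $N_r(g)$ is a definable torsion-free abelian \emph{subgroup} (definable with $r$ existential quantifiers ranging over $\psi_K(\cdot,y)$, so the resulting sentence $\varphi_r$ is still $\forall\exists$ after prenexing), and Lemma \ref{QFA} applies verbatim. If you wished to stick with $r=1$ you could still close the argument — if $N(g)$ had rank $n\geq 2$, the image of $N_1(g)$ in $N(g)/2N(g)\cong(\mathbb{Z}/2\mathbb{Z})^n$ would generate and hence contain at least three elements, and three preimages in pairwise distinct cosets of $2N(g)$ would violate the relativised $\phi$ — but this is an additional step beyond Lemma \ref{QFA} that you would need to spell out, whereas the paper's choice of $r$ avoids it entirely.
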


\begin{co}\label{cyclique2}Let $\Gamma$ be a hyperbolic group and $G$ a finitely generated group such that $\mathrm{Th}_{\forall\exists}(\Gamma)\subset\mathrm{Th}_{\forall\exists}(G)$. Then every abelian subgroup of $G$ is virtually cyclic.
\end{co}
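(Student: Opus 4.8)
The plan is to deduce this immediately from Proposition~\ref{cyclique}, once one knows that $G$ itself is $K$-$\mathrm{CSA}$. So the first step is to record that observation: let $K>0$ be the constant furnished by Proposition~\ref{universelle}. The property of being $K$-$\mathrm{CSA}$ is expressed by a set of universal sentences, $\Gamma$ is $K$-$\mathrm{CSA}$ (it is hyperbolic, apply Proposition~\ref{hyp}, taking $K$ large enough that the conclusions of both propositions hold), and the hypothesis $\mathrm{Th}_{\forall\exists}(\Gamma)\subset\mathrm{Th}_{\forall\exists}(G)$ in particular yields $\mathrm{Th}_{\forall}(\Gamma)\subset\mathrm{Th}_{\forall}(G)$. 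Hence $G$ satisfies all the universal axioms of $K$-$\mathrm{CSA}$, so $G$ is $K$-$\mathrm{CSA}$; in particular the maximal virtually abelian subgroup $M(g)$ is well defined for every $g\in G$ of infinite order.

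Next I would take an arbitrary abelian subgroup $A\leq G$ and split into cases. If $A$ is finite, then it is virtually cyclic (the trivial subgroup has finite index), so there is nothing to prove. If $A$ is infinite, I claim it must contain an element of infinite order. Indeed, if $A$ were an infinite torsion group, then, being abelian, it would be the union of its finitely generated subgroups, each of which is finite; since $A$ is infinite, these finite subgroups would have unbounded order, contradicting the fact that every finite subgroup of the $K$-$\mathrm{CSA}$ group $G$ has order at most $K$. So we may fix $g\in A$ of infinite order.

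Finally, $A$ is an infinite virtually abelian subgroup of $G$ (it is abelian) containing the infinite-order element $g$, so Proposition~\ref{CSA2}(2) gives $A\subset M(A)$ with $M(A)=M(g)$. By Proposition~\ref{cyclique}, $M(g)$ is virtually cyclic, hence so is its subgroup $A$. This finishes the proof.

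I do not expect any genuine obstacle here: all the real work is hidden in Proposition~\ref{cyclique}, and the corollary only adds the elementary remark that, in a group with a uniform bound on the orders of its finite subgroups, an infinite abelian subgroup necessarily contains an element of infinite order, together with the malnormality statement of Proposition~\ref{CSA2}(2) to pass from such an element to the whole subgroup.
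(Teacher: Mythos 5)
Your proof is correct, and the overall structure matches the paper's: establish that $G$ is $K$-$\mathrm{CSA}$, find an element of infinite order in the given abelian subgroup, then invoke Proposition~\ref{CSA2}(2) and Proposition~\ref{cyclique}. The one place you diverge is in justifying the existence of an infinite-order element in an infinite abelian subgroup $A$. The paper appeals to Theorem~\ref{typefini} (abelian subgroups of $\Gamma$-limit groups are finitely generated) and then uses that a finitely generated infinite abelian group has an element of infinite order; this silently uses that $G$ is a $\Gamma$-limit group, which follows from $\mathrm{Th}_{\forall}(\Gamma)\subset\mathrm{Th}_{\forall}(G)$ and equational noetherianity of $\Gamma$. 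You instead observe directly that an infinite torsion abelian group would contain finite subgroups of unbounded order, contradicting the $K$-bound on finite subgroups in a $K$-$\mathrm{CSA}$ group. Your route is marginally more elementary: it stays entirely inside the $K$-$\mathrm{CSA}$ axioms and avoids invoking the finite-generation theorem for $\Gamma$-limit groups, at no cost. Both arguments are sound, and once an infinite-order element $g\in A$ is produced, the rest of the reasoning is identical.
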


\begin{proof6}
By Proposition \ref{universelle}, $G$ is $K$-$\mathrm{CSA}$ for some $K$. Let $H$ be an infinite abelian subgroup of $G$. By Proposition \ref{CSA2}, $H$ is contained in a unique maximal virtually abelian subgroup of $G$, denoted by $M(H)$, and $M(H)=M(h)$ for every element $h$ of $H$ of infinite order (note that such an element exists since $H$ is abelian, finitely generated (according to Lemma \ref{typefini}) and infinite). According to Proposition \ref{cyclique} above, $M(h)$ is virtually cyclic, hence $H$ is virtually cyclic.
\end{proof6}

\begin{proof4}
By Proposition \ref{universelle}, $\Gamma$ and $G$ are $K$-$\mathrm{CSA}$ for some $K$. Let $g$ be an element of $G$ of infinite order. Since the group $M(g)$ is $K$-virtually torsion-free abelian, it has a normal torsion-free abelian subgroup $N$ of index dividing $K!$. For every element $h$ of $M(g)$ the element $h^{K!}$ belongs to $N$. Denote by $N(g)$ the subgroup of $M(g)$ generated by $\left\lbrace h^{K!} \ \vert \ h\in M(g)\right\rbrace$. It is a subgroup of $N$, so it is torsion-free abelian. 

It is enough to show that $N(g)$ is cyclic. Then we will be able to conclude that $M(g)$ is virtually cyclic. Indeed, if $N(g)$ is cyclic, so is $\langle x^{K!},y^{K!}\rangle$ for every $x,y\in M(g)$. As a consequence there is no pair of elements of $M(g)$ generating a subgroup isomorphic to $\mathbb{Z}^2$. Since $M(g)$ is virtually abelian and finitely generated (according to Lemma \ref{typefini}), it is thus virtually cyclic.

For every integer $\ell\geq 1$, let $N_{\ell}(g)=\left\lbrace h_1^{K!}\cdots h_{\ell}^{K!} \ \vert \ h_1,\ldots ,h_{\ell}\in M(g) \right\rbrace$. Since $N(g)$ is finitely generated, there exist an integer $r$ and some elements $g_1,\ldots ,g_r$ of $M(g)$ such that $N(g)$ is generated by $\left\lbrace g_1^{K!},\ldots ,g_r^{K!}\right\rbrace$. We claim that $N(g)=N_r(g)$. In order to see this, remark that, since $N(g)$ is abelian, every element $h\in N(g)$ can be written as follows: \[h={\left(g_1^{K!}\right)}^{n_1}\cdots{\left(g_r^{K!}\right)}^{n_r}={\left(g_1^{n_1}\right)}^{K!}\cdots{\left(g_r^{n_r}\right)}^{K!},\] where $n_1,\ldots,n_r$ lie in $\mathbb{Z}$. This proves that $N(g)\subset N_r(g)$, and the reverse inclusion is immediate. Then, recall that there exists a first-order formula without quantifiers $\psi(x,y)$ such that $M(g)=\lbrace h\in G \ \vert \ \psi(h,g)\rbrace$ (see Proposition \ref{définissable}). Hence \[N_r(g)=\left\lbrace h\in G \ \vert \ \exists h_1 \ldots \exists h_{r} \ \left(h=h_1^{K!}\cdots h_{r}^{K!} \wedge \psi(h_1,g) \wedge \ldots \wedge \psi(h_{r},g)\right)\right\rbrace.\]

It remains to prove that $N_r(g)$ is cyclic. Recall that, by Lemma \ref{QFA}, a finitely generated torsion-free abelian group is cyclic if and only if it satisfies $\forall x_1\forall x_2\forall x_3\exists x_4 \phi(x_1,x_2,x_3,x_4)$, where $\phi(x_1,x_2,x_3,x_4):(x_1=x_2 x_4^2)\vee (x_1=x_3 x_4^2)\vee (x_2=x_3 x_4^2)$. Since $\Gamma$ is a hyperbolic group, every torsion-free abelian subgroup of $\Gamma$ is cyclic, so satisfies the previous sentence. We can write a $\forall\exists$-sentence $\varphi_{r}$ satisfied by $\Gamma$, with the following interpretation: for every element $\gamma$ of $\Gamma$ of infinite order, $N_r(\gamma)$ is cyclic. Below is the sentence $\varphi_r$, where $\underline{h_i}$ stands for $(h_{i,1},\ldots,h_{i,r})$ and $x_i:=h_{i,1}^{K!}\cdots h_{i,r}^{K!}$.
{\small
\[\varphi_r:\forall\gamma\forall \underline{h_1} \forall \underline{h_2}\forall \underline{h_3}\exists \underline{h_4}\left(\bigwedge_{i=1}^{3}\bigwedge_{j=1}^{r}\psi(h_{i,j},\gamma)\wedge\left(\gamma^{K!}\neq 1\right)\right)
   \Rightarrow \left(\bigwedge_{j=1}^{r}\psi(h_{4,j},\gamma)\wedge \phi(x_1,x_2,x_3,x_4)\right)\]}
   
Since $\mathrm{Th}_{\forall\exists}(\Gamma)\subset\mathrm{Th}_{\forall\exists}(G)$, the sentence $\varphi_r$ is true in $G$ as well. It follows that $N(g,r)$ is cyclic. This concludes the proof.
\end{proof4}

\subsection{Generalized Baumslag's lemma} We shall generalize a criterion proved by Baumslag in the case of free groups (see \cite{Bau62} Proposition 1 or \cite{Bau67} Lemma 7) that will be useful to show that some sequences of homomorphisms taking values in a hyperbolic group are discriminating. A proof in the hyperbolic case can be found in \cite{Ol93}, Lemma 2.4. We include a proof for completeness.

If $g$ is an element of infinite order of a hyperbolic group $G$, we denote by $g^{+}$ and $g^{-}$ the attracting and repellings fixed points of $g$ on the boundary $\partial G$ of $G$.

We begin with a preliminary lemma.

\begin{lemme}\label{4}Let $G$ be a hyperbolic group and $S$ a finite generating set of $G$. Let $g,h,x$ be elements of $G$ such that $g$ and $h$ have infinite order. For every $p\geq 1$, denote by $\alpha_p$ a geodesic path between $g^{p}$ and $g^{p-1}$ in $\mathrm{Cay}(G,S)$, and $\beta_p$ a geodesic path between $xh^{p-1}$ and $xh^{p}$ in $\mathrm{Cay}(G,S)$. Denote by $\gamma$ a path joining $1$ and $x$. If $g^{+}\neq x\cdot h^{+}$, then there exist two constants $\lambda$ and $k$ such that for every integers $p\geq 1$ and $q\geq 1$, $w_{p,q}=\alpha_p\cdots\alpha_1\cdot\gamma\cdot\beta_1\cdots\beta_q$ is a $(\lambda,k)$-quasi-geodesic. 
\end{lemme}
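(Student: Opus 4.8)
The idea is to write $w_{p,q}$ as a concatenation of three quasi-geodesics whose constants do not depend on $p$ and $q$, and then to glue them by a standard criterion in hyperbolic spaces; the hypothesis $g^{+}\neq x\cdot h^{+}$ is exactly what keeps the Gromov products at the two gluing points bounded. Geometrically, $\alpha_p\cdots\alpha_1$ arrives at $1$ from the direction of $g^{+}$ while $\beta_1\cdots\beta_q$ leaves $x$ towards $x\cdot h^{+}$, so if these boundary points coincided the path would backtrack; the hypothesis rules this out.

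First I would record the preliminaries. Since $g$ and $h$ have infinite order in the hyperbolic group $G$, the cyclic subgroups $\langle g\rangle$ and $\langle h\rangle$ are quasi-convex, hence undistorted: there are $\mu>0$ and $\nu\geq 0$ with $|g^{n}|\geq\mu|n|-\nu$ and $|h^{n}|\geq\mu|n|-\nu$ for all $n\in\mathbb{Z}$. Now $d(g^{m},g^{m-1})=|g|$ in $\mathrm{Cay}(G,S)$, so the arc-length of $P:=\alpha_p\cdots\alpha_1$ (from $g^{p}$ to $1$) between the vertices $g^{i}$ and $g^{j}$ equals $|i-j|\,|g|\leq\frac{|g|}{\mu}\bigl(d(g^{i},g^{j})+\nu\bigr)$; absorbing the interior of each $\alpha_m$ into an additive constant, $P$ is a $(\lambda_0,k_0)$-quasi-geodesic with $\lambda_0,k_0$ independent of $p$. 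Since $u\mapsto xu$ is an isometry of $\mathrm{Cay}(G,S)$, the same argument gives that $Q:=\beta_1\cdots\beta_q$ (from $x$ to $xh^{q}$) is a $(\lambda_0,k_0)$-quasi-geodesic, uniformly in $q$; and $\gamma$ is a $(1,\ell(\gamma))$-quasi-geodesic, $\ell(\gamma)$ being a fixed constant.

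The core is a gluing lemma: in a $\delta$-hyperbolic geodesic space, if $\sigma_1$ is a $(\lambda,k)$-quasi-geodesic from $a$ to $b$, $\sigma_2$ a $(\lambda,k)$-quasi-geodesic from $b$ to $c$, and $(a\,|\,c)_{b}\leq M$, then $\sigma_1\sigma_2$ is a $(\lambda',k')$-quasi-geodesic with $\lambda',k'$ depending only on $\lambda,k,M,\delta$. To prove it, for $u\in\sigma_1$ and $v\in\sigma_2$ one writes the arc-length from $u$ to $v$ as $\ell_{\sigma_1}(u,b)+\ell_{\sigma_2}(b,v)$, which is at most $\lambda\bigl(d(u,b)+d(b,v)\bigr)$ plus a constant, i.e.\ $\lambda\bigl(d(u,v)+2(u\,|\,v)_{b}\bigr)$ plus a constant; so it suffices to bound $(u\,|\,v)_{b}$. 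By the Morse lemma (stability of quasi-geodesics) $u$ lies within a bounded distance of $[a,b]$ and $v$ within a bounded distance of $[b,c]$, and one checks the elementary fact that two geodesics $[b,u']$, $[b,v']$ with $u'\in[b,a]$, $v'\in[b,c]$ satisfy $(u'\,|\,v')_{b}\leq (a\,|\,c)_{b}+O(\delta)$; together these give $(u\,|\,v)_{b}\leq M+O(\delta)+O(1)$. I expect this lemma---and in particular the verification of the quasi-geodesic inequality across a gluing point---to be the main technical obstacle; the rest is bookkeeping.

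Finally I would apply the gluing lemma twice. At the gluing point $x$: $(1\,|\,xh^{q})_{x}\leq\min\bigl(d(x,1),d(x,xh^{q})\bigr)=\min(|x|,|h^{q}|)\leq|x|$, so $\gamma Q$ is a quasi-geodesic with constants independent of $q$. At the gluing point $1$ one must bound $(g^{p}\,|\,xh^{q})_{1}$ uniformly, and here the hypothesis enters: were this unbounded, choose $p_n,q_n$ with $(g^{p_n}\,|\,xh^{q_n})_{1}\to\infty$; since this Gromov product is $\leq\min\bigl(|g^{p_n}|,|xh^{q_n}|\bigr)$, undistortion of $\langle g\rangle$ and $\langle h\rangle$ forces $p_n\to\infty$ and $q_n\to\infty$, hence $g^{p_n}\to g^{+}$ and $xh^{q_n}\to x\cdot h^{+}$ in $G\cup\partial G$, and by the standard characterization of convergence in the Gromov boundary two sequences whose mutual Gromov products based at $1$ diverge converge to the same boundary point, so $g^{+}=x\cdot h^{+}$---contradiction. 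Thus $(g^{p}\,|\,xh^{q})_{1}\leq M$ for some $M$ independent of $p,q$, and a second application of the gluing lemma to $P$ and $\gamma Q$ shows that $w_{p,q}=P\gamma Q$ is a $(\lambda,k)$-quasi-geodesic with $\lambda,k$ independent of $p$ and $q$.
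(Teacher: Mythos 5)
Your proof is correct, and it follows essentially the same route as the paper: both identify $\alpha_p\cdots\alpha_1$ and $\beta_1\cdots\beta_q$ as uniform quasi-geodesics (since $\langle g\rangle$ and $\langle h\rangle$ are undistorted), and both reduce the claim to the uniform bound on the Gromov product $(g^p\,|\,xh^q)_1$ forced by $g^+\neq x\cdot h^+$. The difference is one of rigor rather than strategy: you explicitly isolate and prove a gluing lemma (concatenating quasi-geodesics at a point of bounded Gromov product yields a quasi-geodesic, via the Morse lemma) and you supply a compactness/convergence argument for why $(g^p\,|\,xh^q)_1$ is bounded, whereas the paper only bounds the total length $\mathrm{length}(w_{p,q})\leq\lambda\,d(g^p,xh^q)+k$ — leaving tacit that the same estimate holds for all subpaths — and asserts the Gromov-product bound without justification. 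Your version therefore fills in exactly the two steps the paper compresses; the paper's version is shorter because it avoids stating the gluing lemma in generality and instead does the additive bookkeeping directly on $w_{p,q}$.
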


\begin{proof}
It is well-known that $p\in\mathbb{Z}\mapsto g^p$ and $p\in\mathbb{Z}\mapsto h^p$ are quasi-geodesics, i.e.\ there exist constants $\lambda_g,k_g$ and $\lambda_h,k_h$ such that for every $p$, $\alpha_p\cdots\alpha_1$ is a $(\lambda_g,k_g)$-quasi-geodesic joining $g^p$ and $1$, and $\beta_1\cdots\beta_p$ is a $(\lambda_h,k_h)$-quasi-geodesic joining $x$ and $xh^p$. An easy computation gives \[\mathrm{length}(w_{p,q})\leq \max(\lambda_g,\lambda_h)(d(1,g^{p}) + d(1,xh^{q}))+(k_g+k_h+\mathrm{length}(\gamma)+\lambda_hd(1,x)).\]Since $g^{+}\neq x\cdot h^{+}$, there exists a constant $C$ such that the Gromov product $(g^p,xh^q)_1$ is less than $C$, that is \[d(1,g^p)+d(1,xh^q)-d(g^p,xh^q)\leq 2C.\]It follows that $\mathrm{length}(w_{p,q})\leq \lambda d(g^p,xh^q)+k$ for some constants $\lambda$ and $k$.
\end{proof}

One can now prove the following criterion generalizing that of Baumslag.

\begin{prop}\label{baumslag}Let $a_0,a_1,\ldots ,a_m, c_1,\ldots ,c_m$ be elements of a hyperbolic group. Let $w(p_1,\ldots,p_m)= a_0 c_1^{p_1} a_1 c_2^{p_2}a_2 \cdots a_{m-1} c_m^{p_m}a_{m}$ with $p_1,\ldots,p_m\geq 0$. Suppose that the two following conditions hold:
\begin{enumerate}
\item every $c_i$ has infinite order;
\item for every $i\in\llbracket 1,m-1\rrbracket$, $c_i^{+}\neq a_i\cdot c_{i+1}^{+}$.
\end{enumerate}
Then there exists a constant $C$ such that $w(p_1,\ldots,p_m)\neq 1$ for every $p_1,\ldots,p_m\geq C$.
\end{prop}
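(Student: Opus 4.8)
The plan is to prove that, once the exponents are large enough, the edge-path $P=P(p_1,\dots,p_m)$ spelled by $w$ in a Cayley graph $\mathrm{Cay}(G,S)$ of $G$ is a quasi-geodesic with constants \emph{not depending on} $p_1,\dots,p_m$; its endpoints $1$ and $w(p_1,\dots,p_m)$ are then at distance bounded below by a quantity that grows with the $p_i$, so in particular $w(p_1,\dots,p_m)\neq 1$. Fix $\delta$ a hyperbolicity constant of $\mathrm{Cay}(G,S)$ and a geodesic word for each of $a_0,\dots,a_m,c_1,\dots,c_m$, spelling $c_i^{p_i}$ as $p_i$ copies of the word for $c_i$. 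By hypothesis (1) and the standard fact (already used in the proof of Lemma \ref{4}) that $n\mapsto c_i^n$ is a quasi-geodesic, each ``$c_i$-block'' of $P$ is a quasi-geodesic with constants depending only on $c_i$, while the $m+1$ pieces spelling the $a_j$ are uniformly bounded. Hence $P$ can fail to be a quasi-geodesic only near the $m-1$ \emph{junctions}, i.e.\ near the occurrences of $a_1,\dots,a_{m-1}$. (If $m=1$ there is nothing to do, since $|w|\geq|c_1^{p_1}|-|a_0|-|a_1|\to\infty$.)

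\emph{Step 1: each junction is a uniform quasi-geodesic.} Fix $i\in\llbracket 1,m-1\rrbracket$ and consider the sub-path of $P$ starting somewhere inside the $i$-th block, reaching its end, crossing the $a_i$-piece, and continuing for a while into the $(i+1)$-st block. Translating on the left by the inverse of the vertex of $P$ at the end of the $i$-th block, this sub-path becomes precisely a path $w_{p,q}$ of the shape treated in Lemma \ref{4}, with $g=c_i^{-1}$, $x=a_i$, $h=c_{i+1}$ (the reversed tail of the $i$-th block playing the role of $\alpha_p\cdots\alpha_1$, the $a_i$-piece that of $\gamma$, and the head of the $(i+1)$-st block that of $\beta_1\cdots\beta_q$). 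The hypothesis $g^{+}\neq x\cdot h^{+}$ of that lemma reads $(c_i^{-1})^{+}\neq a_i\cdot c_{i+1}^{+}$, which is hypothesis (2) at the index $i$. Lemma \ref{4} therefore yields constants $(\lambda_i,k_i)$ \emph{independent of the exponents} such that every such junction window is a $(\lambda_i,k_i)$-quasi-geodesic (equivalently, its endpoints have bounded Gromov product at the middle vertex).

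\emph{Step 2: globalization and conclusion.} Put $\lambda=\max_i\lambda_i$ and $k=\max_i k_i$, enlarged if necessary so as to also dominate the quasi-geodesic constants of the rays $n\mapsto c_i^n$ and absorb the bounded $a_j$-pieces. Let $L=L(\delta,\lambda,k)$ and $(\lambda',k')$ be the constants furnished by the local-to-global principle for quasi-geodesics in a $\delta$-hyperbolic space (any $L$-local $(\lambda,k)$-quasi-geodesic is a global $(\lambda',k')$-quasi-geodesic). Choose $C$ so large that whenever $p_1,\dots,p_m\geq C$ each block of $P$ has length $>L+\max_j|a_j|$; then no sub-path of $P$ of length $\leq L$ meets two distinct junctions, so by Step 1 — together with the fact that a sub-path contained in a single block, possibly together with one adjacent $a_j$-piece, is again a $(\lambda,k)$-quasi-geodesic — the path $P$ is an $L$-local $(\lambda,k)$-quasi-geodesic, hence a global $(\lambda',k')$-quasi-geodesic. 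Consequently $d\big(1,w(p_1,\dots,p_m)\big)\geq \frac{1}{\lambda'}\,\mathrm{length}(P)-k'\geq\frac{1}{\lambda'}\sum_i p_i-k'$, which is positive as soon as $C$ is large enough; this is the desired constant.

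The crux is Step 1: one must perform the translation at each junction carefully enough to recognise \emph{exactly} the configuration of Lemma \ref{4}, and then observe that the constants it produces do not involve $p_1,\dots,p_m$ — this uniformity, rather than a naive word-length estimate, is the whole reason for passing through Lemma \ref{4}. Everything else is a routine application of the stability and local-to-global properties of quasi-geodesics in hyperbolic spaces, the only point of vigilance being the order in which the constants are fixed: first $\delta$, then $(\lambda,k)$, then $L$, then $C$.
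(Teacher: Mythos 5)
Your argument follows exactly the paper's route: uniform quasi-geodesicity of the blocks and junctions via Lemma \ref{4} (with constants independent of the exponents), the local-to-global principle from \cite{CDP90}, then a length estimate to conclude $w\neq 1$; and the order in which you fix the constants is the correct one. Your Step 1 translation with $g=c_i^{-1}$, $x=a_i$, $h=c_{i+1}$ is right, and you correctly read off the requirement of Lemma \ref{4} as $(c_i^{-1})^{+}\neq a_i\cdot c_{i+1}^{+}$, that is $c_i^{-}\neq a_i\cdot c_{i+1}^{+}$.

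However, you then write that this ``is hypothesis (2) at the index $i$'', and it is not: hypothesis (2) as stated reads $c_i^{+}\neq a_i\cdot c_{i+1}^{+}$, with the opposite superscript. The difference is real: in a free group, with $c_1=a$, $a_1=1$, $c_2=a^{-1}$, the stated hypothesis holds (since $a^{+}\neq a^{-}$) and yet $a^{p}a^{-p}=1$ for every $p$, so the proposition is false as literally written. Your careful unwinding has actually uncovered what looks like a typo in the statement (the paper's own, much terser proof just says the estimate ``follows from Lemma \ref{4}'' without performing the translation and so carries the same gloss); you should flag the discrepancy instead of asserting the identification. Note that the only place Proposition \ref{baumslag} is used, namely Corollary \ref{baumslag2}, has the stronger hypothesis $a_i\notin M(c)$, which forces $a_i c^{\pm}\notin\{c^{+},c^{-}\}$ (two elements of infinite order in a hyperbolic group whose fixed-point sets on $\partial G$ meet already share both endpoints, hence generate a virtually cyclic group), so both variants of condition (2) hold there and nothing downstream is affected.
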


\begin{proof}
We will prove that there exist constants $\lambda,k$ satisfying the following property: for every integer $n$, there exists an integer $p(n)$ such that for every $p_1,\ldots,p_m\geq p(n)$, $w(p_1,\ldots,p_m)$ (viewed as a path in $\mathrm{Cay}(G,S)$, for a given finite generating set $S$) is a local $(\lambda,k,n)$-quasi-geodesic, i.e.\ every subword of $w(p_1,\ldots,p_m)$ whose length is less than $n$ is a $(\lambda,k)$-quasi-geodesic. Then, it will follow from \cite{CDP90} Theorem 1.4 that there exist three constants $L$, $\lambda'$ and $k'$ such that for every $p_1,\ldots,p_m\geq p(\lceil L\rceil)$, $w(p_1,\ldots,p_m)$ is a $(\lambda',k')$-quasi-geodesic. Hence: \[d(1,w(p_1,\ldots,p_m))\geq 1/{\lambda'} (\mathrm{length}(w(p_1,\ldots,p_m))-k').\] 
Moreover $\lim\limits_{p_1,\ldots,p_m \rightarrow +\infty}\mathrm{length}(w(p_1,\ldots,p_m))= +\infty$, so $d(1,w(p_1,\ldots,p_m))\geq 1$ for every $p_1,\ldots,p_m$ large enough. As a consequence, $w(p_1,\ldots,p_m)\neq 1$ for every $p_1,\ldots,p_m$ large enough.

To prove the existence of constants $\lambda$ and $k$, let's look at subwords of $w(p_1,\ldots,p_m)$. For every $n$, every subword of $w(p_1,\ldots,p_m)$ whose length is less that $n$ is a subword of a word of the form $c_i^{k}$ or $c_i^{k}a_i c_{i+1}^{k}$ where $i\in\llbracket 1,m-1\rrbracket$.

Now, it suffices to show that the words above are quasi-geodesic with constants that do not depend on $k$. This follows from Lemma \ref{4}.
\end{proof}

If $G$ is a hyperbolic group, each element $g$ of infinite order is contained in a unique maximal virtually abelian subgroup of $G$, denoted by $M(g)$, namely the stabilizer of the pair of points $\lbrace g^{+},g^{-}\rbrace$. The straightforward following corollary of Proposition \ref{baumslag} is easier to use in practice.

\begin{co}\label{baumslag2}Let $a_0,a_1,\ldots ,a_m$ and $c$ be elements of a hyperbolic group. Let $(\varepsilon_i)$ in $\lbrace -1,+1\rbrace^m$. Let $w(p)= a_0{c} ^{\varepsilon_1 p} a_1 {c} ^{\varepsilon_2 p}a_2 \cdots a_{m-1} {c}^{\varepsilon_m p}a_{m}$ with $p\geq 0$. Suppose that the two following conditions hold:
\begin{enumerate}
\item $c$ has infinite order;
\item for every $i\in\llbracket 1,m-1\rrbracket$, $a_i\notin M(c)$.
\end{enumerate}
Then there exists a constant $C$ such that $w(p)\neq 1$ for $p\geq C$.
\end{co}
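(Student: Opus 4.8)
The plan is to deduce the corollary directly from Proposition~\ref{baumslag}, applied with $c_i := c^{\varepsilon_i}$ for $i \in \llbracket 1,m\rrbracket$ and with all exponents specialised to a common value, $p_1 = \cdots = p_m = p$. With this substitution the word $w(p_1,\dots,p_m) = a_0 c_1^{p_1} a_1 \cdots a_{m-1} c_m^{p_m} a_m$ of Proposition~\ref{baumslag} becomes exactly $w(p) = a_0 c^{\varepsilon_1 p} a_1 \cdots a_{m-1} c^{\varepsilon_m p} a_m$, so it suffices to verify the two hypotheses of that proposition; its conclusion then furnishes a constant $C$ with $w(p)\neq 1$ for all $p\geq C$. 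Hypothesis~(1) is immediate, since $c_i = c^{\varepsilon_i}$ has infinite order whenever $c$ does.

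Everything therefore reduces to hypothesis~(2) of Proposition~\ref{baumslag}: for every $i\in\llbracket 1,m-1\rrbracket$ one must have $c_i^{+}\neq a_i\cdot c_{i+1}^{+}$. Here $c_i^{+}$ is the attracting fixed point of $c^{\varepsilon_i}$, which equals $c^{+}$ if $\varepsilon_i = 1$ and $c^{-}$ if $\varepsilon_i = -1$; in particular $c_i^{+}, c_{i+1}^{+}\in\{c^{+},c^{-}\}$. I would argue by contradiction: if $a_i\cdot c_{i+1}^{+} = c_i^{+}$, then the infinite order element $h := a_i c^{\varepsilon_{i+1}} a_i^{-1}$ has attracting fixed point $a_i\cdot c_{i+1}^{+} = c_i^{+}$, i.e.\ the same attracting fixed point as $c^{\varepsilon_i}$. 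Invoking the standard fact that two infinite order elements of a hyperbolic group with a common attracting endpoint on the boundary also share their repelling endpoint, $h$ and $c^{\varepsilon_i}$ have the same pair of fixed points, namely $\{c^{+},c^{-}\}$. But that pair is also $a_i\cdot\{c^{+},c^{-}\}$, so $a_i$ stabilises $\{c^{+},c^{-}\}$, that is $a_i\in M(c)$, contrary to hypothesis~(2) of the corollary. Hence hypothesis~(2) of Proposition~\ref{baumslag} holds, and we are done.

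The only ingredient that is not a purely formal manipulation is the boundary-dynamics fact used above (common attracting endpoint forces common repelling endpoint, equivalently $\mathrm{Stab}(c^{+})\subseteq M(c)$); this is classical for hyperbolic groups and follows, for instance, from the Morse lemma applied to the quasi-geodesic axes of the two elements, so I would either cite it or dispatch it in a line. Beyond this, the argument is a bookkeeping check of the hypotheses of Proposition~\ref{baumslag}, and I do not expect any serious obstacle.
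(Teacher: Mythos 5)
Your argument is correct and follows the route the paper intends: the paper states the corollary as immediate from Proposition~\ref{baumslag} after recalling that $M(c)$ is the stabilizer of the pair $\{c^+,c^-\}$, and your proof is exactly the careful verification of hypothesis~(2) of that proposition using this characterization together with the standard fact that a loxodromic element stabilizing one endpoint of $c$ lies in $M(c)$. The one small imprecision is that the "equivalently" in your last paragraph is not quite an equivalence (the endpoint-sharing fact for infinite-order elements is a consequence of $\mathrm{Stab}(c^+)\subseteq M(c)$, not the reverse), but since you only apply it to the infinite-order element $h=a_i c^{\varepsilon_{i+1}}a_i^{-1}$ this does not affect the proof.
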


\subsection{The canonical JSJ splitting and the modular group}\label{25}

\subsubsection{The canonical JSJ splitting}

\begin{de}\label{FBO}A group $G$ is called a finite-by-orbifold group if it is an extension \[F\rightarrow G \rightarrow \pi_1(O)\]where $O$ is a compact hyperbolic 2-orbifold possibly with boundary, and $F$ is an arbitrary finite group called the fiber. We call extended boundary subgroup of $G$ the preimage in $G$ of a boundary subgroup of $\pi_1(O)$. We define in the same way extended conical subgroups. In the case where $O$ has only conical singularities, i.e.\ has no mirrors, we say that $G$ is a conical finite-by-orbifold group.
\end{de}

\begin{de}\label{QH}A vertex $v$ of a graph of groups is said to be quadratically hanging (denoted by QH) if its stabilizer $G_v$ is a finite-by-orbifold group $F\rightarrow G \rightarrow \pi_1(O)$ such that $O$ has non-empty boundary, and such that any incident edge group is finite or contained in an extended boundary subgroup. We also say that $G_v$ is QH. 
\end{de}

A one-ended finitely generated $K$-$\mathrm{CSA}$ group $G$ has a canonical JSJ splitting over virtually abelian groups (see \cite{GL16}, Theorem 9.14). In the case where all finitely generated abelian subgroups of $G$ are virtually cyclic, $G$ has a canonical JSJ decomposition $\Delta$ over $\mathcal{Z}$, the class of virtually cyclic groups with infinite center. We refer the reader to \cite{GL16}, Section 9.2, for a construction of $\Delta$ using the tree of cylinders. The benefit of this decomposition $\Delta$ is that its QH vertex groups have no mirrors. In the sequel, all one-ended groups will be $K$-$\mathrm{CSA}$ without $\mathbb{Z}^2$, and the expression "the JSJ splitting" or "the $\mathcal{Z}$-JSJ splitting" will always refer to the canonical JSJ splitting over $\mathcal{Z}$ obtained by the tree of cylinders. Below are the properties of $\Delta$ that will be useful in the sequel.
\begin{itemize}
\item[$\bullet$]The graph $\Delta$ is bipartite, with every edge joining a vertex carrying a virtually cyclic group to a vertex carrying a non-virtually-cyclic group.
\item[$\bullet$]There are two kinds of vertices of $\Delta$ carrying a non-cyclic group: rigid ones, and QH ones. If $v$ is a QH vertex of $\Delta$, every incident edge group $G_e$ coincides with an extended boundary subgroup of $G_v$. Moreover, given any extended boundary subgroup $B$ of $G_v$, there exists a unique incident edge $e$ such that $G_e=B$.
\item[$\bullet$]The action of $G$ on the associated Bass-Serre tree $T$ is acylindrical in the following strong sense: if an element $g\in G$ of infinite order fixes a segment of length $\geq 2$ in $T$, then this segment has length exactly 2 and its midpoint has virtually cyclic stabilizer.
\item Let $v$ be a vertex of $T$, and let $e,e'$ be two distinct edges incident to $v$. If $G_v$ is not virtually cyclic, then the group $\langle G_e,G_{e'}\rangle$ is not virtually cyclic.
\end{itemize}

\subsubsection{The modular group}\label{262}

Let $G$ be a one-ended finitely generated $K$-$\mathrm{CSA}$ group that does not contain $\mathbb{Z}^2$. The modular group $\mathrm{Mod}(G)$ of $G$ is the subgroup of $\mathrm{Aut}(G)$ composed of all automorphisms that act by conjugaison on non-QH vertex groups of the $\mathcal{Z}$-JSJ splitting, and on finite subgroups of $G$, and that act trivially on the underlying graph of the $\mathcal{Z}$-JSJ splitting. 

\subsection{Stallings-Dunwoody splitting}\label{SD}

Let $G$ be a finitely generated group. Suppose that there exists a constant $K$ such that every finite subgroup of $G$ has order less than $K$. Then $G$ splits over finite groups as a graph of groups all of whose vertex groups are finite or one-ended. Such a splitting is called a Stallings-Dunwoody splitting of $G$; it is not unique, but the conjugacy classes of one-ended vertex groups do not depend on the splitting. A one-ended subgroup of $G$ that appears as a vertex group of a Stallings-Dunwoody splitting is called a one-ended factor of $G$.

If $G$ is a $\Gamma$-limit group, where $\Gamma$ is hyperbolic, one can prove that there exists a uniform bound on the order of finite subgroups of $G$ (see \cite{RW14} Lemma 1.18). As a consequence, $G$ has a Stallings-Dunwoody splitting.

\subsection{Preliminaries on orbifolds}

In the sequel, all orbifolds are compact, 2-dimensional, hyperbolic and conical (i.e.\ without mirrors).

\subsubsection{Cutting an orbifold into elliptic components}

\begin{de}\label{essential}A set $\mathcal{C}$ of simple closed curves on a conical orbifold is said to be essential if its elements are non null-homotopic, two-sided, non boundary-parallel, pairwise non parallel, and represent elements of infinite order (in other words, no curve of $\mathcal{C}$ circles a singularity). 
\end{de}

\begin{prop}\label{cutting}Let $O$ be a hyperbolic orbifold. Suppose that $S=\pi_1(O)$ acts minimally on a tree $T$ in such a way that its boundary elements are elliptic. Then there exists an essential set $\mathcal{C}$ of curves on $O$, and a surjective $S$-equivariant map $f : T_{\mathcal{C}}\rightarrow T$, where $T_{\mathcal{C}}$ stands for the Bass-Serre tree associated with the splitting of $S$ dual to $\mathcal{C}$. In other words:
\begin{itemize}
\item[$\bullet$] every element of $S$ corresponding to a loop of $\mathcal{C}$ fixes an edge of $T$;
\item[$\bullet$] every fundamental group of a connected component of $O\setminus\mathcal{C}$ is elliptic.
\end{itemize}
\end{prop}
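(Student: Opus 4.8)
The plan is to argue by induction on a complexity of $O$ that strictly decreases whenever $O$ is cut along an essential system of pairwise disjoint simple closed curves. In the inductive step, given the action of $S=\pi_1(O)$ on $T$ with boundary elliptic, I would first build a possibly empty essential system $\mathcal C_0$ of pairwise disjoint curves on $O$ such that every element carried by a curve of $\mathcal C_0$ fixes an \emph{edge} of $T$, and such that either $\mathcal C_0\neq\varnothing$ or $S$ fixes a point of $T$. If $S$ is elliptic we are done with $\mathcal C=\varnothing$, since $T$ is then a single point by minimality. Otherwise let $O_1,\dots,O_r$ be the components of $O$ cut along $\mathcal C_0$. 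Each $\pi_1(O_i)$ acts on $T$ by restriction, and its peripheral subgroups --- those carried by curves of $\mathcal C_0$, which fix edges of $T$, together with those carried by boundary components of $O$, elliptic by hypothesis --- are elliptic; replacing $T$ by the minimal $\pi_1(O_i)$-invariant subtree $T_i$ and applying the induction hypothesis (the complexity of $O_i$ is strictly smaller, and the smallest orbifolds --- pairs of pants, disks with two cone points, annuli, Möbius bands, disks with one cone point --- carry no essential curve at all, so for them the construction below forces $\mathcal C_0=\varnothing$, hence $S$ elliptic, which anchors the induction) one obtains an essential system $\mathcal C_i$ on $O_i$ and a surjective $\pi_1(O_i)$-equivariant map $f_i\colon T_{\mathcal C_i}\to T_i$. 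One then sets $\mathcal C=\mathcal C_0\cup\mathcal C_1\cup\dots\cup\mathcal C_r$.

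To build $\mathcal C_0$ I would use an equivariant map together with surgeries. Since $O$ is hyperbolic its orbifold universal cover is $\mathbb H^2$, on which $S$ acts properly discontinuously with finite point-stabilizers over the cone points; each such finite subgroup fixes a point of $T$. Using this and the ellipticity of the boundary subgroups, one constructs an $S$-equivariant map $\Phi\colon\mathbb H^2\to T$ carrying each cone-point lift into the fixed set of its stabilizer and, after an equivariant homotopy, carrying the lifts of $\partial O$ to vertices of $T$. After an equivariant simplicial approximation relative to an $S$-invariant triangulation of $\mathbb H^2$ having the cone-point lifts among its vertices, the full preimage $L$ under $\Phi$ of the set of midpoints of the edges of $T$ is an $S$-invariant one-submanifold of $\mathbb H^2$ missing the boundary, hence a disjoint union of circles descending to a finite system of pairwise disjoint two-sided simple closed curves in the interior of $O$; since $\Phi$ collapses each circle of $L$ to a single midpoint, each of these curves carries an element fixing an edge of $T$. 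Standard surgeries then make the system essential while keeping this property: innermost-disk surgery removes null-homotopic components, pushing across a collar removes boundary-parallel ones, merging across embedded annuli removes parallel duplicates, and a component bounding a disk with a single cone point --- carrying a finite-order, hence elliptic, element --- is absorbed by an equivariant homotopy of $\Phi$ onto the relevant fixed point. If the system becomes empty, $\Phi$ can be homotoped $S$-equivariantly to a constant, which forces $S$ to fix a point of $T$.

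Finally the reassembly is essentially bookkeeping: the Bass-Serre tree $T_{\mathcal C}$ is obtained from $T_{\mathcal C_0}$ by blowing up each vertex with stabilizer $\pi_1(O_i)$ into a copy of $T_{\mathcal C_i}$, and one defines $f\colon T_{\mathcal C}\to T$ to agree with $f_i$ on that copy and to send each edge inherited from $\mathcal C_0$ to the edge of $T$ fixed by the corresponding curve; these two prescriptions match along the blow-up edges because the edge group $\langle c\rangle$ of a curve $c\in\mathcal C_0$ fixes an edge of $T$ lying in the minimal subtrees of the adjacent pieces. The resulting $f$ is $S$-equivariant and defined on all of $T_{\mathcal C}$, so $f(T_{\mathcal C})$ is a non-empty connected $S$-invariant subtree of $T$ and therefore equals $T$ by minimality; surjectivity is thus automatic, and the two bulleted conclusions are immediate from the construction. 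I expect the genuine difficulty to lie in the surgery step in the presence of torsion: all homotopies must be performed $S$-equivariantly while respecting the cone-point stabilizers, and one must take care that no curve surviving the surgeries encircles or otherwise detects a singularity. In the torsion-free case this whole circle of ideas is classical, and in the stronger framework of $\mathbb R$-trees it is subsumed by Skora's theorem that actions of surface groups on trees are geometric.
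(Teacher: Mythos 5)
Your argument is the Morgan--Shalen resolution-and-surgery technique --- an equivariant map from $\mathbb{H}^2$ to $T$, preimage of edge midpoints giving a $1$-submanifold, then surgery down to an essential system --- which is precisely the content of the reference (Morgan--Shalen, Theorem~3.2.6) that the paper cites for this proposition and then declares to generalize straightforwardly to conical orbifolds. So your approach is essentially the paper's, reconstructed rather than cited; the inductive scaffolding in your first paragraph is redundant, since a single pass of the midpoint-preimage construction already forces the complementary pieces to land in the star of a vertex and hence be elliptic, but this does not affect correctness.
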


The proposition above is proved in \cite{MS84} for surfaces (see Theorem 3.2.6), and the generalization to compact hyperbolic 2-orbifolds of conical type is straightforward. Then, Proposition \ref{cutting} extends to finite-by-orbifold groups through the following observation: if $G$ is a finite extension $F\hookrightarrow G\twoheadrightarrow \pi_1(O)$ acting minimally on a tree $T$, then the action factors through $G/F\simeq\pi_1(O)$, because $F$ acts as the identity on $T$. Indeed, since $F$ is finite, it fixes a point $x$ of $T$. Since $F$ is normal, the non-empty subtree of $T$ pointwise-fixed by $F$ is invariant under the action of $G$. Since this action is minimal, $F$ fixes $T$ pointwise.

\subsubsection{Non-pinching homomorphisms}

\begin{de}Let $G$ and $G'$ be conical finite-by-orbifold groups. A homomorphism from $G$ to $G'$ is called a morphism of finite-by-orbifold groups if it sends each extended boundary subgroup injectively into an extended boundary subgroup, and if it is injective on finite subgroups.
\end{de}

\begin{de}\label{pinch}Let $G$ be a conical finite-by-orbifold group $F\hookrightarrow G\overset{q}{\twoheadrightarrow} \pi_1(O)$. Let $p$ be a homomorphism from $G$ to a group $G'$. Let $\alpha$ be a two-sided and non-boundary-parallel simple loop on $O$ representing an element of infinite order, and let $C_{\alpha}=q^{-1}(\alpha)\simeq F\rtimes\mathbb{Z}$ (well defined up to conjugacy). The curve $\alpha$ (or $C_{\alpha}$) is said to be pinched by $p$ if $p(C_{\alpha})$ is finite. The homomorphism $p$ is said to be non-pinching if it does not pinch any two-sided simple loop. Otherwise, $p$ is said to be pinching. 
\end{de}

\begin{prop}\label{pinchbis}Let $O$ and $O'$ be conical hyperbolic orbifolds. Let $G$ and $G'$ be their fundamental groups. Let $p:G\rightarrow G'$ be a non-pinching morphism of orbifolds. Suppose that $p(G)$ is not contained in a conical or boundary subgroup of $G'$. Then $[G':p(G)]<+\infty$.
\end{prop}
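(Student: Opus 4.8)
\emph{Strategy.} The plan is to assume $[G':p(G)]=+\infty$ and derive a contradiction --- the contradiction being that $p$ pinches some essential simple closed curve of $O$, or that a group is forced to be a proper subgroup of itself. First I would dispose of the degenerate cases: $p(G)$ is neither finite nor infinite virtually cyclic. Indeed, a finite subgroup of $G'$ lies in a conical subgroup, contrary to hypothesis. If $p(G)$ were infinite virtually cyclic, let $V$ be the maximal virtually cyclic subgroup of $G'$ containing it; since $p(G)$ is not contained in a boundary subgroup, $V$ is not a boundary subgroup of $O'$. If $O$ has boundary, choose a boundary subgroup $B\cong\mathbb Z$ of $O$: by hypothesis $p$ embeds $B$ into some boundary subgroup $B'$ of $G'$, but $p(B)\subseteq p(G)\subseteq V$, so $p(B)\subseteq B'\cap V$, a finite group by almost-malnormality of maximal virtually cyclic subgroups (Proposition \ref{CSA2}), contradicting injectivity of $p$ on $B$. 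If $O$ is closed, then $V$, hence $\pi_1(O)$ through $p$, acts minimally on a line $T$ with finite stabilizers, its boundary elements being vacuously elliptic; Proposition \ref{cutting} supplies an essential curve system $\mathcal C$ on $O$, which is non-empty (otherwise $p(G)$ fixes a point of $T$, so is finite), and every curve of $\mathcal C$ fixes an edge of $T$, hence is pinched by $p$ --- again a contradiction. So from now on $p(G)$ is neither finite nor virtually cyclic.

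\emph{Geometric realization.} The orbifolds $O$ and $O'$ are aspherical, their universal covers being convex subsets of $\mathbb H^2$. Since $p$ sends each boundary subgroup injectively into a boundary subgroup and is injective on finite subgroups, it is induced by an orbifold map $f\colon O\to O'$ with $f(\partial O)\subseteq\partial O'$ and injective near the cone points. Let $q\colon\widehat{O'}\to O'$ be the connected orbifold cover with $\pi_1(\widehat{O'})=p(G)$; its degree $[G':p(G)]$ is infinite, so $\widehat{O'}$ is non-compact, while $p(G)$ is finitely generated. I would lift $f$ to $\widehat f\colon O\to\widehat{O'}$, realising $p$ as an epimorphism $\pi_1(O)\twoheadrightarrow p(G)=\pi_1(\widehat{O'})$, and choose a compact core $N\subseteq\widehat{O'}$ containing $\widehat f(O)$: then $\widehat{O'}$ deformation retracts onto $N$, $\pi_1(N)=p(G)$, and $N$ is a compact hyperbolic orbifold with non-empty boundary (hyperbolic because $p(G)$ is neither finite nor virtually cyclic). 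As $\widehat{O'}$ is non-compact and connected, some boundary curve $\beta$ of $N$ does not lie over $\partial O'$; homotoping $\widehat f$ off a collar of $\beta$, I may assume $\widehat f(O)\cap\beta=\varnothing$.

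\emph{Producing a pinched curve.} Fix a properly embedded essential (non-boundary-parallel) arc $a\subseteq N$ with both endpoints on $\beta$, and let $N_a$ be the component of $N$ cut along $a$ that contains $\widehat f(O)$; since $a$ is essential, $\pi_1(N_a)$ is a proper free factor of $\pi_1(N)=p(G)$, hence of infinite index. Make $\widehat f$ transverse to $a$. Both $\widehat f(\partial O)\subseteq q^{-1}(\partial O')$ and $\widehat f(O)$ are disjoint from $\beta$, which contains $\partial a$, so $\widehat f^{-1}(a)$ is a disjoint union of two-sided simple closed curves on $O$, each mapped by $\widehat f$ into the contractible arc $a$, hence each lying in $\ker p$. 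No component is boundary-parallel, because $p$ is injective on boundary subgroups of $O$. Therefore, if some component $c$ is non-null-homotopic and does not circle a cone point of $O$, then $c$ is an essential curve in the sense of Definition \ref{essential} and $p(\langle c\rangle)=\{1\}$, so $c$ is pinched by $p$ --- contradiction. Otherwise every component of $\widehat f^{-1}(a)$ bounds a disc or a disc with a single cone point, and the standard innermost-disc surgeries homotope $\widehat f$, keeping $f(\partial O)\subseteq\partial O'$, to a map with $\widehat f^{-1}(a)=\varnothing$, i.e.\ $\widehat f(O)\subseteq N_a$; then $p(G)=\widehat f_*\pi_1(O)\subseteq\pi_1(N_a)\subseteq\pi_1(N)=p(G)$ forces $\pi_1(N_a)=\pi_1(N)$, contradicting infinite index. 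In all cases the assumption $[G':p(G)]=+\infty$ is untenable.

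\emph{Main difficulty.} The hard part is the surgery step of the last paragraph: in the presence of torsion one must delete the components of $\widehat f^{-1}(a)$ that circle a cone point without disturbing the injectivity of $\widehat f$ on finite subgroups, and one must check carefully that the chosen essential arc $a$ really does cut off a complementary piece of infinite index (this is where the hyperbolicity of $N$, hence the reduction carried out in the first paragraph, is genuinely used). All of this is routine in the classical surface setting, but requires attention to torsion here; and the manipulation of the family $\widehat f^{-1}(a)$ on $O$ is, in effect, an application of Proposition \ref{cutting}.
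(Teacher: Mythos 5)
Your proof is correct, and the overall logic matches the paper's: assume infinite index, observe $p(G)$ is a finitely generated discrete subgroup of a cocompact Fuchsian-type group of infinite covolume, extract a free splitting of $p(G)$ in which the images of boundary subgroups of $O$ are elliptic and which is nontrivial precisely because $p(G)$ is not conical or boundary, and transport that splitting back to $O$ to exhibit a pinched curve, contradicting the non-pinching hypothesis. Where you diverge from the paper is in the mechanics of the last step. The paper quotes Proposition~\ref{cutting} directly, applying it to the action of $G$ (via $p$) on the Bass--Serre tree of the Grushko/Stallings decomposition of $p(G)$ (whose vertex groups are the conical and boundary subgroups of $G'$ met by $p(G)$), so the pinched curve drops out with no surgery. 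You instead realise the situation topologically: you build a compact core $N\subseteq\widehat{O'}$, pick a ``free'' boundary curve $\beta$ and an essential arc $a$ on $\beta$, make $\widehat f$ transverse to $a$, and run an innermost-disc argument on $\widehat f^{-1}(a)$. As you note yourself, that last paragraph is essentially an ad hoc proof of Proposition~\ref{cutting} for the particular one-edge tree dual to $a$; relying on the lemma as stated would have shortened the write-up considerably and also removes the small awkwardness around curves of $\widehat f^{-1}(a)$ that circle a cone point (such a curve has finite nontrivial order and is sent into the contractible arc $a$, contradicting injectivity of $p$ on finite subgroups directly, so no surgery is needed for it). One thing your version does more carefully than the paper is the explicit disposal of the case $p(G)$ infinite virtually cyclic: the paper silently assumes the Grushko decomposition of $p(G)$ is nontrivial in the relevant sense, which requires a word in the loxodromic-cyclic case, and your malnormality argument (via Proposition~\ref{CSA2}) supplies it cleanly when $O$ has boundary, and a $\ref{cutting}$-type argument when $O$ is closed.
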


\begin{proof}
Suppose that $p(G)$ has infinite index in $G'$. Then $p(G)$ is the fundamental group of a geometrically finite hyperbolic conical orbifold of infinite volume. Therefore, $p(G)$ splits as a graph of groups $\Delta$ whose edge groups are trivial, whose vertex groups are conical or boundary subgroups of $G'$, and such that each boundary subgroup of $p(G)$ is elliptic in the Bass-Serre tree $T$ of $\Delta$. This splitting is non-trivial because $p(G)$ is not contained in an extended boundary subgroup or an extended conical subgroup of $G'$. By definition of a morphism of orbifolds, each boundary subgroup of $G$ is contained in a boundary subgroup of $p(G)$, so is elliptic in $T$. Then it follows from Proposition \ref{cutting} that there exists a simple loop on $O$ pinched by $p$. This is a contradiction.
\end{proof}
Let $G$ be the fundamental group of a conical hyperbolic orbifold with boundary. We will associate to $G$ a number, denoted by $k(G)$ and called the complexity of $G$, such that the following proposition holds.

\begin{prop}\label{100}Let $O$ and $O'$ be conical hyperbolic orbifolds, with non-empty boundary. Denote by $G$ and $G'$ their fundamental groups. Let $p:G\rightarrow G'$ be a morphism of orbifolds. If $p(G)$ has finite index in $G'$, then $k(G)\geq k(G')$, with equality if and only if $p$ is an isomorphism.
\end{prop}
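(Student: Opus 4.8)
\emph{Proof proposal.}\ Although $k$ is only about to be defined, the argument uses only that $k(\pi_1(O))$ is a strictly increasing function of $-\chi^{\mathrm{orb}}(O)$ (I expect it to be $-\chi^{\mathrm{orb}}(O)$ itself): so $k$ is positive on hyperbolic orbifolds with non-empty boundary, it strictly increases under proper finite covers (such a cover multiplies $-\chi^{\mathrm{orb}}$ by its degree $>1$), and $k(\pi_1(O))\geq k(\pi_1(O'))$ holds if and only if $\chi^{\mathrm{orb}}(O)\leq\chi^{\mathrm{orb}}(O')$. I would prove the statement by factoring $p$ as $G\twoheadrightarrow p(G)\hookrightarrow G'$ and treating the two factors separately.

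\emph{The inclusion $p(G)\hookrightarrow G'$.}\ Since $p(G)$ has finite index $d:=[G':p(G)]$ in $G'=\pi_1(O')$, orbifold covering theory (the universal cover of a hyperbolic $2$-orbifold being $\mathbb{H}^2$) identifies $p(G)$ with $\pi_1(\widehat{O'})$ for a degree-$d$ cover $\widehat{O'}\to O'$; a finite cover of a conical hyperbolic orbifold with non-empty boundary is again one, and $\chi^{\mathrm{orb}}(\widehat{O'})=d\,\chi^{\mathrm{orb}}(O')$, so $k(p(G))\geq k(G')$ with equality if and only if $d=1$.

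\emph{The surjection $q:G\twoheadrightarrow p(G)$.}\ One first checks that $q$ is a morphism of orbifolds from $\pi_1(O)$ onto $\pi_1(\widehat{O'})$: the extended boundary subgroups of $\widehat{O'}$ are precisely the subgroups $p(G)\cap gBg^{-1}$ with $B$ a boundary subgroup of $O'$ and $g\in G'$ — each such intersection being a finite-index, hence nontrivial, subgroup of the infinite cyclic group $gBg^{-1}$ — so the $p$-image of a boundary subgroup of $O$ lands injectively into one of them, and injectivity on finite subgroups is inherited from $p$. Next I would reduce to free groups: choose a torsion-free (hence free) finite-index subgroup $\Gamma'\leq\pi_1(\widehat{O'})$, of index $m$ and rank $n'$, and set $\Gamma:=q^{-1}(\Gamma')\leq\pi_1(O)$. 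Since $q$ is surjective, $\Gamma$ has index $m$ in $\pi_1(O)$; since $q$ is injective on finite subgroups, $\Gamma$ is torsion-free, hence free of some rank $n$; and $q$ restricts to a surjection $\Gamma\twoheadrightarrow\Gamma'$, i.e.\ $F_n\twoheadrightarrow F_{n'}$, so $n\geq n'$. Multiplicativity of the rational Euler characteristic under finite index then gives $-\chi^{\mathrm{orb}}(O)=\tfrac{n-1}{m}\geq\tfrac{n'-1}{m}=-\chi^{\mathrm{orb}}(\widehat{O'})$, that is $k(G)\geq k(p(G))$. If equality holds, then $n=n'$, so $q|_\Gamma$ is a surjective endomorphism of the free group $F_n$, hence an isomorphism by Hopfianity; thus $\ker q\cap\Gamma=1$, so $\ker q$ is finite (it meets the finite-index subgroup $\Gamma$ trivially), and therefore $\ker q=1$ because $q$ is injective on finite subgroups. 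Hence equality forces $q$ to be an isomorphism.

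\emph{Conclusion and the main difficulty.}\ Combining the two steps, $k(G)\geq k(p(G))\geq k(G')$. If $p$ is an isomorphism then $d=1$ and $q=p$ is an isomorphism, so both inequalities are equalities and $k(G)=k(G')$. Conversely, $k(G)=k(G')$ forces $d=1$ in the second inequality (using $k(G')>0$), and then $q=p$ to be an isomorphism via the equality case of the first. The delicate part is the bookkeeping with torsion — ensuring the relevant preimages and kernels are torsion-free so that one may legitimately pass to surjections of free groups — and, just before that, verifying that the orbifold-morphism structure (boundary subgroups into boundary subgroups, injectivity on finite subgroups) genuinely survives the passage to the cover $\widehat{O'}\to O'$; once $k$ has the expected shape, the Euler-characteristic estimate itself is short.
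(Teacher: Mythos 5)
Your proof is correct and factors the morphism the same way as the paper (as $G\twoheadrightarrow p(G)\hookrightarrow G'$, handling the inclusion by the orbifold covering correspondence and multiplicativity of $\chi^{\mathrm{orb}}$), but the treatment of the surjection $q:G\twoheadrightarrow p(G)$ is genuinely different from the paper's. The paper proves a standalone combinatorial lemma (Lemma~\ref{102}): given an epimorphism between free products of finite groups and a free group that is injective on finite subgroups, the complexity cannot increase, with equality only for isomorphisms. Its proof runs a Stallings-fold argument on the Bass-Serre trees, tracking how each collapse or fold strictly decreases (or preserves) $k$. You instead pass to a torsion-free finite-index subgroup $\Gamma'\leq p(G)$ and its preimage $\Gamma=q^{-1}(\Gamma')$, observe that injectivity of $p$ on finite subgroups makes $\Gamma$ torsion-free, hence free, so $q|_\Gamma$ is a surjection $F_n\twoheadrightarrow F_{n'}$, giving $n\geq n'$ by abelianization; multiplicativity of $\chi$ under finite index converts this into $k(G)\geq k(p(G))$, and Hopfianity of $F_n$ handles the equality case. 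Your route is somewhat more elementary (it reduces everything to classical facts about free groups) and avoids the folding bookkeeping; the paper's Lemma~\ref{102} is a bit more general and reusable, since it applies to arbitrary epimorphisms between Stallings splittings with trivial edge groups rather than only to those arising from orbifold covers. One small remark: you do not actually need to verify that $q$ carries boundary subgroups to boundary subgroups of $\widehat{O'}$ — the only property of $q$ your argument uses is injectivity on finite subgroups, inherited directly from $p$ — so that paragraph can be trimmed.
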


We need two definitions.

\begin{de}[Euler characteristic]Let $O$ be a conical orbifold with $m$ conical points of orders $p_1,\ldots ,p_m$, and let $\Sigma$ be the underlying surface. The Euler characteristic of $O$ is\[\chi(O):=\chi(\Sigma)-\sum_{i=1}^{m} \left(1-\dfrac{1}{p_i}\right).\] A compact orbifold is hyperbolic if and only if $\chi(O)<0$.
\end{de}

\begin{de}[Complexity of a virtually free group]Let $G$ be a virtually free group, and let $\Delta$ be a Stallings decomposition of $G$. Denote by $V(\Delta)$ the set of vertices of $\Delta$, and by $E(\Delta)$ its set of edges. We define the complexity $k(\Delta)$ of $\Delta$ as follows: \[k(\Delta):=\sum_{e\in E(\Delta)}\dfrac{1}{\vert G_e\vert}-\sum_{v\in V(\Delta)}\dfrac{1}{\vert G_v\vert}.\]
This number does not depend on $\Delta$. We define the complexity of $G$ as $k(G):=k(\Delta)$ for any decomposition $\Delta$ of $G$ as a graph of groups with finite edge groups and finite vertex groups.
\end{de}

The Euler characteristic and the complexity $k$ defined above are linked through the following lemma.

\begin{lemme}\label{101}Let $O$ be a conical hyperbolic orbifold. Assume that $O$ possesses at least one boundary component, so that $\pi_1(O)$ is virtually free. Then $k(\pi_1(O))=-\chi(O)$.
\end{lemme}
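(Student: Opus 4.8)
The plan is to compute both sides of the identity directly from explicit graph-of-groups presentations and check they agree. First I would recall that a conical hyperbolic orbifold $O$ with at least one boundary component has virtually free fundamental group, so $\pi_1(O)$ admits a Stallings decomposition, and by the remark following the definition of complexity, $k(\pi_1(O))$ can be computed from any graph-of-groups decomposition with finite edge and vertex groups; this freedom is what makes the verification tractable.

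The key step is to produce a convenient such decomposition of $\pi_1(O)$ reflecting the orbifold structure. Write $O$ as a disc (or more generally the underlying surface $\Sigma$, of Euler characteristic $\chi(\Sigma)$, with some boundary components and handles) carrying $m$ conical points of orders $p_1,\dots,p_m$. I would decompose $\pi_1(O)$ as a graph of groups whose vertex groups are: one vertex with trivial group for the "surface part" of the disc/surface, and one vertex $\mathbb{Z}/p_i\mathbb{Z}$ for each conical point, with trivial edge groups connecting the conical vertices to the central vertex, plus the loops (with trivial edge and vertex groups) accounting for the genus and the boundary components. Concretely, since $\pi_1(O)$ is the free product of $\mathbb{Z}/p_1\mathbb{Z},\dots,\mathbb{Z}/p_m\mathbb{Z}$ with a free group whose rank is determined by $\Sigma$ and the number of boundary components, I would use the wedge-of-circles-and-cone-points decomposition: a single central vertex with trivial group, $m$ pendant edges (trivial) to vertices $\mathbb{Z}/p_i\mathbb{Z}$, and $b$ loops at the central vertex, where $b = 1 - \chi(\Sigma) - (\text{number of boundary components}) + (\text{number of boundary components}) = 1-\chi(\Sigma)$ accounts for the free rank once a boundary component has been used to reduce the genus/handle count. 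Then
\begin{align*}
k(\pi_1(O)) &= \sum_{e\in E(\Delta)}\frac{1}{|G_e|} - \sum_{v\in V(\Delta)}\frac{1}{|G_v|}
= \Bigl(m + 2b\Bigr) - \Bigl(1 + \sum_{i=1}^m \frac{1}{p_i} + b\Bigr)\\
&= m + b - 1 - \sum_{i=1}^m \frac{1}{p_i}
= -\chi(\Sigma) + \sum_{i=1}^m\Bigl(1 - \frac{1}{p_i}\Bigr)
= -\chi(O),
\end{align*}
using $b = 1-\chi(\Sigma)$ and the definition $\chi(O) = \chi(\Sigma) - \sum_{i}(1-\frac{1}{p_i})$. (Each loop contributes two half-edge terms $\tfrac12+\tfrac12=1$ to the edge sum and one trivial vertex, so net contribution $b$; the careful bookkeeping of loops versus edges is where signs must be watched.)

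The main obstacle I anticipate is purely combinatorial: getting the count of loops $b$ correct in terms of $\chi(\Sigma)$ and the number $n\ge 1$ of boundary components, and being careful about whether boundary components of $O$ correspond to extra free generators or are "absorbed" into the decomposition — for a surface with boundary the fundamental group is already free of rank $1-\chi(\Sigma)$, so no further generators are needed for the boundary beyond what $\chi(\Sigma)$ already encodes. A clean way to sidestep any ambiguity is to argue by induction: verify the formula for the base cases (a disc with one or two cone points, a once-punctured torus, etc.) and then check that both $k(\pi_1(O))$ and $-\chi(O)$ change the same way under the elementary moves of adding a handle (changes $\chi(\Sigma)$ by $-2$, changes the free rank by $2$), adding a boundary component (changes $\chi(\Sigma)$ by $-1$, changes the free rank by $1$), and adding a cone point of order $p$ (changes $\chi(O)$ by $-(1-\frac1p)$, and in the decomposition adds one pendant edge and one $\mathbb{Z}/p\mathbb{Z}$ vertex, changing $k$ by $1 - \frac1p$). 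Since the invariance of $k(\Delta)$ under the choice of $\Delta$ is already granted, this reduces everything to a finite check plus three one-line computations.
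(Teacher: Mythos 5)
Your proof is correct, but it takes a genuinely different route from the paper's. The paper cuts $O$ along a maximal system of disjoint properly embedded, non-boundary-parallel arcs, so that the complement consists of annuli and discs with one cone point; it then uses the additivity $\chi(O)=\sum_i\chi(O_i)-n$ together with $k(A\ast B)=k(A)+k(B)+1$ (so $k(\pi_1(O))=\sum_i k(\pi_1(O_i))+n$), reducing the lemma to the two immediate base cases $k(\mathbb{Z})=0=-\chi(\text{annulus})$ and $k(\mathbb{Z}/p)=-1/p=-\chi(\text{one-cone-point disc})$. You instead write $\pi_1(O)\cong \mathbb{Z}/p_1\ast\cdots\ast\mathbb{Z}/p_m\ast F_b$ with $b=1-\chi(\Sigma)$ directly from a presentation (using one boundary component to eliminate the surface relation), exhibit an explicit Stallings graph of groups (wedge of $b$ loops and $m$ pendant cone-point vertices at a trivial central vertex), and read off $k=m+b-1-\sum 1/p_i=-\chi(O)$; you also offer an alternative induction on elementary moves. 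Both approaches are elementary and correct; the paper's arc-cutting version is slightly cleaner bookkeeping-wise (it never needs the explicit rank $b$, and it automatically handles non-orientable surfaces without special casing), whereas yours is more concrete and makes the free-product structure of $\pi_1(O)$ visible. Two small remarks on presentation: your edge/vertex count $(m+2b)$ vs.\ $(1+\sum 1/p_i+b)$ implicitly subdivides each loop into two edges and an extra trivial vertex; this is fine and gives the same $k$, but it should be said so a reader isn't confused by the "half-edge" language. Also, the line computing $b$ reads as if terms cancel miraculously -- better to state directly that a compact surface with $n\geq 1$ boundary components has free fundamental group of rank $1-\chi(\Sigma)$, and that the $m$ cone points contribute the cyclic free factors on top of this.
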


\begin{proof}Let $\mathcal{C}$ be a maximal set of properly embedded arcs in $O$ that are pairwise non parallel and non boundary-parallel. Let $n$ be the cardinality of $\mathcal{C}$. Let $m$ be the number of connected components of $O\setminus\mathcal{C}$, denoted by $O_1,\ldots ,O_m$. Each connected component $O_i$ of $O\setminus\mathcal{C}$ is an annulus or a disc with one singular point. One can see that $\chi(O)$ is equal to $\sum_{1\leq i\leq m}\chi(O_i)-n$, and that $k(\pi_1(O))$ is equal to $\sum_{1\leq i\leq m} k(\pi_1(O_i))+n$ (since $k(A\ast B)=k(A)+k(B)+1$ for any virtually free groups $A$ and $B$). So the lemma follows from the following observation: for each $1\leq i\leq m$, $k(\pi_1(O_i))=-\chi(O_i)$. 
\end{proof}

Now, Proposition \ref{100} follows easily from the lemma below.

\begin{lemme}\label{102}Let $G$ and $G'$ be two free products of finite groups and of a free group. Let $\Delta$ and $\Delta'$ be two Stallings decomposition of $G$ and $G'$ respectively. Assume that edge groups of $\Delta$ and $\Delta'$ are trivial. Let $\phi : G\twoheadrightarrow G'$ be an epimorphism that is injective on finite subgroups of $G$. Then $k(G)\geq k(G')$, with equality if and only if $\phi$ is injective.
\end{lemme}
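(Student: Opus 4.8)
The plan is to reduce the statement to a Grushko-type normal form and then to a counting argument on the rank parameters of the free products. Write $G \cong A_1 \ast \cdots \ast A_p \ast F_s$ and $G' \cong B_1 \ast \cdots \ast B_q \ast F_t$, where the $A_i$ and $B_j$ are the (nontrivial) finite vertex groups of $\Delta$ and $\Delta'$ and $F_s$, $F_t$ are the free parts (whose ranks equal the first Betti numbers of $\Delta$ and $\Delta'$). From the definition of $k$ one computes directly that $k(G) = -1 + \sum_{i=1}^p \bigl(1 - \tfrac{1}{|A_i|}\bigr) + s$ and similarly for $G'$; indeed, for a connected graph of groups with trivial edge groups one has $\sum_{e} \tfrac{1}{|G_e|} = |E(\Delta)|$ and, choosing a spanning tree, $|E(\Delta)| - |V(\Delta)| + 1 = s$, so that $k(\Delta) = |E(\Delta)| - \sum_v \tfrac{1}{|G_v|} = (s - 1 + |V(\Delta)|) - \sum_v \tfrac{1}{|G_v|} = s - 1 + \sum_v \bigl(1 - \tfrac{1}{|G_v|}\bigr)$, and the trivial vertices contribute $0$ to the last sum. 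So it suffices to compare these two explicit expressions.

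Next I would use the Kurosh subgroup theorem together with the fact that $\phi$ is an epimorphism injective on finite subgroups. Since $\phi$ is injective on finite subgroups, each $A_i$ maps isomorphically onto a finite subgroup of $G'$, hence (after conjugation) into one of the free factors $B_j$ — a finite subgroup of a free product of finite groups and a free group is conjugate into one of the finite free factors. The key point is that the induced map on the set of conjugacy classes of maximal finite subgroups is surjective: every $B_j$ is a maximal finite subgroup of $G'$, so, $\phi$ being onto, $B_j$ is contained in $\phi(g A_i g^{-1})$ for some $i$ and $g$, and by maximality equality holds. Distinct $B_j$'s are non-conjugate, so this yields an injection $j \mapsto i(j)$ from $\{1,\dots,q\}$ into $\{1,\dots,p\}$ with $|B_j| = |A_{i(j)}|$. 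In particular $q \le p$ and $\sum_j (1 - \tfrac1{|B_j|}) \le \sum_i (1 - \tfrac1{|A_i|})$, with equality iff $\phi$ induces a bijection on conjugacy classes of maximal finite subgroups, taking each to an isomorphic one.

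It remains to control the free-rank terms $s$ and $t$. Collapsing $G \to G/\langle\!\langle A_1,\dots,A_p\rangle\!\rangle = F_s$ and similarly $G' \to F_t$, and observing these collapsings are compatible with $\phi$ (because $\phi$ sends each $A_i$ into a conjugate of some $B_j$, hence into the kernel of the second collapsing), we get an epimorphism $\bar\phi : F_s \twoheadrightarrow F_t$, whence $s \ge t$. Putting the three inequalities together gives $k(G) \ge k(G')$. For the equality case: $k(G) = k(G')$ forces $s = t$, forces the injection $j \mapsto i(j)$ to be a bijection onto $\{1,\dots,p\}$ (so $p = q$), and forces $|A_i| = |B_{j}|$ for the matched pairs. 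Then $\phi$ restricted to each $A_i$ is an isomorphism onto a conjugate of $B_{i(j)}$, and $\bar\phi$ is an epimorphism between free groups of equal finite rank, hence an isomorphism. A standard argument with the Kurosh decomposition of $\ker\phi$ — a free product of a free group and conjugates of subgroups of the $A_i$, all of which must now be trivial — shows $\ker\phi = 1$, i.e. $\phi$ is injective.

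The main obstacle I expect is the equality case: deducing injectivity of $\phi$ from the numerical equality is not purely a counting matter, and the cleanest route is to invoke the Kurosh subgroup theorem for $\ker\phi \le G$, note each Kurosh free factor of $\ker\phi$ that meets a conjugate of $A_i$ is a subgroup of a finite group, and then argue that both the ``free rank'' of $\ker\phi$ and each such finite intersection must vanish, precisely because $s=t$ and because $\phi$ is bijective on the conjugacy classes of maximal finite subgroups with isomorphic matches. One should double-check that this accounting of $\ker\phi$ via Kurosh is consistent with the rank computation of the first paragraph, for instance by using that the first Betti number is additive under free products in the appropriate sense (Grushko), which is exactly what the formula $k(A \ast B) = k(A) + k(B) + 1$ encodes.
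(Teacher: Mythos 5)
Your approach is genuinely different from the paper's (which builds a $\phi$-equivariant map between Bass-Serre trees and performs a sequence of collapse and fold moves, tracking the complexity at each step), but it has a gap at the crucial step.

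The claim that every maximal finite free factor $B_j$ of $G'$ satisfies $B_j \subset \phi(gA_ig^{-1})$ for some $i$ and $g$, giving an injection from conjugacy classes of maximal finite subgroups of $G'$ into those of $G$, is false. Surjectivity of $\phi$ does not force every element of $B_j$ to have a preimage lying in a single conjugate of a single $A_i$; elements of $B_j$ can be images of infinite-order elements of $G$. Concretely, take $G = \mathbb{Z}/2 \ast \mathbb{Z}$ and $G' = \mathbb{Z}/2 \ast \mathbb{Z}/2$, with $\phi$ sending the $\mathbb{Z}/2$ factor identically to the first $\mathbb{Z}/2$ and the generator of $\mathbb{Z}$ to the generator of the second $\mathbb{Z}/2$. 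Then $\phi$ is surjective and injective on finite subgroups (every finite subgroup of $G$ is conjugate into the single $\mathbb{Z}/2$ factor), but $G'$ has two conjugacy classes of maximal finite subgroups while $G$ has only one, so there is no injection. In particular your intermediate inequality $\sum_j(1 - 1/|B_j|) \le \sum_i(1 - 1/|A_i|)$ fails here ($1 > 1/2$); the lemma still holds because the free-rank terms compensate ($s=1$, $t=0$, and $k(G)=1/2 > 0 = k(G')$). So the total inequality $k(G) \ge k(G')$ cannot be split into your two separate inequalities, and the proof does not go through as written. The first paragraph (the explicit formula for $k$ in terms of Grushko data) and the construction of the induced epimorphism $\bar\phi:F_s\twoheadrightarrow F_t$ on free parts are fine, but they do not suffice, and you will need a more global accounting — as in the paper's approach via folds, which simultaneously tracks the interplay between vertex collapses and rank changes.
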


\begin{proof100}Denote by $d$ the index of $p(G)$ in $G'$. There exists a covering orbifold $O''$ of $O'$ of degree $d$ such that $p(G)\simeq \pi_1(O'')$. As a consequence, $\chi(O'')=d\chi(O')$, so $k(p(G))=dk(G')$ thanks to Lemma \ref{101}. According to Lemma \ref{102}, $k(G)\geq k(p(G))$. Now, assume that $k(G)=k(G')$. Then $d=1$, so $p$ is surjective, and $p$ is injective by Lemma \ref{102}. Hence, $p$ is an isomorphism.
\end{proof100}

\begin{proof102}If $\Delta$ is reduced to a point, then $\phi$ is obviously injective. From now on, we will suppose that $\Delta$ has at least two vertices. Let $T,T'$ be Bass-Serre trees of $\Delta,\Delta'$ respectively. We build a $\phi$-equivariant map $f:T\rightarrow T'$ in the following way: for every vertex $v$ of $T$, there exists a vertex $v'$ of $T'$ such that $\phi(G_v)=G'_{v'}$. Moreover, $v'$ is unique since $\phi$ is injective on finite subgroups, and edge groups of $\Delta'$ are trivial. We let $f(v)=v'$. Next, if $e$ is an edge of $T$, with endpoints $v$ and $w$, there exists a unique path $e'$ from $f(v)$ to $f(w)$ in $T'$. We let $f(e)=e'$. Let us denote by $d'$ the natural distance function on $T'$. Up to subdivising the edges of $T$, we can assume that, for every adjacent vertices $v,w\in T$, $d'(f(v),f(w))\in\lbrace 0,1\rbrace$. We will prove that $\phi$ can be written as a composition $i\circ \pi_n\circ \cdots\circ \pi_0$, with $n\geq 0$, $\pi_0=\mathrm{id}$ and $i$ injective, such that $k(\pi_{\ell}\circ \cdots \circ \pi_0(G))\geq k(\pi_{\ell+1}\circ \cdots \circ \pi_0(G))$ for every $0\leq \ell<n$ (if $n>0$), with equality if and only if $\pi_{\ell+1}=\mathrm{id}$.

\textbf{Step 1.} Assume that there exist two adjacent vertices $v,w$ such that $d'(f(v),f(w))=0$. Let $e$ be the edge between $v$ and $w$. We collapse $e$ in $T$, as well as all its translates under the action of $G$. Collapsing $e$ gives rise to a new vertex $x$ labelled by $\langle G_v,G_w\rangle$ if $v$ and $w$ are not in the same orbit, or $\langle G_v,g\rangle$ if $w=g\cdot v$. Call $S_1$ the resulting tree. Now, let $N$ be the kernel of the restriction of $\phi$ to $G_x$, let $T_1:=S_1/\langle\langle N\rangle\rangle$, $G_1:=G/\langle\langle N\rangle\rangle$ and $h_1: T \twoheadrightarrow T_1$, $\pi_1 : G\twoheadrightarrow G_1$ the associated surjections. The homomorphism $\phi$ factors through $\pi_1$ as $\phi=\phi_1\circ \pi_1$, and the map $f$ factors through $h_1$ as $f=f_1\circ h_1$. Since $T'$ has finite vertex groups, the stabilizer $G_x/N\simeq \phi(G_x)$ of $h_1(x)$ in $T_1$ is finite, so $T_1/G_1$ is a Stallings splitting of $G_1$. Thus, $k(G_1)=k(T/G_1)$ by definition. Let us compare $k(G_1)$ with $k(G)$. It is not hard to see that 
\begin{equation*}
k(G)-k(G_1)= \begin{cases}
             1-\dfrac{1}{\vert G_v\vert}-\dfrac{1}{\vert G_w\vert}+\dfrac{1}{\vert G_x/N\vert} & \text{if} \ v \ \text{and} \ w \ \text{are not in the same orbit}  \\          
             1-\dfrac{1}{\vert G_v\vert}+\dfrac{1}{\vert G_x/N\vert} & \text{if} \ v \ \text{and} \ w \ \text{are in the same orbit} \\
       \end{cases}.
\end{equation*}
Hence, if $v$ and $w$ are in the same orbit, it is clear that $k(G)>k(G_1)$. If $v$ and $w$ are not in the same orbit, there are four distinct cases: if $\vert G_v\vert \geq 2$ and $\vert G_w\vert \geq 2$, it is clear that $k(G)>k(G_1)$; if $\vert G_v\vert =1$ and $\vert G_w\vert \geq 2$ (respectively $\vert G_w\vert =1$ and $\vert G_v\vert \geq 2$), then $G_x=G_w$ (respectively $G_x=G_v$) and $k(G)\geq k(G_1)$ with equality if and only if $N=\lbrace 1\rbrace$, i.e.\ $\pi_1=\mathrm{id}$; if $\vert G_v\vert=\vert G_w\vert =1$, then $G_x=N=\lbrace 1\rbrace$, i.e.\ $\pi_1=\mathrm{id}$.

If the $\phi_1$-equivariant map $f_1 : T_1\rightarrow T'$ collapses some edge, we repeat the previous operation. Since $T$ has only finitely many orbits of edges under the action of $G$, the procedure terminates after finitely many steps. So we can assume, without loss of generality, that $f_1$ sends adjacent vertices on adjacent vertices.

\vspace{1mm}

\textbf{Step 2.} Assume that $f_1$ folds some pair of edges, as pictured below.

\begin{center}
\begin{tikzpicture}[scale=1]
\node[draw,circle, inner sep=1.7pt, fill, label=below:{$w$}] (A1) at (2,0) {};
\node[draw,circle, inner sep=1.7pt, fill, label=below:{${w'}$}] (A2) at (2,2) {};
\node[draw,circle, inner sep=1.7pt, fill, label=below:{$v$}] (A3) at (0,1) {};
\node[draw=none, label=below:{e}] (B1) at (1,0.5) {};
\node[draw=none, label=below:{e'}] (B2) at (1,2.2) {};
\node[draw=none, label=below:{}] (B3) at (7,2) {};
\node[draw,circle, inner sep=1.7pt, fill, label=above:{$f_1(w)=f_1(w')$}] (A4) at (8,1) {};
\node[draw,circle, inner sep=1.7pt, fill, label=below:{$f_1(v)$}] (A5) at (6,1) {};

\draw[-,>=latex] (A3) to (A1) ;
\draw[-,>=latex] (A3) to (A2);
\draw[-,>=latex] (A4) to (A5);
\draw[->,>=latex, dashed] (3,1) to (5,1);
\end{tikzpicture}
\end{center}
Let us fold $e$ and $e'$ together in $T_1$, as well as all their translates under the action of $G_1$. Note that $e$ and $e'$ are not in the same $G_v$-orbit since $T'$ has trivial edge stabilizers and $\phi$ is injective on $G_v$. Folding $e$ and $e'$ together gives rise to a new vertex $x$ labelled by $\langle G_w,G_w'\rangle$ if $w$ and $w'$ are not in the same orbit, or $\langle G_w,g\rangle$ if ${w'}=g\cdot w$. Call $S_2$ the resulting tree. Now, let $N$ be the kernel of the restriction of $\phi_1$ to $(G_1)_x$, let $T_2:=S_2/\langle\langle N\rangle\rangle$, $G_2:=G_1/\langle\langle N\rangle\rangle$ and $h_2: T_1 \twoheadrightarrow T_2$, $\pi_2 : G_1\twoheadrightarrow G_2$ the associated surjections. The homomorphism $\phi_1$ factors through $\pi_2$ as $\phi_1=\phi_2\circ \pi_2$, and the map $f_1$ factors through $h_2$ as $f_1=f_2\circ h_2$. Since $T'$ has finite vertex groups, the stabilizer $(G_1)_x/N\simeq \phi_1((G_1)_x)$ of $h_2(x)$ in $T_2$ is finite, so $T_2/G_2$ is a Stallings splitting of $G_2$. As in the first step, we can see that $k(G_1)>k(G_2)$. Again, we can repeat this operation only finitely many times since $T$ has only finitely many orbits of edges under the action of $G$. At the end, with obvious notations, we get a $\phi_n$-equivariant map $f_n : T_n\rightarrow T'$ that is locally injective, so injective. It remains to prove that $\phi_n$ is injective: if $\phi_n(g)=1$, then for every vertex $v$ of $T_n$, $f_n(gv)=f_n(v)$, so $gv=v$. Since $G_n$ acts on $T_n$ with trivial edge stabilizers, we get $g=1$.\end{proof102}

We deduce the following result from Proposition \ref{pinchbis} and Proposition \ref{100}.

\begin{prop}\label{147}Let $O$ and $O'$ be conical hyperbolic orbifolds, with non-empty boundary. Denote by $G$ and $G'$ their fundamental groups. If $p:G\rightarrow G'$ is a non-pinching morphism of orbifolds such that $p(G)$ is not contained in a conical or boundary subgroup of $G'$, then $k(G)\geq k(G')$, with equality if and only if $p$ is an isomorphism. 
\end{prop}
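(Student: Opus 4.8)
The plan is to deduce Proposition \ref{147} directly from the two results it is advertised to follow from, namely Proposition \ref{pinchbis} and Proposition \ref{100}, with essentially no extra work. The point is that the hypotheses of Proposition \ref{147} are precisely engineered to make both earlier propositions applicable in turn.

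First I would invoke Proposition \ref{pinchbis}. We are given that $O$ and $O'$ are conical hyperbolic orbifolds with fundamental groups $G$ and $G'$, that $p : G \to G'$ is a non-pinching morphism of orbifolds, and that $p(G)$ is not contained in a conical or boundary subgroup of $G'$. These are exactly the hypotheses of Proposition \ref{pinchbis}, so we conclude that $[G' : p(G)] < +\infty$, i.e.\ $p(G)$ has finite index in $G'$.

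Next I would feed this into Proposition \ref{100}. Since $O$ and $O'$ are conical hyperbolic orbifolds with non-empty boundary, $p : G \to G'$ is a morphism of orbifolds, and we have just shown that $p(G)$ has finite index in $G'$, Proposition \ref{100} applies verbatim and yields $k(G) \geq k(G')$, with equality if and only if $p$ is an isomorphism. This is exactly the conclusion of Proposition \ref{147}, so the proof is complete. There is no real obstacle: the substance is entirely contained in Propositions \ref{pinchbis} and \ref{100} (and, through them, in Lemmas \ref{101} and \ref{102}), and the present statement is just the convenient packaging of the two into a single criterion phrased in terms of non-pinching morphisms rather than index hypotheses.
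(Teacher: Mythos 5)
Your proof is correct and is exactly the argument the paper intends: the paper simply states that Proposition \ref{147} is deduced from Propositions \ref{pinchbis} and \ref{100} without writing it out, and your chaining of those two results is the only reasonable reading of that remark.
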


We will now generalize the proposition above to finite extensions of conical hyperbolic orbifolds. First, we need the following lemma.

\begin{lemme}Let $O$ and $O'$ be conical hyperbolic orbifolds. Let $G$ and $G'$ be two finite extensions $F\hookrightarrow G\twoheadrightarrow \pi_1(O)$ and $F'\hookrightarrow G'\twoheadrightarrow \pi_1(O')$. If $p : G \rightarrow G'$ is a homomorphism whose restriction to $F$ is injective and whose image is infinite, then $p(F)\subset F'$. As a consequence, $p$ induces a homomorphism $q$ from $\pi_1(O)$ to $\pi_1(O')$.

\begin{center}
\begin{tikzcd}
F \arrow[r, hook]  \arrow[d,dashrightarrow]
& S \arrow[r, "\pi" ] \arrow[d,"f"]
& \pi_1(O) \arrow[d,dashrightarrow, "q"] \\
F' \arrow[r, hook]
& S' \arrow[r, "\pi'" ] 
& \pi_1(O') \\
\end{tikzcd}
\end{center}
\end{lemme}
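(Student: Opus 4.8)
The plan is to exploit the finiteness of $F$ together with the assumption that $p(G)$ is infinite. First I would observe that $p(F)$ is a finite subgroup of $G'$, hence it is contained in some point-stabilizer for any action of $G'$ on a tree with finite vertex groups along the edges of the orbifold; more directly, I want to show $p(F)$ lies in the kernel $F'$ of $\pi' : G' \twoheadrightarrow \pi_1(O')$. The key point is that $F' = \ker(\pi')$ is precisely the unique maximal \emph{finite normal} subgroup of $G'$: indeed $\pi_1(O')$ is a conical hyperbolic $2$-orbifold group, so it is virtually free (if $O'$ has boundary) or a genuine hyperbolic surface-type group with only conical singularities, and in all cases $\pi_1(O')$ has no non-trivial finite normal subgroup. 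So it suffices to show $\pi'(p(F))$ is a finite normal subgroup of $\pi_1(O')$ that is forced to be trivial.

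The step that needs a little care is normality: $F$ is normal in $G$, but $p$ need not be surjective, so $p(F)$ need not be normal in $G'$, only in $p(G)$. Here I would use that $p(G)$ is infinite. Since $p_{|F}$ is injective, $p(F)$ is a finite subgroup of $p(G)$, normal in $p(G)$, so $\pi'(p(F))$ is a finite normal subgroup of $\pi'(p(G)) \leq \pi_1(O')$. Now $\pi'(p(G))$ is infinite: if it were finite, then $p(G)$, being an extension of $\pi'(p(G))$ by $p(G)\cap F'$ (a finite group), would be finite, contradicting the hypothesis. An infinite subgroup of $\pi_1(O')$ is again virtually free and non-elementary or infinite cyclic / infinite dihedral; in every case such a subgroup $H$ of a conical hyperbolic orbifold group has the property that its finite normal subgroups are trivial — for a non-elementary subgroup this is because the intersection of its finite normal subgroup with all maximal virtually cyclic subgroups must be trivial by $K$-$\mathrm{CSA}$-type arguments, and for a virtually cyclic $H$ one checks directly in $\pi_1(O')$ that no non-trivial finite subgroup is normalized by an element of infinite order (again a consequence of $\pi_1(O')$ being $K$-$\mathrm{CSA}$, or of explicit structure). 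Hence $\pi'(p(F)) = 1$, i.e. $p(F) \subseteq F'$.

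Once $p(F) \subseteq F'$, the induced map $q : \pi_1(O) \to \pi_1(O')$ is well-defined by $q(\pi(g)) = \pi'(p(g))$: it is independent of the lift $g$ because any two lifts differ by an element of $F$, which $p$ sends into $F' = \ker \pi'$. This makes the displayed diagram commute, finishing the proof. The only genuine obstacle is the normality/triviality argument of the middle paragraph — establishing that an infinite subgroup of a conical hyperbolic $2$-orbifold group has no non-trivial finite normal subgroup — and I expect to dispatch it by invoking that $\pi_1(O')$ is $K$-$\mathrm{CSA}$ (Proposition \ref{hyp}) so that an element of infinite order $t$ in $\pi'(p(G))$ lies in a virtually abelian subgroup $M(t)$ equal to its normalizer, and a finite normal subgroup of $\pi'(p(G))$ would have to be normalized by $t$ hence contained in $M(t)$; but the finite subgroups of the $K$-virtually-torsion-free abelian group $M(t)$ that are normalized by the infinite-order element $t$ must be trivial.
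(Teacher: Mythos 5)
Your overall strategy coincides with the paper's: note that $p(F)$ is normal in $p(G)$ (since $F \trianglelefteq G$), observe that $p(G)$ is infinite, and derive a contradiction by showing that a non-trivial finite subgroup of $\pi_1(O')$ (namely $\pi'(p(F))$, if it were non-trivial) cannot be normalized by an infinite subgroup. However, the step you plan to use to close the argument — that ``the finite subgroups of the $K$-virtually-torsion-free abelian group $M(t)$ that are normalized by the infinite-order element $t$ must be trivial,'' allegedly as a consequence of $\pi_1(O')$ being $K$-$\mathrm{CSA}$ — is not a consequence of $K$-$\mathrm{CSA}$ and is in fact false for general $K$-$\mathrm{CSA}$ groups. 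The paper's own Example \ref{instable} furnishes a counterexample: $F_2 \times \mathbb{Z}/2\mathbb{Z}$ is hyperbolic, hence $K$-$\mathrm{CSA}$ by Proposition \ref{hyp}, yet for $t = a$ a generator of $F_2$ the group $M(t) = \langle a\rangle \times \mathbb{Z}/2\mathbb{Z}$ contains a non-trivial finite subgroup centralized (so certainly normalized) by $t$. Nothing in Definition \ref{CSA} rules out such a central finite factor in $M(t)$.

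What actually makes the argument go through is the specific structure of $\pi_1(O')$ as a cocompact conical Fuchsian group, i.e.\ its faithful, properly discontinuous action on $\mathbb{H}^2$ by orientation-preserving isometries. This is exactly what the paper invokes: a non-trivial finite subgroup $A \leq \pi_1(O')$ is elliptic and fixes a unique point of $\mathbb{H}^2$, so its normalizer stabilizes that point and is therefore a finite cyclic group. Equivalently, in $\mathrm{PSL}(2,\mathbb{R})$ the centralizer of a hyperbolic element is the one-parameter hyperbolic subgroup, which is torsion-free, so $M(t)$ for $t$ hyperbolic contains no non-trivial finite subgroup centralized by $t$; and for a reflection in an infinite dihedral $M(t)$, conjugation by $t$ moves the fixed point and so does not normalize the reflection subgroup. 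None of this is captured by the abstract $K$-$\mathrm{CSA}$ axioms. To repair your proof, replace the $K$-$\mathrm{CSA}$ invocation at the end with the elliptic/Fuchsian argument (or simply cite, as the paper does, that a non-trivial elliptic subgroup of a discrete subgroup of $\mathrm{PSL}(2,\mathbb{R})$ has finite normalizer), after which the rest of your plan, including the verification that the induced map $q$ is well-defined, is correct and matches the paper's.
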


\begin{proof}
Firstly, we make the following observation: if $A$ is a finite subgroup of $G'$ which is not contained in $F'$, then the normalizer $N_{G'}(A)$ of $A$ in $G'$ is finite. Indeed, if $A$ is not contained in $F'$, then $\pi(A)$ is a non-trivial finite subgroup of $O'$, so its normalizer is a finite cyclic group (since we can see $\pi(A)$ as an elliptic subgroup of $\mathrm{PSL}(2,\mathbb{R})$ acting on $\mathbb{H}^{2}$). Now, if $p(F)$ is not contained in $F'$, then $p(G)\subset N_{G'}(p(F))$, which is finite. This is a contradiction.
\end{proof}

\begin{prop}\label{complexity}Let $O$ and $O'$ be conical hyperbolic orbifolds, with non-empty boundary. Let $G$ and $G'$ be two finite extensions $F\hookrightarrow G\twoheadrightarrow \pi_1(O)$ and $F'\hookrightarrow G'\twoheadrightarrow \pi_1(O')$. Let $p:G\rightarrow G'$ be a non-pinching morphism of finite-by-orbifold groups such that $p(G)$ is not contained in an extended conical or boundary subgroup of $G'$. Then $k(G)\geq k(G')$, with equality if and only if $p$ is an isomorphism.
\end{prop}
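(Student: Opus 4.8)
The plan is to reduce Proposition \ref{complexity} to its surface-group counterpart, Proposition \ref{147}, by passing to the quotient orbifold groups and using the complexity comparison for virtually free groups (Lemma \ref{102}, Proposition \ref{100}) together with a Riemann--Hurwitz-type bookkeeping that relates $k(G)$ to $k(\pi_1(O))$ and the order $|F|$ of the fiber. First I would observe that, since $p(G)$ is infinite (it is not contained in an extended conical or boundary subgroup, hence not finite), the previous lemma applies and $p$ induces a homomorphism $q:\pi_1(O)\rightarrow\pi_1(O')$ fitting into the commutative diagram above, with $p(F)\subset F'$. I would then check that $q$ is itself a non-pinching morphism of orbifolds whose image is not contained in a conical or boundary subgroup of $\pi_1(O')$: non-pinching passes to the quotient because $C_\alpha=q_O^{-1}(\alpha)\simeq F\rtimes\mathbb{Z}$ maps to a finite group under $p$ if and only if $q(\alpha)$ has finite order (using $p(F)\subset F'$ and that $p$ is injective on $F$), and the image condition transfers because an extended conical/boundary subgroup of $G'$ is exactly the preimage under $\pi'$ of a conical/boundary subgroup of $\pi_1(O')$.

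Next I would relate the two complexities. The key identity is that for a finite-by-orbifold group $F\hookrightarrow G\twoheadrightarrow\pi_1(O)$ with $O$ having non-empty boundary, one has $k(G)=|F|\cdot k(\pi_1(O))$; this follows by taking a Stallings-type decomposition of $G$ lying above a decomposition of $\pi_1(O)$ into discs-with-one-singular-point and annuli (as in the proof of Lemma \ref{101}), noting that each vertex and edge group of the upstairs decomposition has order $|F|$ times the corresponding downstairs orbifold contribution — more precisely one verifies $k(G_v)=|F|\,k(\pi_1(O_v))$ for the elementary pieces and that the alternating sum scales accordingly. Combining this with $k(\pi_1(O))=-\chi(O)$ (Lemma \ref{101}) gives $k(G)=-|F|\chi(O)$, and similarly $k(G')=-|F'|\chi(O')$. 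Since $p(F)\subset F'$ and $p|_F$ is injective, $|F|\le|F'|$; and Proposition \ref{147} applied to $q$ gives $k(\pi_1(O))\ge k(\pi_1(O'))$, i.e.\ $-\chi(O)\ge-\chi(O')$, with equality iff $q$ is an isomorphism. Therefore $k(G)=|F|\,(-\chi(O))\ge |F|\,(-\chi(O'))$, but to conclude $k(G)\ge k(G')=|F'|\,(-\chi(O'))$ I need $|F|\ge|F'|$ on the nose, which is the reverse of what the inclusion $p(F)\subset F'$ gives.

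This tension is the main obstacle, and resolving it is where the real argument lies: one must show that $p(F)=F'$, equivalently that $F'$ is exactly the maximal finite normal subgroup hit by the image. The point is that $p(G)$ has finite index in $G'$ after replacing $O'$ by the covering orbifold $O''$ with $\pi_1(O'')\simeq q(\pi_1(O))$ (Proposition \ref{pinchbis} gives finite index downstairs); the fiber of $p(G)$ as a finite-by-orbifold group is $p(F)$, and since $F'$ is normal in $G'$ and $p(G)$ has finite index, $p(F)$ and $F'$ are commensurable finite normal subgroups, forcing $p(G)\cap F'$ to have finite index in $F'$ — but then normality of $F'$ in $G'$ together with the fact that conical/boundary points of $O''$ inject into those of $O'$ pins down $p(F)=F'\cap p(G)$ and ultimately $|p(G)/p(F)\text{-fiber}|$ matches $|G'/F'|$ up to the covering degree $d$, giving $k(G)=|F|\,k(\pi_1(O))\ge |F|\,d\,k(\pi_1(O'))=|p(F)|\,d\,k(\pi_1(O'))$, and one checks $|p(F)|\,d=|F'|$ exactly when $p(F)=F'$ and $d=1$. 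Then equality $k(G)=k(G')$ forces $d=1$ (so $q$ is onto), $q$ injective by Proposition \ref{147}, and $p|_F$ an isomorphism onto $F'$, whence $p$ is an isomorphism by the five lemma applied to the diagram. I would organize the write-up so that the fiber-comparison $p(F)=F'$ (or the weaker statement that equality of complexities forces it) is isolated as the crux, with the rest being the scaling identity $k(G)=|F|k(\pi_1(O))$ and a citation of Proposition \ref{147}.
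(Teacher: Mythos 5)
Your plan — reduce to Proposition \ref{147} via the induced map $q:\pi_1(O)\to\pi_1(O')$ and a scaling relation between $k(G)$ and $k(\pi_1(O))$ — is exactly the paper's strategy, but you have the scaling relation backwards, and this sign error is what manufactures the spurious ``tension'' that your second paragraph then tries (without full rigor) to resolve. From the definition $k(\Delta)=\sum_e 1/|G_e|-\sum_v 1/|G_v|$, if $\Delta$ is a Stallings decomposition of $\pi_1(O)$ and $\tilde\Delta$ the decomposition of $G$ lying above it, then each edge and vertex group upstairs has order $|F|$ times the one downstairs, so each summand $1/|G_{\tilde v}|$ is $1/|F|$ times the corresponding $1/|\pi_1(O)_v|$; hence $k(G)=k(\pi_1(O))/|F|$, equivalently $k(\pi_1(O))=|F|\,k(G)$. (Alternatively: $k=-\chi$, the rational Euler characteristic is multiplicative in short exact sequences, and $\chi(F)=1/|F|$.) You wrote $k(G)=|F|\,k(\pi_1(O))$. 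Note that your own verification step — ``each vertex group upstairs has order $|F|$ times the one downstairs'' — gives $k(G_v)=k(\pi_1(O_v))/|F|$, the reciprocal of what you asserted.

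With the correct relation everything you were fighting against disappears: $|F|\,k(G)=k(\pi_1(O))\geq k(\pi_1(O'))=|F'|\,k(G')$ by Proposition \ref{147}, and since $p(F)\subset F'$ with $p|_F$ injective gives $|F'|\geq|F|$, while $k(G')>0$, we get $|F'|k(G')\geq|F|k(G')$ and hence $k(G)\geq k(G')$. If equality holds, the chain forces $|F|=|F'|$ and $k(\pi_1(O))=k(\pi_1(O'))$, so $q$ is an isomorphism by Proposition \ref{147}, $p|_F$ is an isomorphism, and $p$ is an isomorphism by the five lemma — exactly the paper's proof. Your entire second paragraph, arguing towards $p(F)=F'$ via commensurability and covering degrees, is therefore not needed; it is also the shakiest part of the proposal, with several steps asserted rather than proved (``forcing $p(G)\cap F'$ to have finite index in $F'$,'' ``pins down $p(F)=F'\cap p(G)$,'' ``one checks $|p(F)|\,d=|F'|$''). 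Your first paragraph's preliminary checks (that $q$ is non-pinching with image not in a conical or boundary subgroup) are on the right track and just need to be stated cleanly.
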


\begin{proof}
$k(\pi_1(O))=\vert F\vert k(G)$ and $k(\pi_1(O'))=\vert F'\vert k(G')$. By the previous lemma, $p$ induces a morphism of orbifolds $q : \pi_1(O)\rightarrow \pi_1(O')$. According to Proposition \ref{147}, $k(\pi_1(O))\geq k(\pi_1(O'))$, so $\vert F\vert k(G)\geq \vert F'\vert k(G')$. But $p(F)\subset F'$, and $p$ is injective in restriction to $F$, so $\vert F'\vert\geq \vert F\vert$. As a consequence, $k(G)\geq k(G')$. Moreover, if $k(G)=k(G')$, then $k(\pi_1(O))\geq k(\pi_1(O'))$, so $q : \pi_1(O)\rightarrow \pi_1(O')$ is an isomorphism by Proposition \ref{147}, and $p_{\vert F}:F\rightarrow F'$ is an isomorphism, so $p$ is an isomorphism.
\end{proof}

\section{How to extract information from the JSJ using first-order logic}

\subsection{Preliminary examples}

Let $\Gamma$ be a hyperbolic group, and let $G$ be a finitely generated group such that $\mathrm{Th}_{\forall\exists}(\Gamma)=\mathrm{Th}_{\forall\exists}(G)$. For convenience, suppose that $G$ is one-ended. We saw in the previous section that $G$ does not contain $\mathbb{Z}^2$, and has a canonical JSJ splitting $\Delta$ over $\mathcal{Z}$ (the class of virtually cyclic groups with infinite center). Our aim is to prove that $G$ is a hyperbolic group. Since the JSJ splitting is acylindrical, and since the QH vertex groups are hyperbolic by definition, it suffices to prove that all non-QH vertex groups of $\Delta$ are hyperbolic (as a consequence of the Bestvina-Feighn combination theorem, see proposition \ref{BF92}). 

Hence, the main issue is to manage to extract information about non-QH vertex groups of $\Delta$ by means of first-order logic. We shall give two motivating examples. The simplest example arises when $\Gamma$ and $G$ are both rigid (that is, they are one-ended, their canonical JSJ splittings over $\mathcal{Z}$ are reduced to a point, and they are not finite-by-orbifold). 

\begin{ex}Let $\Gamma$ be a hyperbolic group, and let $G$ be a finitely generated group such that $\mathrm{Th}_{\forall\exists}(\Gamma)=\mathrm{Th}_{\forall\exists}(G)$. Suppose that $\Gamma$ and $G$ are rigid (i.e.\ do not split non-trivially over a virtually cyclic group). We shall prove that $\Gamma$ and $G$ are isomorphic. Since $G$ is a $\Gamma$-limit group, there exists a discriminating sequence $(f_n)$ of homomorphisms from $G$ to $\Gamma$ (see Section \ref{22}). By definition of the modular group, $\mathrm{Mod}(G)=\mathrm{Inn}(G)$, so it results easily from Sela's shortening argument \ref{sela2} that $f_n$ is necessarily injective for $n$ large enough. Hence, $G$ embeds into $\Gamma$. Now, using Corollary \ref{simple}, we prove in the same way that $\Gamma$ embeds into $G$. As a one-ended hyperbolic group, $\Gamma$ is co-Hopfian, so $G\simeq\Gamma$.
\end{ex}

The second example below is a little more complicated, and much more instructive. First of all, note that we cannot express the full statement of the shortening argument \ref{simple} in first-order logic, since precomposition by a modular automorphism is not expressible by a first-order formula in general. To deal with this problem, let's consider the following corollary of the shortening argument \ref{simple}, which follows immediately from the definition of the modular group.

\begin{co}\label{simple2}Let $\Gamma$ be a group that embeds into a hyperbolic group, and let $G$ be a one-ended finitely generated group. There exists a finite set $F\subset G\setminus\lbrace 1\rbrace$ with the following property: for every non-injective homomorphism $f : G\rightarrow\Gamma$, there exists a homomorphism $f' : G\rightarrow\Gamma$ such that 
\begin{itemize}
\item[$\bullet$]$\ker(f')\cap F\neq\varnothing$;
\item[$\bullet$]$f$ and $f'$ coincide, up to conjugacy by an element of $\Gamma$, on non-QH vertex groups of the $\mathcal{Z}$-JSJ splitting $\Delta$ of $G$, and on finite subgroups of $G$.
\end{itemize}
We say that $f$ and $f'$ are $\Delta$-related (see Definition \ref{reliés2} below).
\end{co}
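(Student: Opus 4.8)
The statement is an immediate consequence of Theorem~\ref{sela2} (the shortening argument) together with the very definition of the modular group $\mathrm{Mod}(G)$. First I would observe that, since $\Gamma$ embeds into a hyperbolic group, Corollary~\ref{simple} applies to $G$: there exist non-trivial elements $g_1,\ldots,g_k\in G$ such that for every non-injective homomorphism $f:G\to\Gamma$ there are a modular automorphism $\sigma\in\mathrm{Mod}(G)$ and an index $\ell$ with $f\circ\sigma(g_\ell)=1$. Set $F=\{g_1,\ldots,g_k\}\subset G\setminus\{1\}$; this is the finite set claimed in the statement.

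Now fix a non-injective homomorphism $f:G\to\Gamma$, and let $\sigma\in\mathrm{Mod}(G)$ and $\ell$ be as above, so that $f\circ\sigma(g_\ell)=1$. Define $f':=f\circ\sigma$. Then $g_\ell\in\ker(f')\cap F$, so the first bullet holds. For the second bullet, recall from Section~\ref{262} that by definition every $\sigma\in\mathrm{Mod}(G)$ acts by conjugation (by some element of $G$) on each non-QH vertex group of the $\mathcal{Z}$-JSJ splitting $\Delta$, and also on each finite subgroup of $G$. Hence, if $H$ is such a vertex group (or finite subgroup), there is $g\in G$ with $\sigma(h)=ghg^{-1}$ for all $h\in H$, and therefore $f'(h)=f(\sigma(h))=f(g)f(h)f(g)^{-1}$ for all $h\in H$; that is, $f$ and $f'$ agree on $H$ up to conjugation by $f(g)\in\Gamma$. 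This is precisely the assertion that $f$ and $f'$ are $\Delta$-related in the sense of Definition~\ref{reliés2}.

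There is essentially no obstacle here: the only mild subtlety is that the conjugating element of $\Gamma$ may depend on which vertex group (or finite subgroup) one looks at, but this is exactly what the notion of being $\Delta$-related allows, so no uniformity across vertex groups is needed. One should also note that $\sigma$ acts trivially on the underlying graph of $\Delta$ (again by definition of the modular group), which ensures that $f\circ\sigma$ and $f$ are compared with respect to the \emph{same} collection of vertex groups, making the statement well-posed. This completes the argument.
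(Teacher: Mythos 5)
Your argument is correct and is exactly the one the paper intends: the paper introduces Corollary~\ref{simple2} as a consequence of Corollary~\ref{simple} ``which follows immediately from the definition of the modular group,'' and your write-up simply makes that implication explicit by setting $F=\{g_1,\dots,g_k\}$, $f'=f\circ\sigma$, and observing that $\sigma$ acts by conjugation on non-QH vertex groups and finite subgroups while acting trivially on the underlying graph. No gaps; this matches the paper's approach.
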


Below is our second example.

\begin{ex}\label{exemple}Let $\Gamma$ be a hyperbolic group, and let $G$ be a finitely generated group such that $\mathrm{Th}_{\forall\exists}(\Gamma)=\mathrm{Th}_{\forall\exists}(G)$. Suppose that $\Gamma$ and $G$ are one-ended, and that there is no QH vertex in the JSJ decompositions of $\Gamma$ and $G$. Then $\Gamma$ and $G$ are isomorphic. In particular, $G$ is hyperbolic.
\end{ex}

\begin{proof9}
First, we prove that $G$ embeds into $\Gamma$. Argue by contradiction and suppose that every homomorphism from $G$ to $\Gamma$ is non-injective. By Corollary \ref{simple2}, there exists a finite set $F\subset G$ such that, for every homomorphism $f : G \rightarrow\Gamma$, there exists $f' : G\rightarrow\Gamma$ that kills an element of $F$ and that coincides with $f$ up to conjugacy on every vertex group of the canonical $\mathcal{Z}$-JSJ splitting $\Delta$ of $G$. One easily sees that this fact can be expressed by a $\forall\exists$-sentence verified by $\Gamma$ (see the next section for details). Since $\mathrm{Th}_{\forall\exists}(\Gamma)\subset\mathrm{Th}_{\forall\exists}(G)$, this sentence is also verified by $G$, and its interpretation in $G$ yields the following: for every endomorphism $f : G \rightarrow G$, there exists an endomorphism $f' : G\rightarrow G$ that kills an element of $F$ and that coincides with $f$ up to conjugacy on every vertex group of the canonical $\mathcal{Z}$-JSJ splitting $\Delta$ of $G$. Taking $f=\mathrm{id}_G$, we get an endomorphism of $G$ that kills an element of $F$ and that is inner in restriction to every vertex group. But we will prove further that such an endomorphism is necessarily injective (see Proposition \ref{lemmeperin}). This contradicts the fact that $f'$ kills an element of $F$. Hence, we have shown that $G$ embeds into $\Gamma$. Now, using Corollary \ref{simple2}, we prove in the same way that $\Gamma$ embeds into $G$. As a one-ended hyperbolic group, $\Gamma$ is co-Hopfian, so $G\simeq\Gamma$.
\end{proof9}

New difficulties arise when $\Gamma$ and $G$ are not supposed to be one-ended, or when the canonical $\mathcal{Z}$-JSJ splittings of $\Gamma$ and $G$ contain QH vertices. However, the example above highlights the crucial role played by endomorphims of $G$ that coincide up to conjugacy with the identity of $G$ on non-QH vertex groups. This example also brings out the key idea to obtain these special homomorphisms, due to Sela-Perin, that consists in expressing a consequence of the shortening argument \ref{sela2} by a $\forall\exists$-sentence that $\Gamma$ verifies (assuming $G$ is one-ended). Since $\Gamma$ and $G$ have the same $\forall\exists$-theory, $G$ satisfies this sentence as well, and its interpretation in $G$ provides us with a special endomorphism of $G$. This example leads us to the definition of related homomorphisms.

\subsection{Related homomorphisms}\label{32}

The following definition is similar (but slightly different) to Definition 5.9 and Definition 5.15 of \cite{Per11}.

\begin{de}[Related homomorphisms]\label{reliés2}
\normalfont
Let $G$ be a one-ended finitely generated $K$-$\mathrm{CSA}$ group and let $G'$ be a group. Let $\Delta$ be the canonical JSJ splitting of $G$ over $\mathcal{Z}$. Let $f$ and $f'$ be two homomorphisms from $G$ to $G'$. We say that $f$ and $f'$ are $\Delta$-related if the two following conditions hold:
\begin{itemize}
\item[$\bullet$]for every non-QH vertex $v$ of $\Delta$, there exists an element $g_v\in G'$ such that \[{f'}_{\vert G_v}=\iota_{g_v}\circ f_{\vert G_v};\]
\item[$\bullet$]for every finite subgroup $F$ of $G$, there exists an element $g\in G'$ such that \[{f'}_{\vert F}=\iota_{g}\circ f_{\vert F}.\]
\end{itemize}
Note that being $\Delta$-related is an equivalence relation on $\mathrm{Hom}(G,G')$. 
\end{de}

\begin{rque} 
The second condition above can be reformulated as follows: for every QH vertex $v$, and for every finite subgroup $F$ of $G_v$, there exists an element $g\in G'$ such that $f_{\vert F}=\iota_{g}\circ {f'}_{\vert F}$. Indeed, every finite group has Serre's property (FA), so every finite subgroup $F$ of $G$ is contained in a conjugate of some vertex group $G_w$ of $\Delta$. Furthermore, if $w$ is a non-QH vertex, it follows from the first condition that there exists an element $g\in G'$ such that $f_{\vert F}=\iota_{g}\circ {f'}_{\vert F}$. 
\end{rque}

\begin{de}[Preretraction]\label{pre}
\normalfont
Let $G$ be a finitely generated $K$-$\mathrm{CSA}$ group and let $H$ be a one-ended subgroup of $G$. Let $\Delta$ be the canonical JSJ splitting of $H$ over $\mathcal{Z}$. A preretraction from $H$ to $G$ is a homomorphism $H\rightarrow G$ that is $\Delta$-related to the inclusion of $H$ into $G$.
\end{de}

The following lemma shows that being $\Delta$-related can be expressed in first-order logic.

\begin{lemme}[compare with \cite{Per11} Lemma 5.18]\label{deltarelies}
Let $G$ be a finitely generated group that possesses a canonical $\mathcal{Z}$-JSJ splitting $\Delta$, and let $\lbrace g_1,\ldots ,g_n\rbrace$ be a generating set of $G$. Let $G'$ be a group. There exists an existential formula $\varphi(x_1,\ldots , x_{2n})$ with $2n$ free variables such that, for every $f,f'\in \mathrm{Hom}(G,G')$, $f$ and $f'$ are $\Delta$-related if and only if $G'$ verifies $\varphi\left(f(g_1),\ldots , f(g_n),f'(g_1),\ldots ,f'(g_n)\right).$
\end{lemme}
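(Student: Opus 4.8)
The plan is to write down explicitly the existential formula $\varphi$ and to check that its truth in $G'$ captures exactly the two bullet conditions of Definition~\ref{reliés2}. The key observation is that $\Delta$ being canonical and the group $G$ being finitely generated, there are only finitely many data to encode: the finite list of non-QH vertices $v_1,\dots,v_r$ of $\Delta$ (up to the action of $G$, i.e.\ one representative per orbit of vertices in the Bass--Serre tree), a finite generating set for each vertex group $G_{v_i}$ expressed as words $u_{i,1}(g_1,\dots,g_n),\dots,u_{i,m_i}(g_1,\dots,g_n)$ in the fixed generators of $G$, and finitely many finite subgroups of $G$ that it suffices to control — namely one representative $F_1,\dots,F_s$ of each conjugacy class of maximal finite subgroups, again with chosen generating words. (That finitely many conjugacy classes of finite subgroups suffice follows from the remark after Definition~\ref{reliés2}: every finite subgroup lies in a conjugate of a vertex group, so being correctly conjugated on the maximal ones handles all of them; and a group with a bound on the order of its finite subgroups has finitely many conjugacy classes of maximal finite subgroups — here $G$ is a $\Gamma$-limit group, so such a bound exists by Section~\ref{SD}.)

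Concretely, I would set, writing $\bar g = (g_1,\dots,g_n)$ and thinking of $x_1,\dots,x_n$ as $f(g_1),\dots,f(g_n)$ and $x_{n+1},\dots,x_{2n}$ as $f'(g_1),\dots,f'(g_n)$,
\[
\varphi(x_1,\dots,x_{2n}) \; : \; \exists z_1\cdots \exists z_r\, \exists t_1\cdots\exists t_s \;\;
\bigwedge_{i=1}^{r}\bigwedge_{k=1}^{m_i} u_{i,k}(x_{n+1},\dots,x_{2n}) = z_i\, u_{i,k}(x_1,\dots,x_n)\, z_i^{-1}
\;\wedge\;
\bigwedge_{j=1}^{s}\bigwedge_{\ell=1}^{p_j} v_{j,\ell}(x_{n+1},\dots,x_{2n}) = t_j\, v_{j,\ell}(x_1,\dots,x_n)\, t_j^{-1},
\]
where the $v_{j,\ell}$ are the chosen generating words for $F_j$. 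This is manifestly existential. The forward direction is immediate: if $f$ and $f'$ are $\Delta$-related, take $z_i = g_{v_i}$ (the conjugator provided by the first bullet for the vertex $v_i$) and $t_j = g$ (the conjugator from the second bullet for $F_j$); since $u_{i,k}$ generate $G_{v_i}$ and conjugation by a fixed element is a homomorphism, the displayed equalities hold when evaluated at $x_\bullet = f(g_\bullet), f'(g_\bullet)$. Conversely, if $G'$ satisfies $\varphi$ at these values, the witnesses $z_i, t_j$ show that ${f'}_{\vert G_{v_i}} = \iota_{z_i}\circ f_{\vert G_{v_i}}$ on generators, hence on all of $G_{v_i}$, and similarly ${f'}_{\vert F_j} = \iota_{t_j}\circ f_{\vert F_j}$; for an arbitrary non-QH vertex $w = g\cdot v_i$ one checks $f_{\vert G_w}$ and $f'_{\vert G_w}$ are conjugate using $f(g), f'(g)$ together with $z_i$, and for an arbitrary finite subgroup one conjugates it into some $F_j$ and combines conjugators — exactly the bookkeeping indicated in the remark following the definition.

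The main point requiring care — the only real obstacle — is the reduction to \emph{finitely many} vertex groups and finite subgroups, since the definition of $\Delta$-related quantifies over \emph{all} non-QH vertices of $\Delta$ (infinitely many in the Bass--Serre tree) and \emph{all} finite subgroups. For the vertices this is handled by $G$-equivariance: it suffices to impose the conjugacy condition on one vertex per $G$-orbit, because the condition at $g\cdot v$ follows from the condition at $v$ together with the relation $G_{g\cdot v} = gG_vg^{-1}$ evaluated under $f$ and $f'$. For finite subgroups one invokes the remark after Definition~\ref{reliés2} to push the problem onto vertex groups, then notes that within a vertex group (finite, virtually cyclic, rigid, or QH) one only needs finitely many conjugacy classes of finite subgroups, using the uniform bound on orders of finite subgroups of the $\Gamma$-limit group $G$. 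Once these finiteness reductions are in place, the rest is the routine verification sketched above that a conjugacy of homomorphisms can be tested on a generating set and that conjugation commutes appropriately with evaluation of words.
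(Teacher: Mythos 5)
Your overall approach — writing an explicit existential formula with one conjugator per item of a finite list of subgroups, and checking it against Definition~\ref{reliés2} on generators — is the same as the paper's. But there is a genuine gap in your finiteness reduction for the finite‐subgroup part.

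You reduce to "one representative $F_1,\dots,F_s$ of each conjugacy class of maximal finite subgroups" and justify finiteness by the claim that "a group with a bound on the order of its finite subgroups has finitely many conjugacy classes of maximal finite subgroups," invoking that $G$ is a $\Gamma$-limit group. This fails on two counts. First, the lemma only assumes $G$ is a finitely generated group with a canonical $\mathcal{Z}$-JSJ splitting; it does not assume $G$ is a $\Gamma$-limit group, so you are not entitled to Section~\ref{SD}. Second, and more seriously, the asserted general fact is false: a bound on the orders of finite subgroups does not force finitely many conjugacy classes of (maximal) finite subgroups, even for finitely generated groups. In the present setting the obstruction is concrete: a \emph{rigid} vertex group is an arbitrary subgroup of $G$ with no extra structure guaranteed, and nothing in the $K$-CSA hypothesis prevents it from having infinitely many conjugacy classes of finite subgroups. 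If that happens, your proposed list $F_1,\dots,F_s$ is infinite and $\varphi$ is not a first-order formula.

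The paper avoids the issue by using the remark after Definition~\ref{reliés2} in a sharper way than you do: it reformulates the second bullet as a condition on finite subgroups \emph{of QH vertex groups only}, because for a finite subgroup conjugate into a non-QH vertex group $G_w$, the first bullet (conjugation on all of $G_w$) already supplies the required conjugator. One then needs finitely many conjugacy classes of finite subgroups only inside the QH vertex groups, and these are finite-by-orbifold groups (hence hyperbolic), for which the finiteness is standard; with finitely many QH vertices in $\Delta$, the list is finite. You actually cite the remark, but then "push the problem onto vertex groups" in general rather than onto QH vertex groups specifically, which is what forces you into the unjustified global finiteness claim. Restricting the list of $F_j$'s to representatives of conjugacy classes of finite subgroups of the QH vertex groups would repair the proof and make it match the paper's.
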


\begin{proof}
Firstly, remark that there exist finitely many (say $p\geq 1$) conjugacy classes of finite subgroups of QH vertex groups of $\Delta$ (indeed, a QH vertex group possesses finitely many conjugacy classes of finite subgroups, and $\Delta$ has finitely many vertices). Denote by $F_1,\ldots ,F_p$ a system of representatives of those conjugacy classes. Denote by $R_1,\ldots ,R_m$ the non-QH vertex groups of $\Delta$. Remark that these groups are finitely generated since $G$ and the edge groups of $\Delta$ are finitely generated. Denote by $\lbrace A_i\rbrace_{1\leq i\leq p+m}$ the union of $\lbrace F_i\rbrace_{1\leq i\leq p}$ and $\lbrace R_i\rbrace_{1\leq i\leq m}$. For every $i\in\llbracket 1,m+p\rrbracket$, let $\lbrace a_{i,1},\ldots ,a_{i,k_i}\rbrace$ be a finite generating set of $A_i$. For every $i\in\llbracket 1,m+p\rrbracket$ and $j\in\llbracket 1,k_i\rrbracket$, there exists a word $w_{i,j}$ in $n$ letters such that $a_{i,j}=w_{i,j}(g_1,\ldots ,g_n)$. Let \[\psi(x_1,\dots,x_{2n}):\exists u_1\ldots\exists u_m \bigwedge_{i=1}^{m+p}\bigwedge_{j=1}^{k_i}w_{i,j}(x_1,\ldots ,x_n)=u_iw_{i,j}(x_{n+1},\ldots ,x_{2n}){u_i}^{-1}. \]

Since $f(a_{i,j})=w_{i,j}(f(g_1),\ldots ,f(g_n))$ and $f'(a_{i,j})=w_{i,j}\left(f'(g_1),\ldots ,f'(g_n)\right)$ for every $i\in\llbracket 1,m+p\rrbracket$ and $j\in\llbracket 1,k_i\rrbracket$, the homomorphisms $f$ and $f'$ are $\Delta$-related if and only if the sentence $\psi\left(f(g_1),\ldots , f(g_n),f'(g_1),\ldots ,f'(g_n)\right)$ is satisfied by $G'$.
\end{proof}

\subsection{Centered graph of groups}

We need to define relatedness in a more general context. In order to deal with groups that are not assumed to be one-ended, we define below the notion of a centered graph of groups. We denote by $\overline{\mathcal{Z}}$ the class of groups that are either finite or virtually cyclic with infinite center.

\begin{de}[Centered graph of groups]\label{graphecentre}A graph of groups over $\overline{\mathcal{Z}}$, with at least two vertices, is said to be centered if the following conditions hold:
\begin{itemize}
\item[$\bullet$]the underlying graph is bipartite, with a QH vertex $v$ such that every vertex different from $v$ is adjacent to $v$;
\item[$\bullet$]every incident edge group $G_e$ coincides with an extended boundary subgroup or with an extended conical subgroup of $G_v$ (see Definition \ref{FBO});
\item[$\bullet$]given any extended boundary subgroup $B$, there exists a unique incident edge $e$ such that $G_e$ is conjugate to $B$ in $G_v$;
\item[$\bullet$]if an element of infinite order fixes a segment of length $\geq 2$ in the Bass-Serre tree of the splitting, then this segment has length exactly 2 and its endpoints are translates of $v$.
\end{itemize}
The vertex $v$ is called the central vertex.
\end{de}

\begin{figure}[!h]
\includegraphics[scale=0.4]{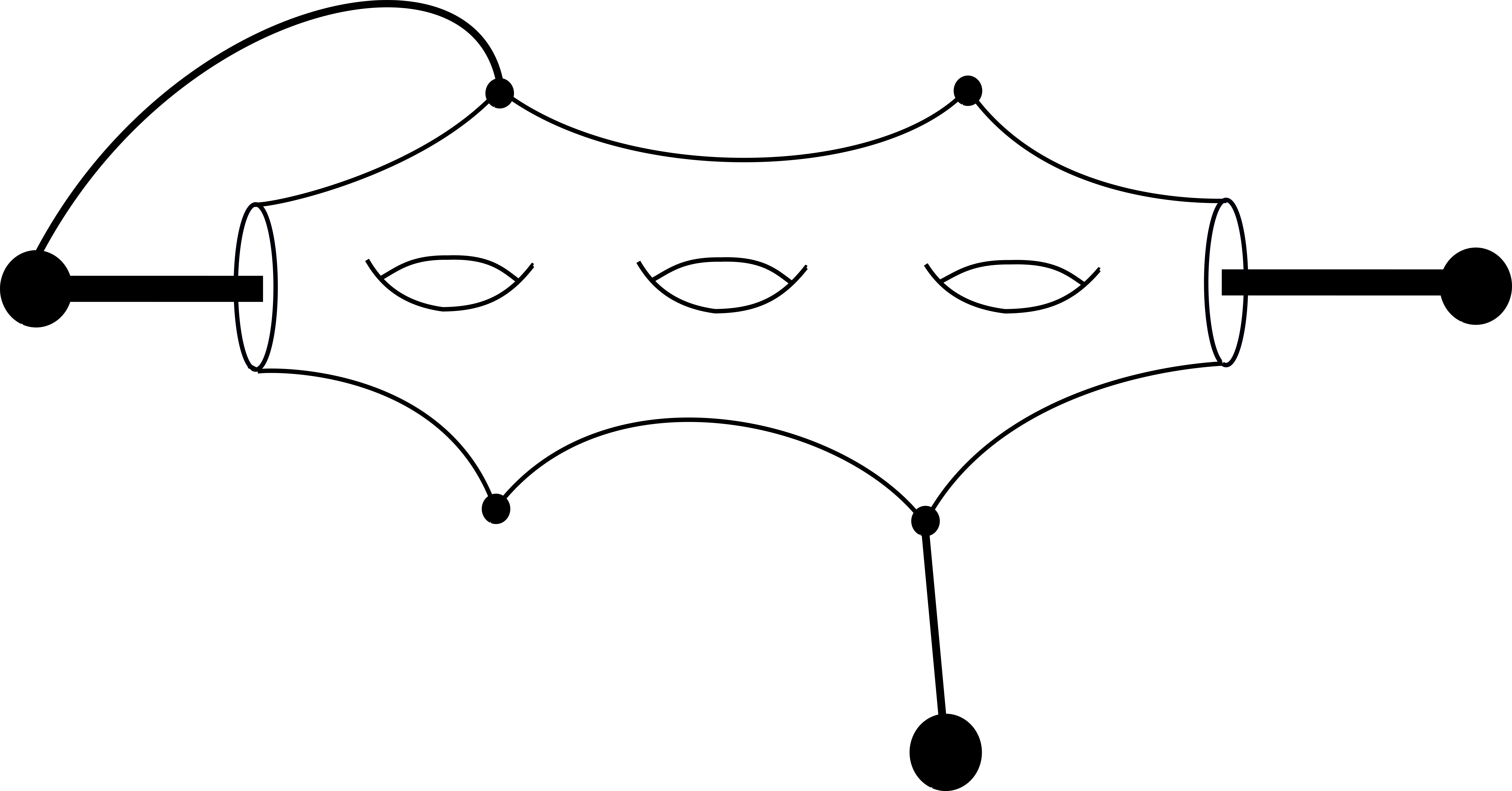}
\caption{A centered graph of groups. Edges with infinite stabilizer are depicted in bold.}
\end{figure}

The following proposition will be crucial in the sequel.

\begin{prop}\label{hyperbolicité}Let $G$ be a group that splits as a centered graph of groups, with central vertex $v$. Then the following assertions are equivalent.
\begin{itemize}
\item[$\bullet$]$G$ is hyperbolic
\item[$\bullet$]For every vertex $w\neq v$, $G_w$ is hyperbolic.
\end{itemize}
\end{prop}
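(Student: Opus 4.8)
The plan is to prove both implications using combination theorems, exactly as foreshadowed in the introduction for hyperbolic towers and quasi-floors.

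\textbf{The easy direction} ($G$ hyperbolic $\Rightarrow$ each $G_w$ hyperbolic for $w\neq v$). Each non-central vertex group $G_w$ is a vertex group of a finite graph-of-groups decomposition of $G$ over virtually cyclic (hence in particular over quasiconvex, once we know $G$ is hyperbolic) edge groups. The acylindricity-type condition in Definition \ref{graphecentre} (a segment of length $\geq 2$ fixed by an infinite-order element has length exactly $2$ with endpoints translates of $v$) forces the edge groups incident to $w$ to be quasiconvex in $G$: in a hyperbolic group, a virtually cyclic subgroup is automatically quasiconvex, so the edge groups, being virtually cyclic or finite, are quasiconvex. A vertex group of a splitting of a hyperbolic group over quasiconvex edge groups is itself quasiconvex, hence hyperbolic. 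So the first step is simply to invoke quasiconvexity of virtually cyclic subgroups of hyperbolic groups, and quasiconvexity of vertex groups in splittings over quasiconvex subgroups.

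\textbf{The main direction} (each $G_w$ hyperbolic $\Rightarrow$ $G$ hyperbolic). Here I would apply the Bestvina--Feighn combination theorem (cited as \cite{BF92} elsewhere in the paper). The central vertex group $G_v$ is a finite-by-orbifold group with non-empty boundary, hence virtually free, hence hyperbolic. So \emph{all} vertex groups of the centered splitting are hyperbolic, and all edge groups are virtually cyclic or finite, hence hyperbolic and (as subgroups of the hyperbolic vertex groups at their endpoints) quasiconvex. It then remains to check the \emph{acylindricity / annuli flare} hypothesis of the combination theorem. This is precisely what the fourth bullet of Definition \ref{graphecentre} is designed to give: the action of $G$ on the Bass--Serre tree is acylindrical (an infinite-order element fixing a segment of length $\geq 2$ forces length exactly $2$ with endpoints in the orbit of $v$), which rules out the pathological ``essential annuli'' and yields the flaring condition. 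Alternatively one can phrase this via the acylindrical combination theorem for hyperbolic groups; either way, the conclusion is that $G$ is hyperbolic.

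\textbf{Main obstacle.} The delicate point is verifying that the hypotheses of the chosen combination theorem are genuinely met — in particular checking that edge groups are malnormal enough / that the graph-of-groups is acylindrical in the precise technical sense required, rather than merely in the sense stated in Definition \ref{graphecentre}. One must also be careful that edge groups embed quasiconvexly into \emph{both} adjacent vertex groups, which for the finite edges is trivial but for the infinite (virtually cyclic) edges requires that extended boundary and extended conical subgroups of a finite-by-orbifold group are quasiconvex in it (true, since the whole group is virtually free) and that the corresponding subgroup of $G_w$ is virtually cyclic hence quasiconvex once $G_w$ is known hyperbolic. I expect the bulk of the write-up to consist of quoting the appropriate form of the combination theorem and matching its hypotheses one by one to the four bullets of Definition \ref{graphecentre}, with the acylindricity bullet doing the essential work.
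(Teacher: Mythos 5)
Your proposal follows exactly the paper's route: the easy direction is Bowditch's result that vertex groups of a splitting of a hyperbolic group over quasiconvex (here, finite or virtually cyclic) edge groups are themselves quasiconvex, and the hard direction is the Bestvina--Feighn combination theorem, with the crucial hypothesis being (almost) malnormality of the edge groups in the central vertex group — which the paper derives as a structural consequence of edge groups being extended boundary or conical subgroups of the finite-by-orbifold group $G_v$, while you derive it from the acylindricity bullet; these are two faces of the same fact. One small slip in your write-up: in the easy direction, quasiconvexity of the edge groups inside $G$ has nothing to do with the acylindricity condition — it holds simply because they are finite or virtually cyclic subgroups of a hyperbolic group — but you correct course in the very next clause, so the argument stands.
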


This is an easy consequence of the two following well-known results.

\begin{prop}[\cite{Bow98}, Proposition 1.2]\label{bow}If a hyperbolic group splits over quasi-convex subgroups, then every vertex group is quasi-convex (so hyperbolic).
\end{prop}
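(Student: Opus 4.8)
The plan is to prove directly that each vertex group is quasi-convex in $G$; hyperbolicity of the vertex groups then follows, since a quasi-convex subgroup of a hyperbolic group is itself hyperbolic. Fix a finite generating set $S$ of $G$, let $X=\mathrm{Cay}(G,S)$ be the ($\delta$-hyperbolic) Cayley graph, and let $T$ be the Bass–Serre tree of the given graph of groups, so that $G$ acts on $T$ cocompactly and each edge stabiliser is a conjugate of one of the edge groups $G_e$, hence quasi-convex in $G$ by hypothesis; since there are finitely many orbits of edges, all edge stabilisers are $\mu$-quasi-convex for a uniform $\mu$. Fix a vertex $v_0$ of $T$ and set $H=G_{v_0}$. (Note that we need not argue separately that $H$ is finitely generated: quasi-convexity is defined for an arbitrary subgroup, and implies finite generation.) It suffices to produce a constant $C_0$ such that for all $h,h'\in H$, every $X$-geodesic from $h$ to $h'$ lies in the $C_0$-neighbourhood of $H$.

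Consider the $G$-equivariant map $\pi\colon G\to T$, $g\mapsto g\cdot v_0$; it is $D_0$-Lipschitz for $D_0:=\max_{s\in S}d_T(sv_0,v_0)$. If $\gamma$ is an $X$-geodesic from $h$ to $h'$, then $\pi\circ\gamma$ is a path in $T$ starting and ending at $v_0$. I would analyse the excursions of $\pi\circ\gamma$ away from $v_0$. For an edge $e$ of $T$ crossed by $\pi\circ\gamma$, write $T\smallsetminus\{e\}=T_e^{-}\sqcup T_e^{+}$ with $v_0\in T_e^{-}$, and let $g_eG_e$ be the coset stabilising $e$; since the endpoints of $\pi\circ\gamma$ lie in $T_e^{-}$, every maximal subinterval of $\gamma$ on which $\pi\circ\gamma$ lies in $T_e^{+}$ has both endpoints within $D_0$ (in $T$) of $e$, and at each transition $\gamma$ passes within a uniform $X$-distance of $g_eG_e$. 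Two inputs then drive the argument: (i) because $g_eG_e$ is quasi-convex and (its limit set in $\partial G$) separates the boundary points attached to $T_e^{+}$ from those attached to $T_e^{-}$, an $X$-geodesic whose endpoints both lie on the $T_e^{-}$ side but which wanders onto the $T_e^{+}$ side must fellow-travel $g_eG_e$ throughout that excursion, up to a uniform constant (using hyperbolicity of $X$ and that a geodesic with endpoints near a $\mu$-quasi-convex set stays near it); and (ii) if $e$ is the first edge of an excursion out of $v_0$, i.e. $e$ is incident to $v_0$, then $g_e$ may be chosen in $H$, so $g_eG_e\subseteq H$. Combining (i) and (ii): each excursion-portion of $\gamma$ stays within a uniform neighbourhood of $g_eG_e\subseteq H$, and running over the finitely many orbits of edges incident to $v_0$ yields $C_0$.

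The main obstacle is making input (i) uniform, because a priori an $X$-geodesic between two points of $H$ can make excursions of large $T$-depth, and neither "$T$-close to $v_0$" nor "$X$-close to a deep edge-coset" implies "$X$-close to $H$". The resolution is to organise an excursion by its nested chain of crossed edges $e=e_1,e_2,\dots,e_k$ and to pull the progressively deeper portions of $\gamma$ onto $g_{e_1}G_{e_1}$ (which contains the relevant deeper cosets only coarsely), repeatedly invoking that a geodesic with endpoints near a $\mu$-quasi-convex subset stays near it; this forces one to control the coarse intersections $g_{e_i}G_{e_i}\cap g_{e_{i+1}}G_{e_{i+1}}$, and for a general splitting this is the delicate point, handled by Bowditch via the structure of $\partial G$ and the limit sets of the (quasi-convex) edge groups. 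For the splittings that occur in this paper the $G$-action on $T$ is acylindrical, so these intersections are finite once the edges are far apart in $T$; this bounds the depth of excursions and makes the whole estimate elementary. In either case the statement is exactly \cite{Bow98}, Proposition 1.2.
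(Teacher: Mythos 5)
The paper simply cites \cite{Bow98} and offers no proof, so there is nothing in the paper to compare your argument against; I will assess the sketch on its own terms. Your outline is correct in substance, but the ``main obstacle'' you describe is illusory, and the machinery you then bring in to resolve it (nested cosets, limit-set separation, appeal to Bowditch, acylindricity) is all unnecessary: the first edge $e$ incident to $v_0$ already finishes the argument, using precisely the two observations you already have. In detail: a maximal subinterval $[a,b]$ of $\gamma$ projecting into $T_e^{+}$ is a sub-geodesic of $\gamma$ whose endpoints occur at transitions across $e$. At such a transition $g\mapsto gs$, the edge $g^{-1}e$ lies in the finite set of edges crossed by $\bigcup_{s\in S}[v_0,sv_0]$; choosing once and for all, for each such edge $f$ in the $G$-orbit of the $H$-orbit representative $e_0$ of $e$ (itself incident to $v_0$), an element $\sigma_f$ with $\sigma_f e_0=f$, and taking $\eta\in H$ with $\eta e_0=e$, one finds $g\in\eta G_{e_0}\sigma_f^{-1}$, hence $d_X\bigl(g,\eta G_{e_0}\bigr)\le C_1:=\max_f|\sigma_f|$, with $\eta G_{e_0}\subset H$. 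Since $\eta G_{e_0}$ is $\mu$-quasi-convex and $a,b$ lie within $C_1+1$ of it, the thin-quadrilateral estimate you cite parenthetically confines all of $[a,b]$ to the $(2\delta+C_1+1+\mu)$-neighbourhood of $\eta G_{e_0}\subset H$, no matter how deep the excursion travels in $T$. Every point of $\gamma$ either projects to $v_0$, hence lies in $H$, or lies in such an excursion; finitely many $H$-orbits of edges incident to $v_0$ make the constant uniform, and $H$ is quasi-convex. Limit-set separation and control of coarse intersections of adjacent edge cosets never enter, and the appeal to acylindricity is in any case off-target (it bounds stabilisers of long $T$-segments, not the $T$-depth to which an $X$-geodesic can project). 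The only genuine technicality is an edge whose quotient image is a loop, which subdivision removes.
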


Recall that a subgroup $H$ of a group $G$ is called almost malnormal if $H\cap gHg^{-1}$ is finite for every $g$ in $G\setminus H$. Note that edge groups in a centered graph of groups are almost malnormal in the central vertex group.

\begin{prop}[\cite{BF92}, corollary of the combination theorem]\label{BF92}
~\

Let $G=A\ast_CB$ be an amalgamated product such that $A$ and $B$ are hyperbolic and $C$ is virtually cyclic and almost malnormal in $A$ or $B$. Then $G$ is hyperbolic. 

Let $G=A\ast_C$ be an HNN extension such that $A$ is hyperbolic and $C$ is virtually cyclic and almost malnormal in $A$. Then $G$ is hyperbolic.
\end{prop}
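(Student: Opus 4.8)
The plan is to deduce both assertions from the main combination theorem of Bestvina and Feighn \cite{BF92}: a finite graph of hyperbolic groups whose edge-group inclusions are quasi-isometric embeddings and which satisfies the \emph{hallways flare condition} has hyperbolic fundamental group. So there are exactly two hypotheses to verify — the quasi-isometric embedding condition and the flaring condition — and the almost malnormality assumption will be spent on the second one.

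First I would dispose of the embedding condition. If $C$ is finite, each inclusion $C\hookrightarrow A$ and $C\hookrightarrow B$ (or, in the HNN case, of the associated subgroups into $A$) is trivially a quasi-isometric embedding, and in fact the hallways flare condition is automatic for finite edge groups, so one concludes immediately. If $C$ is infinite virtually cyclic, then it is a two-ended subgroup of the hyperbolic group $A$ (resp. $B$), hence quasiconvex, hence quasi-isometrically embedded; likewise the associated subgroups in the HNN case.

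The core of the argument is the flaring condition, which I would obtain from acylindricity of the action of $G$ on the Bass--Serre tree $T$ of the splitting. For the amalgam $G=A\ast_C B$, the edge stabilisers of $T$ are the conjugates of $C$, and the pointwise stabiliser of a length-two subpath of $T$ centred at a translate of the $A$-vertex is a conjugate of $C\cap aCa^{-1}$ with $a\in A\setminus C$, which is finite by almost malnormality of $C$ in $A$; since every path of length at least $3$ contains such an $A$-turn, the pointwise stabiliser of any path of length $\geq 3$ is finite, and uniformly bounded in order because the hyperbolic groups $A$ and $B$ have only finitely many conjugacy classes of finite subgroups. (If instead $C$ is almost malnormal in $B$, exchange the roles of $A$ and $B$; the HNN case $G=A\ast_C$ is analogous, the nondegenerate turns of $T$ occurring at translates of the single vertex, where the relevant intersections of (conjugates of) the associated subgroups are finite under the malnormality hypothesis.) This strong acylindricity — a uniform bound on the pointwise stabiliser of any sufficiently long segment — is precisely what forbids the long thin hallways whose absence is the flaring condition: a failure of flaring would, by a limiting argument, produce a bi-infinite hallway of bounded girth, which unfolds into arbitrarily long segments of $T$ carrying infinite pointwise stabiliser, contradicting acylindricity. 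That acylindricity of the tree action, together with hyperbolicity of the vertex groups and quasiconvexity of the edge groups, implies the flaring condition is exactly the point extracted from the analysis of annuli in \cite{BF92}. With flaring in hand, the combination theorem applies and $G$ is hyperbolic.

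The main obstacle I expect is the bookkeeping in the flaring/annulus step — translating a hypothetical non-flaring family of hallways into segments of $T$ with large stabiliser — together with fixing the precise form of the almost-malnormality hypothesis needed in the HNN case, where the splitting has a single vertex group but two associated subgroups; there the substantive requirement is finiteness of the pointwise stabiliser of any nondegenerate turn of $T$, which is what one has at one's disposal whenever this proposition is invoked later (the edge groups of a centered graph of groups being almost malnormal in the central vertex group, with acylindricity built into Definition \ref{graphecentre}).
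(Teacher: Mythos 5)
The paper does not supply a proof of this proposition: it is invoked as a direct corollary of the Bestvina--Feighn combination theorem, with a citation to \cite{BF92} and nothing further. So there is no argument in the paper against which to check yours; what you have written is a sketch of the standard derivation, and it is essentially correct. You identify the two hypotheses of the combination theorem that need to be verified --- quasi-isometric embedding of edge groups and the flaring condition --- and you dispatch the first via quasiconvexity of two-ended subgroups of hyperbolic groups (trivial when $C$ is finite), and the second via the acylindricity that almost malnormality forces on the Bass--Serre tree. Your reduction of flaring to acylindricity is the right move and is exactly what the annuli analysis of \cite{BF92} provides; the uniform bound on stabilisers of long segments, obtained from the uniform bound on orders of finite subgroups of a hyperbolic group, is the correct way to turn ``finite'' into ``uniformly bounded.''

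The one genuine loose end is the one you flag yourself: in the HNN case the phrase ``$C$ almost malnormal in $A$'' is literally insufficient, since the turns at an $A$-vertex involve conjugates of \emph{both} associated subgroups, and one needs $C_1\cap aC_1a^{-1}$, $C_2\cap aC_2a^{-1}$ (for $a$ outside), and $C_1\cap aC_2a^{-1}$ (for all $a$) to be finite. As you note, this stronger condition is what is actually available whenever the proposition is applied in the paper: the relevant splittings are centered graphs of groups, whose definition builds the $2$-acylindricity in explicitly (Definition~\ref{graphecentre}), so the ambiguity in the statement is harmless in practice. Your proposal is a faithful unpacking of a result the paper treats as a black box.
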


We also define relatedness for centered splittings.

\begin{de}[Related homomorphisms]\label{reliés}
\normalfont
Let $G$ and $G'$ be two groups. Suppose that $G$ possesses a centered splitting $\Delta$, with central vertex $v$. Let $f$ and $f'$ be two homomorphisms from $G$ to $G'$. We say that $f$ and $f'$ are $\Delta$-related if the two following conditions hold:
\begin{itemize}
\item[$\bullet$]for every vertex $w\neq v$, there exists an element $g_w\in G'$ such that \[{f'}_{\vert G_w}=\iota_{g_w}\circ f_{\vert G_w};\]
\item[$\bullet$]for every finite subgroup $F$ of $G$, there exists an element $g\in G'$ such that \[{f'}_{\vert F}=\iota_{g}\circ f_{\vert F}.\]
\end{itemize}
Note that being $\Delta$-related is an equivalence relation on $\mathrm{Hom}(G,G')$. 
\end{de}

\begin{rque}Let $G$ be a one-ended finitely generated $K$-$\mathrm{CSA}$ group and let $\Delta$ be the canonical JSJ splitting of $G$ over $\mathcal{Z}$. Let $v$ a be a QH vertex of $\Delta$, and let $\mathrm{star}(v)$ be the subgraph of $\Delta$ composed of $v$ and all its incident edges. Let us denote by $\Lambda$ the splitting of $G$ obtained by collapsing to a point each connected component of the complement of $\mathrm{star}(v)$ in $\Delta$. This new splitting is obviously a centered splitting of $G$. Moreover, if two homomorphisms are $\Delta$-related, then they are $\Lambda$-related.
\end{rque}

\section{Torsion-saturated groups}\label{section 4}

In the presence of torsion, the universal theory of a hyperbolic group is not closed under HNN extensions and amalgamated free products over finite groups. This is a simple example: the universal sentence $\forall x\forall y \ (x^2=1)\Rightarrow (xy=yx)$, which means that any involution is central, is verified by $F_2\times\mathbb{Z}/2\mathbb{Z}$ but not by $(F_2\times\mathbb{Z}/2\mathbb{Z})\ast_{\lbrace 1\rbrace}=(F_2\times\mathbb{Z}/2\mathbb{Z})\ast\mathbb{Z}$. For the same reason, $(F_2\times\mathbb{Z}/2\mathbb{Z})\ast(F_2\times\mathbb{Z}/2\mathbb{Z})$ does not have the same universal theory as $F_2\times\mathbb{Z}/2\mathbb{Z}$.

To control this phenomenon, we will prove that every hyperbolic group $G$ embeds into a hyperbolic group $\overline{G}$ that possesses the following property: the class of $\overline{G}$-limit groups is closed under HNN extensions and amalgamated free products over finite groups.

\begin{de}\label{sat0}We say that a group $G$ is torsion-saturated if the two following conditions hold.
\begin{enumerate}
\item For every isomorphism $\alpha : F_1\rightarrow F_2$ between finite subgroups $F_1,F_2$ of $G$, there exists an element $g\in G$ such that $g xg^{-1}=\alpha(x)$ for every $x\in F_1$.
\item For every finite subgroup $F$ of $G$, there exists an infinite subset $\lbrace g_1,g_2,\ldots\rbrace$ of $G$ such that $g_n$ has infinite order, $M(g_n)=\langle g_n\rangle\times F$ for every $n$, and $M(g_n)\neq M(g_m)$ whenever $n\neq m$.
\end{enumerate}
\end{de}

\begin{rque}The second condition is equivalent to the following: there exist two elements $a,b\in G$ generating a free group, and such that $M(a)=\langle a\rangle\times F$ and $M(b)=\langle b\rangle\times F$.
\end{rque}

We shall see that every hyperbolic group embeds into a torsion-saturated hyperbolic group (see Theorem \ref{sat2} below). The main interest of torsion-saturated groups resides in the following result.

\begin{te}\label{sat}Let $G$ be a torsion-satured hyperbolic group. Then the class of $G$-limit groups is closed under HNN extensions and amalgamated free products over finite groups.
\end{te}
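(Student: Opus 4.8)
The plan is to produce, for any prescribed finite subset of the group under consideration, a homomorphism to $G$ that is injective on it, by conjugating and twisting discriminating sequences of the factors. Recall (Reinfeldt--Weidmann) that since $G$ is hyperbolic, hence equationally noetherian, the $G$-limit groups are exactly the finitely generated fully residually $G$ groups; finite generation is clear for amalgams and HNN extensions over finite groups, so it is enough to prove: (i) if $L_1,L_2$ are fully residually $G$ with a common finite subgroup $F$, then $L_1\ast_F L_2$ is fully residually $G$; and (ii) if $L$ is fully residually $G$, $F_1,F_2\leq L$ are finite and $\alpha\colon F_1\to F_2$ is an isomorphism, then $L\ast_\alpha$ is fully residually $G$. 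Fix $K$ with $G$ a $K$-$\mathrm{CSA}$ group (Proposition \ref{hyp}).

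\emph{The amalgamated product.} Let $(\alpha_n\colon L_1\to G)$ and $(\beta_n\colon L_2\to G)$ be discriminating sequences. For $n$ large, $\alpha_n$ and $\beta_n$ are injective on $F$, so $\theta_n:=\beta_n|_F\circ(\alpha_n|_F)^{-1}$ is an isomorphism between the finite subgroups $\alpha_n(F)$ and $\beta_n(F)$ of $G$; by condition $(1)$ of Definition \ref{sat0} it is realized by conjugation, and replacing $\alpha_n$ by this conjugate (still discriminating) we may assume $\alpha_n|_F=\beta_n|_F=:\Phi_n$, an embedding of $F$ in $G$. By condition $(2)$ of Definition \ref{sat0} applied to $\Phi_n$ there are infinitely many elements $c$ of infinite order with $M(c)=\langle c\rangle\times\Phi_n$ and pairwise distinct such $M(c)$; for any such $c$ and any $p$, the maps $\alpha_n$ and $\iota_{c^p}\circ\beta_n$ still agree on $F$ (as $c^p$ centralises $\Phi_n$), hence glue to a homomorphism $h_{n,c,p}\colon L_1\ast_F L_2\to G$. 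Given a finite set $S\subset (L_1\ast_F L_2)\setminus\{1\}$, enlarge it so that it contains $xf^{-1}$ (when non-trivial) for each syllable $x$ of a reduced expression of an element of $S$ and each $f\in F$, and choose $n$ large so that $\alpha_n,\beta_n$ are injective on the finitely many syllables and syllable-quotients involved; then $\alpha_n(x),\beta_n(x)\notin\Phi_n$ for every relevant syllable $x$. For $g\in S$ of syllable length $\geq 2$, $h_{n,c,p}(g)$ takes the shape $a_0 c^{\varepsilon_1 p}a_1\cdots c^{\varepsilon_k p}a_k$ with $(\varepsilon_i)\in\{-1,+1\}^k$ alternating and each interior $a_i$ equal to $\alpha_n$ or $\beta_n$ of a syllable of $g$; since the only finite-order elements of $M(c)=\langle c\rangle\times\Phi_n$ lie in $\Phi_n$, and since one may choose $c$ (among the infinitely many available) so that $M(c)$ differs from the finitely many subgroups $M(a_i)$ with $a_i$ of infinite order, every interior $a_i$ lies outside $M(c)$. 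Corollary \ref{baumslag2} then gives a constant $C$ with $h_{n,c,p}(g)\neq 1$ for all $p\geq C$ and all such $g$; and for $g\in S$ of syllable length one, $h_{n,c,p}(g)$ is a conjugate of $\alpha_n(g)$ or $\beta_n(g)$, hence non-trivial. Thus $h_{n,c,p}$ is injective on $S$, proving $L_1\ast_F L_2$ is fully residually $G$.

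\emph{The HNN extension, and the main obstacle.} The scheme is parallel: take a discriminating sequence $(f_n\colon L\to G)$; for $n$ large $f_n$ is injective on $F_1$ and $F_2$, and $\beta_n:=f_n\circ\alpha\circ(f_n|_{F_1})^{-1}$ is an isomorphism between $F_n^{(1)}:=f_n(F_1)$ and $F_n^{(2)}:=f_n(F_2)$; by condition $(1)$ it is realized by conjugation by some $s_n$, and the set of elements realizing it is the coset $s_n\,C_G(F_n^{(1)})$, which is infinite since, by condition $(2)$, $C_G(F_n^{(1)})$ contains infinitely many ``independent'' elements of infinite order. For $c$ of infinite order with $M(c)=\langle c\rangle\times F_n^{(1)}$ and any $p$, the element $s_n c^p$ still realizes $\beta_n$ on $F_n^{(1)}$, so setting $h_{n,c,p}|_L=f_n$ and $h_{n,c,p}(t)=s_n c^p$ defines a homomorphism $L\ast_\alpha\to G$. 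Enlarging a given finite set $S$ by syllable-quotients as before and expanding $h_{n,c,p}(g)$ in Britton normal form for $g\in S$, one obtains a product $a_0 c^{\varepsilon_1 p}a_1\cdots c^{\varepsilon_k p}a_k$ where the $\varepsilon_i$ are now the $t$-exponents of $g$ (not necessarily alternating) and each interior $a_i$ is one of $f_n(g_i)$, $s_n^{\pm1}f_n(g_i)$, $s_n^{-1}f_n(g_i)s_n$; one then invokes Corollary \ref{baumslag2} as above. The main difficulty is exactly here: unlike in the amalgam, a reduced Britton form contains segments $t^{\varepsilon}g_i t^{\varepsilon}$ with $g_i$ arbitrary (even trivial), so these $a_i$ involve the conjugator $s_n$ rather than merely images of syllables, and one must choose $s_n$ within the infinite coset $s_n C_G(F_n^{(1)})$ so as to avoid finitely many ``bad'' elements (keeping the finite-order $a_i$ outside $F_n^{(1)}$), and then choose the twisting element $c$ so as to avoid finitely many ``bad'' subgroups $M(\cdot)$ (keeping the infinite-order $a_i$ outside $M(c)$), using Britton's lemma and $f_n(g_i)\notin F_n^{(1)}\cup F_n^{(2)}$ for the relevant syllables. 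Once this bookkeeping is carried out, Corollary \ref{baumslag2} concludes the HNN case exactly as in the amalgam case, and both assertions follow.
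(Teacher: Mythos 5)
You take a genuinely different route from the paper. The paper factors Theorem \ref{sat} through three lemmas: Lemmas \ref{lemme3} and \ref{lemme4} show (via Britton's lemma applied in the target) that $H\ast_\alpha$ and $A\ast_F B$ are limit groups over the auxiliary HNN extensions $G\ast_\beta$ and $G\ast_{F'}$ of $G$ itself, and Lemma \ref{lemme5} — where the Baumslag estimate actually lives — shows that $G\ast_\alpha$ is a $G$-limit group; transitivity of the ``fully residually'' relation then concludes. Crucially, Lemma \ref{lemme5} opens with a normalisation available only because the base is $G$: using condition (1) of Definition \ref{sat0}, the stable letter is replaced by $g^{-1}t$ so that $F_1=F_2$ and $\alpha=\mathrm{id}$, after which a rewriting step consolidates consecutive powers $g_n^p y_i g_n^p$ with $y_i\in F_1$ into $g_n^{2p}y_iy_{i+1}$, and Baumslag applies with all interior syllables outside $M(g_n)$.

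Your proposal skips the auxiliary $(G\ast_\beta)$-limit groups and the $F_1=F_2$ normalisation (which is indeed unavailable inside an arbitrary $G$-limit group $L$) and builds discriminating sequences straight into $G$. The amalgam case you give is complete and correct. In the HNN case the image $s_n c^p$ of the stable letter carries a genuine conjugator $s_n$, so the interior Baumslag syllables are $f_n(g_i)$, $f_n(g_i)s_n$, $s_n^{-1}f_n(g_i)$, $s_n^{-1}f_n(g_i)s_n$, and the paper's rewriting trick has no analogue; you replace it by a two-step avoidance, choosing $s_n$ within the infinite coset $s_n C_G(F_n^{(1)})$ so the finite-order syllables stay outside $F_n^{(1)}$, then choosing $c$ so that $M(c)$ misses the finitely many $M(\cdot)$ attached to the infinite-order syllables. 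Tracing this through, the avoidance does go through: the constraints on $s_n$ are finitely many \emph{finite} cosets $f_n(g_i)^{\pm 1}F_n^{(1)}$ inside an infinite coset, and the degenerate syllables $s_n^{\pm1}$ arising from $t^{\pm 2}$ are handled by additionally requiring $M(s_n)\neq M(c)$. So the scheme is sound, but you only announce the ``bookkeeping'' rather than carrying it out; in a full proof this avoidance argument needs to be written, since it is precisely the point where the paper's cleaner reduction does work for you. What you gain in exchange is that the whole argument stays in $G$, with no intermediate HNN extensions of $G$ and no appeal to transitivity of the limit-group relation.
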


\begin{proof}
Let $H$ be a $G$-limit group. Let $\alpha: F_1\rightarrow F_2$ be an isomorphism between two finite subgroups $F_1,F_2$ of $H$. We shall prove that the HNN extension $H\ast_{\alpha}$ is a $G$-limit group. According to Lemma \ref{lemme3} below, there exists an isomorphism $\beta: F'_1\rightarrow F'_2$ between finite subgroups $F'_1,F'_2$ of $G$ such that $H\ast_{\alpha}$ is a $G\ast_{\beta}$-limit group. But $G\ast_{\beta}$ is a $G$-limit group thanks to Lemma \ref{lemme5} below, so $H\ast_{\alpha}$ is a $G$-limit group. It remains to treat the case of an amalgamated free product. Let $A,B$ be $G$-limit groups. Let $F$ be a finite group that embeds into $A$ and $B$. According to Lemma \ref{lemme4} below, there exists a finite subgroup $F'$ of $G$ such that $A\ast_FB$ is a $G\ast_{F'}$-limit group, and $G\ast_{F'}$ is a $G$-limit group by Lemma \ref{lemme5}, so $A\ast_FB$ is a $G$-limit group as well.
\end{proof}

\begin{lemme}\label{lemme3}Let $G$ be a hyperbolic group and let $H$ be a $G$-limit group. Let $\alpha: F_1\rightarrow F_2$ be an isomorphism between finite subgroups $F_1,F_2$ of $H$. Then there exists an isomorphism $\beta: F'_1\rightarrow F'_2$ between finite subgroups $F'_1,F'_2$ of $G$ such that $H\ast_{\alpha}$ is a $G\ast_{\beta}$-limit group.
\end{lemme}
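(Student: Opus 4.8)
The plan is to lift a discriminating sequence for $H$ over $G$ to a discriminating sequence for $H\ast_{\alpha}$ over a suitable $G\ast_{\beta}$, by sending the stable letter of $H\ast_{\alpha}$ to a left translate of the stable letter of $G\ast_{\beta}$. Since $H$ is a $G$-limit group and $G$ is hyperbolic (hence equationally noetherian), $H$ is finitely generated and fully residually $G$, so there is a discriminating sequence $(f_n)$ of homomorphisms $H\to G$. For $n$ large, $f_n$ is injective on the finite set $F_1\cup F_2$, so $f_n(F_1)$ and $f_n(F_2)$ are finite subgroups of $G$ isomorphic to $F_1$ and $F_2$. Using the well-known fact that a hyperbolic group has only finitely many conjugacy classes of finite subgroups, I would pass to a subsequence and post-compose each $f_n$ with an inner automorphism of $G$ (this does not affect being a discriminating sequence) so that $f_n(F_1)=F_1'$ for a fixed finite subgroup $F_1'\leq G$. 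Letting $\gamma_n\colon F_1'\to G$ be the monomorphism $f_n\circ\alpha\circ(f_n|_{F_1})^{-1}$, whose image is $f_n(F_2)$, a further extraction makes $f_n(F_2)$ conjugate in $G$ to a fixed finite subgroup $F_2'$, say $d_nf_n(F_2)d_n^{-1}=F_2'$ with $d_n\in G$; and since $\mathrm{Iso}(F_1',F_2')$ is a finite set, a last extraction makes $\iota_{d_n}\circ\gamma_n$ equal to a fixed isomorphism $\beta\colon F_1'\to F_2'$ independent of $n$. This $\beta$ is the isomorphism between finite subgroups of $G$ claimed in the statement.

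Next I would define, for each $n$, a homomorphism $\tilde f_n\colon H\ast_{\alpha}\to G\ast_{\beta}$. Writing $t$ for the stable letter of $H\ast_{\alpha}$ (so $txt^{-1}=\alpha(x)$ for $x\in F_1$) and $s$ for that of $G\ast_{\beta}$ (so $sys^{-1}=\beta(y)$ for $y\in F_1'$), I set $\tilde f_n|_H=f_n$ and $\tilde f_n(t)=d_n^{-1}s$. This respects the defining relations of $H\ast_{\alpha}$: for $x\in F_1$, putting $y=f_n(x)\in F_1'$, one computes $\tilde f_n(t)\,y\,\tilde f_n(t)^{-1}=d_n^{-1}\,s\,y\,s^{-1}\,d_n=d_n^{-1}\,\beta(y)\,d_n=\gamma_n(y)=f_n(\alpha(x))$, using $\iota_{d_n}\circ\gamma_n=\beta$.

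The core of the argument is to show that $(\tilde f_n)$ is discriminating. Given $1\neq w\in H\ast_{\alpha}$, write it in reduced form $w=h_0t^{\varepsilon_1}h_1\cdots t^{\varepsilon_k}h_k$. If $k=0$ then $\tilde f_n(w)=f_n(h_0)\neq 1$ for $n$ large. If $k\geq 1$, substituting $\tilde f_n(t)=d_n^{-1}s$ and $\tilde f_n(t^{-1})=s^{-1}d_n$ writes $\tilde f_n(w)$ as a word in $G\ast_{\beta}$ with exactly $k$ occurrences of $s^{\pm1}$, one for each $t^{\pm1}$ of $w$ with the matching sign. Fixing $n$ large enough that $f_n$ is injective on $F_1\cup F_2\cup\{h_0,\dots,h_k\}$, I would then check that this word has no Britton pinch: at a syllable $t^{\varepsilon_i}h_it^{\varepsilon_{i+1}}$ with $\varepsilon_i=1,\varepsilon_{i+1}=-1$ the element trapped between the corresponding $s$ and $s^{-1}$ is $f_n(h_i)$, which is not in $F_1'=f_n(F_1)$ because $h_i\notin F_1$; with $\varepsilon_i=-1,\varepsilon_{i+1}=1$ it is $d_nf_n(h_i)d_n^{-1}$, which is not in $F_2'=d_nf_n(F_2)d_n^{-1}$ because $h_i\notin F_2$; and syllables with $\varepsilon_i=\varepsilon_{i+1}$ impose no condition. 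By Britton's lemma, $\tilde f_n(w)\neq 1$.

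This shows $H\ast_{\alpha}$ is finitely generated and fully residually $G\ast_{\beta}$; since $G\ast_{\beta}$ is hyperbolic (an HNN extension of a hyperbolic group over a finite, hence virtually cyclic and almost malnormal, subgroup — Proposition \ref{BF92}), $H\ast_{\alpha}$ is a $G\ast_{\beta}$-limit group. The step I expect to require the most care is the Britton's-lemma verification above, together with making sure the normalization in the first step really produces a single $\beta$ and a single substitution pattern $t\mapsto d_n^{-1}s$ that simultaneously handles $F_1$, $F_2$ and the gluing isomorphism $\alpha$; once those are in place the rest is routine.
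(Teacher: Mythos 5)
Your proof is correct and follows essentially the same route as the paper: normalize a discriminating sequence $(f_n)\colon H\to G$ so that $f_n(F_1)$ is a fixed $F_1'$ and $f_n(F_2)$ is a fixed $F_2'$ up to a conjugator $d_n$, extract so the induced isomorphism $\beta\colon F_1'\to F_2'$ is constant, extend by sending the stable letter $t$ to $d_n^{-1}s$, and verify discrimination via Britton's lemma. The only cosmetic difference is that you phrase the no-pinch check in terms of eventual injectivity of $f_n$ on the finite set $F_1\cup F_2\cup\{h_0,\dots,h_k\}$, whereas the paper invokes the discriminating property directly; these are interchangeable.
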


\begin{lemme}\label{lemme4}Let $G$ be a hyperbolic group. Suppose that the first condition of Definition \ref{sat0} holds, that is: for every isomorphism $\alpha : F_1\rightarrow F_2$ between finite subgroups $F_1,F_2$ of $G$, there exists an element $g\in G$ such that $g xg^{-1}=\alpha(x)$ for every $x\in F_1$. Let $A,B$ be $G$-limit groups. Let $F$ be a finite group that embeds into $A$ and $B$. Then there exists a finite subgroup $F'$ of $G$ such that $A\ast_FB$ is a $G\ast_{F'}$-limit group.
\end{lemme}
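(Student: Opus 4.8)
The plan is to realise $A\ast_F B$ as a limit group over the double $\Gamma':=G\ast_{F'}G$ of $G$ along a suitable finite subgroup $F'$, and to establish discrimination by hand using the normal form theorem for amalgamated products.

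Since $G$ is hyperbolic, hence equationally noetherian, the $G$-limit groups $A$ and $B$ are finitely generated and fully residually $G$ (see Section \ref{22}); fix discriminating sequences $(a_n)\colon A\to G$ and $(b_n)\colon B\to G$, recalling that such a sequence is eventually injective on any prescribed finite subset of its domain. Fix embeddings $F\hookrightarrow A$ and $F\hookrightarrow B$ and regard $F$ as a subgroup of each. I will build a finite subgroup $F'\le G$ together with a discriminating sequence $(c_n)\colon A\ast_F B\to\Gamma'$. Since $A\ast_F B$ is finitely generated and $\Gamma'$ is hyperbolic (an amalgam of hyperbolic groups over the finite, hence almost malnormal, subgroup $F'$, by Proposition \ref{BF92}, and hence equationally noetherian by \cite{RW14}), this shows that $A\ast_F B$ is a $\Gamma'$-limit group, which is what is wanted.

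\emph{Normalising the restrictions to $F$.} After discarding finitely many indices we may assume $a_n|_F$ and $b_n|_F$ are injective for every $n$. The images $a_n(F),b_n(F)$ are finite subgroups of $G$, and $G$ has only finitely many conjugacy classes of finite subgroups; hence, after passing to subsequences and replacing each $a_n$ and $b_n$ by a suitable $G$-conjugate (which does not affect discrimination), we may assume $a_n(F)=F_A$ and $b_n(F)=F_B$ for fixed finite subgroups $F_A,F_B\le G$. Since there are only finitely many isomorphisms $F\to F_A$ and $F\to F_B$, a further passage to subsequences makes $a_n|_F=\theta_A$ and $b_n|_F=\theta_B$ constant. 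Now $\theta_B\circ\theta_A^{-1}\colon F_A\to F_B$ is an isomorphism between finite subgroups of $G$, so by the first condition of Definition \ref{sat0} there is $g\in G$ with $gxg^{-1}=\theta_B\circ\theta_A^{-1}(x)$ for every $x\in F_A$. Replacing $a_n$ by $\iota_g\circ a_n$ for all $n$, we obtain $a_n|_F=b_n|_F$; write this common isomorphism as $\theta\colon F\to F'$, where $F':=\theta(F)\le G$. This step is the heart of the proof and the only place torsion-saturation is used: condition (1), applied to a single isomorphism between two fixed finite subgroups, provides one element $g$ aligning all the $a_n$ with the $b_n$ on $F$ at once; without it one would merely know that $a_n(F)$ and $b_n(F)$ are abstractly isomorphic, with no common copy available inside $G$. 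The remaining verification is the standard normal-form argument.

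\emph{Construction and discrimination.} Write $\Gamma'=G\ast_{F'}G$ as the amalgam of two copies $G_1,G_2$ of $G$ identified along $F'$, with inclusions $\iota_1,\iota_2\colon G\hookrightarrow\Gamma'$. Since $a_n|_F=b_n|_F=\theta$ has image $F'$ and $\iota_1|_{F'}=\iota_2|_{F'}$, the maps $\iota_1\circ a_n$ and $\iota_2\circ b_n$ agree on $F$ and glue to a homomorphism $c_n\colon A\ast_F B\to\Gamma'$. Let $1\ne w\in A\ast_F B$. If $w$ lies in $A$ or in $B$, then $c_n(w)$ equals $\iota_1(a_n(w))$ or $\iota_2(b_n(w))$, which is nontrivial for $n$ large, as $(a_n),(b_n)$ are discriminating and $\iota_1,\iota_2$ are injective. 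Otherwise $w$ has a reduced expression $w=s_1\cdots s_k$ with $k\ge2$, the $s_i$ lying alternately in $A\setminus F$ and $B\setminus F$. Pick $n$ large enough that $a_n$ is injective on $F$ together with all $A$-syllables of $w$, and $b_n$ is injective on $F$ together with all $B$-syllables of $w$; then each $A$-syllable maps under $a_n$ to an element of $G_1\setminus F'$ (it cannot land in $a_n(F)=F'$, as $s_i\notin F$), and each $B$-syllable maps under $b_n$ to an element of $G_2\setminus F'$. Hence $c_n(w)=\iota_1(a_n(s_1))\,\iota_2(b_n(s_2))\cdots$ is a reduced word of length $k\ge2$ in $\Gamma'$, so $c_n(w)\ne1$ by the normal form theorem for amalgamated products. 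Thus for every $w$ one has $c_n(w)\ne1$ for all sufficiently large $n$; since $A\ast_F B$ is countable, $(c_n)$ is a discriminating sequence, so $A\ast_F B$ is fully residually $\Gamma'$, and being finitely generated it is a $G\ast_{F'}G$-limit group.
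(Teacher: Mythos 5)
Your argument follows the paper's strategy almost step for step — normalize the two discriminating sequences so that they agree on $F$ via condition (1) of Definition~\ref{sat0}, glue them, and verify discrimination with the normal form theorem — and the execution is correct. The one place you deviate is the target group: you land in the double $G\ast_{F'}G$, whereas the lemma as stated (and as the paper defines it inside the proof) asks for the HNN extension $G\ast_{F'}=\langle G,t\mid [t,x]=1\ \forall x\in F'\rangle$. As written your proof therefore establishes a slightly different conclusion than the one claimed.

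The gap is easily bridged, and it is worth noting that the paper's own construction already does so implicitly: it sends $A$ via $\phi_n$ into $G\le G\ast_{F'}$ and $B$ via $\iota_t\circ\chi_n$ into $tGt^{-1}\le G\ast_{F'}$, i.e.\ its discriminating sequence factors through the natural homomorphism $G\ast_{F'}G\to G\ast_{F'}$ that is the identity on the first copy of $G$ and $\iota_t$ on the second. One checks with Britton's lemma that this homomorphism is injective (a reduced word $g_1 g_2' g_3\cdots$ with $g_i\in G\setminus F'$ maps to $g_1 t g_2 t^{-1} g_3 t\cdots$, and no pinch $t g t^{-1}$ or $t^{-1} g t$ with $g\in F'$ occurs). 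Since a group that is fully residually $G\ast_{F'}G$ is, after composing with this embedding, fully residually $G\ast_{F'}$, your conclusion does yield the lemma; but this observation, or the alternative of simply defining $c_n$ with values in $G\ast_{F'}$ directly as the paper does, needs to be made explicit. A minor presentational remark: the paper achieves the alignment on $F$ with a sequence of conjugators $g_n$ (one per index) and a single subsequence extraction, while you first freeze the restrictions to $F$ along subsequences and then align with a single $g$; both are fine, yours being perhaps slightly cleaner but costing a few more subsequence passes.
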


\begin{lemme}\label{lemme5}Let $G$ be a hyperbolic group, and let $\alpha : F_1\rightarrow F_2$ be an isomorphism between finite subgroups $F_1,F_2$ of $G$. Suppose that the two following conditions hold:
\begin{enumerate}
\item there exists an element $g\in G$ such that $g xg^{-1}=\alpha(x)$ for every $x\in F_1$;
\item there exists an infinite subset $E=\lbrace g_1,g_2,\ldots \rbrace\subset G$ such that $g_n$ has infinite order, $M(g_n)=\langle g_n\rangle\times F_1$ for every $n$, and $M(g_n)\neq M(g_m)$ whenever $n\neq m$.
\end{enumerate}
Then $G\ast_{\alpha}$ is a $G$-limit group.
\end{lemme}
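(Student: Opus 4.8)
The plan is to construct, out of a discriminating sequence $(f_n\colon G\to G)$ witnessing that $G$ is fully residually $G$ (for instance, to simplify, we may even take $f_n=\mathrm{id}$, but it is cleaner to keep the sequence abstract since later we will want $G$ replaced by an arbitrary hyperbolic group in which $G$ is a limit group), a discriminating sequence $(F_n\colon G\ast_\alpha\to G)$. Write $G\ast_\alpha=\langle G, t\mid t x t^{-1}=\alpha(x),\ x\in F_1\rangle$ for the stable letter $t$. The idea is to send $t$ to an element of the form $g\,h_n$, where $g$ is the conjugating element provided by hypothesis (1), and $h_n$ is a carefully chosen element of $G$ that centralizes $F_2$ (equivalently: lies in the centralizer of $\alpha(x)$ for all $x\in F_1$), so that the relation $t x t^{-1}=\alpha(x)$ is still satisfied: indeed $(gh_n)x(gh_n)^{-1}=g h_n x h_n^{-1} g^{-1}=g x g^{-1}=\alpha(x)$ provided $h_n$ commutes with $F_1$. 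So first I would record that each $g_m$ in the set $E$ commutes with $F_1$ (since $M(g_m)=\langle g_m\rangle\times F_1$), hence so does any element of $M(g_m)$; these are the elements we will plug in for $h_n$.

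Next, define $F_n\colon G\ast_\alpha\to G$ by $F_n|_G=f_n$ (or $\mathrm{id}$) and $F_n(t)=g\, g_n^{\,N_n}$ for a sequence of exponents $N_n\to\infty$ to be determined, where $g_n$ is the $n$-th element of $E$. One checks $F_n$ is a well-defined homomorphism exactly because $g_n^{N_n}$ commutes with $F_1$. The point of letting $n$ (hence $M(g_n)$) vary, rather than fixing a single $g_0$ and only letting the exponent grow, is that we need the \emph{translation length} of the image of $t$ (or rather, of the relevant conjugates) to go to infinity in a way that is not absorbed by the bounded part $g$: using distinct maximal virtually cyclic subgroups $M(g_n)$ guarantees that for any fixed nontrivial reduced word in $G\ast_\alpha$, the image under $F_n$ is a concatenation of pieces that fellow-travel long geodesic segments pointing in ``different directions'', so Corollary \ref{baumslag2} (the generalized Baumslag lemma) applies.

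The heart of the argument is therefore: take an arbitrary nontrivial $w\in G\ast_\alpha$, put it in reduced form $w=u_0 t^{\varepsilon_1} u_1 t^{\varepsilon_2}\cdots t^{\varepsilon_k} u_k$ with $u_i\in G$ (and the usual reducedness conditions at $F_1,F_2$ when $\varepsilon_i\varepsilon_{i+1}<0$). Applying $F_n$ and substituting $F_n(t)=g g_n^{N_n}$, the image becomes a word of the form $a_0\, g_n^{\pm N_n} a_1\, g_n^{\pm N_n}\cdots g_n^{\pm N_n} a_k$ in $G$, where the $a_i$ are bounded (independent of $n$ and of $N_n$) products built from the $u_i$, $f_n(u_i)$, and $g$. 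By Britton's lemma reducedness of $w$ forces each intermediate ``syllable constant'' $a_i$ (for $1\le i\le k-1$) to lie outside $M(g_n)$: this is where one uses that $g_n$ was chosen with $M(g_n)$ distinct from the finitely many maximal virtually cyclic subgroups that could contain the relevant $a_i$ — here hypothesis (2), giving infinitely many choices of $M(g_n)$, is exactly what makes room to dodge the bounded obstruction set. Then Corollary \ref{baumslag2} yields a constant $C_w$ with $F_n(w)\ne 1$ as soon as $N_n\ge C_w$. A diagonal argument over an enumeration of $G\ast_\alpha\setminus\{1\}$ then lets us pick the $N_n$ so that $(F_n)$ is discriminating, proving $G\ast_\alpha$ is fully residually $G$, i.e.\ a $G$-limit group.

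The main obstacle I expect is the bookkeeping at the boundary subgroups: ensuring that when $\varepsilon_i=-1,\ \varepsilon_{i+1}=+1$ (so a subword $t^{-1}u_i t$ with $u_i\notin F_2$) the corresponding constant $a_i=g^{-1}f_n(u_i)g$ genuinely lands outside $M(g_n)$, and symmetrically for the $+,-$ case with $u_i\notin F_1$. One has to argue that the finite set of ``bad'' maximal virtually cyclic subgroups to avoid — those containing some $g^{\pm1}f_n(u_i)g^{\pm1}$ for the finitely many syllables appearing, across all $n$ if we use a fixed $f_n=\mathrm{id}$ — is indeed finite, so that the infinitely many distinct $M(g_n)$ eventually avoid all of them; the uniform bound on finite subgroups and the $K$-$\mathrm{CSA}$ structure (Propositions \ref{universelle}, \ref{CSA2}) control this. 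The other minor subtlety is that in the intended application $G$ is not literally $G$ but a hyperbolic group of which our $G$ is a limit group, so the $f_n$ really do vary; then one must check the bad set stays finite uniformly in $n$, which follows because there are only finitely many syllables $u_i$ in the fixed word $w$ and $\mathrm{CSA}$-ness is uniform across the limit group.
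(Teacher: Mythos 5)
Your overall strategy matches the paper's: send $t$ to (a conjugate of) a high power $g_n^p$ of an element from the set $E$, rule out killing a fixed reduced word via Corollary \ref{baumslag2}, using hypothesis (2) (infinitely many distinct $M(g_n)$) to dodge finitely many ``bad'' maximal subgroups, and conclude by a diagonal argument over an exhaustion of $G\ast_\alpha$ by finite sets.

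However, there is a genuine gap. You assert that Britton's lemma forces the intermediate syllable constants $a_i$ (for $1\le i\le k-1$) to lie outside $M(g_n)$. This is false. Britton's lemma constrains $u_i$ only at sign changes ($\varepsilon_i=-\varepsilon_{i+1}$); when $\varepsilon_i=\varepsilon_{i+1}$ it is silent, and $u_i$ may well lie in $F_1$ — for instance $u_i=1$, which already occurs for $w=t^2$. In that case $a_i$ is a torsion element of $M(g_n)=\langle g_n\rangle\times F_1$ for \emph{every} $n$: the scheme of picking $M(g_n)$ distinct from a finite list of $M(a_i)$'s simply does not apply, because these $a_i$ are torsion and hence lie in every $M(g_n)$. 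So Corollary \ref{baumslag2} cannot be invoked directly on the word you write down.

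The paper fixes this by a preliminary rewriting step before applying Baumslag. Whenever $\varepsilon_i=\varepsilon_{i+1}$ and $y_i\in F_1$, the block $g_n^{\varepsilon_i p}y_ig_n^{\varepsilon_{i+1}p}$ is replaced by $g_n^{2\varepsilon_i p}y_i$, using precisely that $y_i$ commutes with $g_n$ (since $M(g_n)=\langle g_n\rangle\times F_1$). Iterating produces a word $z_0\,g_n^{\varepsilon_1 n_1 p}\,z_1\cdots g_n^{\varepsilon_\ell n_\ell p}\,z_\ell$ in which every intermediate constant $z_i$ lies \emph{outside} $F_1$. Since the torsion part of $M(g_n)$ is exactly $F_1$, any $z_i$ that did land in $M(g_n)$ would then be of infinite order with $M(z_i)=M(g_n)$, and at this point — and only at this point — your ``avoid finitely many $M(z_i)$'' argument via hypothesis (2) closes the proof. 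You should add this merging step. (It is also cleaner to first normalize $t\mapsto g^{-1}t$ so that $\alpha=\mathrm{id}$ and $F_1=F_2$, as the paper does; this eliminates the stray conjugating factors $g^{\pm1}$ from the $a_i$ and simplifies the reducedness bookkeeping at the edges. Finally, since the lemma is stated for a single hyperbolic group $G$, the paper simply restricts the map to the identity on $G$; there is no need to thread a discriminating sequence $f_n$ through the construction.)
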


\begin{proof11}
Let $(\phi_n)$ be a discriminating sequence of homomorphisms from $H$ to $G$. Without loss of generality, we can suppose that $\phi_n$ is injective in restriction to $F_1$ and $F_2$. We will construct a discriminating sequence $(\rho_n)$ of homomorphisms from $H\ast_{\alpha}$ to $G\ast_{\beta}$, for some $\beta$ that will be defined below. Since $G$ has finitely many conjugacy classes of finite subgroups, there exist two finite subgroups $F'_1,F'_2$ of $G$ such that, up to extracting a subsequence, $\phi_n(F_1)$ is conjugate to $F'_1$ and $\phi_n(F_2)$ is conjugate to $F'_2$ for every $n$. Up to composing $\phi_n$ by an inner automorphism of $G$, one can assume that $\phi_n(F_1)=F'_1$ and $\iota_{g_n}\circ\phi_n(F_2)=F'_2$ for some $g_n\in G$. Denote by $\beta_n$ the isomorphism from $F'_1$ to $F'_2$ making the following diagram commute:

\begin{center}
\begin{tikzcd}
F_1 \arrow[rr, "{\phi_n}_{\vert F_1}"] \arrow[d, "\alpha"]
& & F'_1 \arrow[d, "\beta_n" ] \\
F_2 \arrow[rr, "\iota_{g_n}\circ{\phi_n}_{\vert F_2}" ]
& & F'_2
\end{tikzcd}
\end{center}
Since $\mathrm{Isom}(F'_1,F'_2)$ is finite, there exists an isomorphism $\beta$ between $F'_1$ and $F'_2$ such that (up to extracting a subsequence) $\beta_n =\beta$ for every $n$. Let $t$ and $u$ be the stable letters of $H\ast_{\alpha}$ and $G\ast_{\beta}$, i.e.\ $txt^{-1}=\alpha(x)$ for all $x\in F_1$ and $uyu^{-1}=\beta(y)$ for all $y\in F'_1$. For every $n$, we define a map $\rho_n$ from $H\ast_{\alpha}$ to $G\ast_{\beta}$ as follows:

\begin{equation*}
\rho_n(x) = \begin{cases}
             \phi_n(x) & \text{if} \ x\in H \\
            g_n^{-1}u & \text{if} \ x=t
       \end{cases}.
\end{equation*} 
The map $\rho_n$ clearly extends to a homomorphism since the diagram commutes, and we claim that the sequence $(\rho_n)$ is discriminating. Let $x$ be a non-trivial element of $H\ast_{\alpha}$. If $x$ lies in $H$, it is obvious that $\rho_n(x)$ is non-trivial for every $n$ large enough since $(\phi_n)$ is discriminating. Assume now that $x\notin H$. Then $x$ can be written in reduced form as $x=h_0t^{\varepsilon_1}h_1t^{\varepsilon_2}h_2\cdots$ with $n>0$, $h_i\in H$, $\varepsilon_i=\pm 1$, $h_i\notin F_1$ if $\varepsilon_i=-\varepsilon_{i+1}=1$ and $h_i\notin F_2$ if $\varepsilon_i=-\varepsilon_{i+1}=-1$. One has:\[\rho_n(x)=\phi_n(h_0){(g_nu)}^{\varepsilon_1}\phi_n(h_1){(g_nu)}^{\varepsilon_2}\phi_n(h_2)\cdots  .\]
One has to prove that $\rho_n(x)\neq 1$ for every $n$ large enough. In order to apply Britton's lemma, one verifies that, for every subword of $w$ of the form $uvu^{-1}$ with $v$ not involving $u$, $v$ does not lie in $F'_1$, and that for every subword of $w$ of the form $u^{-1}vu$ with $v$ not involving $u$, $v$ does not lie in $F'_2$.
\begin{itemize}
\item[$\bullet$]If $uvu^{-1}$ is a subword of $w$ with $v$ not involving $u$, then $v$ is of the form $\phi_n(h_i)$ with $h_i\notin F_1$, and $\phi_n(h_i)\notin F'_1$ for every $n$ large enough because $(\phi_n)$ is discriminating.
\item[$\bullet$]Similarly, if $u^{-1}vu$ is a subword of $w$ with $v$ not involving $u$, then $v$ is of the form ${g_n}\phi_n(h_i) g_n^{-1}$ with $h_i\notin F_2$, and $g_n\phi_n(h_i) g_n^{-1}\notin F'_2=g_n\phi_n(F_2) g_n^{-1}$ for every $n$ large enough because $(\iota_{g_n}\circ\phi_n)$ is discriminating.
\end{itemize}
Hence, it follows from Britton's lemma that $\rho_n(x)\neq 1$ for every $n$ large enough.\end{proof11}

\begin{rque}Note that in the previous proof, the hypothesis that $G$ is hyperbolic is only used to ensure that $G$ has finitely many classes of finite subgroups.
\end{rque}

\begin{proof12}
Let $(\phi_n)$ be a discriminating sequence from $A$ to $G$, and let $(\psi_n)$ be a discriminating sequence from $B$ to $G$. For every $n$ large enough, $\phi_n(F)$ and $\psi_n(F)$ are isomorphic to $F$. By hypothesis, there exists an element $g_n\in G$ making the following diagram commute:

\begin{center}
\begin{tikzcd}
 & & \phi_n(F) \arrow[dd, "\iota_{g_n}" ] \\
F \arrow[rru, "{\phi_n}"] \arrow[rrd, "{\psi_n}"'] & &  \\
 & & \psi_n(F)
\end{tikzcd}
\end{center}
Let $\chi_n=\iota_{g_n^{-1}}\circ \psi_n$. So $\chi_n$ and $\phi_n$ coincide on $F$. Since $(\phi_n)$ and $(\chi_n)$ are discriminating, we can assume that $\phi_n$ and $\chi_n$ are injective on $F$. Since $G$ has only finitely many conjugacy classes of finite subgroups, we can assume that $\phi_n(F)=\chi_n(F)=F'$ for every $n$, for some finite subgroup $F'$ of $G$, up to extracting a subsequence and precomposing by an inner automorphism.

Let $G\ast_{F'}=\langle G, t \ \vert \ [t,x]=1 \ \forall x\in F' \rangle$ be the HNN-extension of $G$ over the identity of $F'$. For every $n$, we define a homomorphism $\rho_n$ from $A\ast_FB$ to $G\ast_{F'}$ as follows:
\begin{equation*}
\rho_n(x) = \begin{cases}
             \phi_n(x) & \text{if} \ x\in A \\
             \iota_{t}\circ \chi_n(x) & \text{if} \ x\in B
       \end{cases}.
\end{equation*} 
We claim that the sequence $(\rho_n)$ is discriminating, i.e.\ that for every non-trivial element $x\in A\ast_FB$, $\rho_n(x)\neq 1$ for every $n$ large enough. If $x\in F\setminus\lbrace 1\rbrace$, then $\rho_n(x)=\phi_n(x)$, so $\rho_n(x)$ is non-trivial for every $n$ large enough since $(\phi_n)$ is discriminating. Assume now that $x\notin F$. Then $x$ can be written in a reduced form $a_1b_1a_2b_2\cdots a_kb_k$ with $a_i\in A\setminus F$ and $b_i\in B\setminus F$ (except maybe $a_1$ and $b_k$). The following holds: \[\rho_{n}(x)=\phi_n(a_1)t\chi_n(b_1)t^{-1}\phi_n(a_2)t\chi_n(b_2)t^{-1}\cdots \phi_n(a_k)t\chi_n(b_k)t^{-1}.\]
Since $(\phi_n)$ and $(\chi_n)$ are discriminating, $\phi_n(a_i)\notin F'$ and $\chi_n(b_i)\notin F'$ for every $n$ large enough  (except maybe $\phi_n(a_1)$ and $\chi_n(b_k)$). Hence, it follows from Britton's lemma that $\rho_n(x)\neq 1$ for every $n$ large enough.\end{proof12}

\begin{proof13}
Let $G=\langle S \ \vert \ R\rangle$ be a presentation of $G$, and let \[G\ast_{\alpha}=\langle S,t \ \vert \ R, \ txt^{-1}=\alpha(x) \ \forall x\in F_1 \rangle\] be a presentation of $G\ast_{\alpha}$. By hypothesis, there exists an element $g\in G$ such that $g xg^{-1}=\alpha(x)$ for every $x\in F_1$. Up to replacing $t$ by $g^{-1}t$, we can assume that $F_1=F_2$ and that $\alpha$ is the identity of $F_1$. The presentation of $G\ast_{\alpha}$ becomes $\langle S,t \ \vert \ R, \ [t,x]=1 \ \forall x\in F_1 \rangle$.

For every $g_n\in E$, and for every integer $p$, we define a map $\phi_{p,n}$ from $G\ast_{\alpha}$ to $G$ by 
\begin{equation*}
\phi_{p,n}:\begin{cases}
             z\mapsto z & \text{if} \ z\in G \\
             t\mapsto g_n^p
       \end{cases}.
\end{equation*}
The map $\phi_{p,n}$ clearly extends to a homomorphism since  $\iota_t$ and $\iota_{g_n^p}$ coincide on $F_1$ for every $p$, because $M(g_n)=\langle g_n\rangle\times F_1$ by hypothesis.

Denote by $B_m$ the ball of radius $m$ in $G\ast_{\alpha}$ (for a given generating set). We shall prove the existence of two sequences $(n_m)\in\mathbb{N}^{\mathbb{N}}$ and $(p_{n_m})\in\mathbb{N}^{\mathbb{N}}$ such that $\phi_{p_{n_m},n_m}(x)\neq 1$ for every $x\in B_m\setminus\lbrace 1\rbrace$, for every $m$. Let $x\in B_m\setminus \lbrace 1\rbrace$. If $x$ lies in $G$, $\phi_{p,n}(x)\neq 1$ for all $p$ and $n$. Assume now that $x$ does not belong to $G$. Then $x$ can be written in a reduced form as \[x=y_{0}t^{\varepsilon_{1}}y_{1}t^{\varepsilon_{2}}\cdots t^{\varepsilon_{k}}y_{k}\] with $k>0$, $\varepsilon_i=\pm 1$, $y_{i}\notin F_1$ if $\varepsilon_{i}=-\varepsilon_{i+1}$. We claim that for $p$ and $n$ sufficiently large, the homomorphism $\phi_{p,n}$ verifies $\phi_{p,n}(x)\neq 1$. In order to prove this, we will use Baumslag's lemma \ref{baumslag2} with $c=g_n$. We have\[\phi_{p,n}(x)=y_{0} g_n^{\varepsilon_{1}p}y_{1} g_n^{\varepsilon_{2}p}\cdots g_n^{\varepsilon_{k}p}y_{k}.\]

First, let us rewrite $\phi_{p,n}(x)$ under a more convenient form. Let $i\in\llbracket 1,k-1\rrbracket$, and suppose that $\varepsilon_i=\varepsilon_{i+1}=1$, and that $y_i$ lies in $F_1$. Then, we replace the subword $g_n^p y_i g_n^py_{i+1}$ by $g_n^{2p}y_iy_{i+1}$. In the case where $\varepsilon_{i+2}=-1$, note that $y_iy_{i+1}$ does not belong to $F_1$, since $y_i\in F_1$ and $y_{i+1}\notin F_1$. In the case where $\varepsilon_{i+2}=1$, we repeat the previous operation, and so on. Similarly, if $\varepsilon_i=\varepsilon_{i+1}=-1$ and $y_i$ lies in $F_1$, we replace the subword $g_n^{-p}y_i g_n^{-p}y_{i+1}$ of $\phi_{p,n}$ by $g_n^{-2p}y_iy_{i+1}$, and so on. At the end of this process, we have\[\phi_{p,n}(x)=z_{0} g_n^{\varepsilon_{1}n_1p}z_{1} g_n^{\varepsilon_{2}n_2p}\cdots g_n^{\varepsilon_{\ell}n_{\ell}p}z_{\ell},\]with $n_1,\ldots,n_{\ell}\in\mathbb{N}^{\ast}$, and $z_{i}\notin F_1$ for every $i\in\llbracket 1,\ell-1\rrbracket$.

We can now use Baumslag's lemma \ref{baumslag2} with $c=g_n$. We claim that there exists $n(x)$ such that $z_i$ does not belong to $M(g_n)$ for every $n\geq n(x)$ and for every $i\in\llbracket 1,\ell-1\rrbracket$. Indeed, suppose that $z_i$ lies in $M(g_n)=\langle g_n\rangle\times F_1$, for some $n$. Since $z_i$ does not belong to $F_1$, it has infinite order, so $M(z_i)$ is well-defined and $M(z_i)=M(g_n)$. Then the claim follows from the fact that $M(g_m)\neq M(g_n)$ if $n\neq m$ (by hypothesis).

Let $n_m:=\max\lbrace n(x) \ \vert \ x\in B_m\rbrace$. By Corollary \ref{baumslag2}, there exists an integer $p_{n_m}$ such that $\ker(\phi_{p_{n_m},n_m})\cap B_m=\lbrace 1\rbrace$. This concludes the proof.
\end{proof13}

We conclude this section by proving that every hyperbolic group embeds into a torsion-saturated hyperbolic group.

\begin{te}\label{sat2}Every hyperbolic group embeds into a torsion-saturated hyperbolic group.
\end{te}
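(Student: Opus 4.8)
The plan is to build $\overline{\Gamma}$ from a given hyperbolic group $\Gamma$ by two finite rounds of combination: a first round of HNN extensions over finite subgroups to force condition (1) of Definition \ref{sat0}, and then a second round of amalgamated free products over finite subgroups to force condition (2). Concretely I would produce a chain $\Gamma\hookrightarrow\Gamma_1\hookrightarrow\Gamma_2=:\overline{\Gamma}$. Each round can be carried out in finitely many steps because a hyperbolic group has only finitely many conjugacy classes of finite subgroups, and because an amalgam or an HNN extension of hyperbolic groups over a finite edge group is again hyperbolic (the combination theorem of \cite{BF92}; note that the building blocks $F\times F(a,b)$ used below are virtually free, hence hyperbolic). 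Thus every intermediate group is hyperbolic, in particular $K$-$\mathrm{CSA}$ for a uniform $K$, and $\overline{\Gamma}$ is hyperbolic.

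The technical key, which ensures that structure created at one step survives the later steps, is the following stability statement: if $P=Q\ast_C R$ or $P=Q\ast_C$ is an amalgam or HNN extension over a \emph{finite} group $C$, with $P$ hyperbolic, and if $g\in Q$ has infinite order, then $M_P(g)=M_Q(g)$; in particular $M_P(g)\subset Q$. (Here $M$ denotes the maximal virtually abelian subgroup, which in a hyperbolic group is the maximal virtually cyclic one, there being no $\mathbb{Z}^2$.) To prove it, let $T$ be the Bass--Serre tree and $v$ the vertex with $\mathrm{Stab}(v)=Q$, so $g$ fixes $v$. The virtually cyclic group $M_P(g)$ contains $g$; if it fixed a vertex $w\neq v$ then $g$ would fix the arc $[v,w]$ and hence lie in a finite edge group, which is impossible; and if it fixed no vertex at all then, being finitely generated and Noetherian and virtually cyclic, it would either contain a hyperbolic isometry and therefore preserve a line, forcing a finite-index subgroup to act by translations of positive length while still containing a power of the elliptic element $g$ --- a contradiction --- or else fix an end of $T$, in which case it would be the increasing union of the stabilizers of the vertices along a ray and so, by the ascending chain condition, fix a vertex after all. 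Hence $M_P(g)$ fixes $v$, i.e.\ $M_P(g)\subset Q$, and since $M_Q(g)\subset M_P(g)$ with $M_Q(g)$ maximal virtually cyclic in $Q$, the two coincide. Applied repeatedly, this shows that an equality $M(a)=\langle a\rangle\times F$ holding inside a vertex group persists in any subsequent combination over finite groups.

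For condition (1), let $F_1,\dots,F_k$ represent the conjugacy classes of finite subgroups of $\Gamma$, and for each $i,j$ and each isomorphism $\alpha\colon F_i\to F_j$ --- only finitely many in all --- adjoin a stable letter $t_\alpha$ with $t_\alpha x t_\alpha^{-1}=\alpha(x)$ on $F_i$; let $\Gamma_1$ be the result of all these HNN extensions. Since a finite group acting on a tree fixes a vertex, every finite subgroup of $\Gamma_1$ is conjugate into $\Gamma$, hence conjugate in $\Gamma_1$ to some $F_i$; consequently any isomorphism between two finite subgroups of $\Gamma_1$ becomes, after conjugation, an isomorphism between some $F_i$ and $F_j$, which is realized by conjugation (by the relevant $t_\alpha$) in $\Gamma_1$. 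For condition (2), pick representatives $F'_1,\dots,F'_\ell$ of the (still finitely many) conjugacy classes of finite subgroups of $\Gamma_1$ and treat them one at a time: having built $P_{i-1}$ (with $P_0=\Gamma_1$), set $P_i=P_{i-1}\ast_{F'_i}\bigl(F'_i\times L_i\bigr)$ with $L_i=\langle a_i,b_i\rangle$ free of rank $2$, the amalgamation identifying $F'_i\subset P_{i-1}$ with the $F'_i$-factor. Because $a_i$ and $b_i$ are primitive in $L_i$, one has $M(a_i)=\langle a_i\rangle\times F'_i$ and $M(b_i)=\langle b_i\rangle\times F'_i$ inside $F'_i\times L_i$; by the stability statement these equalities survive in $\overline{\Gamma}:=P_\ell$, and $\langle a_i,b_i\rangle$ stays free, so by the remark following Definition \ref{sat0} condition (2) holds for each $F'_i$, hence --- after conjugation --- for every finite subgroup of $\overline{\Gamma}$. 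Finally these amalgams do not destroy condition (1): a finite subgroup of $\overline{\Gamma}$ is conjugate into $\Gamma_1$, so exactly as before any isomorphism between finite subgroups of $\overline{\Gamma}$ is realized by conjugation inside $\Gamma_1\subset\overline{\Gamma}$.

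The step I expect to be the main obstacle is the stability statement of the second paragraph --- showing that a virtually cyclic subgroup of an amalgam or HNN extension over a finite group which meets a vertex group in an element of infinite order must be elliptic; once that is in hand, the rest is bookkeeping with the finiteness of conjugacy classes of finite subgroups and with the combination theorem.
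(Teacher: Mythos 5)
Your proof is correct and achieves the same result as the paper, but the construction and the way condition (2) of Definition \ref{sat0} is verified are genuinely different. The paper performs a \emph{single} round of HNN extensions: starting from $\Gamma$, it attaches \emph{two} stable letters for each isomorphism $\alpha\in\mathrm{Isom}(F_i,F_j)$ between representatives of conjugacy classes of finite subgroups; the two stable letters $a,b$ attached to $\alpha=\mathrm{id}_{F_i}$ then provide the free pair with $M(a)=\langle a\rangle\times F_i$, $M(b)=\langle b\rangle\times F_i$ required by the equivalent form of condition (2). You instead use two rounds --- first HNN extensions (one stable letter per isomorphism) to secure condition (1), then amalgams $P_{i-1}\ast_{F'_i}(F'_i\times F(a_i,b_i))$ for condition (2). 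Your version has the advantage that the identity $M(a_i)=\langle a_i\rangle\times F'_i$ is immediate inside the vertex group $F'_i\times F(a_i,b_i)$, and you supply the stability lemma (that $M$ is unchanged by passing to an amalgam or HNN extension over a finite edge group, for an elliptic element of infinite order) which transfers it to $\overline{\Gamma}$; the paper simply asserts $M(a)=\langle a\rangle\times F$ for its hyperbolic stable letters without comment, so your argument makes explicit something the paper leaves implicit. One minor remark: in your stability lemma, since $M_P(g)$ is finitely generated, Serre's lemma already shows a group all of whose elements are elliptic fixes a point, so the ``fix an end / ascending chain'' branch of your case analysis is not really needed; but it does no harm.
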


\begin{proof}Let $G$ be a hyperbolic group and denote by $F_1,\ldots ,F_m$ a system of representatives of the conjugacy classes of finite subgroups of $G$. We build a graph of groups as follows: begin with a vertex $v$ labelled by $G$, then for every pair $\lbrace i,j\rbrace\subset \llbracket 1,m\rrbracket$ (including $\lbrace i,i\rbrace$), if $F_i$ and $F_j$ are isomorphic, then for every isomorphism $\alpha\in\mathrm{Isom}(F_i,F_j)$ add two edges from $v$ to itself labelled by $\alpha$. Denote by $\overline{G}$ the fundamental group of this graph of groups. It is hyperbolic by Bestvina-Feighn's combination theorem \cite{BF92}. Let us prove that it is torsion-saturated. By definition, for every isomorphism $\alpha : F_1 \rightarrow F_2$ between finite subgroups of $\overline{G}$, there exists an element $g$ of $\overline{G}$ such that $gxg^{-1}=\alpha(x)$, for all $x\in F_1$. Hence the first condition of Definition \ref{sat0} is satisfied by $\overline{G}$. It remains to verify that the second condition holds. Let $F$ be a finite subgroup of $\overline{G}$. We can assume that $F=F_i$ for some $1\leq i\leq m$. Let $a$ and $b$ be the two stable letters associated with the two HNN extensions over the identity of $F_i$. The group $\langle a,b\rangle$ is free, and $M(a)=\langle a\rangle\times F$, $M(b)=\langle b\rangle\times F$. This concludes the proof.
\end{proof}

\section{Quasi-floors and quasi-towers}\label{section5}

Hyperbolic floors and hyperbolic towers have been introduced by Sela in \cite{Sel01} to solve Tarski's problem about the elementary equivalence of free groups (see also Kharlampovich and Myasnikov's NTQ groups). Since we want to deal with torsion, we need new definitions. We introduce below quasi-floors and quasi-towers. 

\subsection{Definitions}

\begin{de}[Quasi-floor]\label{quasi-floor}Let $G$ and $H$ be two groups. Let $\Delta$ be a centered splitting of $G$. Let $V_G$ be the set of vertices of $\Delta$, and $v$ the central vertex. Suppose that $H$ splits as a graph of groups with finite edge groups, and denote by $V_H$ the set of vertices of this splitting. We say that $G$ is a quasi-floor over $H$ if there exist two homomorphisms $r : G \rightarrow H$ and $j : H \rightarrow G$, a partition $V_H=V_H^1\sqcup V_H^2$ and a bijection $s : V_G\setminus \lbrace v\rbrace \rightarrow V_H^1$ such that the following conditions hold:
\begin{itemize}
\item[$\bullet$]$j\circ r$ is $\Delta$-related to the identity of $G$;
\item[$\bullet$]for every $w$ in $V_G\setminus \lbrace v\rbrace$, $r(G_w)={H_{s(w)}}$;
\item[$\bullet$]for every $u\in V_H^2$, $H_u$ is finite and $j$ is injective on $H_u$.
\end{itemize}
If, moreover, there exists a one-ended subgroup $A$ of $G$ such that $A\cap \ker(r)\neq \lbrace 1\rbrace$, the quasi-floor is said to be strict.
\end{de}

\begin{rque}\label{rem}Recall that the first condition means that $j\circ r$ is inner on every vertex group $G_w$, with $w\neq v$, and on every finite subgroup of $G_v$ (see Definition \ref{reliés}). It is not hard to see that the three conditions in the previous definition imply the following:
\begin{itemize}
\item[$\bullet$]$j$ is injective in restriction to any one-ended subgroup of $H$;
\item[$\bullet$]$r\circ j$ is inner on every $H_u$, with $u\in V_H^1$;
\item[$\bullet$]for every $u$ in $V_H^1$, there exists $g_u\in G$ such that $j(H_u)=G_{s^{-1}(u)}^{g_u}$.
\end{itemize}
Note in particular that $r$ sends $G_w$ isomorphically on $H_{s(w)}$ (for every $w\neq v$), and that $j$ sends $H_u$ isomorphically on a conjugate of $G_{s^{-1}(u)}$ (for every $u\in V_H^1$). 
\end{rque}

\begin{figure}[h!]
\includegraphics[scale=0.3]{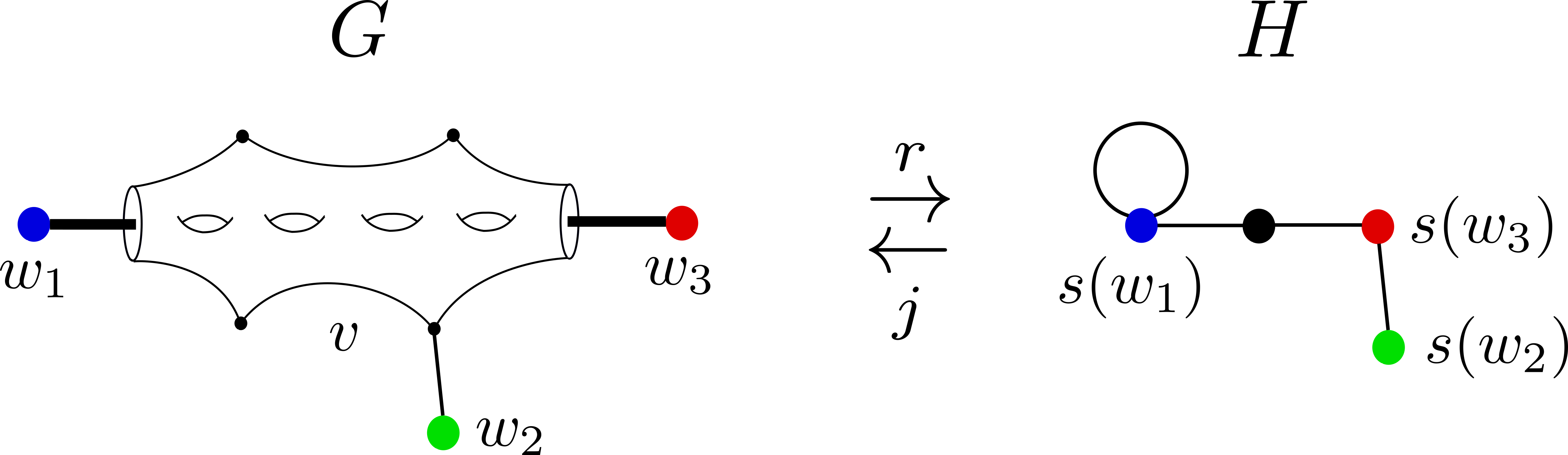}
\caption{$G$ is a quasi-floor over $H$. On this figure, $\vert V_H^1\vert =3$ and $\vert V_H^2\vert =1$. The black vertex (on the right) is labelled by a finite group. Edges with infinite stabilizer are depicted in bold.}
\end{figure}

Sometimes it is convenient to think of $G$ and $H$ as subgroups of a bigger group $G'$ that retracts onto $H$ via an epimorphism $\rho : G' \rightarrow H$ such that $\rho_{\vert G}=r$, as illustrated below. However, it should be noted that the existence of $\rho$ does not guarantee the existence of a homomorphism $j : H\rightarrow G$ as in the previous definition.

\begin{figure}[!h]
\centering
\includegraphics[scale=0.02]{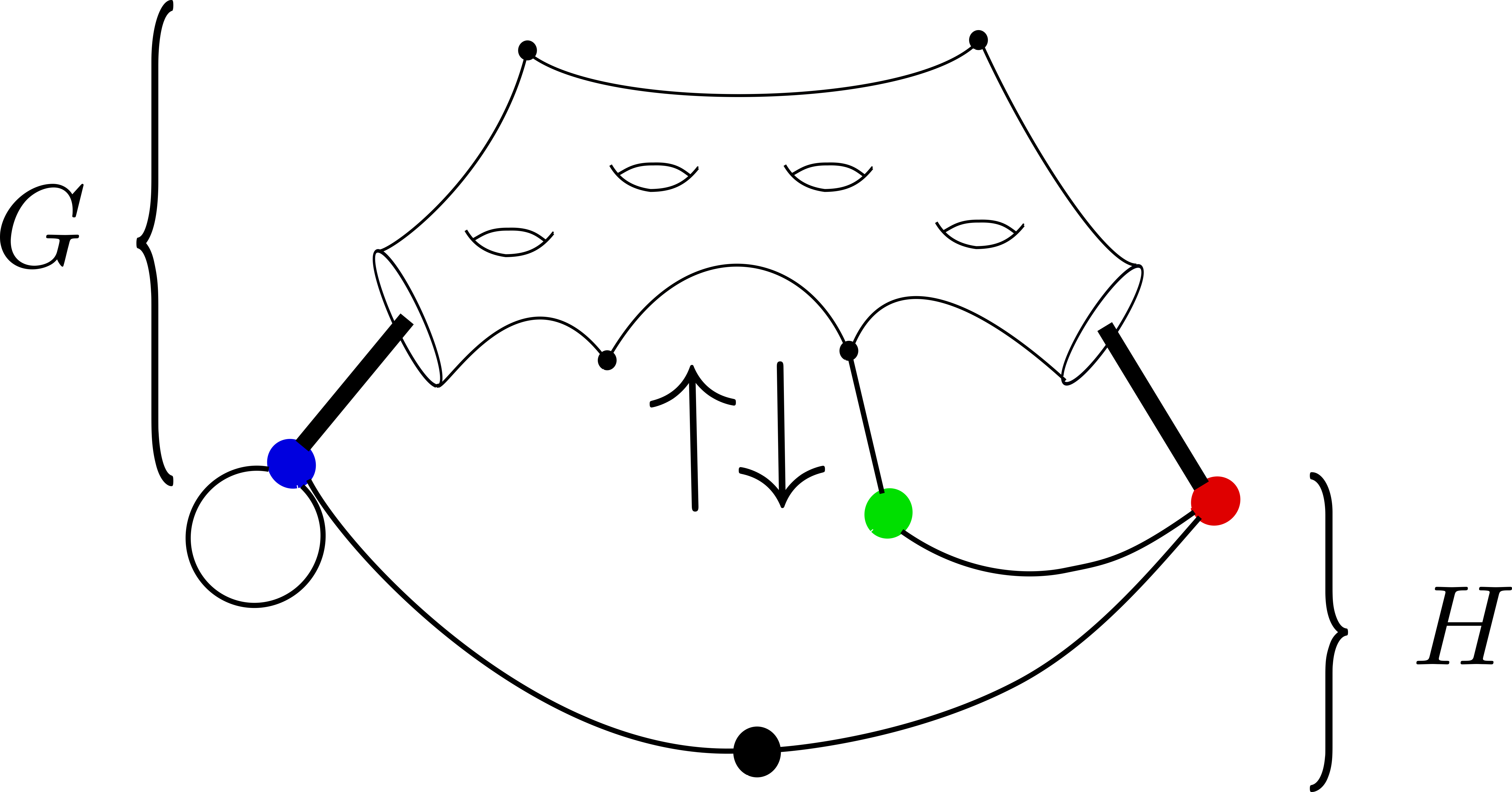}
\caption{The groups $G$ and $H$ can be viewed as subgroups of a bigger group that retracts onto $H$. Edges with infinite stabilizer are depicted in bold.}
\label{cassimple2}
\end{figure}

We stress that a hyperbolic floor in the sense of Sela (see \cite{Per11}, Definition 5.4) is a quasi-floor in the sense of the previous definition.

\begin{ex}Let $G$ be a group, and $H$ a subgroup of $G$. Suppose that $G$ is a hyperbolic floor over $H$ in the sense of Sela. Then $G$ is a quasi-floor over $H$ in the sense of the previous definition. In this particular case, the set $V_H^2$ is empty, $j$ is the inclusion of $H$ into $G$, and $r$ is a retraction from $G$ onto $H$, i.e.\ $r\circ j$ is the identity of $H$ (see Figure \ref{plusdidee} below).
\vspace{2mm}

\begin{figure}[h!]
\includegraphics[scale=1]{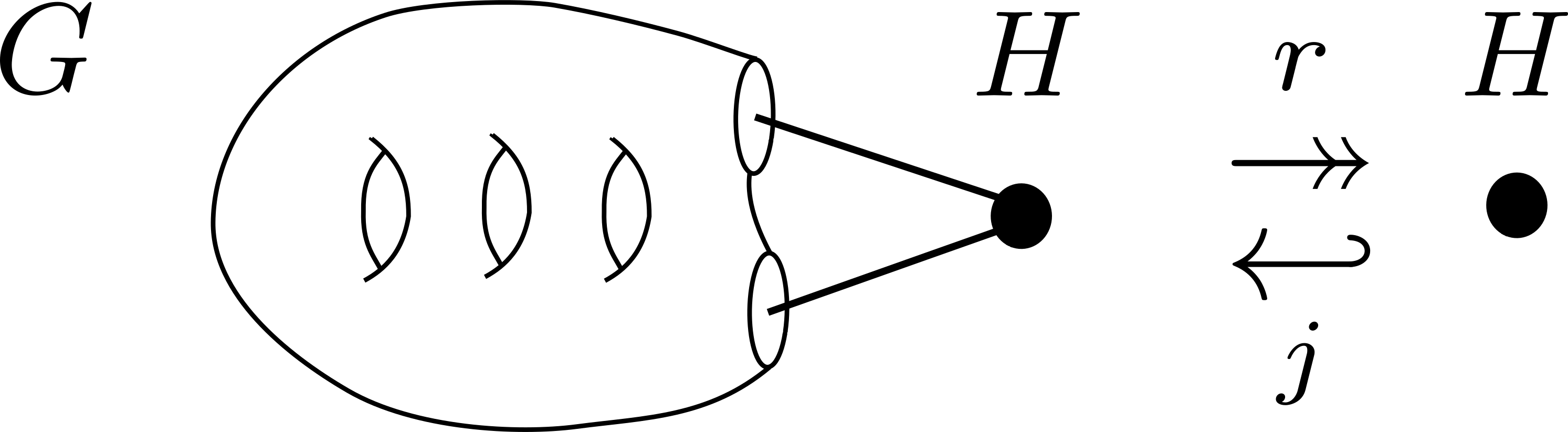}
\caption{$G$ is a hyperbolic floor over $H$ in the sense of Sela, $r\circ j=\mathrm{id}_H$.}
\label{plusdidee}
\end{figure}
\end{ex}

It should be noted that in the definition of a quasi-floor, no assumption is made about the image of the QH group $G_v$, whereas in the definition of a hyperbolic floor, $r(G_v)$ is assumed to be non-abelian. It turns out that this hypothesis is not necessary to prove that hyperbolicity is preserved under elementary equivalence.

In the definition of a quasi-floor, $r$ is not a retraction (it is not even surjective), $j$ is not injective and $H$ is not a subgroup of $G$. However, $r$ can be viewed as a "piecewise retraction" from $G$ to $H$, and $j$ can be viewed as a "piecewise inclusion" from $H$ to $G$. Indeed, as we mentionned in Remark \ref{rem}, it follows easily from Definition \ref{quasi-floor} that $r\circ j$ is inner on every vertex group $H_u$ with $u\in V_H^1$ (in particular on every one-ended subgroup of $H$), and that $j$ is injective on every vertex group $H_u$ with $u\in V_H$ (in particular on every one-ended subgroup of $H$).

It is important to emphasize that, maybe, our construction of a quasi-floor (in Section \ref{section72}) might be modified to ensure that $r$ is an epimorphism, and that $j$ is a monomorphism. However, it seemed to us that this could give rise to a number of new technical difficulties. These complications will be avoided by using Theorem \ref{sat2} stating that every hyperbolic group embeds into a torsion-saturated hyperbolic group (see Definition \ref{sat0}).

Here below is an example of a quasi-floor in the presence of torsion.

\begin{ex}\label{exempleqt}Let $A=\left(\langle x\rangle\times F(x_1,x_2)\right)\ast (\langle y\rangle\times F(x_3,x_4))$, with $x$ of order 6 and $y$ of order 10. $F(x_i,x_j)$ stands for the free group on two generators $x_i$ and $x_j$. Let $\Sigma$ be the orientable surface of genus two with two boundary components, and let $S=\pi_1(\Sigma)$. Call $\langle b_1\rangle$ and $\langle b_2\rangle$ its two boundary subgroups. Let $B=\langle z\rangle\times S$ with $z$ of order 2. Let us define a graph of groups with two vertices labelled by $A$ and $B$, and two edges linking these vertices, identifying the extended boundary subgroup $\langle z\rangle\times \langle b_1\rangle$ with $\langle x^3,[x_1,x_2]\rangle < A$ by $z\mapsto x^3,b_1\mapsto [x_2,x_1]$, and the extended boundary subgroup $\langle z\rangle\times \langle b_2\rangle$ with $\langle y^5,[x_3,x_4]\rangle < A$ by $z\mapsto y^5,b_2\mapsto [x_4,x_3]$. Call $G$ the fundamental group of this graph of groups.

First, note that $G$ cannot be a quasi-floor over $A$. To see that, remark that each involution of $G$ commutes with an element of order 3 and with an element of order 5, whereas there are two conjugacy classes of involutions in $A$: those commuting with an element of order 3, and those commuting with an element of order 5. Hence, there cannot exist any homomorphism $G\rightarrow A$ that is injective on finite subgroups.

We shall prove that $G$ is a quasi-floor over $A\ast_{\left\langle x^3\right\rangle \simeq \left\langle y^5\right\rangle}$. Here is a presentation of $G$:
\[G=\Biggl\langle 
       \begin{array}{l|cl}
                        &tx^3t^{-1}=y^5, x^6=y^{10}=1,  \\
           x,y,x_1,x_2,x_3,x_4,s_1,s_2,s_3,s_4,t & [s_3,s_4][s_1,s_2][x_2,x_1]t^{-1}[x_4,x_3]t=1, \\
                        & [x,x_1]=[x,x_2]=[y,x_3]=[y,x_4]=1    \\                                       
        \end{array}
     \Biggr\rangle\]
The group $G$ retracts onto 

    \[H=\Biggl\langle 
       \begin{array}{l|cl}
                        &tx^3t^{-1}=y^5, \ x^6=y^{10}=1,  \\
           x,y,x_1,x_2,x_3,x_4,t & \\ 
                        & [x,x_1]=[x,x_2]=[y,x_3]=[y,x_4]=1    \\                                       
        \end{array}
     \Biggr\rangle\simeq A\ast_{\left\langle x^3\right\rangle \simeq \left\langle y^5\right\rangle} \]
via the epimorphism $r : G \rightarrow H$ defined as follows:
\begin{equation*}
r: \begin{cases}
             a\mapsto a & \text{if} \ a\in\lbrace x,y,x_1,x_2,x_3,x_4,t\rbrace \\
             s_i\mapsto x_i & \text{if} \ 1\leq i\leq 2 \\
             s_i\mapsto t^{-1}x_it^{1} & \text{if} \ 3\leq i\leq 4\\
       \end{cases}.
\end{equation*}
Hence, $G$ is a quasi-floor over $H=A\ast_{\left\langle x^3\right\rangle \simeq \left\langle y^5\right\rangle}$. See Figure \ref{plusdidee2} below.

\begin{figure}[h!]
\includegraphics[scale=1]{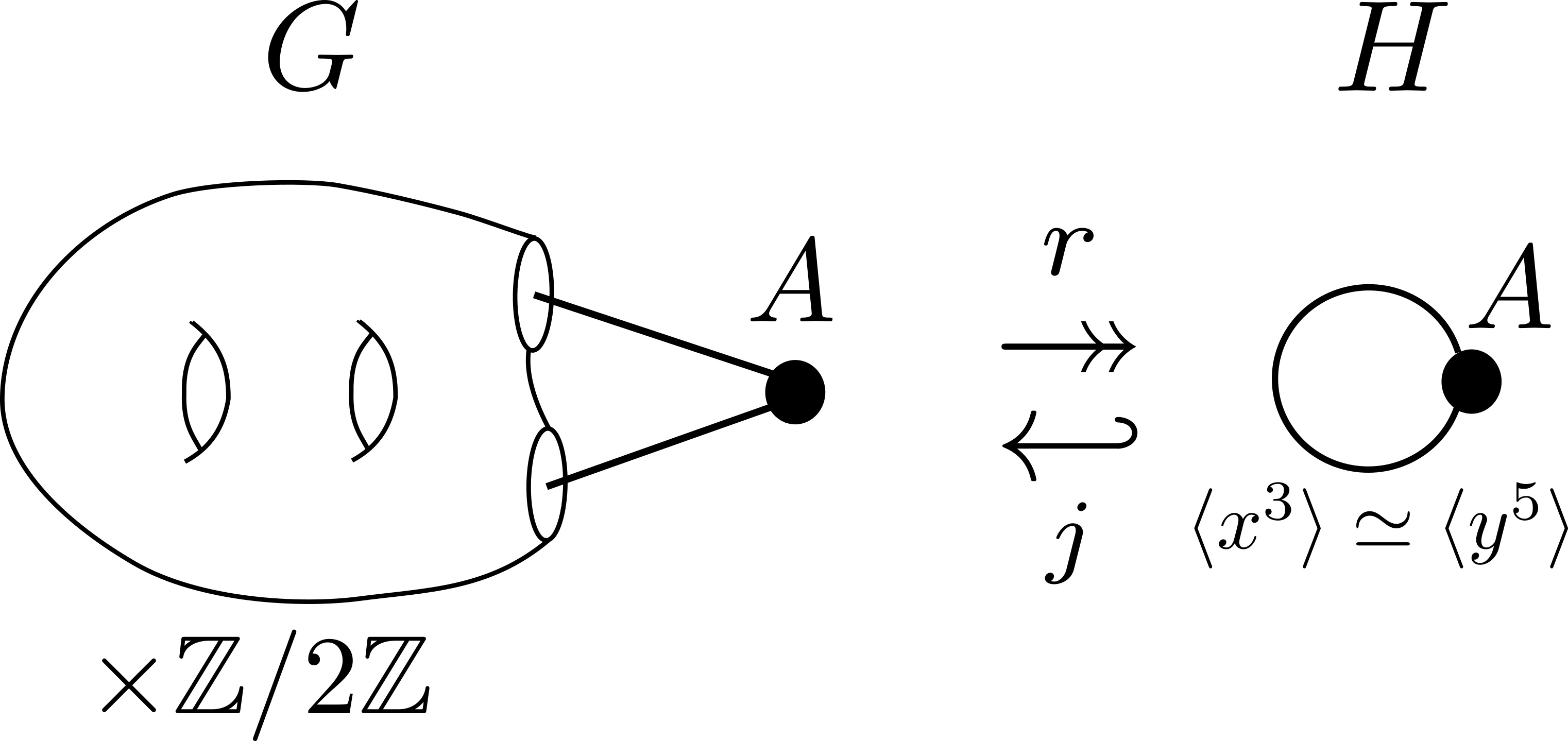}
\caption{$G$ is a quasi-floor over $H$, but not over $A$.}
\label{plusdidee2}
\end{figure}
\end{ex}

We now define quasi-towers, which are obtained by successive addition of quasi-floors.

\begin{de}[Quasi-tower]\label{quasi-tower}Let $G$ and $H$ be two groups. We say that $G$ is a quasi-tower over $H$ if there exists a finite sequence of groups $(G_m)_{0\leq m\leq n}$, with $n\geq 1$, $G_0=G$ and $G_n=H$, such that for every integer $m$ in $\llbracket 1 , n\rrbracket$, the group $G_{m-1}$ is a quasi-floor over $G_{m}$. If, moreover, every quasi-floor is strict, then the quasi-tower is said to be strict.
\end{de}

We end this subsection with two definitions that are comparable to Sela's elementary prototype (Definition 7.3 of \cite{Sel09}) and Sela's elementary core (Definition 7.5 of \cite{Sel09}).

\begin{de}[Quasi-prototype]\label{prototype}A quasi-prototype is a group $G$ that is not a strict quasi-floor over any group $H$.
\end{de}

\begin{ex}A one-ended hyperbolic group whose $\mathcal{Z}$-JSJ splitting does not contain any QH vertex is a quasi-prototype.
\end{ex}

\begin{de}[Quasi-core]\label{core}
Let $G$ be a group. If $G$ is not a quasi-prototype, a quasi-core of $G$ is a group $C$ satisfying the two following conditions:
\begin{itemize}
\item[$\bullet$] $G$ is a strict quasi-tower over $C$.
\item[$\bullet$] $C$ is a quasi-prototype.
\end{itemize}
If $G$ is a quasi-prototype, we define $G$ as the only quasi-core of $G$.
\end{de}

\begin{rque}Note that if a quasi-core exists, it is not unique \textit{a priori}.
\end{rque}

\subsection{Inheritance of hyperbolicity}

Here is an easy but essential proposition.

\begin{prop}\label{héritage}Let $G$ and $H$ be two groups. Suppose that $G$ is a quasi-tower over $H$. The following hold:
\begin{itemize}
\item[$\bullet$]$G$ is hyperbolic if and only if $H$ is hyperbolic.
\item[$\bullet$]$G$ embeds into a hyperbolic group if and only if $H$ embeds into a hyperbolic group.
\item[$\bullet$]$G$ is hyperbolic and cubulable if and only if $H$ is hyperbolic and cubulable.
\end{itemize}
\end{prop}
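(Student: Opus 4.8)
The plan is to reduce everything to a single quasi-floor, since a quasi-tower is by definition a finite iteration of quasi-floors and all three properties in the statement are transitive along such iterations. So it suffices to prove the following: if $G$ is a quasi-floor over $H$ (with the notation of Definition \ref{quasi-floor}: a centered splitting $\Delta$ of $G$ with central vertex $v$, homomorphisms $r:G\to H$, $j:H\to G$, a partition $V_H=V_H^1\sqcup V_H^2$ and a bijection $s:V_G\setminus\{v\}\to V_H^1$), then $G$ has one of the three properties if and only if $H$ does. Once this is established for a single floor, an obvious induction on the length $n$ of the sequence $(G_m)_{0\le m\le n}$ finishes the proof.

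For one direction, suppose $H$ is hyperbolic (resp.\ hyperbolic and cubulable). Recall from Remark \ref{rem} that $r$ restricts to an isomorphism from $G_w$ onto $H_{s(w)}$ for every $w\neq v$; hence every non-central vertex group $G_w$ of $\Delta$ is isomorphic to a vertex group $H_{s(w)}$ of a graph-of-groups decomposition of $H$ with finite edge groups. Since $H$ is hyperbolic, it is in particular $K$-$\mathrm{CSA}$ and its one-ended factors (the one-ended vertex groups of a Stallings–Dunwoody splitting, see Section \ref{SD}) are hyperbolic; the groups $H_{s(w)}$ are built from finitely many such factors amalgamated over finite groups, hence are themselves hyperbolic by the Bestvina–Feighn combination theorem \ref{BF92}. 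Therefore each $G_w$, $w\neq v$, is hyperbolic. Now apply Proposition \ref{hyperbolicité}: $G$ splits as a centered graph of groups with every non-central vertex group hyperbolic, so $G$ is hyperbolic. For cubulability, replace the appeal to Bestvina–Feighn by the Hsu–Wise combination theorem \cite{HW15} for hyperbolic cubulable groups (a centered splitting has virtually cyclic, almost malnormal edge groups, exactly the hypothesis needed), which gives that if every $G_w$, $w\neq v$, is hyperbolic and cubulable then so is $G$ — and the QH vertex group $G_v$ is the fundamental group of a finite-by-orbifold group, which is virtually special, hence hyperbolic and cubulable, so it causes no trouble. The statement ``$G$ embeds into a hyperbolic group iff $H$ does'' is subtler, but in the present set-up one uses that $G$ and $H$ sit inside a common group $G'$ retracting onto $H$ via $\rho$ with $\rho_{|G}=r$; if $H$ embeds into a hyperbolic group $\Omega$, one builds a hyperbolic group containing $G$ by combining $\Omega$ with the hyperbolic QH piece $G_v$ along the (virtually cyclic, almost malnormal) edge groups, again via \ref{BF92}.

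For the converse, suppose $G$ is hyperbolic (resp.\ hyperbolic and cubulable, resp.\ a subgroup of a hyperbolic group). By Proposition \ref{bow} (Bowditch), since $G$ is hyperbolic and $\Delta$ is a splitting over virtually cyclic — hence quasi-convex — subgroups, every vertex group of $\Delta$ is quasi-convex, so in particular every $G_w$ with $w\neq v$ is hyperbolic; and if $G$ is moreover cubulable, these quasi-convex subgroups are themselves hyperbolic and cubulable (a quasi-convex subgroup of a cocompactly cubulated hyperbolic group acts properly cocompactly on a convex subcomplex, or one invokes Haglund–Wise specialness passing to subgroups). Via the isomorphisms $r_{|G_w}:G_w\xrightarrow{\ \sim\ }H_{s(w)}$, every vertex group $H_u$ with $u\in V_H^1$ is then hyperbolic (resp.\ hyperbolic cubulable); and every vertex group $H_u$ with $u\in V_H^2$ is \emph{finite}, hence trivially hyperbolic and cubulable. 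Thus in the given decomposition of $H$ as a graph of groups with finite edge groups, \emph{all} vertex groups are hyperbolic (resp.\ hyperbolic and cubulable), and since finite edge groups are automatically almost malnormal and virtually cyclic, Proposition \ref{BF92} (resp.\ the Hsu–Wise combination theorem) gives that $H$ is hyperbolic (resp.\ hyperbolic and cubulable). For the subgroup statement, if $G\le\Omega$ with $\Omega$ hyperbolic, the same argument shows each one-ended factor of $H$ embeds into a hyperbolic group, and assembling these along finite edge groups inside a suitable hyperbolic amalgam exhibits $H$ as a subgroup of a hyperbolic group.

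The main obstacle is the ``embeds into a hyperbolic group'' clause, for two reasons: hyperbolicity is \emph{not} inherited by subgroups (as the paper itself notes, citing \cite{Bra99}), so one cannot simply say ``$G_w$ is a subgroup of the hyperbolic $G$, done'', and one must instead work with the common overgroup $G'$ and the retraction $\rho$, keeping careful track of which pieces are one-ended factors; and in the converse direction one must reassemble an ambient hyperbolic group for $H$ out of the ambient hyperbolic groups for its factors, which requires checking the malnormality/virtual-cyclicity hypotheses of the combination theorem at the finite edge groups — routine, but it is where the bookkeeping lives. Everything else is a direct application of Propositions \ref{hyperbolicité}, \ref{bow}, \ref{BF92} and the Hsu–Wise theorem, together with the structural facts about quasi-floors recorded in Remark \ref{rem}.
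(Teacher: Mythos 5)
Your proposal is correct and follows essentially the same strategy as the paper: reduce to a single quasi-floor, pass the property from $G$ to $H$ via Bowditch's quasi-convexity result \ref{bow} (and Haglund's \ref{hag} for cubulability), and pass it from $H$ to $G$ via the Bestvina--Feighn \ref{BF92} and Hsu--Wise \ref{combicubu} combination theorems applied to the $2$-acylindrical centered splitting $\Delta_G$. One small slip: the common overgroup $G'$ with retraction $\rho$ that you cite is not actually part of Definition \ref{quasi-floor} (the paper only remarks that it is sometimes convenient to picture such a group), but this is harmless since your actual construction for the embeddability clause --- replacing each non-central vertex group $G_w$ of $\Delta_G$ by the ambient hyperbolic group $\Omega\supseteq H$ and applying Bestvina--Feighn to the resulting $2$-acylindrical graph of groups --- is exactly the paper's argument and does not use $G'$.
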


The proof of the third claim is postponed to Section \ref{64} (see Proposition \ref{propcubu}).

\begin{proof}
We shall prove the proposition in the case where $G$ is a quasi-floor over $H$, the general case follows immediately by induction. Let $\Delta_G$ and $\Delta_H$ be the splittings of $G$ and $H$ associated with the quasi-floor structure. By definition, $\Delta_G$ is a centered graph of groups. Let $V_G$ be its set of vertices, and let $v$ be the central vertex. Denote by $V_H^1$ the set of vertices of $\Delta_H$ whose stabilizers are infinite. By definition, there exists a bijection $s : V_G\setminus \lbrace v\rbrace \rightarrow V_H^1$ such that $G_w\simeq H_{s(w)}$ for every $w\in V_G\setminus \lbrace v\rbrace$.

We prove the first claim. Suppose that $H$ is hyperbolic. Then, by Proposition \ref{bow}, $H_u$ is hyperbolic for every vertex $u$ of $\Delta_H$. As a consequence, the vertex groups of $\Delta_G$ are hyperbolic. Since each edge group of $\Delta_G$ is virtually cyclic and almost malnormal in $G_v$, it follows from the Bestvina-Feighn combination theorem (see Proposition \ref{BF92}) that $G$ is hyperbolic. Conversely, if $G$ is hyperbolic, we prove in exactly the same way that $H$ is hyperbolic.

Now, let us prove the second claim (see Figure \ref{plonge} below). Suppose that $H$ embeds into a hyperbolic group $\Gamma$. In particular, each $H_u$ embeds into $\Gamma$. As a consequence, each $G_w$ embeds into $\Gamma$, for $w\in V_G\setminus\lbrace v\rbrace$. We construct a graph of groups $\Delta_G^{\Gamma}$ from $\Delta_G$ by replacing each vertex group $G_w$ by $\Gamma$. Recall that $\Delta_G$ is 2-acylindrical by definition of a centered graph of groups \ref{graphecentre}, so $\Delta_G^{\Gamma}$ is 2-acylindrical. Call $\Omega$ the fundamental group of $\Delta_G^{\Gamma}$. It is clear that $G$ embeds into $\Omega$. Moreover, $\Omega$ is hyperbolic by the combination theorem of Bestvina and Feighn. Conversely, if $G$ embeds into a hyperbolic group, we prove in the same way that $H$ embeds into a hyperbolic group.
\end{proof}
\vspace{1mm}

\begin{figure}[h!]
\includegraphics[scale=0.3]{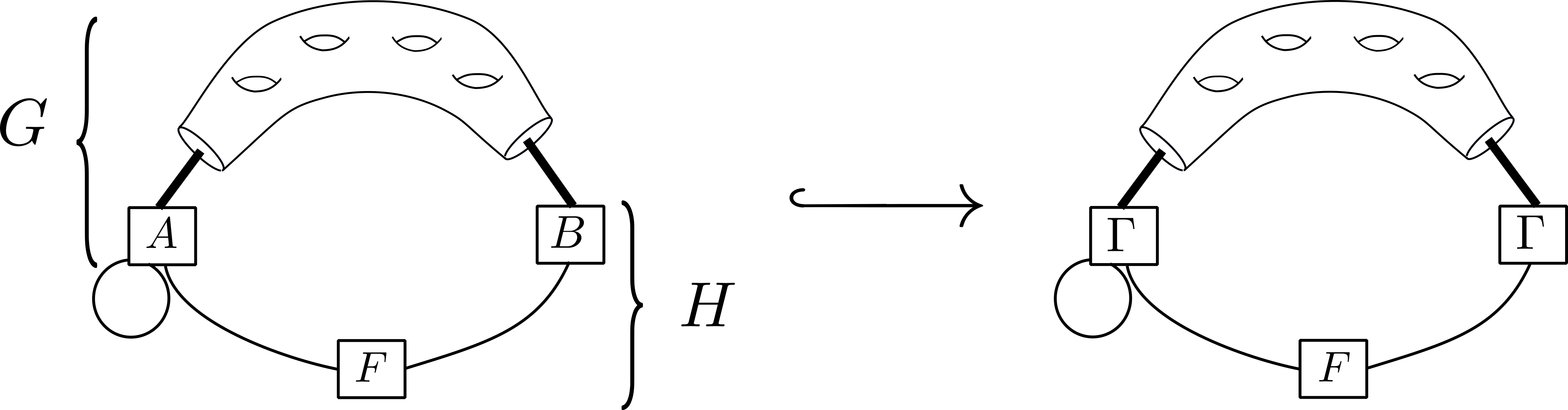}
\caption{The group $G$ is a quasi-floor over $H$. If $G$ embeds into a hyperbolic group $\Gamma$, then $H$ embeds into a hyperbolic group. Conversely, if $H$ embeds into a hyperbolic group $\Gamma$, then $G$ embeds into a hyperbolic group. Edges with infinite stabilizer are depicted in bold.}
\label{plonge}
\end{figure}

\subsection{Every $\Gamma$-limit group has a quasi-core}\label{section}

We shall prove the following result:

\begin{prop}Let $\Gamma$ be a hyperbolic group, and $G$ a $\Gamma$-limit group. Then $G$ has a quasi-core $C$. Moreover, $G$ is hyperbolic if and only if $C$ is hyperbolic.
\end{prop}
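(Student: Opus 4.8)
The plan is to build a quasi-core of $G$ one quasi-floor at a time, and to use the descending chain condition for limit groups (Theorem \ref{chaine}) to force the process to terminate.

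\emph{First}, by Theorem \ref{sat2} I would fix a torsion-saturated hyperbolic group $\overline{\Gamma}$ containing $\Gamma$. Since being fully residually $\Gamma$ is preserved under composition with the embedding $\Gamma\hookrightarrow\overline{\Gamma}$, the group $G$ is a $\overline{\Gamma}$-limit group, and by Theorem \ref{sat} the class of $\overline{\Gamma}$-limit groups is closed under amalgamated free products and HNN extensions over finite groups. This yields the engine of the argument: \emph{if a finitely generated $\overline{\Gamma}$-limit group $G'$ is a quasi-floor over $H$, then $H$ is again a finitely generated $\overline{\Gamma}$-limit group}. Indeed, $H$ carries a finite graph-of-groups decomposition with finite edge groups (Definition \ref{quasi-floor}); by Remark \ref{rem} its vertex groups with infinite stabilizer are isomorphic to the non-central vertex groups of the centered splitting of $G'$, hence are finitely generated subgroups of $G'$, hence $\overline{\Gamma}$-limit groups, while its finite vertex groups embed in $\overline{\Gamma}$. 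Building $H$ up one edge at a time exhibits it as obtained from $\overline{\Gamma}$-limit groups by finitely many amalgams and HNN extensions over finite groups, so Theorem \ref{sat} applies. It is precisely here that passing to $\overline{\Gamma}$ is essential: by Example \ref{instable}, $H$ need not be a $\Gamma$-limit group (compare Example \ref{exempleqt}).

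\emph{Next}, I would iterate. If $G$ is a quasi-prototype, then $C=G$ and we are done. Otherwise $G$ is a strict quasi-floor over some group, which by the previous paragraph is a finitely generated $\overline{\Gamma}$-limit group; continuing, we either reach after finitely many steps a strict quasi-tower $G=G_0\to\cdots\to G_n$ whose last term $C=G_n$ is a quasi-prototype --- a quasi-core of $G$ --- or we obtain an infinite sequence of strict quasi-floors $G=G_0\xrightarrow{r_1}G_1\xrightarrow{r_2}G_2\xrightarrow{r_3}\cdots$ between finitely generated $\overline{\Gamma}$-limit groups, which must be ruled out. Theorem \ref{chaine} does not apply to the $r_m$ directly because they are not surjective; instead, following the strategy sketched in the introduction, I would construct finitely generated subgroups $H_m\le G_m$ with $r_{m+1}(H_m)=H_{m+1}$ for all $m$ --- roughly, $H_m$ generated by compatibly chosen representatives of the vertex groups with infinite stabilizer occurring in the splittings of $G_m$, which is possible because by Remark \ref{rem} each $r_{m+1}$ maps the non-central vertex groups of the centered splitting of $G_m$ isomorphically onto the infinite vertex groups of the base splitting of $G_{m+1}$. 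Each $H_m$ is then a $\overline{\Gamma}$-limit group and $r_{m+1}|_{H_m}\colon H_m\twoheadrightarrow H_{m+1}$ is a sequence of epimorphisms. Strictness of $G_m$ over $G_{m+1}$ furnishes a one-ended subgroup $A\le G_m$ with $A\cap\ker r_{m+1}\neq\{1\}$; arranging $A\le H_m$ makes $r_{m+1}|_{H_m}$ non-injective for every $m$, contradicting Theorem \ref{chaine}. Hence the sequence is finite and $G$ has a quasi-core.

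\emph{Finally}, the ``moreover'' clause is immediate: if $G$ is a quasi-prototype then $C=G$; otherwise $G$ is a strict quasi-tower --- in particular a quasi-tower --- over $C$, and Proposition \ref{héritage} gives that $G$ is hyperbolic if and only if $C$ is. The main obstacle is the construction of the subgroups $H_m$ in Step~3, and in particular the tension between requiring $r_{m+1}(H_m)=H_{m+1}$ (so that the $H_m$ form a chain of epimorphisms) and requiring each $H_m$ to contain a witness $A$ of strictness of the $m$-th floor (so that no $r_{m+1}|_{H_m}$ is an isomorphism); resolving this is what makes the descending chain condition bite. Everything else is bookkeeping around Theorems \ref{sat2}, \ref{sat}, \ref{chaine} and Proposition \ref{héritage}.
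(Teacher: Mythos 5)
Your overall scaffolding matches the paper's: pass to a torsion-saturated hyperbolic overgroup $\overline{\Gamma}$ so that each quasi-floor base is again a $\overline{\Gamma}$-limit group (Proposition \ref{chapeau0}), then invoke the descending chain condition \ref{chaine} on an auxiliary chain of epimorphisms, and finish the ``moreover'' with Proposition \ref{héritage}. But the step you flag as ``the main obstacle'' is in fact a genuine gap, and your proposed construction of the $H_m$ would not resolve it. Generating $H_m$ by the non-central vertex groups of the centered splitting of $G_m$ does not mesh well with the $(m+1)$-st floor: those vertex groups are carried by $r_{m+1}$ onto the vertex groups of the finite-edge-group splitting of $G_{m+1}$ coming from the $m$-th floor, which is a different splitting of $G_{m+1}$ from the centered one used in floor $m+1$, so there is no reason for $r_{m+1}(H_m)$ to equal your $H_{m+1}$. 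And you explicitly leave unresolved the competing demand that $H_m$ also contain a witness of strictness of $r_{m+1}$.

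The paper's Proposition \ref{truc} dissolves this tension with a cleaner choice: set $H_m:=r_{m-1}\circ\cdots\circ r_0(G_0)$, the forward image of $G_0$. This automatically gives the chain of epimorphisms $r_m\colon H_m\twoheadrightarrow H_{m+1}$ among finitely generated $\overline{\Gamma}$-limit groups. For strictness, one does \emph{not} arrange the witness $A_m\le G_m$ to lie in $H_m$; rather, iterating Remark \ref{rem} and Proposition \ref{prop reliés}(2) shows that $r_{m-1}\circ\cdots\circ r_0\circ j_0\circ\cdots\circ j_{m-1}$ restricts to a conjugation on the one-ended subgroup $A_m$, so some conjugate $gA_mg^{-1}$ lies in $H_m$. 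Since $\ker(r_m)$ is normal and $A_m\cap\ker(r_m)\neq\{1\}$, also $gA_mg^{-1}\cap\ker(r_m)\neq\{1\}$, hence $H_m\cap\ker(r_m)\neq\{1\}$ and $r_m|_{H_m}$ is not injective. That is exactly what makes the descending chain condition bite. Everything else in your write-up (the role of $\overline{\Gamma}$, closure of $\overline{\Gamma}$-limit groups under the quasi-floor move, the reduction of the ``moreover'' to Proposition \ref{héritage}) agrees with the paper.
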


The second part of the previous proposition is an immediate consequence of Proposition \ref{héritage} above. It remains to prove the first part, that is:

\begin{prop}\label{truc2}If $\Gamma$ is a hyperbolic group, then every $\Gamma$-limit group has a quasi-core.
\end{prop}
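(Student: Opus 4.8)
The plan is to strip strict quasi-floors off $G$ one at a time, staying inside a well-behaved class of limit groups, and to show by a descending chain argument that this process stops. \emph{Reduction.} I would first replace $\Gamma$ by a torsion-saturated hyperbolic group $\overline{\Gamma}$ containing it, which exists by Theorem~\ref{sat2}. Composing a discriminating sequence $G\to\Gamma$ with the inclusion $\Gamma\hookrightarrow\overline{\Gamma}$ shows that $G$ is a $\overline{\Gamma}$-limit group, and Theorem~\ref{sat} guarantees that the class of $\overline{\Gamma}$-limit groups is closed under amalgamated products and HNN extensions over finite subgroups --- the stability property that Example~\ref{instable} shows to fail over $\Gamma$ itself, and which is needed to keep the auxiliary groups below inside the class. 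All groups occurring in the argument are then $K$-$\mathrm{CSA}$ (Proposition~\ref{universelle}) with finitely generated abelian subgroups (Theorem~\ref{typefini}); in particular each has a Stallings--Dunwoody splitting (Section~\ref{SD}) and a canonical JSJ splitting of each of its one-ended factors, so that the notion of a (strict) quasi-floor is meaningful for it.

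\emph{The inductive step.} If $G$ is already a quasi-prototype (Definition~\ref{prototype}), set $C=G$ and stop. Otherwise $G$ is, by definition, a strict quasi-floor over some group $H$, via homomorphisms $r\colon G\to H$ and $j\colon H\to G$ as in Definition~\ref{quasi-floor}. The point to establish is that $H$ is again a $\overline{\Gamma}$-limit group. Indeed $H$ sits as a retract of the ambient group $G'$ (with $\rho\colon G'\twoheadrightarrow H$ and $\rho\circ j=\mathrm{id}_H$), so if $G'$ is a $\overline{\Gamma}$-limit group then a discriminating sequence for $G'$ restricts along $j$ to a discriminating sequence for $H$ (because $\rho(j(h))=h$ forces $j(h)\ne 1$ whenever $h\ne 1$), while $H$ is finitely generated; and $G'$ is a $\overline{\Gamma}$-limit group by the quasi-floor construction of Section~\ref{section72}, whose inputs are the generalized Baumslag criterion~\ref{baumslag2} for the surface attachment and Theorem~\ref{sat} for the gluings over finite subgroups, starting from the vertex groups of $G$ (finitely generated subgroups of a $\overline{\Gamma}$-limit group, hence $\overline{\Gamma}$-limit groups). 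Iterating, we obtain a sequence of strict quasi-floors $G=G_0\xrightarrow{r_1}G_1\xrightarrow{r_2}G_2\to\cdots$ of $\overline{\Gamma}$-limit groups.

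\emph{Termination.} It remains to prove this sequence is finite. Following the idea announced in the introduction, I would construct for each $n$ a finitely generated subgroup $H_n\le G_n$ such that $r_{n+1}$ restricts to an epimorphism $H_n\twoheadrightarrow H_{n+1}$ and such that $H_n$ contains the one-ended subgroup of $G_n$ witnessing strictness of the quasi-floor $G_n\to G_{n+1}$. The $H_n$ being $\overline{\Gamma}$-limit groups, the descending chain condition (Theorem~\ref{chaine}) forces $r_{n+1}\vert_{H_n}$ to be an isomorphism for every large $n$; but then $\ker(r_{n+1})\cap H_n=\{1\}$, which contradicts strictness of the quasi-floor $G_n\to G_{n+1}$ --- strictness supplies a one-ended $A\le G_n$, hence $A\le H_n$ by our choice, with $A\cap\ker(r_{n+1})\ne\{1\}$. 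Therefore the sequence terminates at some $C:=G_N$, which by definition is not a strict quasi-floor over any group, i.e.\ a quasi-prototype; since $G$ is then a strict quasi-tower over $C$, this $C$ is a quasi-core of $G$ (Definition~\ref{core}).

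\emph{Main obstacle.} The hard part is the construction of the nested subgroups $H_n$: one must arrange at once that each $H_n$ is finitely generated, that $r_{n+1}$ maps $H_n$ \emph{onto} $H_{n+1}$ (the $r_n$ are not surjective, which is exactly why the naive adaptation of Sela's torsion-free descending-floor argument fails), and that $H_n$ captures the one-ended pieces carrying strictness, so that the isomorphism conclusion of Theorem~\ref{chaine} genuinely contradicts strictness. This is the combinatorial core where the failure of $H$ to be either a subgroup or a quotient of $G$ in a quasi-floor must be circumvented, and where the preliminary work on torsion-saturated groups (Section~\ref{section 4}) is indispensable for keeping the entire tower inside the class of $\overline{\Gamma}$-limit groups.
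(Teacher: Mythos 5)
Your high-level plan is the paper's: pass to a torsion-saturated $\overline{\Gamma}$, show every group in a sequence of strict quasi-floors starting from a $\Gamma$-limit group is a $\overline{\Gamma}$-limit group, then invoke the descending chain condition. But both of the two steps where you acknowledge difficulty are precisely the ones the paper resolves differently, and there is a gap in each as you have stated them.

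First, your argument that $H$ is a $\overline{\Gamma}$-limit group runs through the auxiliary group $G'$ from Figure 3. But $G'$ is not part of Definition~\ref{quasi-floor} -- the paper is explicit that the picture is heuristic, and even warns that the existence of $\rho:G'\twoheadrightarrow H$ does not imply the existence of a map $j:H\to G$ as in the definition. You also write $\rho\circ j=\mathrm{id}_H$; if $j$ here is the homomorphism $H\to G$ of the definition, then $\rho\circ j = r\circ j$, which by Remark~\ref{rem} is only inner on each one-ended vertex group of $\Delta_H$, not the identity. The paper's Proposition~\ref{chapeau0} bypasses $G'$ entirely: by Remark~\ref{rem}, $j$ embeds each vertex group of $\Delta_H$ into $G$ (injectively on the finite ones in $V_H^2$ and isomorphically onto conjugates of the $G_w$ for the ones in $V_H^1$), so each vertex group is a $\Gamma$-limit group, hence a $\overline{\Gamma}$-limit group; then $H$ is obtained from these pieces by amalgams and HNN extensions over finite groups, and Theorem~\ref{sat} applied to $\overline{\Gamma}$ closes the loop. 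Your parenthetical ``starting from the vertex groups of $G$\dots'' contains the germ of this, but as written it is subordinate to the uncontrolled $G'$.

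Second, for termination you propose to manufacture finitely generated $H_n\le G_n$ with $r_{n+1}(H_n)=H_{n+1}$ \emph{and} $A_n\le H_n$ where $A_n$ witnesses strictness, and you correctly flag that you don't see how to build such $H_n$. The paper's Proposition~\ref{truc} relaxes exactly the requirement that is causing you trouble. Set $G'_n := r_n\circ\cdots\circ r_0(G_0)$; then $r_{n+1}(G'_n)=G'_{n+1}$ automatically, and each $G'_n$ is a finitely generated subgroup of the $\overline{\Gamma}$-limit group $G_n$, hence a $\overline{\Gamma}$-limit group. One does not need $A_n\le G'_n$: by iterating the first assertion of Lemma~\ref{prop reliés}, the composition $r_n\circ\cdots\circ r_0\circ j_0\circ\cdots\circ j_n$ restricts on the one-ended group $A_n$ to conjugation by some $g\in G_n$, so $G'_n$ contains $gA_ng^{-1}$; and since $A_n\cap\ker(r_{n+1})\ne\{1\}$ and $\ker(r_{n+1})$ is normal, $gA_ng^{-1}\cap\ker(r_{n+1})\ne\{1\}$ as well. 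Thus $r_{n+1}\vert_{G'_n}$ is not injective, and Theorem~\ref{chaine} applied to $(G'_n)$ forces the sequence to be finite. Replacing your ``$H_n$ contains $A_n$'' demand by ``$H_n$ contains a conjugate of $A_n$, carried there by a map that is a conjugation on $A_n$'' is the missing idea.
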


In other words, the previous proposition claims that a $\Gamma$-limit group is either a quasi-prototype, either a strict quasi-tower over a quasi-prototype. The proposition is an easy consequence of the following lemma.

\begin{lemme}\label{truc2bis}There does not exist any infinite sequence $(G_n)_{n\geq 0}$ of finitely generated groups such that $G_0$ is a $\Gamma$-limit group and, for every integer $n$, $G_n$ is a strict quasi-floor over $G_{n+1}$.  
\end{lemme}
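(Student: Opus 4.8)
The plan is to argue by contradiction, using the descending chain condition (Theorem~\ref{chaine}) as the fundamental finiteness mechanism, exactly as in the torsion-free case but with the surjectivity defect of quasi-floors repaired by passing to suitable subgroups. Suppose such an infinite sequence $(G_n)_{n\geq 0}$ exists, with $G_0$ a $\Gamma$-limit group and each $G_n$ a strict quasi-floor over $G_{n+1}$, witnessed by homomorphisms $r_n : G_n\rightarrow G_{n+1}$ and $j_n : G_{n+1}\rightarrow G_n$, centered splittings $\Delta_{G_n}$ with central QH vertex $v_n$, and the partition/bijection data of Definition~\ref{quasi-floor}. The first point to establish is that every $G_n$ is again a $\Gamma$-limit group: this is where Theorem~\ref{propintro}/Theorem~\ref{sat2} enters. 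Replacing $\Gamma$ by a torsion-saturated hyperbolic overgroup $\overline{\Gamma}$ (which does not change the class we care about, since a $\Gamma$-limit group is a $\overline{\Gamma}$-limit group), the class of $\overline{\Gamma}$-limit groups is closed under amalgams and HNN extensions over finite groups by Theorem~\ref{sat}; since a quasi-floor $G_n$ is built from $G_{n+1}$-type pieces (the vertex groups $G_w\simeq H_{s(w)}$ together with the QH vertex and finitely many finite vertex groups) glued along virtually cyclic and finite edge groups, and the QH piece itself is a hyperbolic (hence $\overline{\Gamma}$-limit, after one more application of saturation if needed) group, one checks that $G_n$ being a $\Gamma$-limit group forces $G_{n+1}$ to be one as well, and conversely $G_{n+1}$ a $\Gamma$-limit group makes $G_n$ a $\Gamma$-limit group. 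Actually the direction we need is: $G_0$ is a $\Gamma$-limit group, hence (as a retract-like quotient piece, via $r_0$ composed with a discriminating sequence, or directly since $j_0\circ r_0$ is $\Delta$-related to the identity) $G_1$ is a $\Gamma$-limit group, and inductively all $G_n$ are.

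Next I would extract the descending chain. As indicated in the introduction (the passage about "there exists a subgroup $H_n<G_n$ such that the restriction of $r_n$ to $H_n$ is an epimorphism onto $H_{n+1}$"), the key technical step is: define $H_0:=G_0$ and, inductively, $H_{n+1}:=r_n(H_n)\leq G_{n+1}$, so that $r_n$ restricts to an epimorphism $H_n\twoheadrightarrow H_{n+1}$. One must check that each $H_n$ is a $\Gamma$-limit group — this holds because $H_n$ is a finitely generated subgroup of the $\Gamma$-limit group $G_n$, and finitely generated subgroups of $\Gamma$-limit groups are $\Gamma$-limit groups (they are again fully residually $\Gamma$); finite generation of $H_{n+1}$ follows from that of $H_n$. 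Then $(H_n)$ with the epimorphisms $(r_n|_{H_n})$ is a sequence of $\Gamma$-limit groups with epimorphisms, so by Theorem~\ref{chaine} the map $r_n|_{H_n}$ is an isomorphism for all $n$ large enough, say for $n\geq N$.

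Finally I would derive a contradiction from strictness. Strictness of the quasi-floor $G_n$ over $G_{n+1}$ means there is a one-ended subgroup $A$ of $G_n$ with $A\cap\ker(r_n)\neq\{1\}$. The point is to show that this forces $r_n|_{H_n}$ to be non-injective for the relevant $n$, i.e.\ that the one-ended subgroup $A$ (or a conjugate) can be taken inside $H_n$. Here one uses that $j_n\circ r_n$ is $\Delta$-related to the identity of $G_n$, so $r_n$ is injective on every non-central vertex group $G_w$ and on finite subgroups; a one-ended subgroup $A$ with $A\cap\ker(r_n)\neq\{1\}$ cannot be elliptic in $\Delta_{G_n}$ in a non-central vertex, so it must "involve" the QH central vertex $v_n$ in an essential way — and by the construction of the quasi-core tower (to be given in Section~\ref{section72}), the one-ended factors of $G_n$, and in particular $H_n$ which contains the images of the earlier floors, actually contain such an $A$ up to conjugacy, or at least $H_{n+1}=r_n(H_n)$ is a proper quotient of $H_n$ witnessed by the nontrivial element killed. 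In any case, strictness yields that $r_n|_{H_n}$ is non-injective for infinitely many $n$ (indeed for every $n$), contradicting that it is an isomorphism for $n\geq N$. This contradiction proves the lemma.

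**Main obstacle.** The delicate point, and the one I expect to require genuine care, is the bookkeeping ensuring that the one-ended subgroup $A$ witnessing strictness of the $n$-th floor genuinely survives inside the subgroup $H_n$ on which we have arranged $r_n$ to be an isomorphism — in other words, reconciling the "strictness lives somewhere in $G_n$" hypothesis with the "$r_n$ is an iso on $H_n$" conclusion. This is precisely the subtlety flagged in the introduction as arising from the non-surjectivity of $r_n$, and handling it correctly will presumably rely on properties of the explicit quasi-tower construction (the fact that the subgroups $H_n$ can be chosen to contain the non-QH vertex groups, which are the one-ended factors, together with enough of the QH part to detect the kernel).
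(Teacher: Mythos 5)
Your overall architecture matches the paper's: pass to a torsion-saturated hyperbolic overgroup $\overline{\Gamma}$ so that all the $G_n$ become $\overline{\Gamma}$-limit groups (this is the content of Proposition~\ref{chapeau0} and Corollary~\ref{chapeau1}), set $H_{n}:=r_{n-1}\circ\cdots\circ r_0(G_0)$ to repair non-surjectivity of the $r_n$'s, and then feed the resulting chain of $\overline{\Gamma}$-limit groups and epimorphisms into the descending chain condition~\ref{chaine}. All of that is correct and agrees with the paper (your $H_{n+1}$ is its $G'_n$).

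However, you have left a genuine gap precisely where you flagged the ``delicate point,'' and the hand-waving you offer there does not close it. The DCC gives you that $r_n|_{H_n}$ is eventually an isomorphism; to contradict this, you need to exhibit, for each $n$, a nontrivial element of $H_n\cap\ker(r_n)$. Strictness only hands you a one-ended subgroup $A\leq G_n$ with $A\cap\ker(r_n)\neq\{1\}$, and nothing \emph{a priori} places $A$, or even a conjugate of it, inside $H_n=r_{n-1}\circ\cdots\circ r_0(G_0)$. Your suggestion that $A$ must ``involve the QH central vertex'' and that the quasi-tower construction will sort this out is not an argument; it is an appeal to machinery you haven't invoked. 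The paper resolves this with a short but essential trick that you have not found: push $A$ all the way down via the (injective-on-one-ended-subgroups) maps $j_{n-1}\circ\cdots\circ j_0$ to get a one-ended subgroup of $G_0$, then push back up with $r_{n-1}\circ\cdots\circ r_0$; using that each $r_i\circ j_i$ is inner in restriction to one-ended subgroups (Remark~\ref{rem} and Proposition~\ref{prop reliés}) and that $r\circ\iota_g=\iota_{r(g)}\circ r$, the full round trip $(r_{n-1}\circ\cdots\circ r_0)\circ(j_0\circ\cdots\circ j_{n-1})$ restricted to $A$ is a conjugation $\iota_\gamma$. Hence $\gamma A\gamma^{-1}\subseteq H_n$, and since $\ker(r_n)$ is normal, $\gamma A\gamma^{-1}\cap\ker(r_n)=\gamma\bigl(A\cap\ker(r_n)\bigr)\gamma^{-1}\neq\{1\}$. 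This is the step your proof is missing, and without it the contradiction with the DCC does not go through.
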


First, note that if $\Gamma$ is a torsion-free hyperbolic group, it follows from the descending chain condition \ref{chaine} that there does not exist any infinite sequence $(G_n)_{n\geq 0}$ such that $G_0$ is a $\Gamma$-limit group and $G_n$ is a hyperbolic floor over $G_{n+1}$ (in the sense of Sela). 

In the presence of torsion, however, Definition \ref{quasi-floor} has two drawbacks that seem to be obstacles to the use of the descending chain condition: if $G$ is a $\Gamma$-limit group, and if $G$ is a quasi-floor over $H$, then in general $H$ is neither a $\Gamma$-limit group, nor a quotient of $G$.

We remedy the first issue using the fact that every hyperbolic group embeds into a torsion-saturated hyperbolic group (see Theorem \ref{sat2}).

\begin{prop}\label{chapeau0}Let $\Gamma$ be a hyperbolic group, and $G$ a $\Gamma$-limit group. Let $\overline{\Gamma}$ be a torsion-saturated hyperbolic group containing $\Gamma$. Suppose that $G$ is a quasi-floor over a finitely generated group $H$. Then $H$ is a $\overline{\Gamma}$-limit group.
\end{prop}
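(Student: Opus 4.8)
The plan is to realize $H$ as built from finitely generated $\overline{\Gamma}$-limit groups by finitely many amalgamated free products and HNN extensions over finite groups, and then to apply Theorem \ref{sat} — which is available precisely because $\overline{\Gamma}$ is torsion-saturated. I would first set up the pieces. Since $\Gamma$ embeds into $\overline{\Gamma}$ and $G$ is fully residually $\Gamma$, the group $G$ is in particular fully residually $\overline{\Gamma}$; fix a discriminating sequence $(f_n : G\to\overline{\Gamma})$. By Definition \ref{quasi-floor} together with Remark \ref{rem}, the quasi-floor hypothesis provides a centered splitting $\Delta$ of $G$ with central vertex $v$, a splitting $\Delta_H$ of $H$ as a finite graph of groups with finite edge groups and vertex set $V_H=V_H^1\sqcup V_H^2$, a homomorphism $j:H\to G$, and a bijection $s:V_G\setminus\{v\}\to V_H^1$ such that $j$ carries each $H_u$ with $u\in V_H^1$ isomorphically onto a conjugate of the non-central vertex group $G_{s^{-1}(u)}$ of $\Delta$, while $j$ is injective on each (finite) $H_u$ with $u\in V_H^2$.

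Next I would check that every vertex group of $\Delta_H$ is a $\overline{\Gamma}$-limit group. For $u\in V_H^2$ this is immediate: $H_u$ is finite and, being embedded by $j$ into $G$, it embeds into $\overline{\Gamma}$ through $f_n$ for $n$ large, so it is trivially a $\overline{\Gamma}$-limit group. For $u\in V_H^1$, the group $H_u$ is isomorphic to the vertex group $G_{s^{-1}(u)}$ of the centered splitting $\Delta$ of $G$; this group is finitely generated (either because $H$ is finitely generated and the edge groups of $\Delta_H$ are finite, or because $G$ and the edge groups of $\Delta$ are finitely generated), and it is a subgroup of the $\Gamma$-limit group $G$. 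Restricting the discriminating sequence $(f_n)$ to it shows that it is fully residually $\Gamma$, hence a $\Gamma$-limit group, hence a $\overline{\Gamma}$-limit group.

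Finally I would run the combination step. Choosing a spanning tree of the underlying graph of $\Delta_H$ and adding edges one at a time, $H$ is obtained from its vertex groups by a finite sequence of amalgamated free products over finite groups (the spanning-tree edges, each attaching a new vertex group) followed by HNN extensions over finite groups (the remaining edges). Every intermediate group is finitely generated, being the fundamental group of a finite subgraph of groups with finitely generated vertex and edge groups, and by Theorem \ref{sat} applied to the torsion-saturated hyperbolic group $\overline{\Gamma}$ it is again a $\overline{\Gamma}$-limit group; inductively, so is $H$.

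The real content here is entirely carried by Theorem \ref{sat}: the whole reason for passing to the torsion-saturated overgroup $\overline{\Gamma}$ is that, in contrast to $\Gamma$ itself (compare Example \ref{instable}), the class of $\overline{\Gamma}$-limit groups is stable under amalgamation and HNN extension over finite groups — which is exactly the operation by which a quasi-floor reconstructs $H$ from the (subgroups of the) vertex groups of $G$. Within the present argument the only delicate point is the bookkeeping that identifies the vertex groups of $\Delta_H$ with finite groups and with finitely generated subgroups of $G$ via $j$, together with the verification that these pieces and all the intermediate fundamental groups are finitely generated, so that Theorem \ref{sat} genuinely applies at each step.
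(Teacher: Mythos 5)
Your proof is correct and takes essentially the same route as the paper: extract the splitting $\Delta_H$ of $H$ over finite groups from the quasi-floor structure, observe via Remark \ref{rem} that each vertex group of $\Delta_H$ embeds (via $j$) into the $\Gamma$-limit group $G$ and is therefore a $\overline{\Gamma}$-limit group, and then invoke Theorem \ref{sat} to build $H$ up from these pieces by amalgams and HNN extensions over finite groups. The paper's own proof is more terse, leaving implicit the finite generation of the vertex groups and the spanning-tree induction that you spell out; these are useful verifications but do not constitute a different argument.
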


\begin{proof}
Let $\Delta_H$ be the splitting of $H$ over finite groups associated to the structure of a quasi-floor. It follows from the definition of a quasi-floor that every vertex group of $\Delta_H$ embeds into $G$ (see Remark \ref{rem}), so is a $\Gamma$-limit group. By Theorem \ref{sat}, the class of $\overline{\Gamma}$-limit groups is closed under HNN extensions and amalgamated free products over finite groups, so $H$ is a $\overline{\Gamma}$-limit group.
\end{proof}

The following corollary is immediate.

\begin{co}\label{chapeau1}Let $\Gamma$ be a hyperbolic group. Let $\overline{\Gamma}$ be a torsion-saturated hyperbolic group containing $\Gamma$. Let $(G_n)_{n\geq 0}$ be a sequence of finitely generated groups such that $G_0$ is a $\Gamma$-limit group and $G_n$ is a quasi-floor over $G_{n+1}$ for every $n$. Then every $G_n$ is a $\overline{\Gamma}$-limit group.
\end{co}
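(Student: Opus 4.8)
The plan is to induct on $n$, feeding Proposition \ref{chapeau0} back into itself. For the base case $n=0$ I would observe that since $\Gamma$ embeds into $\overline{\Gamma}$, composing a discriminating sequence of homomorphisms $G_0\to\Gamma$ with the inclusion $\Gamma\hookrightarrow\overline{\Gamma}$ produces a discriminating sequence $G_0\to\overline{\Gamma}$; as $G_0$ is finitely generated and $\overline{\Gamma}$ is hyperbolic, this exactly says that $G_0$ is a $\overline{\Gamma}$-limit group.

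For the inductive step, assume $G_n$ is a $\overline{\Gamma}$-limit group. The key observation is that $\overline{\Gamma}$ is itself a torsion-saturated hyperbolic group which contains $\overline{\Gamma}$, so Proposition \ref{chapeau0} applies verbatim with $\overline{\Gamma}$ playing the role of both the ambient hyperbolic group and its torsion-saturated overgroup. Since $G_n$ is a $\overline{\Gamma}$-limit group and $G_n$ is a quasi-floor over the finitely generated group $G_{n+1}$, Proposition \ref{chapeau0} gives that $G_{n+1}$ is a $\overline{\Gamma}$-limit group. This closes the induction and proves that every $G_n$ is a $\overline{\Gamma}$-limit group.

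I expect essentially no obstacle here, which is why the statement is labelled immediate. The one point worth flagging — and the very reason Proposition \ref{chapeau0} is phrased with an auxiliary torsion-saturated group rather than as a self-contained statement about $\Gamma$-limit groups — is that for $n\geq 1$ the group $G_n$ need not be a $\Gamma$-limit group, because of the HNN/amalgam phenomenon of Example \ref{instable}. Thus one genuinely cannot iterate Proposition \ref{chapeau0} with $\Gamma$ held fixed; instead one must use that the class of $\overline{\Gamma}$-limit groups, $\overline{\Gamma}$ being already torsion-saturated, is closed under amalgamated free products and HNN extensions over finite groups (Theorem \ref{sat}), which is precisely the ingredient invoked inside Proposition \ref{chapeau0} when passing from a quasi-floor to its base. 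Once this is noted, the induction is purely formal.
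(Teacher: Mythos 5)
Your proof is correct and spells out exactly the induction the paper has in mind (the paper merely declares the corollary "immediate" from Proposition \ref{chapeau0}). The key observation you flag — that one must iterate Proposition \ref{chapeau0} with $\overline{\Gamma}$ in both roles, since $G_n$ for $n\geq 1$ need only be a $\overline{\Gamma}$-limit group and not a $\Gamma$-limit group — is precisely the point, and your base case is handled correctly.
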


Then, the following proposition remedies the lack of surjectivity in the definition of a quasi-floor, under the assumption that each quasi-floor is a $\Gamma$-limit group.

\begin{prop}\label{truc}Let $\Gamma$ be a hyperbolic group. There does not exist any infinite sequence $(G_n)_{n\geq 0}$ of groups such that, for every integer $n$, $G_n$ is a strict quasi-floor over $G_{n+1}$ and $G_n$ is a $\Gamma$-limit group.
\end{prop}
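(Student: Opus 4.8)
The plan is to extract from the given chain of quasi-floors a genuine descending chain of epimorphisms between finitely generated $\Gamma$-limit groups, and then invoke the descending chain condition (Theorem~\ref{chaine}). Write $r_n\colon G_n\to G_{n+1}$ for the homomorphism $r$ attached to the quasi-floor structure of $G_n$ over $G_{n+1}$ (Definition~\ref{quasi-floor}). Set $H_0:=G_0$ and, recursively, $H_{n+1}:=r_n(H_n)\leq G_{n+1}$, so that $r_n$ restricts to an epimorphism $H_n\twoheadrightarrow H_{n+1}$. Each $H_n$ is finitely generated, being a homomorphic image of the finitely generated group $G_0$; moreover $H_n$ is a finitely generated subgroup of the $\Gamma$-limit group $G_n$, hence finitely generated and fully residually $\Gamma$, hence itself a $\Gamma$-limit group. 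By Theorem~\ref{chaine}, $r_n|_{H_n}$ is an isomorphism for all $n$ large enough, say for all $n\geq N$ (with $N\geq 1$).

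The core of the argument is to contradict this using strictness, and the key point is the following. For a group $X$ appearing in the sequence (with $X\ne G_0$), let $K_X$ be the subgroup of $X$ generated by the vertex groups indexed by $V^1$ in the finite-edge splitting of $X$ coming from the quasi-floor structure below it; and let $L_X$ be the subgroup of $X$ generated by the non-central vertex groups of the centered splitting of $X$ coming from the quasi-floor structure above it. The claim is that $H_n\supseteq L_{G_n}$ for every $n\geq 0$. This is proved by induction: for $n=0$ it is trivial since $H_0=G_0$; for the inductive step, Definition~\ref{quasi-floor} and Remark~\ref{rem} say that $r_n$ maps the non-central vertex groups of the centered splitting of $G_n$ isomorphically onto the vertex groups indexed by $V^1$ of the finite splitting of $G_{n+1}$, and that the associated map $s_n$ is a bijection onto $V^1$; hence $r_n(L_{G_n})=K_{G_{n+1}}$, so the inductive hypothesis gives $H_{n+1}=r_n(H_n)\supseteq K_{G_{n+1}}$, and it remains to see that $L_{G_{n+1}}\subseteq K_{G_{n+1}}$. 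In particular $H_n\supseteq K_{G_n}$ for all $n\geq 1$.

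Granting the claim, conclude as follows. Take $n=N$. By strictness of the quasi-floor $G_N\to G_{N+1}$ there is a one-ended subgroup $A\leq G_N$ with $A\cap\ker r_N\neq\{1\}$. Being one-ended and finitely generated, $A$ fixes a vertex of the Bass--Serre tree of every splitting of $G_N$ over finite groups (otherwise Bass--Serre theory would produce a splitting of $A$ over a finite group), so some conjugate $\gamma A\gamma^{-1}$ lies in a vertex group $(G_N)_u$ of the finite splitting of $G_N$; since $A$ is infinite, $(G_N)_u$ is infinite, hence $u\in V^1$, so $(G_N)_u\leq K_{G_N}\leq H_N$; and since $\ker r_N$ is normal, $\gamma A\gamma^{-1}\cap\ker r_N\neq\{1\}$. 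Therefore $\ker\big(r_N|_{H_N}\big)\neq\{1\}$, contradicting the fact that $r_N|_{H_N}$ is an isomorphism. Hence no such infinite sequence exists, which proves the proposition, and (together with Proposition~\ref{héritage} and Theorem~\ref{chaine}) also yields Lemma~\ref{truc2bis} and Proposition~\ref{truc2}.

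The step that is not routine — and the main obstacle — is the inductive step of the claim, i.e.\ the inclusion $L_{G_{n+1}}\subseteq K_{G_{n+1}}$: one must compare the centered splitting of $G_{n+1}$ furnished by the quasi-floor $G_{n+1}\to G_{n+2}$ with the finite-edge splitting of $G_{n+1}$ furnished by the quasi-floor $G_n\to G_{n+1}$. Both families of vertex groups are, up to conjugacy, one-ended factors of $G_{n+1}$ in the sense of Section~\ref{SD}, but one-ended factors are canonical only up to conjugacy, so making these inclusions hold literally (rather than merely up to conjugation) throughout the whole tower requires the bookkeeping that will be built into the construction of quasi-floors; everything else is a direct application of the descending chain condition and of elementary Bass--Serre theory.
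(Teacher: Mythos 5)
Your setup of the sequence $H_n = r_{n-1}\circ\cdots\circ r_0(G_0)$ (the paper's $G'_{n-1}$) and the reduction to showing that $\ker\bigl(r_N|_{H_N}\bigr)\neq\{1\}$ for all $N$ is exactly right, as is the invocation of the descending chain condition. But the argument you build to show $H_N$ contains a conjugate of the witnessing one-ended subgroup has a real gap, and you have correctly located it: the inductive step requires $L_{G_{n+1}}\subseteq K_{G_{n+1}}$, which compares the non-central vertex groups of the centered splitting $\Delta_{G_{n+1}}$ (from the quasi-floor $G_{n+1}\to G_{n+2}$) with the infinite-stabilizer vertex groups of the finite-edge splitting of $G_{n+1}$ (from the quasi-floor $G_n\to G_{n+1}$). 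These are two independent splittings; their vertex groups agree only up to conjugacy, and Definition~\ref{quasi-floor} builds in no normalization that would make the containment hold literally. Your suggestion that this "bookkeeping" could be arranged in the construction of quasi-floors does not close the gap here, because Proposition~\ref{truc} is stated for an arbitrary chain of quasi-floors, not only for the ones constructed later in the paper.

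The paper sidesteps this entirely by using the homomorphisms $j_n:G_{n+1}\to G_n$, which your proposal never uses. The point is that $r_n\circ j_n$ is inner on every one-ended subgroup of $G_{n+1}$ (Remark~\ref{rem}), so the composition $r_{N-1}\circ\cdots\circ r_0\circ j_0\circ\cdots\circ j_{N-1}$ restricts to a conjugation on the one-ended subgroup $A\leq G_N$ witnessing strictness of $r_N$. Since $j_0\circ\cdots\circ j_{N-1}(A)\subseteq G_0$, its image under $r_{N-1}\circ\cdots\circ r_0$ lies in $H_N$, and is a conjugate $A^\gamma$. Normality of $\ker(r_N)$ then gives $A^\gamma\cap\ker(r_N)\neq\{1\}$, so $r_N|_{H_N}$ is not injective, contradicting the DCC. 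This argument needs no comparison between the two splittings of $G_{n+1}$ and is immune to the conjugacy ambiguity that blocks your inclusion $L_{G_{n+1}}\subseteq K_{G_{n+1}}$; it is the use of the $j_n$ (the "piecewise inclusions") that makes the proof go through.
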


\begin{proof}
Let $(G_n)_{n\geq 0}$ be a sequence of groups such that, for every $n$, $G_n$ is a strict quasi-floor over $G_{n+1}$ and $G_n$ is a $\Gamma$-limit group. Denote by $r_n : G_n \rightarrow G_{n+1}$ and $j_n : G_{n+1} \rightarrow G_{n}$ the associated homomorphisms, for every $n$. Let $G'_n=r_n\circ\cdots \circ r_0(G_0)$. We shall apply the descending chain condition \ref{chaine} to the sequence $(G'_n)_{n\geq 0}$. It follows from the definition of a strict quasi-floor that there exists a one-ended subgroup $A_n$ of $G_n$ such that $A_n\cap\ker(r_{n+1})\neq \lbrace 1\rbrace$. In addition, the restriction of $r_n\circ\cdots \circ r_0\circ j_0\circ \cdots \circ j_n$ to $A_n$ is a conjugation. Consequently, $r_n\circ\cdots \circ r_0( j_0\circ \cdots \circ j_n(A_n))\cap\ker(r_{n+1})\neq \lbrace 1\rbrace$. So the restriction of $r_{n+1}$ to $G'_n=r_n\circ\cdots \circ r_0(G_0)$ is not injective. Hence, the descending chain condition \ref{chaine} implies that the sequence $(G'_n)$ is finite, so the sequence $(G_n)$ is finite as well.
\end{proof}

We can now prove Lemma \ref{truc2bis}, which implies Proposition \ref{truc2}.

\vspace{1mm}

\begin{proof8}Let $(G_n)_{n\geq 0}$ be a sequence of finitely generated groups such that $G_0$ is a $\Gamma$-limit group and, for every integer $n$, $G_n$ is a strict quasi-floor over $G_{n+1}$. We aim to prove that this sequence is finite. By Corollary \ref{chapeau1}, every $G_n$ is a $\overline{\Gamma}$-limit group, where $\overline{\Gamma}$ stands for a torsion-saturated hyperbolic group containing $\Gamma$. Then it follows from Proposition \ref{truc} that the sequence $(G_n)_{n\geq 0}$ is finite.
\end{proof8}

\subsection{Quasi-floor and relatedness}Here is a result that will be useful in the next section.

\begin{prop}\label{prop reliés}Let $G$ and $H$ be two finitely generated groups. Suppose that $G$ is a quasi-floor over $H$. By definition, $G$ splits as a centered splitting $\Delta_G$. Let $r : G \rightarrow H$ and $j : H\rightarrow G$ be the homomorphisms associated with the quasi-floor structure. Let $K$ be a one-ended group having a $\mathcal{Z}$-JSJ splitting, denoted by $\Delta_K$. Then,
\begin{enumerate}
\item for every monomorphism $f : K\hookrightarrow G$, $j\circ r\circ f$ and $f$ are $\Delta_K$-related;
\item for every monomorphism $f : K\hookrightarrow H$, $r\circ j\circ f$ and $f$ are $\Delta_K$-related.
\end{enumerate}
\end{prop}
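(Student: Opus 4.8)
The plan is to check the two defining conditions of $\Delta_K$-relatedness: that the two homomorphisms agree, up to conjugacy in the target, on every finite subgroup of $K$ and on every non-QH vertex group of $\Delta_K$. The finite-subgroup condition is automatic: in (1), $j\circ r$ is $\Delta_G$-related to $\mathrm{id}_G$ by the very definition of a quasi-floor, hence inner on every finite subgroup of $G$, hence on $f(F)$ for finite $F\le K$; in (2), every finite subgroup of $K$ is sent by $f$ into a conjugate of a vertex group of $\Delta_H$, on which $r\circ j$ is inner (Remark \ref{rem}). So in both parts what remains is to understand $f(K_w)$ for $K_w$ a non-QH vertex group of $\Delta_K$.

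Part (2) is short: since $\Delta_H$ is a splitting over finite groups and $K$ is one-ended, $f(K)$ fixes a vertex of the Bass--Serre tree of $\Delta_H$ — otherwise a minimal $f(K)$-subtree would split $K$ over a finite group. That vertex group is infinite, hence lies in $V_H^1$, and $r\circ j$ is inner on it by Remark \ref{rem}. So $r\circ j\circ f$ and $f$ differ by an inner automorphism of $H$; in particular they are $\Delta_K$-related.

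For part (1), observe that $j\circ r$ is inner on every non-central vertex group $G_u$ of $\Delta_G$ ($u\neq v$), hence — as $\Delta_G$ is bipartite with $v$ on every edge and $G_e\le G_u$ for the non-central endpoint of $e$ — also on every edge group of $\Delta_G$ and on all their conjugates. Thus it suffices to show that $f(K_w)$ is conjugate in $G$ into a non-central vertex group or into an edge group of $\Delta_G$. Consider the action of $K$ on the Bass--Serre tree $T_G$ of $\Delta_G$. If $f(K)$ is elliptic it lies in a conjugate of some $G_u$; as $K$ is one-ended it is not contained in a virtually free group, whereas $G_v$ is virtually free (its orbifold has boundary), so $u\neq v$ and we are done. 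If $f(K)$ is not elliptic, then on the minimal $f(K)$-subtree $T_K\subseteq T_G$ each edge stabilizer is finite or — being an infinite subgroup of a member of $\overline{\mathcal{Z}}$ — lies in $\mathcal{Z}$; collapsing the invariant forest of infinite-stabilizer edges and using one-endedness of $K$ forces that forest to fill $T_K$, so that $T_K$ is a non-trivial $\mathcal{Z}$-splitting of $K$. The rigid vertex groups of the canonical $\mathcal{Z}$-JSJ are elliptic in every $\mathcal{Z}$-splitting, so a rigid $K_w$ fixes a vertex of $T_G$; being one-ended, its stabilizer is a non-central $G_u$, and again we are done.

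The remaining case, $K_w$ virtually cyclic, is the delicate one and the place where I expect the real work. One must prevent the infinite virtually cyclic group $f(K_w)$ from landing ``transversally'' in a conjugate of the central QH group $G_v$ — that is, inside $G_v$ but inside no extended boundary subgroup. The relevant tools are the $2$-acylindricity of the centered splitting $\Delta_G$, the almost-malnormality of extended boundary subgroups in $G_v$, the virtual freeness of $G_v$, and Proposition \ref{cutting}. Concretely, $K_w$ is $\Delta_K$-adjacent to a rigid or QH vertex $Q$, and $C:=K_w\cap Q$ is an infinite virtually cyclic edge group; if $Q$ is rigid, $f(Q)$ fixes a non-central vertex of $T_G$ and a short-segment argument (via $2$-acylindricity) places $f(C)$ inside an edge group of $\Delta_G$; if $Q$ is QH, the boundary subgroups of $Q$ meeting $K_w$ are elliptic in $T_G$ (they lie in $f(K_w)$), so Proposition \ref{cutting} applies to the $f(Q)$-action on $T_G$ and yields the same conclusion; finally, almost-malnormality of the extended boundary subgroup containing $f(C)$ propagates this containment from $C$ to all of $K_w$. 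Putting the sub-cases together gives the result; everything else is bookkeeping of which JSJ pieces $j\circ r$ (resp. $r\circ j$) is already known to be inner on.
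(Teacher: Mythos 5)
Your proof of the second assertion matches the paper's: $f(K)$ is one-ended, hence elliptic in the Bass--Serre tree of the finite splitting $\Delta_H$, hence lands in a single $V_H^1$-vertex group on which $r\circ j$ is inner. (Your initial remark that ``every finite subgroup of $K$ is sent into a vertex group on which $r\circ j$ is inner'' is slightly off as stated, since Remark \ref{rem} only gives innerness on $V_H^1$-vertex groups, but this is repaired once one notices that all finite subgroups of $K$ sit inside the single $V_H^1$-vertex group containing $f(K)$.)

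For the first assertion the paper is both shorter and more complete than your argument, and the difference is exactly the point you flag as ``the delicate one.'' The paper does not case on whether $f(K)$ is elliptic in $T_G$; instead it works directly with a non-QH vertex group $R$ of $\Delta_K$ and invokes the fact that in the tree-of-cylinders $\mathcal{Z}$-JSJ \emph{every} non-QH vertex group (rigid \emph{and} virtually cyclic) is elliptic in every tree with edge stabilizers finite or in $\mathcal{Z}$. Since $f$ is injective, the pullbacks of the $\overline{\mathcal{Z}}$-stabilizers of $T_G$ are again finite or $\mathcal{Z}$-subgroups of $K$, so $f(R)$ is elliptic in $T_G$ at once. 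Then a cited result (\cite{GL16}, Corollary 5.24(5)) handles the case where $f(R)$ fixes only translates of the central vertex $v$: an infinite subgroup of $G_v$ elliptic in every $\mathcal{Z}$-splitting of $G_v$ must lie in an extended boundary subgroup, i.e.\ an edge group, hence also in a non-central vertex group. This argument treats rigid and virtually cyclic $R$ uniformly. Your approach, by contrast, detours through the ellipticity of $f(K)$ and the minimal $f(K)$-invariant subtree, uses universal ellipticity only for rigid vertex groups, and then leaves the virtually cyclic case as a sketch.

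That sketch has a genuine gap. You never establish that $f(K_w)$ is elliptic in $T_G$ when $K_w$ is a cylinder vertex: the observation that the minimal $f(K)$-subtree is a $\mathcal{Z}$-tree only gives ellipticity of rigid vertex groups, and the subsequent ``almost-malnormality propagates from $C$ to $K_w$'' step presupposes that $f(K_w)$ already lies in $G_v^g$ (otherwise almost-malnormality of the boundary subgroup inside $G_v$ has nothing to bite on). So the conclusion you need is assumed rather than proved. There is also a smaller slip in the rigid case of your non-elliptic branch: you justify $u\neq v$ by saying a rigid $K_w$ is one-ended, but rigid vertex groups of a $\mathcal{Z}$-JSJ need not be one-ended; the correct observation (used in the paper) is that a rigid vertex group is infinite and not virtually cyclic, hence cannot live in an extended boundary or extended conical subgroup of $G_v$. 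In short: the plan is reasonable and the finite-subgroup and part (2) reductions are right, but the heart of part (1) — uniform ellipticity of $f(R)$ in $T_G$ for all non-QH $R$, and the passage through \cite{GL16} Corollary 5.24(5) — is the missing ingredient.
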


\begin{proof}We shall prove the first assertion. We have to prove that $f$ and $j\circ r\circ f$ coincide, up to conjugacy by an element of $G$, on every non-QH vertex group of $\Delta_K$, as well as on every finite subgroup of $K$. The condition concerning finite subgroups is obvious, since $j\circ r$ is inner on each finite subgroup of $G$. Now, let $R$ be a non-QH vertex group of $\Delta_K$. We shall prove that $f(R)$ belongs to a non-QH vertex of $\Delta_G$. By definition, $R$ is elliptic in every tree on which $K$ acts with edge stabilizers in $\mathcal{Z}$ or finite. Therefore, $f(R)$ is elliptic in the Bass-Serre tree $T$ of $\Delta_G$ (indeed, the preimage under $f$ of every $\mathcal{Z}$-subgroup (resp. finite subgroup) of $G$ is a $\mathcal{Z}$-subgroup (resp. finite subgroup) of $K$, because $f$ is injective). We make the following observation: let $S$ be a finite-by-orbifold group, and let $B$ be a subgroup of $S$. If $B$ is elliptic in every splitting of $S$ over $\mathcal{Z}$, then $B$ is finite or lies in an extended boundary subgroup (see \cite{GL16} Corollary 5.24 (5)). As a consequence, if $R$ is rigid, $f(R)$ lies in a conjugate of a non-QH vertex group of $\Delta_G$. In the case where $R$ is virtually cyclic, then $f(R)$ may possibly lie in a boundary subgroup of the central QH vertex group of $\Delta_G$. In any case, $f(R)$ lies in a conjugate of a non-QH vertex group of $\Delta_G$. Hence, since $j\circ r$ is $\Delta_G$-related to the identity of $G$, there exists an element $g$ in $G$ such that \[{j\circ r \circ f}_{\vert R}={\iota_g\circ f}_{\vert R}.\]

This finishes the proof of the first assertion.

We now prove the second assertion. By definition, there exists a splitting $\Delta_H$ of $H$ over finite groups such that $r\circ j$ is inner on every one-ended vertex group of this splitting (see Remark \ref{rem}). Since $f(K)\simeq K$ is one-ended, it is contained in a one-ended vertex of $\Delta_H$. Thus, $r\circ j$ is inner on $f(K)$.
\end{proof}

\section{Proofs of the theorems}\label{section6}

In this section, we shall prove our main results by admitting Proposition \ref{étage}, whose proof is postponed to Section \ref{section72} for sake of clarity.

\addtocontents{toc}{\setcounter{tocdepth}{2}}

\subsection{How to build a quasi-floor using first-order logic}

We shall prove the following proposition by means of first-order logic.

\begin{prop}\label{disjonctionbis}Let $\Gamma$ be a group that embeds into a hyperbolic group, and let $G$ be a finitely generated group such that $\mathrm{Th}_{\forall\exists}(\Gamma)\subset\mathrm{Th}_{\forall\exists}(G)$. Suppose that $\Gamma$ is a quasi-tower over a group $\Gamma'$, and that $G$ is a quasi-tower over a group $G'$. Let $H$ be a one-ended subgroup of $G'$. Then one of the following holds:
\begin{itemize}
\item[$\bullet$]$H$ embeds into $\Gamma'$,
\item[$\bullet$]or there exists a non-injective preretraction $H \rightarrow G'$, that is a homomorphism related to the inclusion of $H$ into $G'$ (see Definition \ref{pre}).
\end{itemize}
\end{prop}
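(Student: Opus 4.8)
\emph{The plan.} The plan is to assume that $H$ does not embed into $\Gamma'$ and then produce a non-injective preretraction $H\to G'$. Fix a finite generating set $h_1,\dots,h_n$ of $H$, write $\Delta_H$ for its canonical $\mathcal{Z}$-JSJ splitting and $\iota_H\colon H\hookrightarrow G'$ for the inclusion. Composing the homomorphisms attached to the successive quasi-floors relating $G$ to $G'$, and those relating $\Gamma$ to $\Gamma'$, one gets homomorphisms $r_G\colon G\to G'$, $j_G\colon G'\to G$, $r_\Gamma\colon\Gamma\to\Gamma'$, $j_\Gamma\colon\Gamma'\to\Gamma$. The first reduction is that it is enough to produce a homomorphism $g\colon H\to G$ which is $\Delta_H$-related to $f_0:=j_G\circ\iota_H$ and kills some element of a finite set $F\subset H\setminus\{1\}$ fixed below. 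Indeed $f_0$ is injective, since each $j$ occurring in it is injective on one-ended subgroups and $H$ is one-ended; applying Proposition~\ref{prop reliés}(2) once for each floor of the tower $G\to\cdots\to G'$, one checks by induction that $r_G\circ f_0=r_G\circ j_G\circ\iota_H$ is $\Delta_H$-related to $\iota_H$, i.e.\ is a preretraction $H\to G'$. Pushing the relation between $g$ and $f_0$ through $r_G$ then shows that $r_G\circ g$ is $\Delta_H$-related to $r_G\circ f_0$, hence to $\iota_H$; thus $r_G\circ g$ is a preretraction $H\to G'$, and it is non-injective because it kills the nontrivial element of $F$ killed by $g$.

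\emph{The finite set $F$ and the algebra over $\Gamma$.} By Proposition~\ref{héritage}, $\Gamma'$ embeds into a hyperbolic group; applying Corollary~\ref{simple2} to $\Gamma'$ and the one-ended finitely generated group $H$ gives a finite set $F\subset H\setminus\{1\}$ such that every non-injective homomorphism $H\to\Gamma'$ is $\Delta_H$-related to one that kills an element of $F$. Now suppose $H$ does not embed into $\Gamma'$. Then for \emph{every} homomorphism $\phi\colon H\to\Gamma$ the composite $r_\Gamma\circ\phi\colon H\to\Gamma'$ is non-injective, hence is $\Delta_H$-related to some $\psi\colon H\to\Gamma'$ killing an element of $F$; setting $g:=j_\Gamma\circ\psi$, the homomorphism $g$ kills an element of $F$ and, pushing the relation between $\psi$ and $r_\Gamma\circ\phi$ through $j_\Gamma$, it is $\Delta_H$-related to $j_\Gamma\circ r_\Gamma\circ\phi$. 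Since $\Delta_H$-relatedness is an equivalence relation, this yields the key algebraic statement over $\Gamma$: \emph{if $\phi$ is moreover $\Delta_H$-related to $j_\Gamma\circ r_\Gamma\circ\phi$, then there is a homomorphism $g\colon H\to\Gamma$ which is $\Delta_H$-related to $\phi$ and kills an element of $F$.}

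\emph{Transfer by first-order logic.} That last statement is a fragment of the shortening argument of the kind that can be encoded by a $\forall\exists$-sentence $\sigma$ satisfied by $\Gamma$, following Section~\ref{32}: a homomorphism $H\to L$ is coded by the tuple of images of $h_1,\dots,h_n$ subject to a finite subsystem of the relations of $H$ (a finite subsystem suffices because hyperbolic groups — hence $\Gamma$-limit groups such as $G$ — are equationally Noetherian), $\Delta_H$-relatedness of two such tuples is existential by Lemma~\ref{deltarelies}, and the hypothesis ``$\phi$ is $\Delta_H$-related to $j_\Gamma\circ r_\Gamma\circ\phi$'' is built in by existentially quantifying an auxiliary tuple witnessing simultaneously that it comes from a homomorphism factoring through the quasi-tower retraction and that it is $\Delta_H$-related to the tuple of images of $\phi$. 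Since $\mathrm{Th}_{\forall\exists}(\Gamma)\subset\mathrm{Th}_{\forall\exists}(G)$, we get $G\models\sigma$. Reading $\sigma$ in $G$ and specializing $\phi$ to $f_0=j_G\circ\iota_H$ — which is indeed $\Delta_H$-related to $j_G\circ r_G\circ f_0$, again by Proposition~\ref{prop reliés}(2) along the tower followed by composition with $j_G$ — produces a homomorphism $g\colon H\to G$ that is $\Delta_H$-related to $f_0$ and kills an element of $F$. By the first reduction, $r_G\circ g$ is the required non-injective preretraction $H\to G'$.

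\emph{The main obstacle.} I expect the serious difficulty to be the encoding in the third paragraph: one must write a \emph{single} $\forall\exists$-sentence whose interpretation in an arbitrary group $L$ faithfully expresses ``$\phi$ is $\Delta_H$-related to its quasi-tower retraction'', uniformly enough that the inclusion-type map $f_0$ satisfies the encoded hypothesis once the sentence is read in $G$. This is precisely why the proposition couples $\Gamma$ and $G$ together with their respective quasi-tower structures, and it is where the expressibility results of Section~\ref{32} and the bookkeeping of the quasi-floor formalism (Definition~\ref{quasi-floor}, Remark~\ref{rem}, Proposition~\ref{prop reliés}) have to be combined carefully; the remaining steps are then formal.
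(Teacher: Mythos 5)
Your overall architecture is right — reduce to producing a homomorphism $g\colon H\to G$ that is $\Delta_H$-related to $f_0:=j_G\circ\iota_H$ and kills an element of a fixed finite set $F$, then push forward by $r_G$ and invoke Proposition~\ref{prop reliés}(2) — and your first and last paragraphs match the paper's Step 3. However, there is a genuine gap in the middle, and you have correctly located it yourself in your ``main obstacle'' paragraph without resolving it.

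The difficulty is that your algebraic statement over $\Gamma$ is \emph{conditional}: you show that for every $\phi\colon H\to\Gamma$ there is a $g$ killing $F$ that is $\Delta_H$-related to $j_\Gamma\circ r_\Gamma\circ\phi$, and then add ``if moreover $\phi$ is $\Delta_H$-related to $j_\Gamma\circ r_\Gamma\circ\phi$, then $g$ is $\Delta_H$-related to $\phi$.'' That extra hypothesis is not expressible: $j_\Gamma\circ r_\Gamma$ is a specific homomorphism of $\Gamma$ attached to its quasi-tower structure, not a term in the language of groups, so there is no formula whose truth at a tuple $\bar z$ means ``$\bar z$ is the image under the quasi-tower retraction of $\bar x$.'' The fix you sketch — existentially quantify an auxiliary tuple witnessing that it ``comes from a homomorphism factoring through the quasi-tower retraction'' — merely renames the problem; and even granting a formula over $\Gamma$, there is no reason its interpretation in $G$ would say anything about the quasi-tower structure of $G$. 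The root of the issue is that $j_\Gamma\circ r_\Gamma\circ\phi$ need not be $\Delta_H$-related to $\phi$ when $\phi$ becomes non-injective at some intermediate floor, because Proposition~\ref{prop reliés}(1) requires injectivity at each stage of the descent.

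The paper avoids the conditional entirely. Instead of descending all the way to $\Gamma'$ and climbing back, one descends only to the first level $N$ where the pushforward becomes non-injective: since $H$ does not embed in $\Gamma'$ such an $N\le n$ exists, and by minimality $r_{N-1}\circ\cdots\circ r_1\circ\phi$ is still injective, so $N$ successive applications of Proposition~\ref{prop reliés}(1) show that $u:=j_1\circ\cdots\circ j_N\circ r_N\circ\cdots\circ r_1\circ\phi\colon H\to\Gamma$ is both non-injective and $\Delta_H$-related to $\phi$. Applying Corollary~\ref{simple2} to $u$ (with $F$ taken from that corollary for maps $H\to\Gamma$, not $H\to\Gamma'$) gives an $f'$ killing $F$ and $\Delta_H$-related to $u$, hence to $\phi$. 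This establishes the \emph{unconditional} statement ``for every $\phi\colon H\to\Gamma$ there exists $f'$ $\Delta_H$-related to $\phi$ killing an element of $F$,'' which is directly the interpretation of the $\forall\exists$-sentence $\mu$, with no extra hypothesis to encode. The rest of your argument then goes through unchanged with this in place of your conditional statement.
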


But before proving the proposition above, let us explain how it will be used in the sequel. In Section \ref{section72}, we shall prove the following proposition, whose proof is quite technical.

\begin{prop}\label{étage}Let $G$ be a finitely generated group possessing a $\mathcal{Z}$-JSJ splitting. Let $H$ be a one-ended factor of $G$ (defined in Section \ref{SD}) that is not finite-by-orbifold. If there exists a non-injective preretraction $H\rightarrow G$, then $G$ is a strict quasi-floor.
\end{prop}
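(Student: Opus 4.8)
The plan is to upgrade the non-injective preretraction into a centered splitting of $G$ exhibiting $G$ as a strict quasi-floor; this is the analogue, in the presence of torsion, of the construction of a hyperbolic floor from a preretraction in the work of Sela and Perin, and essentially all of the difficulty lies in carrying the finite fibers through the construction. Write $i$ for the inclusion of the one-ended factor $H$ into $G$, $p:H\to G$ for the given non-injective preretraction, $\Delta_H$ for the canonical $\mathcal Z$-JSJ splitting of $H$ and $T_H$ for its Bass--Serre tree; by Definitions \ref{pre} and \ref{reliés2}, $p$ agrees with $i$ up to conjugacy in $G$ on every non-QH vertex group of $\Delta_H$, on every edge group, and on every finite subgroup of $H$.

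First I would localize the non-injectivity. Because $p$ is inner on all non-QH vertex groups and all edge groups of $\Delta_H$, the strong acylindricity of $\Delta_H$ recalled in Section \ref{25} (an infinite-order element fixing a segment of length $\ge 2$ in $T_H$ forces that segment to have length exactly $2$, with virtually cyclic midpoint stabiliser) severely constrains the folding of $T_H$ under $p$, and one shows that the failure of injectivity of $p$ is carried by the quadratically hanging part: there is a QH vertex $v$ of $\Delta_H$ with vertex group $Q=G_v$, a conical finite-by-orbifold group $F\to Q\to\pi_1(O)$, such that after conjugating $p$ the restriction $p|_Q$ is a non-injective morphism of finite-by-orbifold groups (the hypothesis that $H$ is not finite-by-orbifold is used here: it guarantees that $\Delta_H$ genuinely has a non-QH part, so that the extended boundary subgroups of $Q$ really are edge groups on which $p$ is inner). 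Applying Propositions \ref{cutting} and \ref{pinchbis} I would then replace $p|_Q$ by a non-pinching representative and read off, from the essential curves it pinches or identifies, a conical hyperbolic orbifold $\widehat O$ obtained from $O$ by regluing, a finite-by-orbifold subgroup $\widehat Q$ of $G$ with underlying orbifold $\widehat O$ and fiber $F$, containing $p(Q)$, and a retracting homomorphism $\widehat Q\twoheadrightarrow Q$; that $\widehat Q$ is a bona fide finite-by-orbifold group, and the monotonicity of complexities needed to control the process, is exactly the content of Propositions \ref{100}, \ref{complexity} and \ref{147}.

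Next I would assemble a splitting $\Delta_G$ of $G$ with central vertex $v$ carrying $\widehat Q$, the remaining vertices carrying the one-ended and virtually cyclic pieces of $G$ surrounding $\widehat Q$, and the edges carrying the extended boundary and conical subgroups of $\widehat O$. One verifies every clause of Definition \ref{graphecentre}: bipartiteness and the fact that every vertex $\ne v$ is adjacent to $v$ come from the construction; the identification of incident edge groups with extended boundary subgroups of $\widehat Q$, and the one-edge-per-boundary-subgroup property, are read off from $\widehat O$; and the $2$-acylindricity of $\Delta_G$ follows from the $K$-$\mathrm{CSA}$ structure of $G$ (implicit in possessing a $\mathcal Z$-JSJ, see Section \ref{25}) together with the almost-malnormality of virtually cyclic subgroups from Proposition \ref{CSA2}(2). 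I would then define the ground floor $\widehat H$ by deleting $\widehat Q$ from $\Delta_G$ and regluing the surviving vertex groups $\{G_w\}_{w\ne v}$ to one another and to finitely many new finite vertices (forming $V_H^2$) along the finite parts — the fiber $F$ and the conical-point stabilisers — of the boundary and conical subgroups of $\widehat O$, so that $\widehat H$ splits over finite groups with one-ended vertex groups $\{G_w\}_{w\ne v}$ (forming $V_H^1$, in bijection with $V_G\setminus\{v\}$ via the evident map $s$) and finite vertex groups. The homomorphism $r:G\to\widehat H$ is the identity on each $G_w$ and, on $\widehat Q$, the retracting homomorphism into $\widehat H$ extending $\widehat Q\twoheadrightarrow Q$; $j:\widehat H\to G$ is $i$ on each $G_w$. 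A routine check in the spirit of Remark \ref{rem} then shows that $j\circ r$ is $\Delta_G$-related to $\mathrm{id}_G$, that $r(G_w)=\widehat H_{s(w)}$, and that $j$ is injective on the finite pieces, so $G$ is a quasi-floor over $\widehat H$ in the sense of Definition \ref{quasi-floor}. Finally, the non-injectivity of $p|_Q$ propagates to non-injectivity of $r$, and since $G$ is one-ended (so that $G$ itself, or a one-ended factor of $G$ meeting $\widehat Q$, is a one-ended subgroup meeting $\ker r$ nontrivially) the quasi-floor is strict.

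The genuinely hard step is the first half of the argument: the localisation of the non-injectivity on a QH vertex and the construction of $\widehat O$ in the presence of torsion. One must identify precisely which essential curves of $O$ are pinched or identified by $p$, reglue $O$ into $\widehat O$ so that the fiber $F$ is carried coherently through every identification and $\widehat O$ retracts onto $O$ with all boundary subgroups matching, and at the same time ensure that the resulting $\Delta_G$ is an honest centered splitting — in particular verifying its $2$-acylindricity and the boundary-to-edge bijection. This is where the orbifold-complexity lemmas (Propositions \ref{100}, \ref{complexity}, \ref{147}) and the $K$-$\mathrm{CSA}$ machinery genuinely pull their weight, which is why the complete argument is postponed to Section \ref{section72}; Example \ref{exempleqt} exhibits a concrete instance of the resulting quasi-floor.
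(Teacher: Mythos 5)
Your high-level plan is the right one — localize the defect to a QH vertex, use the orbifold-complexity machinery to control the image, and produce a centered splitting of $G$ from which to read off the quasi-floor — but the details go wrong in several places, and they are exactly the places where the torsion makes the paper's construction genuinely harder than Sela's torsion-free floor construction.

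First, the localization step is both under-argued and mis-stated. You assert that the acylindricity of $\Delta_H$ forces the non-injectivity of $p$ to be "carried by" the QH part, concluding that some $p|_{G_v}$ is non-injective. The paper isolates this in Proposition \ref{lemmeperin}: if a preretraction sends \emph{every} QH vertex group isomorphically onto a conjugate of itself, then it is injective. The proof of that proposition is a precise "no folding" argument (using the bipartite structure, the fact that distinct edge groups at a non-virtually-cyclic vertex generate a non-virtually-cyclic group, and injectivity on vertex groups), not a direct appeal to acylindricity. Moreover the conclusion it yields is only that some $G_v$ is \emph{not sent isomorphically to a conjugate of itself} — this includes the case where $p|_{G_v}$ \emph{is} injective but its image is not a conjugate of $G_v$. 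That injective case is treated separately in Proposition \ref{???} (the "non-pinching case") via Lemma \ref{magique}, and you would need to handle it too; assuming $p|_Q$ non-injective discards it.

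Second, the central step of your construction — producing a \emph{single} finite-by-orbifold subgroup $\widehat Q\subset G$ with $\widehat Q\supset p(Q)$ and a retraction $\widehat Q\twoheadrightarrow Q$ — does not exist in general. When $p$ pinches a maximal essential set $S$ of curves, the pinched quotient of $G_v$ splits over \emph{finite} groups into several smaller finite-by-orbifold groups $Q_{v_1},\dots,Q_{v_n}$ (see the pinched decomposition and Figure \ref{pinchedquotient}), and Lemma \ref{magique} sends each $\phi(Q_{v_k})$ into a conjugate of a \emph{possibly different} non-central vertex group $G_{w_k}$. There is no single finite-by-orbifold subgroup of $G$ swallowing $p(G_v)$. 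The direction of your retraction is also reversed: by Propositions \ref{100}/\ref{complexity}, pinching and cutting strictly decrease complexity, so the pieces $Q_{v_k}$ are "smaller" than $G_v$, not larger; $G_v$ does not embed in $\widehat Q$, it is quotiented.

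Third, your description of $r$ as "the identity on each $G_w$ and the retracting homomorphism on $\widehat Q$" quietly makes $r$ a retraction and $\widehat H$ a subgroup of $G$. The paper is explicit that in the torsion case this fails: $H$ is neither a subgroup nor a quotient of $G$, $r$ need not be surjective, and $j$ need not be injective — that is the whole reason quasi-floors (rather than hyperbolic floors) are introduced. The actual ground floor is obtained, after the pinched quotient, by a three-stage elimination (Steps 2–4 of the proof of Proposition \ref{???}): boundary-carrying pieces $Q_{v_k}$ are absorbed into the $G_{w_k}$ using Lemma \ref{magique}; pieces mapping into $G_v$ itself collapse onto extended conical subgroups; and the remaining closed pieces are glued to $G_{w_k}$ via the topological quotient $X_{Q''}/\!\sim$, at which point the natural map $Q''\to H$ ceases to be surjective. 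None of this is visible in "deleting $\widehat Q$ and regluing along finite parts", and without it you cannot verify the clauses of Definition \ref{quasi-floor}, in particular $r(G_w)=H_{s(w)}$ and the role of the finite vertices in $V_H^2$.

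Finally, a smaller point: the reduction from the preretraction $H\to G$ to an endomorphism $q$ of $G$ that is related to $\mathrm{id}_G$ for a suitable centered splitting (using a Stallings–Dunwoody refinement and collapsing the complement of $\mathrm{star}(v)$, and then verifying $q|_H$ is non-injective so the floor is strict) also requires an argument; the strictness in particular is not automatic from "$p$ non-injective propagates to $r$ non-injective" since $p$ is only defined on $H$ and $r$ is constructed on all of $G$.
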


Propositions \ref{disjonctionbis} and \ref{étage} above are complementary and explain how to build quasi-floors. The first one states the existence of a non-injective preretraction under certain conditions, while the second one asserts that a strict quasi-floor can be built using a non-injective preretraction. Combining these two propositions, we get:

\begin{prop}\label{disjonctionbis2}Let $\Gamma$ be a group that embeds into a hyperbolic group, and let $G$ be a finitely generated group such that $\mathrm{Th}_{\forall\exists}(\Gamma)\subset\mathrm{Th}_{\forall\exists}(G)$. Let $G'$ be a quasi-core of $G$. Suppose that $\Gamma$ is a quasi-tower over a group $\Gamma'$. Let $H$ be a one-ended factor of $G'$ that is not finite-by-orbifold. Then $H$ embeds into $\Gamma'$,
\end{prop}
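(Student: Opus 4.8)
The plan is to obtain Proposition \ref{disjonctionbis2} as a formal combination of Propositions \ref{disjonctionbis} and \ref{étage}, the only extra ingredient being the defining property of a quasi-core. First I would check that all the objects involved are of the expected type. Let $\Omega$ be a hyperbolic group containing $\Gamma$. Since universal sentences pass to subgroups, and since adding a dummy existential quantifier to a universal sentence does not change its truth value, we have $\mathrm{Th}_{\forall}(\Omega)\subset\mathrm{Th}_{\forall}(\Gamma)\subset\mathrm{Th}_{\forall}(G)$; as $\Omega$ is equationally noetherian and $G$ is finitely generated, $G$ is an $\Omega$-limit group. Applying Corollary \ref{chapeau1} to the quasi-tower structure realizing $G'$ as a quasi-core of $G$, with $\overline{\Omega}$ a torsion-saturated hyperbolic group containing $\Omega$ (Theorem \ref{sat2}), we get that $G'$ is a $\overline{\Omega}$-limit group. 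In particular $G'$ is $K$-$\mathrm{CSA}$, its finite subgroups have bounded order and all its abelian subgroups are virtually cyclic; hence $G'$ admits a Stallings--Dunwoody splitting, each of its one-ended factors has a canonical $\mathcal{Z}$-JSJ splitting and contains no $\mathbb{Z}^{2}$, and the notion of a preretraction $H\to G'$ is meaningful. Thus the hypotheses needed to apply Propositions \ref{disjonctionbis} and \ref{étage} are in place.

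Next I would unwind Definition \ref{core}. Since $G'$ is a quasi-core of $G$, either $G$ is a strict quasi-tower over $G'$ with $G'$ a quasi-prototype, or $G$ is itself a quasi-prototype and $G'=G$; in both cases $G'$ is a quasi-prototype. In the first case $G$ is in particular a quasi-tower over $G'$; in the second case, the statement and the proof of Proposition \ref{disjonctionbis} apply verbatim with $G=G'$, viewing $G$ as the trivial (length-zero) quasi-tower over itself. Hence, in all cases, Proposition \ref{disjonctionbis} applies to the data $(\Gamma,\Gamma',G,G',H)$: here $\Gamma$ is a quasi-tower over $\Gamma'$ by hypothesis, and $H$, being a one-ended factor of $G'$, is a one-ended subgroup of $G'$. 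It yields the dichotomy: either $H$ embeds into $\Gamma'$, or there exists a non-injective preretraction $H\to G'$.

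It remains to exclude the second alternative. Suppose it holds, i.e.\ there is a non-injective preretraction $H\to G'$. Since $H$ is a one-ended factor of $G'$ and is not finite-by-orbifold, Proposition \ref{étage} applied with $G'$ in the role of $G$ shows that $G'$ is a strict quasi-floor, that is, $G'$ is a strict quasi-floor over some group. This contradicts the fact that $G'$ is a quasi-prototype, which by Definition \ref{prototype} means precisely that $G'$ is not a strict quasi-floor over any group. Therefore the first alternative must hold, and $H$ embeds into $\Gamma'$, which is what we wanted.

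I expect no genuine difficulty of its own in this argument: Proposition \ref{disjonctionbis2} is a splicing of Propositions \ref{disjonctionbis} and \ref{étage}, and all the substance lies in those two statements — the first-order encoding of the shortening argument on one side, and the construction of a strict quasi-floor out of a non-injective preretraction (carried out later in the paper) on the other. The only mild point of care is the bookkeeping in the degenerate cases $G=G'$ or $\Gamma=\Gamma'$, which, as noted above, are harmless.
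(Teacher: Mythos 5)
Your proof is correct and follows the paper's intended argument exactly: the paper itself presents Proposition \ref{disjonctionbis2} as the immediate consequence of combining Proposition \ref{disjonctionbis} (the dichotomy) with Proposition \ref{étage} (preretraction yields a strict quasi-floor) and the defining property of a quasi-core (being a quasi-prototype, hence not a strict quasi-floor). Your write-up simply makes explicit what the paper leaves as a one-line remark, including the bookkeeping that $G'$ is a $\overline{\Omega}$-limit group (so the hypotheses of Proposition \ref{étage} are in force) and the harmless degenerate case $G=G'$.
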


\begin{proof15}By hypothesis, $G$ is a quasi-tower over a group $G'$. Denote by $j:G'\rightarrow G$ the associated homomorphism. Let $H$ be a one-ended subgroup of $G'$, and let $\Delta$ be its canonical $\mathcal{Z}$-JSJ splitting. Note that $j_{\vert H}$ is injective (see Remark \ref{rem}). We suppose that $H$ does not embed into $\Gamma'$. We will prove the existence of a non-injective homomorphism $H \rightarrow G'$ related to the inclusion of $H$ into $G'$.

\vspace{1mm}

\textbf{Step 1.} There exist non-trivial elements $h_1,\ldots ,h_k\in H$ such that every homomorphism $f:H\rightarrow\Gamma$ is $\Delta$-related to a homomorphism $f':H\rightarrow\Gamma$ that kills some $h_{\ell}$.

\vspace{1mm}

\textit{Proof of Step 1.} Since $\Gamma$ is a quasi-tower over $\Gamma'$, there exists by definition a finite sequence of groups $(\Gamma_m)_{0\leq m \leq n}$ such that $\Gamma_0=\Gamma$ and $\Gamma_n=\Gamma'$, and for every $1\leq m\leq n$ there exist two homomorphisms $r_m : \Gamma_{m-1} \rightarrow \Gamma_{m}$ and $j_m :\Gamma_{m}\rightarrow \Gamma_{m-1}$ such that $j_m\circ r_m$ is related to the identity of $G_{m-1}$. Let $r_0=j_0=\mathrm{id}_{\Gamma}$. 

Let $f$ be a homomorphism from $H$ to $\Gamma$. First, we shall prove that $f$ is related to a non-injective homomorphism $u$. Since there is no monomorphism from $H$ into $\Gamma'$, $r_{n}\circ\cdots\circ r_0\circ f:H\rightarrow\Gamma'$ is non-injective. Let $N$ be the smallest integer such that $r_{N}\circ\cdots\circ r_0\circ f$ is non-injective. If $N=0$, $f$ is non-injective, so we define $u:=f$. If $N\geq 1$, then $f$ is injective, so $j_1\circ r_1\circ f$ is related to $f$ by Lemma \ref{prop reliés}. If $N=1$, we define $u:=j_1\circ r_1\circ f$. If $N\geq 2$, then $r_1\circ f$ is injective, so $j_2\circ r_2\circ r_1\circ f$ is related to $r_1\circ f$ by Lemma \ref{prop reliés}. Thus $j_1\circ j_2\circ r_2\circ r_1\circ f$ is related to $j_1\circ r_1\circ f$, which is itself related to $f$. An easy induction proves that we can take $u:=j_{0}\circ\cdots j_{N}\circ r_{N}\circ\cdots\circ r_0\circ f$. Now, it follows from Corollary \ref{simple2} that there exists a homomorphism $f'$ that kills $h_{\ell}$ for some $\ell\in\llbracket 1,k\rrbracket$ and that is related to $u$, which is itself related to $f$. By transitivity, $f'$ is related to $f$. 

\vspace{1mm}

\textbf{Step 2.} There exists a non-injective homomorphism $p : H\rightarrow G$ that is related to $j_{\vert H}$.

\vspace{1mm}

\textit{Proof of Step 2.} The proof consists in expressing by a first-order sentence the statement of Step 1, and by interpreting this sentence in $G$. Let $H=\langle s_1,\ldots , s_n \ \vert \ w_1,w_2,\ldots\rangle$ be a (possibly infinite) presentation of $H$. Let $W_i=\lbrace w_1,\ldots ,w_i\rbrace$ for every $i\geq 0$. Denote by $H_i$ the finitely presented group $\langle s_1,\ldots ,s_n \ \vert \ W_i\rangle$. By hypothesis, $\Gamma$ embeds into a hyperbolic group $\Omega$. As a hyperbolic group, $\Omega$ is equationally noetherian (see \cite{RW14}, Corollary 6.13). Then $\Gamma$ and $G$ are equationally noetherian, as $\Omega$-limit groups (see \cite{OH07} Corollary 2.10). As a consequence, the sets $\mathrm{Hom}(H,\Gamma)$ and $\mathrm{Hom}(H,G)$ are respectively in bijection with $\mathrm{Hom}(H_i,\Gamma)$ and $\mathrm{Hom}(H_i,G)$ for $i$ large enough (see Lemma 5.2 of \cite{RW14}). Hence, there exists an integer $i$ such that $\mathrm{Hom}(H,\Gamma)$ (resp. $\mathrm{Hom}(H,G)$) is in bijection with the $n$-tuples in $\Gamma^n$ (resp. $G^n$) solutions of the following system of equations, denoted by $\Sigma_i(x_1,\ldots,x_n)$:
\begin{equation*}
 \Sigma_i(x_1,\ldots,x_n) : \left\{
      \begin{aligned}
        w_1(x_1,\ldots &,x_n)=1 \\
        \vdots & \\
        w_i(x_1,\ldots &,x_n)=1 \\
      \end{aligned} 
    \right. .
\end{equation*} 

Let $f$ and $f'$ be homomorphisms from $H$ to $\Gamma$. Recall that there exists an existential formula $\phi(x_1,\ldots , x_{2n})$ with $2n$ free variables such that \[\Gamma\models\phi\left(f(s_1),\ldots,f(s_n),f'(s_1),\ldots,f'(s_n)\right)\]if and only if $f$ and $f'$ are $\Delta$-related (see Lemma \ref{deltarelies}).

We can write a $\forall\exists$ first-order sentence $\mu$, verified by $\Gamma$, whose interpretation in $\Gamma$ is the statement of Step 1: for every homomorphism $f:H\rightarrow\Gamma$, there exists a homomorphism $f':H\rightarrow\Gamma$ that is $\Delta$-related to $f$ and that kills some $h_{\ell}$. The sentence $\mu$ is the following:

\[\mu : \forall x_1 \ldots\forall x_n \exists y_1 \ldots\exists y_n \  \left(\rule{0cm}{1cm}\Sigma_i(x_1,\ldots,x_n)\Rightarrow
 \left(\rule{0cm}{1cm}
       \begin{array}{lc}
                        & \Sigma_i(y_1,\ldots,y_n)  \\
        & \wedge \ \phi(x_1,\ldots,x_n,y_1,\ldots,y_n) \\
                        &   \wedge\underset{i\in\llbracket 1,k\rrbracket}{\bigvee} h_i(y_1,\ldots,y_n)=1 \\                                       
        \end{array}
     \right)\right).\]

Since $\mu\in\mathrm{Th}_{\forall\exists}(\Gamma)\subset\mathrm{Th}_{\forall\exists}(G)$, the sentence $\mu$ is true in $G$ as well. The interpretation of $\mu$ in $G$ is the following: for every homomorphism $f:H\rightarrow G$, there exists a homomorphism $f':H\rightarrow G$ that is $\Delta$-related to $f$ and that kills some $h_{\ell}$.

By taking $f=j_{\vert H}$, we get a non-injective homomorphism $p : H\rightarrow G$ that is related to $j_{\vert H}$.

\vspace{1mm}

\textbf{Step 3.} We have proved the existence of a non-injective homomorphism $p : H\rightarrow G$ that is related to $j_{\vert H}$. Let $r : G\rightarrow G'$ be the homomorphism associated with the structure of a quasi-tower. $r\circ p: H\rightarrow G'$ is related to $(r\circ j)_{\vert H}$, which is related to the inclusion of $H$ into $G'$ by the second assertion of Lemma \ref{prop reliés} (note that this assertion is stated for a quasi-floor, but it extends obviously to the case of a quasi-tower). This concludes the proof.
\end{proof15}

\subsection{Being a subgroup of a hyperbolic group is a first-order invariant}

We shall prove that the property of being a subgroup of a hyperbolic group is preserved under elementary equivalence, among finitely generated groups. More precisely, we will prove the following theorem.

\begin{te}\label{sousgroupe}Let $\Gamma$ be a group that embeds into a hyperbolic group $\Omega$, and let $G$ be a finitely generated group. If $\mathrm{Th}_{\forall\exists}(\Gamma)\subset\mathrm{Th}_{\forall\exists}(G)$, then $G$ embeds into a hyperbolic group.
\end{te}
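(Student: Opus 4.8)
The plan is to push the problem down to the one-ended factors of a quasi-core of $G$, where the needed embedding is produced by Proposition \ref{disjonctionbis2}, and then to reassemble everything by the combination theorem for graphs of groups with finite edge groups.

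\emph{Set-up and first reduction.} Fix a hyperbolic group $\Omega$ containing $\Gamma$. Since $\Gamma\leq\Omega$ we have $\mathrm{Th}_\forall(\Omega)\subset\mathrm{Th}_\forall(\Gamma)$, and $\mathrm{Th}_\forall(\Gamma)\subset\mathrm{Th}_\forall(G)$ (universal sentences being among the $\forall\exists$-sentences), so $\mathrm{Th}_\forall(\Omega)\subset\mathrm{Th}_\forall(G)$; as $\Omega$ is equationally noetherian (see \cite{RW14}) and $G$ is finitely generated, $G$ is fully residually $\Omega$, i.e.\ an $\Omega$-limit group. By Proposition \ref{truc2}, $G$ admits a quasi-core $G'$. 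If $G\neq G'$ then $G$ is a strict quasi-tower over $G'$, and by Proposition \ref{héritage} $G$ embeds into a hyperbolic group if and only if $G'$ does (if $G=G'$ this is trivial); hence it suffices to embed $G'$ into a hyperbolic group. Moreover $G'$ is again a limit group over a hyperbolic group (Corollary \ref{chapeau1}), so it has a uniform bound on the orders of its finite subgroups and therefore a Stallings--Dunwoody decomposition $\Lambda$ (Section \ref{SD}): a finite graph of groups with finite edge groups whose vertex groups are finite or one-ended.

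\emph{Reduction to one-ended factors.} I claim it is enough to embed each one-ended factor of $G'$ into a hyperbolic group. Given such embeddings, form a new graph of groups on the same underlying graph $\Lambda$ by replacing each one-ended vertex group by a hyperbolic overgroup (keeping the finite vertex groups unchanged) and composing each edge monomorphism with the relevant embedding. Its fundamental group $\Omega'$ contains $G'$, since $\Lambda$ maps into it injectively on vertex groups and trivially on the underlying graph; and $\Omega'$ is hyperbolic, because a finite subgroup of a hyperbolic group is automatically virtually cyclic and almost malnormal, so an iterated application of Proposition \ref{BF92} along the edges of $\Lambda$ applies. This proves the claim.

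\emph{One-ended factors.} Let $H$ be a one-ended factor of $G'$. If $H$ is finite-by-orbifold, then one-endedness forces the underlying compact hyperbolic $2$-orbifold to have empty boundary, so its fundamental group is a cocompact Fuchsian group and $H$, a finite extension of it, is hyperbolic and embeds into itself. If $H$ is not finite-by-orbifold, I would invoke Proposition \ref{disjonctionbis2} with $\Gamma'=\Gamma$: since $G'$ is a quasi-core, hence a quasi-prototype, the contrapositive of Proposition \ref{étage} forbids any non-injective preretraction $H\to G'$, and Proposition \ref{disjonctionbis2} then yields $H\hookrightarrow\Gamma\hookrightarrow\Omega$, a hyperbolic group. (Taking $\Gamma'=\Gamma$ is the degenerate ``trivial quasi-tower'' situation, which the proof of Proposition \ref{disjonctionbis} already accommodates through the case $N=0$.) This completes the proof. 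The substantive difficulties are packaged into Propositions \ref{disjonctionbis} and \ref{étage} — the first-order argument producing a non-injective preretraction, and the construction of a strict quasi-floor from it; within the present argument the only steps requiring care are the combination step (recognizing a finite graph of hyperbolic groups with finite edge groups as hyperbolic while keeping $G'$ embedded in it) and the bookkeeping that licenses the degenerate quasi-tower $\Gamma$ over $\Gamma$.
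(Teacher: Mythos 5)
Your proof is correct and follows essentially the same route as the paper's: pass to a quasi-core $G'$ of the $\Omega$-limit group $G$, show finite-by-orbifold one-ended factors are already hyperbolic, embed the remaining one-ended factors into $\Gamma\leq\Omega$ via Proposition \ref{disjonctionbis2}, assemble a hyperbolic overgroup of $G'$ by the Bestvina--Feighn combination theorem applied to a Stallings--Dunwoody splitting, and carry the embedding back up the quasi-tower via Proposition \ref{héritage}. The two details you make explicit — that the ``quasi-tower of $\Gamma$ over itself'' is formally degenerate but accommodated by the $N=0$ case in the proof of Proposition \ref{disjonctionbis}, and that finite edge groups are virtually cyclic and almost malnormal so the combination theorem applies — are both correct and are left implicit in the paper.
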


\begin{proof}
Since $G$ is a $\Omega$-limit group, it possesses a quasi-core $G'$ (according to Proposition \ref{truc2}). Notice that every one-ended factor of $G'$ that is finite-by-orbifold is hyperbolic, as a consequence of Corollary \ref{cyclique2}. If each one-ended factor of $G'$ is finite-by-orbifold, then $G'$ is hyperbolic, so $G$ embeds into a hyperbolic group by Proposition \ref{héritage}. Otherwise, let $H_1,\ldots, H_p$ be the one-ended factors of $G'$ that are not finite-by-orbifold. It follows from Proposition \ref{disjonctionbis2} that each $H_k$ embeds into $\Gamma$, so into $\Omega$. Let $\Omega'$ be the group obtained by replacing by $\Omega$ each $H_k$ in a Stallings-Dunwoody splitting of $G'$. It is clear that $G'$ embeds into $\Omega'$. In addition, $\Omega'$ is hyperbolic. So by Proposition \ref{héritage}, $G$ embeds into a hyperbolic group $\Omega''$.
\end{proof}

Let us observe that, if $\Omega$ is locally hyperbolic, then $\Omega''$ is locally hyperbolic as well. As a consequence, the following theorem holds.

\begin{te}Let $\Gamma$ be a locally hyperbolic group, and let $G$ be a finitely generated group. If $\mathrm{Th}_{\forall\exists}(\Gamma)\subset\mathrm{Th}_{\forall\exists}(G)$, then $G$ is a locally hyperbolic group.
\end{te}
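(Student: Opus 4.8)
The plan is to run the proof of Theorem~\ref{sousgroupe} essentially verbatim and then check that the hyperbolic group $\Omega''$ produced there, which contains $G$, is in fact locally hyperbolic; since a subgroup of a locally hyperbolic group is again locally hyperbolic (every finitely generated subgroup of the subgroup is a finitely generated subgroup of the ambient group), this yields local hyperbolicity of $G$. Recall how $\Omega''$ is assembled: one passes to a quasi-core $G'$ of $G$, whose one-ended factors are either finite-by-orbifold — hence already hyperbolic by Corollary~\ref{cyclique2} — or are the groups $H_1,\dots,H_p$, which embed into $\Gamma$ by Proposition~\ref{disjonctionbis2}; one then forms $\Omega'$ by replacing each $H_k$ in a Stallings--Dunwoody splitting of $G'$ by a copy of a hyperbolic group containing it, and $\Omega''$ by replacing vertex groups by copies of $\Omega'$ in the centred splittings coming from the quasi-tower structure of $G$, keeping the QH central vertices. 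All edge groups occurring are finite or virtually cyclic, and all the graphs of groups involved are acylindrical, so Proposition~\ref{BF92} (resp.\ its acylindrical version) gives hyperbolicity at each stage, exactly as in Proposition~\ref{héritage}.

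The first new point is that every building block of $\Omega''$ is locally hyperbolic. The groups $H_k$ are finitely generated and embed into the locally hyperbolic group $\Gamma$; since only finitely many of them occur, they all embed into a single finitely generated — hence hyperbolic — subgroup $\Gamma_0\leq\Gamma$, and $\Gamma_0$, being a subgroup of $\Gamma$, is locally hyperbolic. One takes $\Gamma_0$ as the filling group in the construction above in place of the ambient hyperbolic group used in the proof of Theorem~\ref{sousgroupe}; this is the only change needed and it bypasses the fact that $\Gamma$ itself need not embed into a hyperbolic group. The remaining blocks are finite groups, QH vertex groups, and finite-by-orbifold one-ended factors of $G'$; these last are virtually free or virtually fundamental groups of closed surfaces, hence locally hyperbolic, and QH vertex groups are finite-by-orbifold.

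Next I would show that combining locally hyperbolic groups along finite or virtually cyclic subgroups inside an acylindrical graph of groups produces a locally hyperbolic group. Let $L$ be a finitely generated subgroup of such a fundamental group acting on the Bass--Serre tree $T$; replacing $T$ by the minimal $L$-invariant subtree, the quotient graph is finite since $L$ is finitely generated, so $L$ splits as a finite graph of groups with finite or virtually cyclic edge groups and vertex groups of the form $L\cap(\text{conjugate of a vertex group})$. Finiteness of the quotient graph and finite generation of $L$ force these vertex groups to be finitely generated, hence hyperbolic, being finitely generated subgroups of locally hyperbolic groups. Applying the combination theorem once more then shows $L$ is hyperbolic. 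Iterating this up the construction of $\Omega''$ shows $\Omega''$ is locally hyperbolic, and therefore so is $G\leq\Omega''$. If instead every one-ended factor of $G'$ is finite-by-orbifold, $G'$ is already hyperbolic and locally hyperbolic, and the same combination argument applied to the centred splittings of $G$ finishes the proof.

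The step I expect to be the main obstacle is the last one: verifying that local hyperbolicity is genuinely inherited by the acylindrical-combination process. The delicate ingredients are that a finitely generated subgroup of the relevant graph of groups has finitely generated vertex groups (which does follow from finiteness of the induced quotient graph) and that the almost-malnormality/acylindricity hypothesis required by the combination theorem is inherited by the induced splitting of $L$. A secondary technical matter is to check that replacing the ambient hyperbolic group of the proof of Theorem~\ref{sousgroupe} by a finitely generated hyperbolic subgroup $\Gamma_0\leq\Gamma$ does not affect the steps behind Proposition~\ref{disjonctionbis2} that produce the embeddings $H_k\hookrightarrow\Gamma$; here one uses that each $H_k$ is finitely generated, so its image lands in some finitely generated subgroup of $\Gamma$, and that only finitely many $H_k$ must be accommodated at once.
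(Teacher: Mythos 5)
Your approach is essentially the one the paper takes: the paper proves this theorem by noting, just before the statement, that ``if $\Omega$ is locally hyperbolic, then $\Omega''$ is locally hyperbolic as well'' --- i.e.\ it runs the proof of Theorem~\ref{sousgroupe} and observes that the resulting ambient hyperbolic group $\Omega''$ is locally hyperbolic, so $G\leq\Omega''$ is too. You fill in substantive detail the paper leaves implicit, in particular the combination step; and the paper also flags (Remark~\ref{remarque}) that a more direct argument via the shortening argument avoids quasi-towers altogether.

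One point you mis-assess. You claim that passing to a finitely generated hyperbolic $\Gamma_0\leq\Gamma$ containing all the $H_k$ ``bypasses the fact that $\Gamma$ itself need not embed into a hyperbolic group.'' It does not: Proposition~\ref{disjonctionbis2} (and hence the existence of the embeddings $H_k\hookrightarrow\Gamma$, the existence of a quasi-core of $G$, and the equational noetherianness used throughout) is invoked \emph{before} $\Gamma_0$ enters, and its statement requires $\Gamma$ to embed into a hyperbolic group. The way out is the one the paper already takes in the Introduction, where the theorem is stated with ``$\Gamma$ a \emph{finitely generated} locally hyperbolic group'': then $\Gamma$ is itself hyperbolic, so one may take $\Omega=\Gamma$ (or indeed $\Gamma_0=\Gamma$) and the hypothesis of Proposition~\ref{disjonctionbis2} is satisfied for free. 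Your use of $\Gamma_0$ is still a clean way to name the filling group, but its role is not to rescue the argument in the non-f.g.\ case.

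On the step you flag as the main obstacle: you are right to be careful that finitely generated subgroups of the graph of groups have finitely generated vertex stabilizers. The statement you need is that when a finitely generated group acts cocompactly on a tree with \emph{finitely generated} edge stabilizers, the vertex stabilizers are finitely generated; this does hold here because all edge groups are finite or virtually cyclic, but it is not an automatic consequence of the quotient graph being finite, and you should record that the finite generation of the edge groups is what makes it go through (the paper, being terser, does not address this either). With that said, the rest of your reduction --- acylindricity of the induced action of $L$ on its minimal subtree, hence almost-malnormality of the induced edge groups, hence applicability of the Bestvina--Feighn combination theorem as in Proposition~\ref{BF92} --- is sound and matches what the paper's one-sentence observation must ultimately rely on.
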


\begin{rque}\label{remarque}We stress that the theorem above can be derived more directly from Sela's shortening argument, without using quasi-towers. In \cite{Sel01}, Sela proved (Corollary 4.4) that a limit group is hyperbolic if and only if it does not contain $\mathbb{Z}^2$. Therefore, a finitely generated group $G$ satisfying $\mathrm{Th}_{\forall\exists}(F_2)\subset\mathrm{Th}_{\forall\exists}(G)$ is hyperbolic (thanks to Corollary \ref{cyclique2}). In fact, Sela's proof shows that $G$ is locally hyperbolic, and it turns out that this proof remains valid if we replace $F_2$ by a given locally hyperbolic group. 
\end{rque}

\subsection{Being hyperbolic is a first-order invariant}\label{section62}Let $G$ be a finitely generated group with the same first-order theory as a hyperbolic group $\Gamma$. In this section, we shall prove that $G$ is hyperbolic. Let $G'$ be a quasi-core of $G$, and $\Gamma'$ a quasi-core of $\Gamma$ (these groups exist thanks to Proposition \ref{truc2}). According to Proposition \ref{héritage}, it is enough to prove that $G'$ is hyperbolic. Denote by $G'_1,\ldots ,G'_n$ the one-ended vertex groups of a Stallings-Dunwoody decomposition $\Delta_{G'}$ of $G'$ that are not finite-by-orbifold. It is enough to prove that $G'_1,\ldots ,G'_n$ are hyperbolic, because the finite-by-orbifold vertex groups of $\Delta_{G'}$ are hyperbolic. Indeed, by Theorem \ref{sousgroupe}, $G$ embeds into a hyperbolic group, so $G'$ embeds into a hyperbolic group as well (thanks to Proposition \ref{héritage}). 

Denote by $\Gamma'_1,\ldots ,\Gamma'_m$ the one-ended vertex groups of a Stallings-Dunwoody splitting of $\Gamma'$ that are not finite-by-orbifold. We aim to prove that each $G'_k$ is isomorphic to some $\Gamma'_{\ell}$. Since $G$ embeds into a hyperbolic group, it follows from Proposition \ref{disjonctionbis2} that there exist two applications $\tau : \llbracket 1,n\rrbracket\rightarrow\llbracket 1,m\rrbracket$ and $\sigma : \llbracket 1,m\rrbracket\rightarrow\llbracket 1,n\rrbracket$ such that $G'_k$ embeds into $\Gamma'_{\tau(k)}$ for every $k\in\llbracket 1,n\rrbracket$, and $\Gamma'_{\ell}$ embeds into $G'_{\sigma(\ell)}$ for every $\ell\in\llbracket 1,m\rrbracket$. 

We will prove that we can choose $\tau$ and $\sigma$ so that $\sigma\circ\tau$ is a permutation of $\llbracket 1,n\rrbracket$. It will be enough to conclude (see Proposition \ref{isom}). For example, suppose that $n=m=1$. Then $G'_1$ embeds into $\Gamma'_1$ and $\Gamma'_1$ embeds into $G'_1$. Since $\Gamma$ is assumed to be hyperbolic, $\Gamma'$ is hyperbolic as well, by Proposition \ref{héritage}. As a one-ended hyperbolic group, $\Gamma'_1$ is co-Hopfian, so $G'_1$ is isomorphic to $\Gamma'_1$. Recall that the co-Hopf property for one-ended hyperbolic groups has been proved by Sela in the torsion-free case (see \cite{Sel97}) and by Moioli in the general case, in his PhD thesis (see \cite{Moi13}). 

In order to prove the existence of $\tau$ and $\sigma$ such that $\sigma\circ\tau$ is a permutation of $\llbracket 1,n\rrbracket$, we need to strenghten Proposition \ref{disjonctionbis2}.

\newpage

\begin{prop}\label{disjonctionbis2bisbis}Let $\Gamma$ be a group, and let $G$ be a finitely generated group. Suppose that 
\begin{itemize}
\item[$\bullet$]$\Gamma$ embeds into a hyperbolic group $\Omega$;
\item[$\bullet$]$\mathrm{Th}_{\forall\exists}(\Gamma)\subset\mathrm{Th}_{\forall\exists}(G)$;
\item[$\bullet$]$\Gamma$ is a quasi-tower over a group $\Gamma'$. 
\end{itemize}
As a $\Omega$-limit group, $G$ has a quasi-core $G'$. Denote by $G'_1,\ldots ,G'_n$ the one-ended vertex groups of a Stallings-Dunwoody splitting of $G'$ that are not finite-by-orbifold, and denote by $\Gamma'_1,\ldots ,\Gamma'_m$ the one-ended vertex groups of a Stallings-Dunwoody splitting of $\Gamma'$ that are not finite-by-orbifold. 
\begin{itemize}
\item[$\bullet$]There exists a homomorphism $f : G' \rightarrow \Gamma'$ and an application $\tau : \llbracket 1,n\rrbracket\rightarrow\llbracket 1,m\rrbracket$ such that, for every $k\in\llbracket 1,n\rrbracket$, the restriction of $f$ to $G'_k$ is injective and $f(G_k)$ is contained in ${\Gamma'}_{\tau(k)}^{\gamma_k}$ for some $\gamma_k\in\Gamma'$. 
\item[$\bullet$]Denote by $I\subset\llbracket 1,n\rrbracket$ the set of indices $i\in\llbracket 1,n\rrbracket$ such that $f(G'_i)={\Gamma'}_{\tau(i)}^{\gamma_{i}}$ and $\Gamma'_{\tau(i)}$ is hyperbolic. If $i$ lies in $I$, then for every $j\neq i$, and for every $\gamma\in \Gamma'$, $f(G'_{j})\not\subset f(G'_{i})^{\gamma}$. As a consequence, for every $i\in I$ and $j\in\llbracket 1,n\rrbracket$, $\tau(i)=\tau(j)\Leftrightarrow i=j$.
\end{itemize}
\end{prop}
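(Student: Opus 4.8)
The plan is to refine Proposition~\ref{disjonctionbis2}, which already provides, for each one-ended non-finite-by-orbifold factor $G'_k$ of the quasi-core $G'$, a monomorphism into $\Gamma'$. The first bullet then amounts to (i) locating the image of each such embedding inside a Stallings--Dunwoody splitting of $\Gamma'$, and (ii) assembling all these embeddings into a single homomorphism $f\colon G'\to\Gamma'$; the second bullet is an incompressibility statement whose proof will exploit that $G'$, being a quasi-core, is a quasi-prototype.

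For (i): $G'_k$ is one-ended, hence elliptic in every splitting over finite groups, so the image of any embedding $G'_k\hookrightarrow\Gamma'$ lies in a conjugate of a Stallings--Dunwoody vertex group of $\Gamma'$; moreover a one-ended non-finite-by-orbifold group does not embed into a one-ended finite-by-orbifold group (a finite-index subgroup of the latter would again be finite-by-orbifold, an infinite-index subgroup virtually free hence not one-ended), so this vertex group is one of the $\Gamma'_\ell$, which defines $\tau(k)$. For (ii), I would run the first-order argument behind Proposition~\ref{disjonctionbis} globally rather than factor by factor: fix a finite presentation approximating $G'$ as in Step~2 of the proof of~\ref{disjonctionbis} (using equational noetherianity of $\Gamma$-limit groups, see~\ref{chaine} and~\cite{RW14}), encode ``injective on $G'_k$'' by the non-triviality of finitely many elements of $G'_k$, encode relatedness on each $G'_k$ via Lemma~\ref{deltarelies}, and encode ``the image of $G'_k$ lies in a conjugate of $\Gamma'_\ell$'' using generators of $\Gamma'_\ell$ carried into $\Gamma$ by the quasi-tower maps $j_m$, which are injective on one-ended subgroups (Remark~\ref{rem}). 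The resulting $\forall\exists$-sentence holds in $\Gamma$: given any homomorphism $G'\to\Gamma$, push it down to $\Gamma'$ along the $r_m$'s and lift back along the $j_m$'s as in Step~1 of~\ref{disjonctionbis}, invoking~\ref{disjonctionbis2} for injectivity on the factors. Transferring the sentence to $G$ and evaluating it at the tautological homomorphism yields the desired $f$ and $\tau$. (Alternatively one may try to patch the embeddings of~\ref{disjonctionbis2} directly, conjugating them so as to respect the finite edge-group identifications of a Stallings--Dunwoody splitting of $G'$.)

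For the second bullet, let $i\in I$, so $f$ restricts to an isomorphism $G'_i\xrightarrow{\ \sim\ }{\Gamma'}^{\gamma_i}_{\tau(i)}$ with $\Gamma'_{\tau(i)}$ one-ended and hyperbolic, and suppose toward a contradiction that $f(G'_j)\subset f(G'_i)^{\gamma}$ for some $j\neq i$ and some $\gamma\in\Gamma'$. Since a one-ended subgroup of $\Gamma'$ lies in a unique conjugacy class of Stallings--Dunwoody vertex group, this forces $\tau(j)=\tau(i)$; after conjugating I may assume $f(G'_i)=\Gamma'_{\tau(i)}$ and $f(G'_j)\le\Gamma'_{\tau(i)}$, so $(f|_{G'_i})^{-1}\circ f|_{G'_j}$ embeds the non-finite-by-orbifold one-ended factor $G'_j$ into the factor $G'_i$. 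I would then derive a contradiction with the quasi-prototype property of $G'$: this embedding, together with the \emph{surjectivity} of $f|_{G'_i}$ onto the full vertex group, should allow one to build a non-injective preretraction of a non-finite-by-orbifold one-ended factor of $G'$ into $G'$, whence Proposition~\ref{étage} exhibits $G'$ as a strict quasi-floor --- contradicting that $G'$ is a quasi-core. Finally, the ``as a consequence'' clause is immediate: if $\tau(i)=\tau(j)$ with $i\in I$, then $f(G'_j)$ is a one-ended subgroup of ${\Gamma'}^{\gamma_j}_{\tau(j)}=f(G'_i)^{\gamma_i^{-1}\gamma_j}$, forcing $j=i$.

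The hard part is this last incompressibility step: turning the abstract embedding $G'_j\hookrightarrow G'_i$ into a genuine \emph{non-injective} preretraction into $G'$, and making precise how the equality of images $f(G'_i)={\Gamma'}^{\gamma_i}_{\tau(i)}$ and the hyperbolicity of $\Gamma'_{\tau(i)}$ (in particular its co-Hopf property, \cite{Sel97}, \cite{Moi13}) guarantee that the map one produces is non-injective rather than merely non-surjective --- without these hypotheses a proper embedding $G'_j\hookrightarrow G'_i$, e.g.\ a surface subgroup of a surface group, would be entirely consistent with $G'$ being a quasi-core. The remaining steps are routine repackaging within the first-order framework of Section~\ref{32}.
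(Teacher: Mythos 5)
Your outline of the first bullet is roughly in the spirit of the paper, but your treatment of the second bullet has a genuine gap that you yourself flag as ``the hard part,'' and the route you propose to close it does not work. You want to turn the abstract embedding $\left(f|_{G'_i}\right)^{-1}\circ \iota_{\gamma^{-1}}\circ f|_{G'_j}\colon G'_j\hookrightarrow G'_i$ into a non-injective preretraction of some one-ended factor of $G'$. But this map, composed with the inclusion $G'_i\hookrightarrow G'$, is \emph{injective}; a preretraction in the sense of Definition~\ref{pre} that one could feed into Proposition~\ref{étage} must be \emph{non-injective}, so no preretraction falls out of this embedding, co-Hopf property or not. The missing device is the one the paper actually uses: for each pair $k\neq \ell$ with $G'_\ell$ hyperbolic, fix an infinite-order element $x_k$ in a non-QH vertex group of the $\mathcal{Z}$-JSJ of $G'_k$ and let $X_{k,\ell}$ be a set of conjugacy representatives of $\{\psi(x_k)\ \vert\ \psi\in\mathrm{Mono}(G'_k,G'_\ell)\}$ in $G'_\ell$. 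Theorem~\ref{sela2bis} (applied to the hyperbolic group $G'_\ell$) guarantees $X_{k,\ell}$ is \emph{finite}; this is exactly why the set $I$ of indices with $\Gamma'_{\tau(i)}$ hyperbolic shows up in the statement. The finiteness converts ``$f(G'_k)$ is contained in a conjugate of $f(G'_\ell)$'' into a finite disjunction of conjugacy equations $f(x_k)=\gamma\,f(z)\,\gamma^{-1}$, $z\in X_{k,\ell}$, which \emph{is} an existential condition. Without this, containment of images is not a $\forall\exists$-expressible property.

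There is a second, structural issue: the paper proves the two bullets \emph{simultaneously} by a single contradiction. One assumes that for every $f\colon G'\to\Gamma'$ either some $f|_{G'_k}$ is non-injective or some $f(G'_j)\subset f(G'_i)^\gamma$ with $i\in I$; pushes this along the quasi-tower maps to a disjunction for all $f\colon G'\to\Gamma$ (the first alternative becoming ``$f|_{G'_k}$ is $\Delta_k$-related to a homomorphism killing an element of $F$,'' via Corollary~\ref{simple} and Lemma~\ref{prop reliés}, and the second alternative becoming ``$f(x_k)$ is conjugate to $f(z)$ for some $z$ in a fixed finite set''); encodes this disjunction as a $\forall\exists$-sentence; transfers it to $G$; and evaluates at $f=j$. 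The conjugacy alternative is ruled out because distinct Stallings--Dunwoody factors of $G'$ cannot have infinite intersection up to conjugacy, and the remaining alternative produces a non-injective preretraction $(r\circ f')|_{G'_k}\colon G'_k\to G'$, contradicting that $G'$ is a quasi-core via Proposition~\ref{étage}. Your proposal to first produce an $f$ satisfying the first bullet (via an $\exists$-sentence asserting injectivity on each factor and containment in conjugates of the $\Gamma'_\ell$'s) and then separately verify the second bullet does not match this structure: injectivity is a universal condition that cannot be pinned down by finitely many equations inside a $\forall\exists$-sentence without the shortening-argument/$F$ device, and the constants describing the $\Gamma'_\ell$ live in $\Gamma$ and cannot be carried into a sentence one intends to interpret in $G$. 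The alternative you float (``conjugate the separate embeddings so that they patch across the finite edge groups'') is also not obviously achievable and is not the route taken.
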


\begin{proof}
Suppose for the sake of contradiction that the proposition is false. Then, for every homomorphism $f : G' \rightarrow \Gamma'$, one of the following holds:
\begin{itemize}
\item[$\bullet$] there exists $k\in\llbracket 1,n\rrbracket$ such that $f_{\vert G'_k}$ is non-injective,
\item[$\bullet$] or $I\neq \varnothing$, the restriction of $f$ to each $G'_{k}$ is injective, and there exist $i\in I$,$j\in\llbracket 1,n\rrbracket$ with $i\neq j$, and $\gamma,\gamma'\in\Gamma'$ such that $f(G'_j)\subset f(G'_i)^{\gamma}={\Gamma'}_{\tau(i)}^{\gamma'}$.
\end{itemize}
Let's find a contradiction.

\vspace{1mm}

\textbf{Step 1.} We will prove that there exist some finite subsets $X_1\subset G'_1,\ldots ,X_n\subset G'_n$ containing only elements of infinite order, and a finite set $F\subset G'\setminus\lbrace 1 \rbrace$ such that, for every homomorphism $f:G'\rightarrow\Gamma$, one of the following claims is true:  
\begin{itemize}[label=$\bullet$]
\item there exists $k\in\llbracket 1,n\rrbracket$ such that $f_{\vert G'_k}$ is $\Delta_k$-related to a homomorphism $f':G'_k\rightarrow\Gamma$ that kills an element of $F$ (where $\Delta_k$ stands for the $\mathcal{Z}$-JSJ splitting of $G'_k$),
\item or there exist an element $\gamma\in\Gamma$ and two elements $x_k\in X_k$ and $x_{\ell}\in X_{\ell}$ (with $k\neq \ell$) such that $f(x_k)=\gamma f(x_{\ell}) \gamma^{-1}$.
\end{itemize} 

\vspace{1mm}

\textit{Proof of Step 1.} In a first time, we will define the set $F$ and the sets $X_1,\ldots,X_n$. 

\vspace{1mm}

\textbf{Definition of $F$.} Since $\Gamma$ embeds into a hyperbolic group, for each $k\in\llbracket 1,n\rrbracket$, Corollary \ref{simple} to Sela's shortening argument \ref{sela2} provides us with a finite set $F_k\subset G'_k$ such that every non-injective homomorphism from $G'_k$ to $\Gamma$ kills an element of $F_k$, up to precomposition by a modular automorphism of $G'_k$. We let $F:=F_1\cup\cdots\cup F_n$.

\vspace{1mm}

\textbf{Definition of $X_1,\ldots,X_n$.} In the case where $I$ is empty, let $X_1=\cdots = X_n=\varnothing$. Now, assume that $I$ is non-empty. For each $k\in\llbracket 1,n\rrbracket$, since $G'_{k}$ is not finite-by-orbifold, there exists at least one non-QH vertex group $A_{k}$ in the $\mathcal{Z}$-JSJ splitting $\Delta_{k}$ of $G'_{k}$. Fix an element of infinite order $x_{k}\in A_{k}$. For each $\ell\in\llbracket 1,n\rrbracket\setminus\lbrace k\rbrace$, if $G'_{\ell}$ is not hyperbolic, let $X_{k,\ell}:=\varnothing$; if $G'_{\ell}$ is hyperbolic, let $Y_{k,\ell}:=\lbrace f(x_k) \ \vert \ f\in \mathrm{Mono}(G'_k,G'_{\ell})\rbrace$ and let $X_{k,\ell}$ be a set of representatives for the orbits of $Y_{k,\ell}$ under the action of $G'_{\ell}$ by conjugation. Note that $X_{k,\ell}$ is finite thanks to Sela's shortening argument \ref{sela2bis}. Last, we let \[X_{k}:=\lbrace x_{k}\rbrace\cup\bigcup_{1\leq \ell\neq k\leq n}X_{k,\ell}.\]

Now, we will prove that these sets have the expected property. The group $\Gamma$ being a quasi-tower over $\Gamma'$, there exists a finite sequence of groups $(\Gamma_p)_{0\leq p \leq N}$ such that ${\Gamma}_0=\Gamma$ and ${\Gamma}_N=\Gamma'$, and such that $\Gamma_{p-1}$ is a quasi-floor over $\Gamma_{p}$, for each $p\in\llbracket 1,N\rrbracket$. Let $r_p : \Gamma_{p-1} \rightarrow \Gamma_{p}$ and $j_p :\Gamma_{p}\rightarrow {\Gamma}_{p-1}$ be the homomorphisms associated with the quasi-floor structure, for each $p\in\llbracket 1,N\rrbracket$.

Let $f\in{\mathrm{Hom}(G',\Gamma)}$. If there exists a vertex group $G'_k$ such that ${f}_{\vert G'_k}$ is non-injective, then it follows from Corollary \ref{simple} that there exists $f':G'_k\rightarrow \Gamma$ that is $\Delta_k$-related to ${f}_{\vert G'_k}$ and kills an element of $F$. Otherwise, if there exists a vertex group $G'_k$ such that $(r_1\circ f)_{\vert G'_k}$ is non-injective, then $(j_1\circ r_1\circ f)_{\vert G'_k}$ is non-injective as well, so it follows from Corollary \ref{simple} that there exists $f':G'_k\rightarrow\Gamma$ that is $\Delta_k$-related to $(j_1\circ r_1\circ f)_{\vert G'_k}$ and kills an element of $F$. But $(j_1\circ r_1\circ f)_{\vert G'_k}$ is $\Delta_k$-related to ${f}_{\vert G'_k}$ according to Proposition \ref{prop reliés}, so $f'$ is $\Delta_k$-related to ${f}_{\vert G'_k}$ and satisfies the first claim. Otherwise, we look at $r_2\circ r_1\circ f$, etc. 

If, after $N$ steps, we have not found any $f'$ satisfying the first claim, then the morphism $f'=r_N\circ\cdots\circ r_1\circ f\in\mathrm{Hom}(G',\Gamma')$ is injective on each $G'_j$. So, by hypothesis, the set $I$ is non-empty and there exist $k, \ell\in\llbracket 1,n\rrbracket$ with $k\neq \ell$, $i\in I$ and $\gamma,\gamma'\in\Gamma'$ such that $f'(G'_k)$ is contained in $\iota_{\gamma}(f'(G'_{\ell}))=\iota_{\gamma'}(\Gamma'_{i})$. As a consequence, $G'_{\ell}$ is hyperbolic and \[\left({f'}_{\vert G'_{\ell}}\right)^{-1}\circ\iota_{\gamma^{-1}}\circ \left(f_{\vert G'_k}\right)\in\mathrm{Mono}(G'_k,G'_{\ell}).\] Hence, by definition of $X_k$ and $X_{\ell}$, there exist $x_k\in X_k$ and $x_{\ell}\in X_{\ell}$ such that $f'(x_k)=\iota_{\gamma}(f'(x_{\ell}))$. So $j_1\circ\cdots\circ j_N\circ f'(x_k)=\iota_{\gamma'}\circ j_1\circ\cdots\circ j_N\circ f'(x_{\ell})$ for some $\gamma'\in \Gamma$. But $j_1\circ\cdots\circ j_N\circ f'=j_1\circ\cdots\circ j_N\circ r_N\circ\cdots\circ r_1\circ f$ is related to $f$ thanks to Proposition \ref{prop reliés}. Therefore, $f(x_k)=\iota_{\gamma''}\circ f(x_{\ell})$ for some $\gamma''\in \Gamma$. This concludes the proof of the first step.

\vspace{1mm}

\textbf{Step 2.} The statement of Step 1 is expressible by a $\forall\exists$-sentence, denoted by $\phi$, which is true in $\Gamma$ (as in the proof of Proposition \ref{disjonctionbis}, Step 2). Since $\mathrm{Th}_{\forall\exists}(\Gamma)\subset\mathrm{Th}_{\forall\exists}(G)$, $\phi$ is true in $G$ as well. Thus, for every $f\in{\mathrm{Hom}(G',G)}$, one of the following claims is true:
\begin{itemize}[label=$\bullet$]
\item there exist a vertex group $G'_k$ together with a homomorphism $f':G'_k\rightarrow G$ which is $\Delta_k$-related to $f_{\vert G'_k}$ and kills an element of $F$,
\item or there exist an element $g\in G$ and two elements $x_k\in X_k$ and $x_{\ell}\in X_{\ell}$ (with $k\neq \ell$) such that $f(x_k)=g f(x_{\ell}) g^{-1}$, with $x_k$ of infinite order.
\end{itemize}

By definition, $G$ is a quasi-tower over $G'$. Let $r : G \rightarrow G'$ and $j: G'\rightarrow G$ be the two homomorphisms associated with this structure of a quasi-tower. 

Taking $f:=j$, the second claim above is false. Otherwise, $j(G'_k)\cap j(G'_{\ell})^g$ is infinite, so $r\circ j(G'_k)\cap r\circ j(G'_{\ell})^{r(g)}$ is infinite, since $r$ is injective in restriction to $j(G'_k)$. But $r\circ j$ is inner on $G'_k$ and on $G'_{\ell}$. Hence, there exists an element $h\in G'$ such that $G'_k\cap h{G'_{\ell}}h^{-1}$ is infinite. This is a contradiction, since $G'_k$ and ${G'_{\ell}}$ are two different vertex groups of a Stallings-Dunwoody decomposition of $G'$.

As a consequence, the first claim is necessarily true (for $f:=j$). There exist a vertex group $G'_k$ together with a homomorphism $f':G'_k\rightarrow G$ which is $\Delta_k$-related to $j_{\vert G'_k}$ and kills an element of $F$. Then $(r\circ f')_{\vert G'_k} : G'_k \rightarrow G'$ is a non-injective preretraction, by Lemma \ref{prop reliés}. It follows from Proposition \ref{étage} that $G'$ is a strict quasi-floor. This is a contradiction, since $G'$ is a quasi-core by hypothesis.
\end{proof}

We can now prove that hyperbolicity is preserved under elementary equivalence.

\begin{prop}\label{isom}Let $\Gamma$ be a hyperbolic group and $G$ a finitely generated group such that $\mathrm{Th}_{\forall\exists}(\Gamma)=\mathrm{Th}_{\forall\exists}(G)$. Let $G'$ be a quasi-core of $G$ and $\Gamma'$ a quasi-core of $\Gamma$. Then every one-ended factor of $G'$ that is not finite-by-orbifold is isomorphic to a one-ended factor of $\Gamma'$. Therefore, $G'$ is hyperbolic.
\end{prop}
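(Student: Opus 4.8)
The plan is to apply the strengthened disjunction, Proposition~\ref{disjonctionbis2bisbis}, in \emph{both} directions between $\Gamma$ and $G$, and then to run a co-Hopfian cycle argument on the resulting combinatorial data. Write $[n]$ for $\llbracket 1,n\rrbracket$. Since $\Gamma$ is hyperbolic, $\Gamma'$ is hyperbolic by Proposition~\ref{héritage}, so each one-ended factor $\Gamma'_1,\dots,\Gamma'_m$ of $\Gamma'$ is a one-ended hyperbolic group, hence co-Hopfian (Sela~\cite{Sel97}, Moioli~\cite{Moi13}). Moreover $G$ embeds into a hyperbolic group by Theorem~\ref{sousgroupe}, so Proposition~\ref{disjonctionbis2bisbis} applies both to the pair $(\Gamma,G)$ and, using $\mathrm{Th}_{\forall\exists}(\Gamma)=\mathrm{Th}_{\forall\exists}(G)$ together with the fact that $\Gamma'$ and $G'$ are quasi-cores, to the pair $(G,\Gamma)$. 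This yields homomorphisms $f\colon G'\to\Gamma'$ and $g\colon\Gamma'\to G'$, maps $\tau\colon[n]\to[m]$ and $\sigma\colon[m]\to[n]$, and conjugating elements, such that $f$ is injective on each $G'_k$ with $f(G'_k)$ contained in a conjugate of $\Gamma'_{\tau(k)}$, and $g$ is injective on each $\Gamma'_\ell$ with $g(\Gamma'_\ell)$ contained in a conjugate of $G'_{\sigma(\ell)}$; furthermore, writing $I\subseteq[n]$ for the set of $k$ with $f(G'_k)$ \emph{equal to} a conjugate of $\Gamma'_{\tau(k)}$ (the hyperbolicity clause in the definition of $I$ being automatic here, as $\Gamma'$ is hyperbolic) and $J\subseteq[m]$ for the analogous set for $g$, the strong-injectivity statements of Proposition~\ref{disjonctionbis2bisbis} hold: for $i\in I$ and $j\ne i$ one has $f(G'_j)\not\subseteq f(G'_i)^{\gamma}$ for all $\gamma$, and symmetrically for $J$ and $g$.

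Next I would analyse the finite maps $\sigma\tau\colon[n]\to[n]$ and $\tau\sigma\colon[m]\to[m]$ and their recurrent sets $R_1:=\mathrm{Rec}(\sigma\tau)$ and $R_2:=\mathrm{Rec}(\tau\sigma)$. If $k\in R_1$, then $(\sigma\tau)^pk=k$ for some $p\ge1$, so $\tau(k)\in R_2$, and composing the maps $f,g$ around the corresponding $\tau\sigma$-cycle produces an injective endomorphism of $\Gamma'_{\tau(k)}$ (up to conjugacy), which is an isomorphism since $\Gamma'_{\tau(k)}$ is co-Hopfian. As each map in the cycle is injective and the total composition is bijective, \emph{every} map in the cycle is an isomorphism; in particular $G'_{\sigma\tau(k)}$ is hyperbolic, $G'_{\sigma\tau(k)}\cong\Gamma'_{\tau(k)}$, $g$ maps $\Gamma'_{\tau(k)}$ onto a conjugate of $G'_{\sigma\tau(k)}$, and $f$ maps $G'_{\sigma\tau(k)}$ onto a conjugate of $\Gamma'_{\tau\sigma\tau(k)}$. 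Since $\sigma\tau$ restricts to a bijection of $R_1$, these facts give: $R_1\subseteq I$, $\tau(R_1)\subseteq R_2\cap J$, every $G'_k$ with $k\in R_1$ is hyperbolic and isomorphic to a one-ended factor of $\Gamma'$, and (by the symmetric statements for $R_2$, together with a short count of cardinalities) $\tau$ restricts to a bijection $R_1\to R_2$, $\sigma$ restricts to a bijection $R_2\to R_1$, and $\sigma(R_2)=R_1$.

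The key step — and, I expect, the main obstacle — is to upgrade this to $R_1=[n]$, equivalently, to show that $\sigma\tau$ is a bijection. I would argue by contradiction: if $R_1\subsetneq[n]$ then $[n]\setminus R_1$ is nonempty and cannot be $\sigma\tau$-invariant (else it would contain a recurrent point), so there is $k_0\in[n]\setminus R_1$ with $k_1:=\sigma\tau(k_0)\in R_1$; pick $k_2\in R_1$ with $\sigma\tau(k_2)=k_1$, so $k_2\ne k_0$. Now $k_2\in R_1\subseteq I$, so $f(G'_{k_2})$ is a conjugate of $\Gamma'_{\tau(k_2)}$; also $\tau(k_2)\in J$, with $g(\Gamma'_{\tau(k_2)})$ a conjugate of $G'_{\sigma\tau(k_2)}=G'_{k_1}$ and $G'_{k_1}$ hyperbolic (all by the cycle argument). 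The strong injectivity of $f$ at $i=k_2\in I$ applied to $j=k_0$ gives $f(G'_{k_0})\not\subseteq f(G'_{k_2})^{\gamma}$ for all $\gamma$, hence $\tau(k_0)\ne\tau(k_2)$ (otherwise $f(G'_{k_0})$ would lie in a conjugate of $\Gamma'_{\tau(k_0)}=\Gamma'_{\tau(k_2)}$). On the other hand, $\sigma(\tau(k_2))=k_1=\sigma\tau(k_0)=\sigma(\tau(k_0))$, so the strong injectivity of $g$ at $\ell=\tau(k_2)\in J$ applied to $\ell'=\tau(k_0)$ forces $\tau(k_0)=\tau(k_2)$ — a contradiction. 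Hence $R_1=[n]$, and an analogous argument (using that every $\Gamma'_\ell$ is hyperbolic) gives $R_2=[m]$. Consequently $\tau$ and $\sigma$ are injective, $n=m$, and both are bijections; in particular $I=[n]$, so $f$ restricts to an isomorphism from each $G'_k$ onto a conjugate of $\Gamma'_{\tau(k)}$. Thus every one-ended factor of $G'$ that is not finite-by-orbifold is isomorphic to a one-ended factor of $\Gamma'$, hence hyperbolic. Since the finite-by-orbifold one-ended factors of $G'$ are hyperbolic (their underlying orbifolds being compact hyperbolic, so that these groups are finite extensions of hyperbolic groups; cf.\ also Corollary~\ref{cyclique2}), and $G'$ is obtained from its one-ended factors by amalgamated products and HNN extensions over finite groups via its Stallings--Dunwoody splitting, the combination theorem of Bestvina--Feighn (Proposition~\ref{BF92}) shows that $G'$ is hyperbolic.
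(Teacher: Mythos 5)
Your proof is correct and follows the same strategy as the paper's: apply Proposition~\ref{disjonctionbis2bisbis} in both directions, use co-Hopfianity of one-ended hyperbolic groups on each cycle of $\sigma\circ\tau$ to show recurrent indices lie in $I$ (resp.\ $J$), and deduce bijectivity of $\tau$ and $\sigma$. The only difference is one of exposition: the paper dispatches the final combinatorial step (that the injectivity of $\tau$ on periodic points and of $\sigma$ on periodic points forces both maps to be bijections) with ``one easily checks,'' whereas you spell out the argument via the recurrent sets $R_1,R_2$ and the contradiction at a first escape from $R_1$ — a welcome clarification, but not a different route.
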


Thanks to Proposition \ref{héritage} about inheritance of hyperbolicity, the corollary below is immediate.

\begin{co}Let $\Gamma$ be a hyperbolic group and $G$ a finitely generated group such that $\mathrm{Th}_{\forall\exists}(\Gamma)=\mathrm{Th}_{\forall\exists}(G)$. Then $G$ is hyperbolic.
\end{co}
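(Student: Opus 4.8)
The plan is to combine Proposition \ref{disjonctionbis2bisbis} with the co-Hopf property for one-ended hyperbolic groups. Apply Proposition \ref{disjonctionbis2bisbis} twice, once in each direction: since $\mathrm{Th}_{\forall\exists}(\Gamma)=\mathrm{Th}_{\forall\exists}(G)$, we have both $\mathrm{Th}_{\forall\exists}(\Gamma)\subset\mathrm{Th}_{\forall\exists}(G)$ and $\mathrm{Th}_{\forall\exists}(G)\subset\mathrm{Th}_{\forall\exists}(\Gamma)$, and $\Gamma$ (being hyperbolic) embeds into a hyperbolic group while $G$ is a $\Gamma$-limit group (so embeds into a hyperbolic group by Theorem \ref{sousgroupe}). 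Hence the proposition applies symmetrically. The first application gives a homomorphism $f:G'\to\Gamma'$ and a map $\tau:\llbracket 1,n\rrbracket\to\llbracket 1,m\rrbracket$ with $f_{\vert G'_k}$ injective and $f(G'_k)\subset{\Gamma'}_{\tau(k)}^{\gamma_k}$; the second gives $g:\Gamma'\to G'$ and $\sigma:\llbracket 1,m\rrbracket\to\llbracket 1,n\rrbracket$ with $g_{\vert \Gamma'_\ell}$ injective and $g(\Gamma'_\ell)\subset{G'}_{\sigma(\ell)}^{\delta_\ell}$. Note that $\Gamma'$ is hyperbolic by Proposition \ref{héritage} (since $\Gamma$ is hyperbolic), so \emph{every} $\Gamma'_\ell$ is hyperbolic; this is what lets us use the second bullet of Proposition \ref{disjonctionbis2bisbis}.

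Next I would analyze the composition $g\circ f:G'\to G'$ restricted to each $G'_k$. For a fixed $k$, $(g\circ f)_{\vert G'_k}$ is an injective homomorphism $G'_k\hookrightarrow {G'}_{\sigma(\tau(k))}^{\delta_{\tau(k)}\cdot(\text{conj})}$, i.e.\ $G'_k$ embeds into a conjugate of $G'_{\sigma\circ\tau(k)}$. The goal is to show $\sigma\circ\tau$ is a permutation of $\llbracket 1,n\rrbracket$ and that each embedding is in fact an isomorphism. The key leverage is the second bullet of Proposition \ref{disjonctionbis2bisbis}: consider the set $I$ of indices $i$ such that $f(G'_i)={\Gamma'}_{\tau(i)}^{\gamma_i}$ (onto the whole factor) with $\Gamma'_{\tau(i)}$ hyperbolic. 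Since all $\Gamma'_\ell$ are hyperbolic, the hyperbolicity condition is automatic, so $I$ is exactly the set of $i$ for which $f$ maps $G'_i$ onto a conjugate of $\Gamma'_{\tau(i)}$; for such $i$, $G'_i$ is itself hyperbolic (isomorphic to $\Gamma'_{\tau(i)}$). The counting/injectivity argument: a standard argument with the descending chain condition or with the complexity would show that iterating $g\circ f$ cannot strictly decrease things forever, so eventually the embeddings $G'_k\hookrightarrow G'_{\sigma\tau(k)}$ must be isomorphisms. More concretely, one runs $g\circ f$ as an endomorphism of $G'$ that is injective on each one-ended factor $G'_k$ and (after composing with conjugations) maps $G'_k$ into $G'_{\sigma\tau(k)}$; an Euler-characteristic / co-Hopf style pigeonhole on the finite set $\{G'_1,\dots,G'_n\}$ forces $\sigma\circ\tau$ to be a bijection and each restriction to be onto a conjugate, hence an isomorphism $G'_k\cong G'_{\sigma\tau(k)}$; running $f\circ g$ the same way gives the reverse, so $\tau$ itself is a bijection onto its image and each $G'_k\cong\Gamma'_{\tau(k)}$. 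Once each $G'_k$ is isomorphic to a hyperbolic group $\Gamma'_{\tau(k)}$, all one-ended factors of $G'$ are hyperbolic; the finite-by-orbifold ones are hyperbolic by Corollary \ref{cyclique2}; so $G'$, built from these by a Stallings-Dunwoody splitting over finite groups, is hyperbolic, and then $G$ is hyperbolic by Proposition \ref{héritage}.

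The main obstacle I anticipate is the combinatorial bookkeeping that forces $\sigma\circ\tau$ to be a permutation with each component map surjective. The embeddings $G'_k\hookrightarrow G'_{\sigma\tau(k)}$ need not obviously be surjective from the statement of Proposition \ref{disjonctionbis2bisbis} alone; one genuinely needs the second bullet (the fact that if $i\in I$ then $f$ cannot simultaneously map a \emph{different} factor $G'_j$ into the same conjugate of $\Gamma'_{\tau(i)}$) to rule out collisions, together with a finiteness/minimality argument — for instance choosing $f$ and $g$ so that the total "complexity" (number of factors counted with a co-Hopfian-type weight) is minimized, or iterating and invoking the descending chain condition \ref{chaine} on an appropriate sequence of quotients. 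Making this argument airtight — in particular handling the conjugating elements $\gamma_k,\delta_\ell$ and ensuring the compositions land in a single well-defined conjugate of a single factor — is where the care is needed; the rest (co-Hopfness of one-ended hyperbolic groups via \cite{Sel97}, \cite{Moi13}, and the combination theorem to reassemble $G'$) is routine given the results already in the excerpt.
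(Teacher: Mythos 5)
Your proposal is correct and is essentially the paper's proof: it invokes Proposition \ref{disjonctionbis2bisbis} in both directions, uses $\Gamma'$'s hyperbolicity (via Proposition \ref{héritage}) so that every $\Gamma'_\ell$ is hyperbolic, and then argues via co-Hopfness plus the uniqueness-of-preimage clause to force $\tau$ and $\sigma$ to be mutually inverse bijections, finishing with Proposition \ref{héritage} and the combination theorem. The only place you leave things loose is the combinatorial step; the paper pins it down by looking at elements periodic under $\sigma\circ\tau$ (every orbit on a finite set is eventually periodic, a periodic $i$ gives a cyclic chain of embeddings through the one-ended hyperbolic group $\Gamma'_{\tau(i)}$, co-Hopfness forces each map in the chain to be an isomorphism, so $i\in I$ and $\tau^{-1}(\tau(i))=\{i\}$, and one then checks that these constraints force every element to be periodic and $\tau,\sigma$ to be bijective), rather than the descending chain condition or complexity count you float as alternatives.
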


\begin{proof5}
Let $G'_1,\ldots ,G'_n$ be the one-ended vertex groups of a Stallings-Dunwoody splitting of $G'$ that are not finite-by-orbifold, and let $\Gamma'_1,\ldots ,\Gamma'_m$ be the one-ended vertex groups of a Stallings-Dunwoody splitting of $\Gamma'$ that are not finite-by-orbifold. Since $\Gamma$ is hyperbolic, $\Gamma'$ is hyperbolic (by Proposition \ref{héritage}), so $\Gamma'_1,\ldots ,\Gamma'_m$ are hyperbolic.

Let $f : G'\rightarrow\Gamma'$ and $h:\Gamma'\rightarrow G'$ be the homomorphisms given by Proposition \ref{disjonctionbis2bisbis} (note that $f$ exists because $G$ embeds into a hyperbolic group, by Proposition \ref{sousgroupe}). 

Let $\tau : \llbracket 1,n\rrbracket\rightarrow\llbracket 1,m\rrbracket$ be the map such that, for every $i\in\llbracket 1,n\rrbracket$, $f(G'_i)$ is contained in a conjugate of $\Gamma'_{\tau(i)}$. As in Proposition \ref{disjonctionbis2bisbis}, denote by $I\subset\llbracket 1,n\rrbracket$ the set of indices $i\in\llbracket 1,n\rrbracket$ such that $f(G'_i)=\Gamma'_{\tau(i)}$ up to conjugacy (and $\Gamma'_{\tau(i)}$ is hyperbolic).

Let $\sigma : \llbracket 1,m\rrbracket\rightarrow\llbracket 1,n\rrbracket$ be the map such that, for every $j\in\llbracket 1,m\rrbracket$, $h(\Gamma'_j)$ is contained in a conjugate of $G'_{\sigma(j)}$. Denote by $J\subset\llbracket 1,m\rrbracket$ the set of indices $j\in\llbracket 1,m\rrbracket$ such that $h(\Gamma'_j)=G'_{\sigma(j)}$ up to conjugacy, and $G'_{\sigma(j)}$ is hyperbolic.

If $i\in\llbracket 1,n\rrbracket$ is periodic under $\sigma\circ\tau$, then $f$ induces an isomorphism between $G'_i$ and $f(G'_i)=\Gamma'_{\tau(i)}$, because one-ended hyperbolic groups are co-Hopfian. As a consequence, $i$ belongs to $I$. So it follows from Proposition \ref{disjonctionbis2bisbis} that $i$ is the unique preimage of $\tau(i)$ under the map $\tau$. Similarly, if $j\in\llbracket 1,m\rrbracket$ is periodic under $\tau\circ\sigma$, then $h$ induces an isomorphism between $\Gamma'_j$ and $h(\Gamma'_j)=G'_{\sigma(j)}$. Therefore, $G'_{\sigma(j)}$ is hyperbolic and $j$ lies in $J$. So it follows from Proposition \ref{disjonctionbis2bisbis} that $j$ is the unique preimage of $\sigma(j)$ under the map $\sigma$.

One easily checks that any such pair of maps $\sigma,\tau$ between finite sets are necessarily bijective, and that every element is periodic. Hence, for every $i\in\llbracket 1,n\rrbracket$, $f$ induces an isomorphism between $G'_i$ and $\Gamma'_{\tau(i)}$ (up to conjugacy), which is hyperbolic. This concludes the proof.
\end{proof5}

\subsection{Being hyperbolic and cubulable is a first-order invariant}\label{64}
In this section, we shall prove the following theorems.

\begin{te}\label{hypcubulation}Let $\Gamma$ be a hyperbolic group and let $G$ be a finitely generated group. Suppose that $\mathrm{Th}_{\forall\exists}(\Gamma) = \mathrm{Th}_{\forall\exists}(G)$. Then $\Gamma$ is cubulable if and only if $G$ is cubulable.
\end{te}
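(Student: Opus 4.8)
The plan is to bounce the statement down to the quasi-cores of $G$ and $\Gamma$, compare their one-ended factors, and then invoke a combination theorem.

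First, note that $G$ is hyperbolic: this is the corollary to Proposition \ref{isom}, and $\Gamma$ is hyperbolic by hypothesis. Hence for both groups the property ``cubulable'' is the same as ``hyperbolic and cubulable'', and it suffices to prove that $G$ is cubulable if and only if $\Gamma$ is cubulable. Now $G$ is a $\Gamma$-limit group (because $\mathrm{Th}_{\forall}(\Gamma)\subset\mathrm{Th}_{\forall}(G)$ and $\Gamma$, being hyperbolic, is equationally noetherian), and $\Gamma$ is trivially a $\Gamma$-limit group, so by Proposition \ref{truc2} both admit quasi-cores; fix a quasi-core $G'$ of $G$ and a quasi-core $\Gamma'$ of $\Gamma$. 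Since $G$ is a quasi-tower over $G'$ and $\Gamma$ is a quasi-tower over $\Gamma'$, the third item of Proposition \ref{héritage} (established below as Proposition \ref{propcubu}) gives that $G$ is hyperbolic and cubulable if and only if $G'$ is, and likewise for $\Gamma$ and $\Gamma'$. Combined with the observation above, we are reduced to proving: $G'$ is cubulable if and only if $\Gamma'$ is cubulable.

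Next I would compare $G'$ and $\Gamma'$ through their Stallings--Dunwoody decompositions. Each of them is the fundamental group of a finite graph of groups with finite edge groups whose vertex groups are finite, finite-by-orbifold, or one-ended and not finite-by-orbifold; and both $G'$ and $\Gamma'$ are hyperbolic (Proposition \ref{héritage} again, since $G$ and $\Gamma$ are hyperbolic). The finite and finite-by-orbifold vertex groups are virtually free, hence act properly and cocompactly on a tree, a $1$-dimensional $\mathrm{CAT}(0)$ cube complex, so they are cubulable. By Proposition \ref{isom}, every one-ended factor of $G'$ that is not finite-by-orbifold is isomorphic to a one-ended factor of $\Gamma'$; moreover the argument there (via the maps $\tau,\sigma$ of Proposition \ref{disjonctionbis2bisbis}) yields a \emph{bijection} between the one-ended non-finite-by-orbifold factors of $G'$ and those of $\Gamma'$ matching isomorphic groups. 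Thus $G'$ and $\Gamma'$ are built, by finitely many amalgamated free products and HNN extensions over finite groups, from the very same list of one-ended pieces together with virtually free pieces.

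It then remains to record the two facts that close the loop, among hyperbolic groups: cubulability passes to the one-ended factors of a splitting over finite groups, and it is preserved by assembling such a splitting. For the first, the one-ended factors of the hyperbolic group $G'$ (resp.\ $\Gamma'$) are quasiconvex by Proposition \ref{bow}, and a quasiconvex subgroup of a cocompactly cubulated hyperbolic group is again cocompactly cubulated. For the second, given cocompact cubulations of all vertex groups, the Bass--Serre tree of the (finite-edge-group) splitting together with these cube complexes produces a proper cocompact action on a $\mathrm{CAT}(0)$ cube complex by the combination theorem of Hsu and Wise \cite{HW15}, the hypotheses being met in the simplest way since the edge groups are finite. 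Chaining: $G'$ is cubulable precisely when every one-ended non-finite-by-orbifold factor of $G'$ is, precisely when (via the bijection above) every one-ended non-finite-by-orbifold factor of $\Gamma'$ is, precisely when $\Gamma'$ is cubulable; together with the reductions of the first paragraph this gives that $G$ is cubulable if and only if $\Gamma$ is. The main obstacle I anticipate is this last step: verifying that the hypotheses of the Hsu--Wise combination theorem and of the quasiconvex-cubulation statement match the present situation, and checking carefully that Proposition \ref{isom} really supplies a multiplicity-preserving bijection of factors (rather than merely a matching of isomorphism types), so that the piece-by-piece transfer of cubulability is legitimate.
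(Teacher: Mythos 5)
Your proposal follows essentially the same route as the paper's proof: reduce to quasi-cores $G'$ and $\Gamma'$ via Proposition \ref{propcubu}, compare the Stallings--Dunwoody factors using Proposition \ref{isom}, deduce cubulability of the one-ended factors via Haglund's quasiconvex-subgroup result (Proposition \ref{hag}), and reassemble using the Hsu--Wise combination theorem. Two small corrections to details you flag as potential obstacles. First, the multiplicity-preserving bijection you worry about is not actually needed: to transfer cubulability from $\Gamma'$ to $G'$, it suffices that every one-ended non-finite-by-orbifold vertex group of a Stallings--Dunwoody splitting of $G'$ be isomorphic to some vertex group on the $\Gamma'$ side, which Proposition \ref{isom} gives directly; the converse implication is handled symmetrically once one knows $G$ is hyperbolic, exactly as in the paper. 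Second, your claim that the finite-by-orbifold vertex groups are virtually free and hence act properly cocompactly on a tree is incorrect: any finite-by-orbifold vertex group of a Stallings--Dunwoody splitting is one-ended, so its orbifold is closed and the group is a finite extension of a cocompact Fuchsian group, which is not virtually free. Such groups are of course cubulable (they are hyperbolic and virtually Fuchsian), but the justification needs to be by that standard fact rather than by a proper cocompact action on a tree.
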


\begin{te}\label{hypcubulation2}Let $\Gamma$ and $G$ be two finitely generated groups. Suppose that $\mathrm{Th}_{\forall\exists}(\Gamma) = \mathrm{Th}_{\forall\exists}(G)$. Then $\Gamma$ is hyperbolic and cubulable if and only if $G$ is hyperbolic and cubulable.
\end{te}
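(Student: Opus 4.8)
The two theorems together say that, among finitely generated groups, the conjunction ``hyperbolic and cubulable'' is a $\forall\exists$-invariant, both within the class of hyperbolic groups (Theorem \ref{hypcubulation}) and without any a priori hyperbolicity assumption (Theorem \ref{hypcubulation2}). I would first observe that Theorem \ref{hypcubulation2} subsumes the forward implication of Theorem \ref{hypcubulation} and that, conversely, if $\Gamma$ is hyperbolic and $\mathrm{Th}_{\forall\exists}(\Gamma)=\mathrm{Th}_{\forall\exists}(G)$, then $G$ is hyperbolic by the corollary to Proposition \ref{isom}; so it suffices to prove Theorem \ref{hypcubulation2}. By symmetry of the hypothesis $\mathrm{Th}_{\forall\exists}(\Gamma)=\mathrm{Th}_{\forall\exists}(G)$, it is enough to show: if $\Gamma$ is hyperbolic and cubulable and $\mathrm{Th}_{\forall\exists}(\Gamma)\subset\mathrm{Th}_{\forall\exists}(G)$ with $G$ finitely generated, then $G$ is hyperbolic and cubulable.

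The strategy is to run the proof of Proposition \ref{isom} again, but now carrying along cubulability. Since $G$ has the same $\forall\exists$-theory as the hyperbolic group $\Gamma$, $G$ is a $\Gamma$-limit group, hence (Proposition \ref{truc2}) has a quasi-core $G'$, and $\Gamma$ has a quasi-core $\Gamma'$. By the third bullet of Proposition \ref{héritage} (whose proof is Proposition \ref{propcubu}, postponed to Section \ref{64}), $G$ is hyperbolic and cubulable iff $G'$ is, and likewise for $\Gamma$ and $\Gamma'$; so it suffices to prove that $G'$ is hyperbolic and cubulable, knowing $\Gamma'$ is. Now by Proposition \ref{isom} every one-ended factor of $G'$ that is not finite-by-orbifold is isomorphic to a one-ended factor of $\Gamma'$; and the finite-by-orbifold one-ended factors of $G'$ are virtually free-by-orbifold hyperbolic groups, which are cubulable (they act properly cocompactly on a $\mathrm{CAT}(0)$ cube complex dual to the curves of the orbifold, or: virtually special by Wise). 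Since $\Gamma'$ is hyperbolic and cubulable, so is each of its one-ended factors (a quasi-convex subgroup of a cubulable hyperbolic group, being itself hyperbolic and acting properly cocompactly on a convex subcomplex, is cubulable; alternatively, cubulability of a hyperbolic group passes to vertex groups of splittings over quasi-convex subgroups by Proposition \ref{bow} combined with the Hsu--Wise machinery). Hence every one-ended factor of $G'$ is hyperbolic and cubulable.

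It then remains to assemble $G'$ from its one-ended factors. The Stallings--Dunwoody splitting of $G'$ expresses $G'$ as the fundamental group of a finite graph of groups with finite edge groups and vertex groups either finite or one-ended; we have just shown each one-ended vertex group is hyperbolic and cubulable, and the finite vertex groups are trivially so. I would then invoke the combination theorem of Hsu and Wise (\cite{HW15}), exactly as the introduction indicates is used for hyperbolic towers: a group that is hyperbolic (which $G'$ is, by the corollary to Proposition \ref{isom}) and splits as a finite graph of hyperbolic cubulable groups over finite (hence quasi-convex, in fact trivially cocompactly cubulated) edge groups is again hyperbolic and cubulable. This gives $G'$ hyperbolic and cubulable, and then Proposition \ref{héritage} lifts this to $G$. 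The converse direction is obtained by swapping the roles of $\Gamma$ and $G$, which is legitimate since the hypothesis in Theorem \ref{hypcubulation2} is symmetric; for Theorem \ref{hypcubulation}, the hypothesis $\mathrm{Th}_{\forall\exists}(\Gamma)=\mathrm{Th}_{\forall\exists}(G)$ with $\Gamma$ hyperbolic forces $G$ hyperbolic, and then Theorem \ref{hypcubulation2} applies verbatim.

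The main obstacle, and the step deserving the most care, is the passage between cubulability and the graph-of-groups decompositions: one needs that cubulability of a hyperbolic group descends to the one-ended factors of a Stallings--Dunwoody splitting (to handle $\Gamma'$), and that it is inherited by the total group of a finite graph of hyperbolic cubulable groups amalgamated over finite groups (to rebuild $G'$). Both are applications of Hsu--Wise's combination theorem for cubulated hyperbolic groups, but the precise hypotheses (quasi-convexity of edge groups, relative cocompactness of the cube complex actions, compatibility of wallspaces along the edges) must be checked; this is precisely the content of the postponed Proposition \ref{propcubu}, and the cleanest route is to reduce Theorems \ref{hypcubulation} and \ref{hypcubulation2} to Proposition \ref{isom} plus a single clean statement: ``for a hyperbolic group $H$ with a Stallings--Dunwoody splitting, $H$ is cubulable iff each of its one-ended factors is cubulable,'' and then prove that statement once, via Hsu--Wise, in Section \ref{64}.
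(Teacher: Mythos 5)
Your proof is correct and follows essentially the same route as the paper: reduce to the case where both groups are known to be hyperbolic (via the corollary to Proposition \ref{isom}), pass to quasi-cores, match up the one-ended factors using Proposition \ref{isom}, and reassemble using the Hsu--Wise combination theorem together with Proposition \ref{héritage} (whose cubulability part is Proposition \ref{propcubu}). The paper deduces Theorem \ref{hypcubulation2} in a single line from Theorem \ref{hypcubulation} and the hyperbolicity result, whereas you unpack the whole argument; you also make explicit that the finite-by-orbifold one-ended factors of $G'$ are themselves cubulable, a point the paper's proof of Theorem \ref{hypcubulation} leaves implicit.
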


Note that Theorem \ref{hypcubulation2} above follows immediately from Theorem \ref{hypcubulation} and from the fact that hyperbolicity is preserved under elementary equivalence, among finitely generated groups (see Theorem \ref{isom}).

We refer the reader to \cite{Sag14} for a definition of $\mathrm{CAT}(0)$ cube complexes.

\begin{de}
\normalfont 
A group $G$ is said to be cubulable if it acts properly and cocompactly on a $\mathrm{CAT}(0)$ cube complex.
\end{de}

Before proving Theorem \ref{hypcubulation}, we shall prove the following proposition.

\begin{prop}\label{propcubu}Let $G$ be a hyperbolic group. If $G$ is a quasi-tower over a group $H$, then $G$ is cubulable if and only if $H$ is cubulable.
\end{prop}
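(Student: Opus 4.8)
The plan is to reduce immediately to the case of a single quasi-floor, since the quasi-tower case follows by induction on the length of the tower. So suppose $G$ is hyperbolic and $G$ is a quasi-floor over $H$, with associated centered splitting $\Delta_G$ of $G$ (central vertex $v$), splitting $\Delta_H$ of $H$ over finite groups, homomorphisms $r:G\to H$ and $j:H\to G$, partition $V_H=V_H^1\sqcup V_H^2$, and bijection $s:V_G\setminus\{v\}\to V_H^1$. Recall that, by Remark \ref{rem}, $r$ maps each $G_w$ ($w\neq v$) isomorphically onto $H_{s(w)}$, while $j$ maps each $H_u$ ($u\in V_H^1$) isomorphically onto a conjugate of $G_{s^{-1}(u)}$, and $j$ is injective on each finite vertex group $H_u$ ($u\in V_H^2$). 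In particular $G_w\simeq H_{s(w)}$ for every $w\neq v$, and since $G$ is hyperbolic, Proposition \ref{bow} gives that every $G_w$ is (quasi-convex, hence) hyperbolic; thus $H$ is hyperbolic by Proposition \ref{BF92} applied to $\Delta_H$ (whose edge groups are finite, hence trivially almost malnormal). So both $G$ and $H$ are hyperbolic, and cubulability becomes a question about hyperbolic groups, for which the combination machinery of Hsu and Wise applies.

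The core of the argument is that cubulability of a hyperbolic group passes to and from vertex groups of a splitting over quasi-convex almost malnormal virtually cyclic (or finite) edge subgroups. In one direction this is the easy, well-known fact that a quasi-convex subgroup of a cubulable hyperbolic group is cubulable: if $G$ acts properly cocompactly on a $\mathrm{CAT}(0)$ cube complex, then since $G$ is hyperbolic and cubulable it is virtually special by Agol's theorem, so every quasi-convex subgroup is separable and in particular acts properly cocompactly on a $\mathrm{CAT}(0)$ cube complex (this is exactly the kind of statement one extracts from \cite{HW15}/\cite{Ago13}); applying this to the $G_w$ and to the vertex groups of $\Delta_H$ gives that cubulability of $G$ implies cubulability of every $G_w$, hence of every $H_u$ with $u\in V_H^1$ (as $G_w\simeq H_{s(w)}$), while the finite groups $H_u$ with $u\in V_H^2$ are trivially cubulable; then the Hsu--Wise combination theorem for hyperbolic groups acting on $\mathrm{CAT}(0)$ cube complexes, applied to $\Delta_H$, yields that $H$ is cubulable. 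Conversely, if $H$ is cubulable hyperbolic, the same quasi-convex-subgroup fact makes each $H_u$ cubulable; hence each $G_w$ ($w\neq v$) is cubulable, the central QH vertex group $G_v$ is cubulable since finite-by-orbifold groups (virtually surface groups) are cubulable, and then Hsu--Wise applied to the centered splitting $\Delta_G$ — whose edge groups are virtually cyclic and almost malnormal in $G_v$, hence quasi-convex — gives that $G$ is cubulable.

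The main obstacle is purely bookkeeping about the hypotheses of the Hsu--Wise combination theorem: one must check that every edge group appearing in $\Delta_G$ and $\Delta_H$ is quasi-convex in the adjacent vertex groups and satisfies whatever almost-malnormality (or "acylindrical / wall-compatibility") condition \cite{HW15} requires. For $\Delta_H$ the edge groups are finite, so this is immediate. For $\Delta_G$ this is built into Definition \ref{graphecentre}: edge groups are virtually cyclic, coincide with extended boundary or conical subgroups of $G_v$, are almost malnormal in $G_v$, and the $2$-acylindricity clause controls how they sit in the non-central vertices; combined with hyperbolicity of all vertex groups (already established) this is precisely the setting in which Hsu--Wise applies. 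The only other point requiring a word is the use of Agol's theorem to get separability of quasi-convex subgroups — but this is standard and is exactly how cubulability is known to behave under graph-of-groups decompositions in the hyperbolic setting. Having verified these hypotheses, the proof is a two-line application of the combination theorem in each direction, carried through the induction on the number of quasi-floors, and the proposition follows; this in turn completes the proof of the third bullet of Proposition \ref{héritage}.
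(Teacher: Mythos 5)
Your plan follows the paper's proof almost exactly: reduce to a single quasi-floor, observe both $G$ and $H$ are hyperbolic, pass cubulability to vertex groups by quasi-convexity, and recombine via the Hsu--Wise combination theorem. There is, however, one step where the justification you give does not actually establish what you need. To get cubulability of quasi-convex subgroups you invoke Agol's theorem to conclude $G$ is virtually special, then assert that quasi-convex subgroups are ``separable and in particular act properly cocompactly on a $\mathrm{CAT}(0)$ cube complex.'' Separability of a quasi-convex subgroup does not, by itself, give cubulability of that subgroup --- the implication ``separable $\Rightarrow$ cubulable'' is a non-sequitur. What one actually uses is Haglund's convex-core result (cited in the paper as Proposition \ref{hag}): a quasi-convex subgroup of a cubulable group acts properly cocompactly on a convex subcomplex of the ambient $\mathrm{CAT}(0)$ cube complex. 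That statement needs no Agol and no specialness at all, and it is the correct (and lighter) tool here. If you insist on going through Agol, the right chain is ``virtually special $\Rightarrow$ quasi-convex subgroups are virtual retracts, hence virtually special, hence cubulable,'' via Haglund--Wise canonical completion/retraction, not via separability. On the positive side, you explicitly handle the cubulability of the central finite-by-orbifold vertex group $G_v$, which the paper's proof silently glosses over; that is a genuine improvement in presentation.
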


In order to prove this result, we need a combination theorem for hyperbolic and cubulable groups, analogous to that of Bestvina and Feighn for hyperbolic groups (see \ref{BF92}). 

Recall that a subgroup $H$ of a group $G$ is called almost malnormal if $H\cap gHg^{-1}$ is finite for every $g$ in $G\setminus H$. Note that edge groups in a centered graph of groups are almost malnormal in the central vertex group (see Definition \ref{graphecentre}).

In \cite{HW15}, Hsu and Wise proved the following result, as a special case of their main theorem.

\begin{prop}[Combination theorem for hyperbolic and cubulable groups]\label{combicubu}
~\

Let $G=A\ast_CB$ be an amalgamated product such that $A$ and $B$ are hyperbolic and cubulable, and $C$ is virtually cyclic and almost malnormal in $A$ or in $B$. Then $G$ is hyperbolic and cubulable. 

Let $G=A\ast_C$ be an HNN extension such that $A$ is hyperbolic and cubulable, and $C$ is virtually cyclic and almost malnormal in $A$. Then $G$ is hyperbolic and cubulable.

\end{prop}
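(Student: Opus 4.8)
The plan is to obtain both statements as instances of the general combination theorem for cocompactly cubulated groups proved by Hsu and Wise in \cite{HW15}; the only real work is to check that its hypotheses hold for the splittings $G=A\ast_C B$ and $G=A\ast_C$ considered here. Recall that that theorem produces a proper cocompact action on a $\mathrm{CAT}(0)$ cube complex for a group splitting as a finite graph of groups whose vertex groups act properly and cocompactly on $\mathrm{CAT}(0)$ cube complexes, provided the edge groups are quasiconvex and satisfy the relevant (almost) malnormality condition. So I would carry out three verifications and then invoke \cite{HW15}.

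First, I would check that $G$ is hyperbolic: in the amalgamated case $A$ and $B$ are hyperbolic, $C$ is virtually cyclic, and $C$ is almost malnormal in $A$ or in $B$, so this is exactly the Bestvina--Feighn combination theorem recalled in Proposition \ref{BF92}, and the HNN case $G=A\ast_C$ is identical. Second, I would observe that the edge group $C$ is quasiconvex: being an infinite virtually cyclic subgroup, it is quasiconvex in the hyperbolic groups $A$, $B$ and $G$ (every virtually cyclic subgroup of a hyperbolic group is quasiconvex). Third, the vertex groups $A$ and $B$ are hyperbolic and cubulable by hypothesis, i.e.\ they act properly and cocompactly on $\mathrm{CAT}(0)$ cube complexes, which is precisely the vertex-group hypothesis of the Hsu--Wise theorem. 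Feeding these inputs into \cite{HW15} produces the desired proper cocompact action of $G$ on a $\mathrm{CAT}(0)$ cube complex, and combined with the first step this yields that $G$ is hyperbolic and cubulable.

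The point requiring care — and the one I expect to be the main obstacle — is to reconcile the malnormality hypotheses: Proposition \ref{BF92}, and hence the statement here, only demands that $C$ be almost malnormal on one side, whereas the Hsu--Wise framework is naturally phrased for edge groups that are almost malnormal in the ambient group. I would address this by first noting that, since $C$ is virtually cyclic and almost malnormal in (say) $A$ with $A$ hyperbolic, $C$ coincides with the maximal virtually cyclic subgroup of $A$ containing it; and then either by invoking directly the form of \cite{HW15} adapted to hyperbolic graphs of groups with quasiconvex edge groups that are almost malnormal in an adjacent vertex group, or by a preliminary rewriting of the splitting — enlarging $C$ on the offending side to the maximal virtually cyclic subgroup containing it and re-expressing $G$ as an iterated graph of groups over virtually cyclic subgroups, whose vertex groups remain hyperbolic and cubulable by the first two verifications. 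Confirming that this rewriting brings one genuinely within the scope of \cite{HW15} is the delicate step of the argument.
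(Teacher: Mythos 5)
Your plan is essentially what the paper does: Proposition~\ref{combicubu} is not reproved in the text but quoted directly, with the remark that it is a special case of the main theorem of Hsu--Wise~\cite{HW15}. Your three verifications --- hyperbolicity of $G$ via Proposition~\ref{BF92}, quasiconvexity of the infinite virtually cyclic edge group $C$ in the hyperbolic groups $A$, $B$, $G$, and cubulability of the vertex groups by hypothesis --- are correct and are exactly the inputs needed to invoke \cite{HW15}.

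The one place you overthink is the malnormality condition. Your caution is understandable in principle, since almost malnormality of $C$ in $A$ does \emph{not} imply almost malnormality of $C$ in $G=A\ast_C B$: in the Bass--Serre tree an infinite intersection $C\cap gCg^{-1}$ is forced to be finite only when the geodesic between the base edge and its $g$-translate passes through an $A$-vertex, and in particular $C\cap bCb^{-1}$ may well be infinite for some $b\in B\setminus C$. But there is nothing to repair: the Hsu--Wise hypothesis is phrased precisely in the one-sided form --- each edge group is quasiconvex and almost malnormal in one of its two adjacent vertex groups --- which is exactly the condition stated in the proposition. So your first resolution, citing \cite{HW15} directly, is the correct one and needs no ``adaptation.'' Your second proposed workaround should be dropped, as it does not accomplish what you want: enlarging $C$ on the $B$-side to the maximal virtually cyclic subgroup $C'$ of $B$ containing it does not produce a new amalgam over $C'$, since $C'$ need not embed into $A$; and interposing $C'$ as an extra vertex group, $G\cong (A\ast_C C')\ast_{C'}B$, merely transfers the one-sidedness to the new splitting $A\ast_C C'$, where $C$ is almost malnormal in $A$ but (being of finite index in $C'$) not in $C'$.
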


We also need the following proposition, proved by Haglund, claiming that cubulability is inherited by quasi-convex subgroups.

\begin{prop}[\cite{Hag08}, Corollary 2.29]\label{hag}Let $G$ be a cubulable group. If $H$ is a quasi-convex subgroup of $G$, then $H$ is cubulable.
\end{prop}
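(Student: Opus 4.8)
The plan is to realise $H$ as acting properly and cocompactly on a convex subcomplex of a cube complex on which $G$ already acts. Fix a proper cocompact action of $G$ on a $\mathrm{CAT}(0)$ cube complex $X$. Since the action is properly discontinuous and cocompact, every vertex has finite stabiliser and there are finitely many $G$-orbits of edges, so each vertex has finite valence; together with the fact that there are finitely many $G$-orbits of cubes, this shows $X$ is locally finite and finite-dimensional. Fix a base vertex $x_0$. By the Milnor--\v{S}varc lemma the orbit map $g\mapsto g\cdot x_0$ is a $G$-equivariant quasi-isometry from $G$, with a word metric, to the $1$-skeleton $X^{(1)}$ equipped with its combinatorial ($\ell^1$) metric $d$. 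Since $H$ is quasi-convex in $G$, it follows that the orbit $S:=H\cdot x_0$ is an $R$-quasi-convex subset of $(X^{(1)},d)$ for some $R\ge 0$: every combinatorial geodesic between two points of $S$ stays in the $R$-neighbourhood of $S$.

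Next I would let $Y$ be the combinatorial convex hull of $S$ in $X$, that is, the intersection of all combinatorial half-spaces of $X$ containing $S$. Then $Y$ is a convex subcomplex of $X$, hence is itself a $\mathrm{CAT}(0)$ cube complex, its hyperplanes being exactly the restrictions to $Y$ of the hyperplanes of $X$; and $Y$ is $H$-invariant because $S$ is. The heart of the argument, which is the content of \cite{Hag08}, Corollary 2.29, is that $Y$ lies within finite Hausdorff distance of $S$. To see this one first checks that a hyperplane $\mathfrak{h}$ of $X$ crosses $Y$ if and only if it separates two points $a,b$ of $S$: if $\mathfrak{h}$ separates no two points of $S$ then the half-space of $\mathfrak{h}$ containing $S$ is one of the defining half-spaces of $Y$, so $\mathfrak{h}$ misses $Y$, while conversely two points of $S$ separated by $\mathfrak{h}$ already lie in $Y$. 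A combinatorial geodesic from $a$ to $b$ then crosses $\mathfrak{h}$ and, by $R$-quasi-convexity of $S$, stays $R$-close to $S$, so every hyperplane crossing $Y$ passes within bounded distance of $S$. Using this, and the fact that for a vertex $y\in Y$ the number $d(y,S)$ equals the number of hyperplanes separating $y$ from its nearest point of $S$ (each of which crosses $Y$), one shows $d(y,S)$ is bounded in terms of $R$ and $\dim X$. This is the step I expect to be the main obstacle: controlling how the nested family of hyperplanes between $y$ and $S$ can accumulate needs the bounded-packing behaviour of hyperplanes in a cocompact cube complex, and is precisely where Haglund's work is required.

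Granting the Hausdorff bound, say $Y\subseteq N_D(S)$, I would finish as follows. The group $H$ acts on $Y$ properly, being the restriction of the proper action of $G\supseteq H$ on $X$. It acts cocompactly: every cube $\sigma$ of $Y$ has all its vertices within distance $D$ of $S=H\cdot x_0$, so some $H$-translate of $\sigma$ meets the ball of radius $D$ about $x_0$; since $X$ is locally finite and finite-dimensional, this ball meets only finitely many cubes, so $Y$ has only finitely many $H$-orbits of cubes. Thus $H$ acts properly and cocompactly on the $\mathrm{CAT}(0)$ cube complex $Y$, which is exactly the assertion that $H$ is cubulable.
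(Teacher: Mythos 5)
The paper states this proposition without proof, citing it directly to Haglund's article \cite{Hag08} (Corollary 2.29), so there is no in-paper argument to compare against. Your sketch follows what is essentially Haglund's route, and it is sound in outline: take the combinatorial convex hull $Y$ of an orbit $S=H\cdot x_0$, show that quasi-convexity of $S$ forces $Y$ into a bounded neighbourhood of $S$, and deduce properness and cocompactness of the $H$-action on $Y$. You are also right that the only real estimate is the Hausdorff bound, and right about where uniform local finiteness enters. To make that step concrete: given a vertex $y\in Y$ and a nearest point $s\in S$, any hyperplane $\mathfrak{h}$ separating $y$ from $s$ must also separate $s$ from some $s'\in S$ (else the halfspace of $\mathfrak{h}$ containing $S$ would be one of the defining halfspaces of $Y$, excluding $y$); taking the median $m$ of $y,s,s'$, one checks $\mathfrak{h}$ separates $s$ from $m$, and $R$-quasi-convexity of $S$ together with minimality of $d(y,s)$ forces $d(s,m)\le 2R$, so $\mathfrak{h}$ passes within $2R$ of $s$. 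Cocompactness of the $G$-action bounds the number of hyperplanes meeting a $2R$-ball, which bounds $d(y,S)$.

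One point deserves more care than you gave it: the passage from ``$H$ is quasi-convex in $G$'' to ``$S$ is quasi-convex in $(X^{(1)},d)$'' via Milnor--\v{S}varc is not automatic. Quasi-convexity of a subset is not a quasi-isometry invariant in general; to transport it along the orbit map you need the Morse lemma, hence hyperbolicity of $G$ (alternatively, one reads ``quasi-convex subgroup'' as ``some orbit is combinatorially quasi-convex in $X$'', which is in fact the hypothesis in Haglund's Corollary 2.29). In the present paper this proposition is only invoked for hyperbolic $G$ (see Proposition \ref{propcubu}), so the gap is harmless in context, but as stated the proposition allows arbitrary cubulable $G$, and your first step silently uses hyperbolicity.
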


Let us now prove Proposition \ref{propcubu} and Theorem \ref{hypcubulation}.

\vspace{1mm}

\begin{proof18}
We shall prove the proposition in the case where $G$ is a quasi-floor over $H$, the general case follows immediately. Since $G$ is assumed to be hyperbolic, by Proposition \ref{héritage}, the group $H$ is hyperbolic.

Let $\Delta_G$ and $\Delta_H$ be the splittings of $G$ and $H$ associated with the quasi-floor structure. By definition, $\Delta_G$ is a centered graph of groups. Let $V_G$ be its set of vertices, and let $v$ be the central vertex. Denote by $V_H^1$ the set of vertices of $\Delta_H$ whose stabilizers are infinite. By definition, there exists a bijection $s : V_G\setminus \lbrace v\rbrace \rightarrow V_H^1$ such that $G_w\simeq H_{s(w)}$ for every $w\in V_G\setminus \lbrace v\rbrace$.

Suppose that $H$ is cubulable. Then, by Proposition \ref{hag}, $H_u$ is cubulable for every vertex $u$ of $\Delta_H$. As a consequence, the vertex groups of $\Delta_G$ are cubulale. Since each edge group of $\Delta_G$ is virtually cyclic and almost malnormal in $G_v$, it follows from the combination theorem \ref{combicubu} that $G$ is cubulable. Conversely, if $G$ is cubulable, we prove in exactly the same way that $H$ is cubulable. 
\end{proof18}

\vspace{1mm}

\begin{proof17}
Let $\Gamma$ be a hyperbolic group and let $G$ be a finitely generated group such that $\mathrm{Th}_{\forall\exists}(\Gamma) = \mathrm{Th}_{\forall\exists}(G)$. Suppose that $\Gamma$ is cubulable. We will prove that $G$ is cubulable. Let us denote by $G'$ a quasi-core of $G$ and by $\Gamma'$ a quasi-core of $\Gamma$. According to Proposition \ref{propcubu}, $\Gamma'$ is cubulable, so each vertex group of a Stallings-Dunwoody splitting of $\Gamma'$ is cubulable, by Proposition \ref{hag}. Moreover, according to Proposition \ref{isom}, every vertex group of a Stallings-Dunwoody splitting of $G'$ (that is not finite-by-orbifold) is isomorphic to a vertex group of a Stallings-Dunwoody splitting of $\Gamma'$. Consequently, by Theorem \ref{combicubu} the group $G'$ is cubulable. Then, it follows from Proposition \ref{propcubu} that $G$ is cubulable. Symmetrically, if $G$ is cubulable, then so is $\Gamma$ (because $G$ is hyperbolic, by Theorem \ref{isom}).\end{proof17}

\addtocontents{toc}{\setcounter{tocdepth}{1}}

\section{From a preretraction to a quasi-floor}\label{section72}

In the previous section, we proved our main theorems by admitting that we can build a strict quasi-floor by means of a non-injective preretraction. This is a precise statement:

\begin{prop}\label{floor7}Let $G$ be a finitely generated $K$-$\mathrm{CSA}$ group that does not contain $\mathbb{Z}^2$. Suppose that $G$ has a one-ended factor $H$ that is not finite-by-orbifold. Let $\Delta$ be the $\mathcal{Z}$-JSJ splitting of $H$. Suppose that there exists a non-injective homomorphism $p:H\rightarrow G$ that is $\Delta$-related to the inclusion of $H$ into $G$. Then $G$ is a strict quasi-floor.
\end{prop}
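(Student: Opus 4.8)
The plan is to follow the torsion-free blueprint of Sela and Perin for upgrading a preretraction to a floor, adapting it to the presence of torsion, where one cannot in general turn a preretraction into an honest retraction onto a subgroup (this is exactly what forces the weaker notion of a quasi-floor). The first step is to \emph{locate the non-injectivity in a QH vertex of $\Delta$}. Since $p$ is $\Delta$-related to the inclusion $\iota\colon H\hookrightarrow G$, on each non-QH vertex group $R$ and on each finite subgroup one has $p=\iota_g\circ\iota$ for a suitable $g$. Using the listed structural properties of $\Delta$ — in particular that two distinct edges incident to a non-virtually-cyclic vertex generate a non-virtually-cyclic group, together with the strong acylindricity — the conjugating elements attached to adjacent rigid vertices and their incident edge groups must agree, so after conjugating $p$ (which does not affect $\Delta$-relatedness) we may assume $p$ coincides with $\iota$ on a connected ``rigid skeleton'' of $\Delta$ carrying all non-QH vertex groups and their incident edge groups. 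Consequently $\ker p$ is normally generated by elements lying in QH vertex groups, so non-injectivity of $p$ produces a QH vertex $v$ on which $p_{\vert G_v}$ is non-degenerate: either $p$ pinches an essential simple closed curve on the underlying orbifold $O_v$ in the sense of Definition \ref{pinch}, or $p_{\vert G_v}$ is a non-injective, non-pinching morphism of finite-by-orbifold groups whose image is not contained in an extended boundary or conical subgroup (if it were, $p$ would factor through a proper collapse and the argument could be rerun).

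The second step is to \emph{eliminate the pinching case by a complexity argument}. Among all QH vertices of $\Delta$ (and, if one iterates, of its refinements) and all non-injective preretractions $H\to G$, choose one minimizing $\sum_{w}k(G_w)$, the sum of the complexities $k$ of the QH vertex groups of the relevant splitting, as in Lemma \ref{101} and Proposition \ref{complexity}. If $p$ pinched a simple closed curve $\alpha$ on $O_v$, then by Proposition \ref{cutting} cutting $H$ along $\alpha$ and its orbit yields a refinement $\Delta'$ of the splitting in which $\langle\alpha\rangle$ becomes an edge group; $p$ is still related to the inclusion with respect to $\Delta'$, and by Lemma \ref{102} and Proposition \ref{100} the new QH pieces have strictly smaller total complexity, contradicting minimality (once one checks that $\Delta'$ is again a legitimate $\mathcal{Z}$-JSJ-type splitting to which the normalization of Step 1 applies). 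Hence the minimizing $v$ is non-pinching, and by Propositions \ref{pinchbis} and \ref{complexity} the image $p(G_v)$ cannot fill any finite-by-orbifold subgroup of $G$ with finite index (equality of complexities would force $p_{\vert G_v}$ to be an isomorphism, contradicting its non-injectivity); so $p$ ``retracts $G_v$ away'', mapping it into the subgroup of $G$ generated by the images of the extended boundary subgroups of $G_v$ together with the rest of $G$.

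The third step is to \emph{assemble the quasi-floor}. Build the centered splitting $\Delta_G$ of $G$ by taking a Stallings--Dunwoody splitting of $G$, refining the one-ended factor $H$ by $\Delta$, and collapsing everything outside the star of $v$; one checks that $\Delta_G$ satisfies all of Definition \ref{graphecentre} (edge groups in $\overline{\mathcal{Z}}$, the segment-of-length-$2$ condition) using the acylindricity of $\Delta$ and the absence of $\mathbb{Z}^2$ in $G$. Let $\tilde H$ be defined by the graph of groups whose infinite-stabilizer vertices are the non-central vertices $w\neq v$ of $\Delta_G$, carrying the same groups $G_w$, together with finitely many finite vertices recording the central fibre and edge groups, all edge groups being finite; define $r\colon G\to\tilde H$ to be the identity on each $G_w$ ($w\neq v$) and $p$ on $G_v$ (consistent on the incident edge groups because $p$ agrees with $\iota$ on the extended boundary subgroups after the Step 1 normalization), and let $j\colon\tilde H\to G$ be the natural map realizing each $G_w$ as a vertex group of $\Delta_G$ and each finite vertex as a finite subgroup of $G$. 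One then verifies the three conditions of Definition \ref{quasi-floor}: $j\circ r$ is inner on each $G_w$ ($w\neq v$) and on finite subgroups, hence $\Delta_G$-related to $\mathrm{id}_G$; $r(G_w)=\tilde H_{s(w)}$ by construction; and $j$ is injective on the finite vertices. For strictness, $\ker r\supseteq\ker p$ meets the one-ended subgroup $H<G$ nontrivially, since $p$ is non-injective and $H$ is one-ended.

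The main obstacle I expect is the third step: ensuring that the non-pinching preretraction on $G_v$ really produces a \emph{centered} splitting of $G$ together with a consistent pair $(r,j)$, and in particular that the conjugating elements forced by $\Delta$-relatedness on the extended boundary subgroups of $G_v$ can be absorbed. In the torsion-free setting these are absorbed via surface mapping-class elements, yielding an honest retraction onto a subgroup; with torsion this can fail, which is exactly why a quasi-floor only asks for a ``piecewise retraction'' $r$ and a separate ``piecewise inclusion'' $j$ (and allows passing to a larger ambient group $G'$ that retracts onto $\tilde H$). Getting the bookkeeping of fibres, extended boundary and conical subgroups, and finite edge groups right — so that $\Delta_G$ genuinely satisfies Definition \ref{graphecentre}, $\tilde H$ genuinely has a graph-of-groups decomposition with finite edge groups, and the bijection $s$ between the non-central vertices of $\Delta_G$ and the infinite-stabilizer vertices of $\tilde H$ behaves as required — is the technical heart of the proof.
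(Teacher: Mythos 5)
The overall structure you sketch — locate the failure of injectivity in a QH vertex, handle pinching, then assemble the quasi-floor — does track the paper's outline (Step 1 of the proof of Proposition \ref{floor7} and then Proposition \ref{???}). However, two of your three steps have genuine gaps.

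The most serious gap is in your Step 2, the complexity-minimization argument you use to "eliminate the pinching case." You claim that if $p$ pinches a simple closed curve $\alpha$ on $O_v$, then cutting $H$ along $\alpha$ gives a refinement $\Delta'$ with smaller total QH complexity, and that $p$ is still $\Delta'$-related to the inclusion. This second claim is false: after cutting, $C_\alpha$ becomes the stabilizer of a new $\mathcal{Z}$-vertex, which is a non-QH vertex, so $\Delta'$-relatedness requires $p$ and $\iota$ to agree on $C_\alpha$ up to conjugation. But $p$ pinches $\alpha$, i.e.\ $p(C_\alpha)$ is finite, while $\iota(C_\alpha)=C_\alpha$ is infinite — so $p$ is not $\Delta'$-related to $\iota$ at all. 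More fundamentally, a refinement of $H$ is the wrong move: the paper does not refine $H$ but passes to the \emph{pinched quotient} $Q=H/N$ where $N$ is normally generated by the kernels $\ker(p_{|C_\alpha})$, so that the induced map $\phi:Q\to G$ is well-defined and non-pinching, and the pinched decomposition $\Delta_Q$ is a splitting over \emph{finite} edge groups (since $N_\alpha$ has finite index in $C_\alpha$). There is no way to "avoid" pinching by minimization; it must be processed through the quotient, and this is exactly why Case B of Proposition \ref{???} (Steps 1–4, with Lemma \ref{magique} doing the heavy lifting) is the technical core.

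Your Step 3 has a related gap even in the non-pinching case: you define $\tilde H$ and $r$ directly, but you never establish \emph{where} $p(G_v)$ lands. The paper's Lemma \ref{magique} proves (via a delicate analysis using the $2$-acylindricity, Proposition \ref{cutting}, and the complexity estimates of Proposition \ref{complexity}) that $p(G_v)$ is elliptic in the Bass-Serre tree of $\Delta_G$ and is absorbed into a conjugate of a single non-central vertex group $G_w$; without this, your map $r$ from $G$ to $\tilde H$ is not well-defined as a graph-of-groups map. This is far from automatic, and it is precisely the place where the "rigid skeleton normalization" you propose in Step 1 cannot substitute: two rigid vertex groups adjacent across a $\mathcal{Z}$-vertex may require different conjugating elements (their difference need only lie in the centralizer of the edge group, which is a whole $\mathcal{Z}$-subgroup), so there is no single conjugation that makes $p$ coincide with $\iota$ across a connected rigid skeleton. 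The paper avoids this pitfall by proving injectivity directly (Proposition \ref{lemmeperin}, via an equivariant map $T\to T$ and a no-folding argument) rather than by any normalization claim.
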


The goal of this section is to prove the proposition above.

\subsection{A preliminary proposition}

\begin{prop}\label{lemmeperin}Let $G$ be a one-ended finitely generated $K$-$\mathrm{CSA}$ group that does not contain $\mathbb{Z}^2$, so that $G$ has a $\mathcal{Z}$-JSJ splitting denoted by $\Delta$. Let $p$ be an endomorphism of $G$. If $p$ is $\Delta$-related to the identity of $G$ and sends every QH group isomorphically to a conjugate of itself, then $p$ is injective.
\end{prop}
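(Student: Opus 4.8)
The plan is to show that $p$ acts injectively on the Bass–Serre tree $T$ of $\Delta$ and injectively on each vertex group, and then conclude via a standard "action on a tree" argument. The key structural fact to exploit is that $\Delta$ is acylindrical (in the strong sense recorded in Section~\ref{25}): an element of infinite order fixing a segment of length $\geq 3$ is trivial, and a length-$2$ fixed segment forces virtually cyclic stabilizer at the midpoint. First I would observe that $p$, being $\Delta$-related to $\mathrm{id}_G$, sends each non-QH vertex group $G_v$ injectively (since $\iota_{g_v}\circ \mathrm{id}$ is injective) into a conjugate of itself, and by hypothesis it does the same on QH vertex groups; likewise $p$ is injective on each edge group (edge groups are virtually cyclic and $p$ restricted to an adjacent vertex group is injective). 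Moreover, because $p$ is conjugation on each vertex group and edge groups are elliptic, $p$ maps each vertex stabilizer to a vertex stabilizer: $p(G_v) = G_{v'}^{g_v}$ for an appropriate vertex $v'$ (here one uses that $G_v$, being a non-virtually-cyclic vertex group, is elliptic in $\Delta$, so $p(G_v)$ — conjugate to $G_v$ — is elliptic and generates a non-elementary subgroup, hence lies in a unique vertex stabilizer; for virtually cyclic vertices one argues via edge groups). This yields a $p$-equivariant simplicial map $f : T \to T$.

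Next I would show $f$ is injective, equivalently that $f$ does not fold any pair of edges at a vertex and does not collapse any edge. Collapsing an edge $e$ with stabilizer $G_e$ would mean $p(G_e)$ — which is $G_e$ up to conjugacy — fixes a vertex rather than an edge; but collapsing is ruled out since adjacent vertices map to adjacent (or equal) vertices and equal would contradict that $p$ is injective on a vertex group adjacent to $e$ whose image must properly contain the fixed edge data — more carefully, since $p$ is injective on the vertex group $G_v$ at an endpoint and $G_v$ is not virtually cyclic, the two edges of $\Delta$ incident to $v$ in distinct orbits cannot have images sharing a common edge, because $\langle G_e, G_{e'}\rangle$ is not virtually cyclic (the last bullet of the properties of $\Delta$), whereas acylindricity bounds the stabilizer of a length-$\geq 2$ segment. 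This is where the acylindricity of $\Delta$ and the "non-virtually-cyclic span of two incident edge groups" property do the real work: a fold at $v$ would produce an element of infinite order stabilizing a path of length $2$ whose midpoint carries a non-virtually-cyclic group, contradicting the strong acylindricity. Hence $f$ is locally injective, so injective.

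Finally, with $f : T \to T$ injective and $p$ injective on every vertex stabilizer, I claim $p$ is injective: if $p(g) = 1$ then for every vertex $x \in T$ we have $f(gx) = p(g) f(x) = f(x)$, so $gx = x$ by injectivity of $f$; thus $g$ fixes $T$ pointwise, so $g$ lies in every vertex stabilizer, and since some vertex stabilizer $G_v$ satisfies $p|_{G_v}$ injective with $g \in \ker p$, we get $g = 1$. (In fact $g$ acts trivially on $T$, and a one-ended group acting on its JSJ tree has trivial kernel unless the tree is a point, which is excluded since $H$—and hence $G$—is not rigid in the relevant sense; but the vertex-group argument is cleaner.) I expect the main obstacle to be the folding step: one must carefully rule out that $f$ identifies two edge orbits at a QH or rigid vertex, and the argument must handle the case where an image vertex is a virtually cyclic (small) vertex of $\Delta$, where acylindricity only gives a length-$2$ bound; here one leans on the precise combination of the bipartite structure, the control on incident edge groups at QH vertices (each extended boundary subgroup corresponds to a unique incident edge), and the fact that $p$ sends QH groups isomorphically to conjugates of themselves, which prevents degeneration of the orbifold part.
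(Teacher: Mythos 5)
Your plan follows the same overall strategy as the paper's proof: build a $p$-equivariant map $f : T \to T$ of the Bass--Serre tree, show that $f$ sends adjacent vertices to adjacent vertices, rule out foldings using the acylindricity of $\Delta$ and the non-virtually-cyclic-span property at vertex groups, and finally deduce injectivity of $p$ from injectivity of $f$. Your concluding step is a minor and valid variant (you use injectivity of $p$ on vertex stabilizers directly; the paper uses $2$-acylindricity to show a kernel element has finite order, then invokes injectivity of $p$ on finite subgroups).

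The genuine gap is in the no-fold argument when the apex $v$ of the fold carries a virtually cyclic group, which you correctly flag as the hard case but for which the ingredients you propose to lean on are not the ones that work. The control on incident edge groups at QH vertices and the hypothesis that $p$ sends QH groups isomorphically to conjugates are used to \emph{construct} $f$, not to exclude this fold. The paper's argument is more specific. Writing $w' = g\cdot w$ (possible since $f(w)=f(w')$ are in the same orbit), one uses $p(g)\cdot f(w) = f(w)$, i.e.\ $p(g)\in G_{f(w)} = p(G_w)$, to modify $g$ by an element of $G_w$ so that $p(g)=1$ while still $w' = g\cdot w$. Since $p$ is injective on every vertex group and $g\neq 1$, the element $g$ fixes no vertex, hence is hyperbolic with translation length $2$. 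The non-virtually-cyclic-span property is then invoked not at $v$ (where $G_v$ is virtually cyclic) but at the non-virtually-cyclic vertex $w'$, applied to the two distinct incident edges $e'$ and $ge$: injectivity of $p$ on $G_{w'}$ gives that $p(\langle G_{e'}, G_{ge}\rangle)$ is not virtually cyclic, while $p(g)=1$ forces $p(\langle G_{e'},G_{ge}\rangle) = p(\langle G_{e'},G_e\rangle)\subset p(G_v)$, which \emph{is} virtually cyclic --- a contradiction. Without this reduction to $p(g)=1$ and the transfer of the span property from $v$ to $w'$, the bipartite structure and QH bookkeeping alone will not close this case.
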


\begin{proof}
Let $T$ be the Bass-Serre tree of $\Delta$. We denote by $V$ the set of vertices of $T$. First of all, let us recall some properties of $\Delta$ that will be useful in the sequel (see Section \ref{25}).
\begin{enumerate}
\item The graph $\Delta$ is bipartite, with every edge joining a vertex carrying a virtually cyclic group to a vertex carrying a non-virtually-cyclic group.
\item Let $v$ be a vertex of $T$, and let $e,e'$ be two distinct edges incident to $v$. If $G_v$ is not virtually cyclic, then the group $\langle G_e,G_{e'}\rangle$ is not virtually cyclic.
\item The action of $G$ on $T$ is 2-acylindrical: if an element $g\in G$ fixes a segment of length $>2$ in $T$, then $g$ has finite order.
\end{enumerate}

If $\Delta$ is reduced to a point, then $p$ is obviously injective. From now on, we will suppose that $\Delta$ has at least two vertices.

As a first step, we build a $p$-equivariant map $f:T\rightarrow T$. Let $v_1,\ldots ,v_n$ be some representatives of the orbits of vertices. For every $1\leq k\leq n$, there exists $g_k\in G$ such that $p(G_{v_k})=g_kG_{v_k}g_k^{-1}$. We let $f(v_k)=g_k\cdot v_k$, so that $p(G_{v_k})=G_{f(v_k)}$. Then we define $f$ on each vertex of $T$ by equivariance. Next, we define $f$ on the edges of $T$ in the following way: if $e$ is an edge of $T$, with endpoints $v$ and $w$, there exists a unique path $e'$ from $f(v)$ to $f(w)$ in $T$. We let $f(e)=e'$.

Now we will prove that $f$ is injective, which allows to conclude that $p$ is injective. Indeed, if $p(g)=1$ for some $g\in G$, then for every vertex $v\in V$ one has $p(g)\cdot f(v)=f(g\cdot v)=f(v)$, thus $g\cdot v=v$ for every $v$. Since the action of $G$ on $\Delta$ is 2-acylindrical, $g$ has finite order. Moreover the restriction of $p$ to every element of finite order is injective (by definition of $\Delta$-relatedness), so $g=1$, which proves that $p$ is injective.

We now prove that $f$ is injective. The proof will proceed in two steps: first, one shows that $f$ sends adjacent vertices on adjacent vertices, then one proves that there are no foldings.

\vspace{1mm}

\textbf{$f$ sends adjacent vertices to adjacent vertices:} let's consider two adjacent vertices $v$ and $w$ of $T$. One has $d(f(v),f(w))\leq 2$, because the action of $G$ on $T$ is 2-acylindrical and $p$ is injective on edge groups, which are virtually cylic and infinite. Since the graph is bipartite, $d(f(v),f(w))$ and $d(v,w)=1$ have the same parity. Hence $d(f(v),f(w))=1$.

\vspace{1mm}

\textbf{There are no foldings:} let $v$ be a vertex of $T$, let $w$ and $w'$ be two distinct vertices adjacent to $v$. Denote by $e$ and $e'$ the edges between $v$ and $w$, and between $v$ and $w'$ respectively. Argue by contradiction and suppose that $f(w)=f(w')$, then $f(e)=f(e')$ since there are no circuits in a tree.

\begin{center}
\begin{tikzpicture}[scale=1]
\node[draw,circle, inner sep=1.7pt, fill, label=below:{$w$}] (A1) at (2,0) {};
\node[draw,circle, inner sep=1.7pt, fill, label=below:{$w'$}] (A2) at (2,2) {};
\node[draw,circle, inner sep=1.7pt, fill, label=below:{$v$}] (A3) at (0,1) {};
\node[draw=none, label=below:{$e$}] (B1) at (1,0.5) {};
\node[draw=none, label=below:{$e'$}] (B2) at (1,2.2) {};
\node[draw=none, label=below:{$f(e)=f(e')$}] (B3) at (7,2) {};
\node[draw,circle, inner sep=1.7pt, fill, label=below:{$f(w)=f(w')$}] (A4) at (8,1) {};
\node[draw,circle, inner sep=1.7pt, fill, label=below:{$f(v)$}] (A5) at (6,1) {};

\draw[-,>=latex] (A3) to (A1) ;
\draw[-,>=latex] (A3) to (A2);
\draw[-,>=latex] (A4) to (A5);
\draw[->,>=latex, dashed] (3,1) to (5,1);
\end{tikzpicture}
\end{center}

If $G_v$ is not virtually cyclic, then $\langle G_e,G_{e'}\rangle$ is not virtually cyclic (see the second property of $\Delta$ recalled above), thus $p(\langle G_e,G_{e'}\rangle)$ is not virtually cyclic since $\langle G_e,G_{e'}\rangle$ is contained in $G_v$, and $p$ is injective on $G_v$. Hence one can assume now that $G_v$ is virtually cyclic. There exists an element $g\in G$ such that $w'=g\cdot w$. Since $f$ is $p$-equivariant, $p(g)\cdot f(w)=f(w)$, i.e.\ $p(g)\in G_{f(w)}=p(G_w)$. As a consequence, there exists an element $h\in G_w$ such that $p(g)=p(h)$. Up to multiplying $g$ by the inverse of ${h}$, one can assume that $p(g)=1$. Then $g$ does not fix a point of $T$, because $p$ is injective on vertex groups and $g\neq 1$. It follows that $g$ is hyperbolic, with translation length equal to 2.

\begin{center}
\begin{tikzpicture}[scale=1]
\node[draw,circle, inner sep=1.7pt, fill, label=below:{$w$}] (A1) at (2,0) {};
\node[draw,circle, inner sep=1.7pt, fill, label=below:{$w'$}] (A2) at (2,2) {};
\node[draw,circle, inner sep=1.7pt, fill, label=below:{$v$}] (A3) at (0,1) {};
\node[draw,circle, inner sep=1.7pt, fill, label=below:{$gv$}] (A4) at (0,3) {};
\node[draw=none, label=below:{$e$}] (B1) at (1,0.5) {};
\node[draw=none, label=below:{$e'$}] (B2) at (1,2.2) {};
\node[draw=none, label=below:{$ge$}] (B2) at (1,3.4) {};
\draw[-,>=latex] (A2) to (A4) ;
\draw[-,>=latex] (A3) to (A1) ;
\draw[-,>=latex] (A3) to (A2);
\end{tikzpicture}
\end{center}

The group $\langle G_{e'},G_{ge}\rangle$ is not virtually cyclic since $G_{w'}$ is not virtually cyclic. It follows that $p(\langle G_{e'},G_{ge}\rangle)$ is not virtually cyclic (indeed, $p$ is injective on $G_{w'}$). On the other hand, $p(\langle G_{e'},G_{ge}\rangle)$ is equal to $p(\langle G_{e'},G_{e}\rangle)$ because $p(g)=1$. Thus $p(\langle G_{e'},G_{e}\rangle)$ is not virtually cyclic. It is a contradiction.
\end{proof}

\subsection{Building a quasi-floor from a non-injective preretraction}

\begin{de}[Maximal pinched set, pinched quotient, pinched decomposition]Let $G$ be a group that splits as a centered splitting $\Delta_G$, with central vertex $v$. The stabilizer $G_v$ of $v$ is a conical finite-by-orbifold group $F\hookrightarrow G_v \twoheadrightarrow\pi_1(O)$. Denote by $q$ the epimorphism from $G_v$ onto $\pi_1(O)$. Let $G'$ be a group, and let $p : G \rightarrow G'$ be a homomorphism. Let $S$ be an essential set of curves on $O$ (see Definition \ref{essential}). Suppose that each  element of $S$ is pinched by $p$ (meaning that $p\left(q^{-1}(\alpha)\right)$ is finite for every $\alpha\in S$), and that $S$ is maximal for this property. The set $S$ is called a maximal pinched set for $p$. Note that $S$ may be empty.

For every $\alpha\in S$, $q^{-1}(\alpha)$ is a virtually cyclic subgroup of $G_v$, isomorphic to $F\rtimes\mathbb{Z}$. Let $N_{\alpha}=\ker\left(p_{\vert q^{-1}(\alpha)}\right)$, and let $N$ be the subgroup of $G$ normally generated by $\lbrace N_{\alpha}\rbrace_{\alpha\in S}$. The quotient group $Q=G/N$ is called the pinched quotient of $G$ associated with $S$. Let $\pi : G\twoheadrightarrow Q$ be the quotient epimorphism. Since each $N_{\alpha}$ has finite index in $q^{-1}(\alpha)$, and since $p$ is injective on finite subgroups, killing $N$ gives rise to new conical points and new QH vertices. The group $Q$ splits naturally as a graph of groups $\Delta_Q$ obtained by replacing the vertex $v$ in $\Delta_G$ by the splitting of $\pi(G_{v})$ over finite groups obtained by killing $N$ (see Figure \ref{pinchedquotient} below). $\Delta_Q$ is called the pinched decomposition of $Q$.
\begin{figure}[!h]
\centering
\includegraphics[scale=0.8]{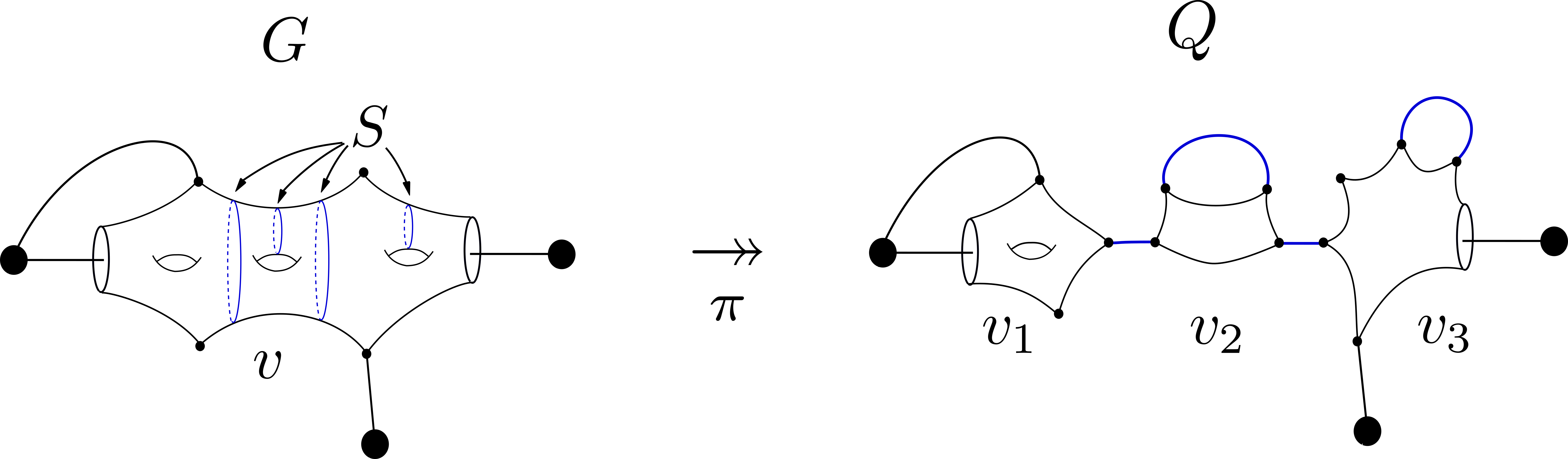}
\caption{For convenience, on this figure, $F$ is trivial. For each $\alpha\in S$, there exists a smallest integer $n\geq 1$ such that $p(\alpha^n)=1$. Killing $\langle\alpha^n\rangle$ gives rise to a conical point of order $n$. The new QH vertices coming from $v$ are denoted by $v_1,v_2,v_3$.}
\label{pinchedquotient}
\end{figure}
\end{de}

We keep the same notations. Suppose that there exists an endomorphism $p$ of $G$ that is $\Delta_G$-related to the identity of $G$. Let $S$ be a maximal pinched set for $p$, let $Q$ be the pinched quotient and let $\Delta_Q$ be its pinched decomposition. Denote by $v_1,\ldots,v_n$ the new vertices coming from $v$ (see Figure \ref{pinchedquotient} above). Let $\pi : G \twoheadrightarrow Q$ be the quotient epimorphism. There exists a unique homomorphism $\phi : Q \rightarrow G$ such that $p=\phi\circ\pi$. This homomorphism is non-pinching since $S$ is assumed to be maximal. We will need a lemma.

\begin{lemme}\label{magique}We keep the same notations. Assume that $p$ does not send $G_v$ isomorphically to a conjugate of itself, and denote by $\mathcal{F}$ the set of edges of $\Delta_Q$ with finite stabilizer. Let $Y$ be a connected component of $\Delta_Q\setminus \mathcal{F}$, and let $Q_Y$ be its stabilizer. Then $\phi(Q_Y)$ is elliptic in the Bass-Serre tree of $\Delta_G$. Moreover, $Y$ contains at most one vertex $w$ different from the vertices $v_k$ coming from the central vertex $v$. If it does, then $\phi(Q_Y)=\phi(Q_w)$.
\end{lemme}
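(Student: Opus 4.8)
The plan is to analyze the structure of $Q_Y$ using the fact that $Y$ is a connected component of $\Delta_Q$ after removing edges with finite stabilizer, so $Q_Y$ is a fundamental group of a graph of groups all of whose edge groups are infinite virtually cyclic. First I would recall that $\phi$ is non-pinching (since $S$ is maximal) and that $\phi\circ\pi = p$ is $\Delta_G$-related to the identity of $G$, hence in particular inner on every non-QH vertex group of $\Delta_G$ and on every finite subgroup. The key structural observation is that the vertices $v_1,\ldots,v_n$ of $\Delta_Q$ coming from $v$ are QH (they arise from the splitting of $\pi(G_v)$ over finite groups obtained by killing $N$, together with the new conical points), while every other vertex $w$ of $\Delta_Q$ is a vertex inherited from $\Delta_G$, i.e.\ a non-central vertex of the centered splitting, which is a non-QH vertex of $\Delta_G$. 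Since $p$ is $\Delta_G$-related to the identity, $\phi$ restricted to (the $\pi$-image of) each such $w$ is inner, hence $\phi(Q_w)$ is a conjugate of $G_w$, which is elliptic in the Bass-Serre tree $T_G$ of $\Delta_G$.

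Next I would prove the ellipticity of $\phi(Q_Y)$. By the remark after Definition \ref{graphecentre} (2-acylindricity of the centered splitting) and by \cite{GL16} Corollary 5.24(5) applied to the finite-by-orbifold group $G_v$, an infinite virtually cyclic subgroup of $G_v$ that is elliptic in every $\mathcal{Z}$-splitting of $G_v$ must lie in an extended boundary subgroup. The strategy is the standard ping-pong/accessibility argument: $Q_Y$ is generated by the vertex groups $Q_{v_k}$ it contains (with appropriate stable letters) glued along infinite edge groups. Each edge group of $\Delta_Q$ within $Y$ is an infinite virtually cyclic group; its $\phi$-image is infinite (since $\phi$ is non-pinching) and, being contained in the image of some $\pi(G_{v_k})$ and of an adjacent $\pi(G_w)$, it is the intersection of two conjugates of elliptic subgroups of $T_G$. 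The crucial point to establish is that two such edge-group images cannot generate a non-elementary subgroup: I would use that $\phi(Q_w)$ is a conjugate of the non-QH group $G_w$ and hence elliptic, so all edges of $Y$ incident to $w$ map into the stabilizer of a single vertex of $T_G$; then one propagates ellipticity along $Y$ using 2-acylindricity to control how the QH pieces $\phi(Q_{v_k})$ attach. If some $\phi(Q_{v_k})$ were not elliptic, it would act on a subtree of $T_G$ with infinite edge stabilizers contradicting that its boundary-type subgroups (coming from the edges of $Y$) are elliptic; by Proposition \ref{cutting} this would force a curve on the orbifold associated to $v_k$ to be pinched by $\phi$, contradicting non-pinching. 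So $\phi(Q_Y)$ fixes a vertex of $T_G$.

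**The component $Y$ contains at most one non-central vertex.** This is where I expect the main obstacle. Suppose $Y$ contained two distinct vertices $w, w'$ both different from all the $v_k$. Since $\Delta_Q$ is bipartite (inherited from the bipartite structure of the centered splitting and the fact that the new vertices $v_k$ are QH and pairwise non-adjacent), any path in $Y$ from $w$ to $w'$ passes through some $v_k$'s, and the edges along it have infinite stabilizer. Now $\phi(Q_{v_k})$, $\phi(Q_w)$, $\phi(Q_{w'})$ are all elliptic in $T_G$ by the previous paragraph, but the edge groups joining them map to infinite subgroups; so $\phi(Q_w)$ and $\phi(Q_{w'})$ and each intervening $\phi(Q_{v_k})$ all fix a common vertex $x$ of $T_G$ (using again 2-acylindricity, since a segment fixed by an infinite-order element has length $\le 2$). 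Then $\phi(Q_w) = \iota_g(G_w)$ and $\phi(Q_{w'}) = \iota_{g'}(G_{w'})$ are both contained in $G_x$; but $G_w$ and $G_{w'}$ are distinct non-central vertex groups of the centered splitting $\Delta_G$, and the key hypothesis — that $p$ does \emph{not} send $G_v$ isomorphically to a conjugate of itself — is what must be leveraged here, together with the bijection $s$ from Definition \ref{quasi-floor} respectively the bookkeeping of how many QH pieces are produced. The argument I would run is a complexity count: if $p$ does not restrict to an isomorphism on $G_v$, then by Proposition \ref{complexity} applied to the finite-by-orbifold group $G_v$ and the non-pinching morphism induced by $\phi$ on its image, there is a strict drop $k(G_v) > k(\phi(G_v))$; feeding this back shows $Y$ cannot "absorb" two non-central vertices without the images overlapping in $G_x$, which is impossible since distinct non-central vertex groups are not conjugate inside $G_x$. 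Finally, if $Y$ does contain exactly one such vertex $w$, then $Q_Y$ is generated by $Q_w$ together with QH vertex groups $Q_{v_k}$ and edge groups, all of whose $\phi$-images lie in $G_x = \phi(Q_w)$-conjugate (each $Q_{v_k}$-image lands, via its boundary subgroups which generate a finite-index-type subgroup after cutting, inside the same vertex stabilizer), giving $\phi(Q_Y) = \phi(Q_w)$. I would write this last identification out carefully since it underpins the construction of the quasi-floor retraction in the proof of Proposition \ref{floor7}.
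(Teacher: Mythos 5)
There is a genuine gap in your ellipticity argument, and it is precisely where the paper's proof invests most of its effort. After applying Proposition \ref{cutting}, you obtain a decomposition of $Q_{v_k}$ along essential curves whose pieces are individually elliptic in $T_G$; these curves have \emph{infinite} virtually cyclic image under $\phi$ (they are not pinched, since $S$ is maximal). The real difficulty is then to show that all these elliptic pieces fix a \emph{common} vertex of $T_G$, and your proposal does not address it. Your claim that non-ellipticity of $\phi(Q_{v_k})$ ``would force a curve \ldots to be pinched by $\phi$'' does not follow from Proposition \ref{cutting}, which produces non-pinched cutting curves with no contradiction to non-pinching. In the paper, the missing ingredient is the complexity Proposition \ref{complexity}: after observing (via $2$-acylindricity) that two adjacent pieces fix vertices $x,y$ with $d(x,y)\in\lbrace 0,1,2\rbrace$, one must rule out $d=1$ and $d=2$, and this is done by showing that in those cases some piece maps into a conjugate of $G_v$, whence Proposition \ref{complexity} forces its image to lie in an extended boundary subgroup (since its complexity is strictly smaller than $k(G_v)$, as cutting or pinching strictly decreases $k$), which then contradicts the minimality of $d(x,y)$. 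None of this reasoning appears in your sketch, and without it the ``propagation of ellipticity'' does not close.

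The same gap recurs in the second half. The paper first establishes the intermediate claim $\phi(Q_{v_k})<\phi(Q_w)$ for any non-$v_i$ vertex $w$ adjacent to $v_k$, again by the distance plus complexity dichotomy, and this is exactly where the hypothesis that $p$ does not send $G_v$ isomorphically to a conjugate of itself enters: it excludes the equality case $k(Q_{v_k})=k(G_v)$ in Proposition \ref{complexity} that would otherwise allow $d(x,y)=1$. Uniqueness of $w$ then follows because $\phi(Q_{v_k})$ is infinite and sits in both $\phi(Q_{w_1})$ and $\phi(Q_{w_2})$, so $\phi(Q_{w_1})\cap\phi(Q_{w_2})$ is infinite, and the $2$-acylindricity of the centered splitting (an infinite-order element fixes a segment of length $\le 2$, with endpoints translates of $v$ if the length is $2$, while $w_1,w_2$ are not translates of $v$) forces $w_1=w_2$. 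Your substitute argument -- that ``distinct non-central vertex groups are not conjugate inside $G_x$'' -- is not what is needed and is not obviously true; the correct statement is about almost malnormality/acylindricity in $T_G$. You have the right skeleton (cut, propagate ellipticity, control overlaps), but the actual leverage is Proposition \ref{complexity} combined with the short-segment dichotomy, and that is missing from your proposal.
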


\begin{proof}
First, we shall prove that $\phi(Q_{v_k})$ is elliptic in the Bass-Serre tree $T$ of $\Delta_G$, for every $k\in\llbracket 1,n\rrbracket$. Denote by $O_k$ the underlying orbifold of $Q_{v_k}$. We shall use Proposition \ref{cutting} to split $Q_{v_k}$ as a graph of groups all of whose vertex groups are elliptic in $T$ via $\phi$. If $C$ is an extended boundary subgroup of $Q_{v_k}$, $C$ is of the form $\pi (C')$ where $C'$ stands for an extended boundary subgroup of $G_v$, so $\phi(C)=p(C')$ is elliptic in $T$ by definition of $\Delta_G$-relatedness. Consequently, by Proposition \ref{cutting}, there exists a set $\mathcal{C}_k$ of disjoint simple loops on $O_k$ such that, if $X$ is a connected component of $O_k\setminus\mathcal{C}_k$, and if $H$ is the preimage of $\pi_1(X)$ in $Q_{v_k}$, then $\phi(H)$ is elliptic in the Bass-Serre tree $T$.

Let $\Delta_Q(\mathcal{C}_k)$ be the splitting of $Q$ obtained by replacing $v_k$ in $\Delta_Q$ by the splitting of $Q_{v_k}$ dual to $\mathcal{C}_k$. First, note that if $\mathcal{C}_k$ is empty, then $\phi(Q_{v_k})$ is obviously elliptic in $T$. Assume now that $\mathcal{C}_k$ is non-empty and denote by $v_{k,1},\ldots,v_{k,m}$ the new vertices coming from $v_k$. Let $T'$ be the Bass-Serre tree of $\Delta_Q(\mathcal{C}_k)$. Let $w_{k,i},w_{k,j}\in T'$ be two representatives of $v_{k,i},v_{k,j}\in\Delta_Q(\mathcal{C}_k)$ that are adjacent in $T'$ and linked by an edge with infinite stabilizer. By the previous paragraph, there exists a non-empty subset $I\subset T$ pointwise-fixed by $\phi\left(Q_{w_{k,i}}\right)$, and a non-empty subset $J\subset T$ pointwise-fixed by $\phi\left(Q_{w_{k,j}}\right)$. Let $x\in I$ and $y\in J$ such that $d(x,y)=d(I,J)$, where $d$ is the natural metric on $T$. By definition of a centered splitting, if an element of $G$ of infinite order fixes a segment of length $\geq 2$ in $T$, then this segment has length exactly 2 and its endpoints are translates of the central vertex $v$. Therefore, since $f$ is non-pinching on $Q_{v_k}$, $d(x,y)\in\lbrace 0,1,2\rbrace$, and $d(x,y)=2$ if and only if $x$ and $y$ are translates of $v$. We will prove that $d(x,y)=0$.

First, suppose that $d(x,y)=2$. Then we can assume without loss of generality that $x=v$, i.e. $\phi\left(Q_{w_{k,i}}\right)< G_v$. Since $f$ is non-pinching on $Q_{w_{k,i}}$, the group $\phi\left(Q_{w_{k,i}}\right)$ is infinite, so it is not contained in an extended conical subgroup of $G_v$. If $\phi\left(Q_{w_{k,i}}\right)$ is not contained in an extended boundary subgroup of $G_v$, then it follows from Proposition \ref{complexity} that $k(Q_{w_{k,i}})\geq k(G_v)$, with equality if and only if $f$ induces an isomorphism from $Q_{w_{k,i}}$ to $G_v$. This is a contradiction since the complexity decreases as soon as we cut along a loop or pinch a loop (so $k(Q_{w_{k,i}})\leq k(G_v)$), and $Q_{w_{k,i}}$ is not isomorphic to $G_v$. Therefore, $\phi\left(Q_{w_{k,i}}\right)$ is necessarily contained in an extended boundary subgroup of $G_v$. Then $\phi\left(Q_{w_{k,i}}\right)$ fixes a point $z$ in $T$ such that $d(x,z)=1$. As a consequence, $d(z,y)=1$ or $d(z,y)=3$. This last case is impossible since an element of $G$ of infinite order fixes a segment of length $\leq 2$ in $T$. So $d(z,y)=1$, and this contradicts the definition of $x$.

Now, suppose that $d(x,y)=1$. Since $\Delta_G$ is bipartite, one can assume, up to composing $\phi$ by an inner automorphism and permuting $x$ and $y$, that $x=v$. If $\phi\left(Q_{w_{k,i}}\right)$ is not contained in an extended boundary subgroup of $G_v$, we get a contradiction thanks to Proposition \ref{complexity}, as above. Thus $\phi\left(Q_{w_{k,i}}\right)$ is contained in an extended boundary subgroup of $G_v$. So $\phi\left(Q_{w_{k,i}}\right)$ has a fixed point $z$ in $T$ such that $d(x,z)=1$, so $d(z,y)=0$ or $d(z,y)=2$. This last case is impossible since $z$ and $y$ are not translates of $v$. As a consequence, $d(z,y)=0$, and this contradicts the definition of $x$. 

Hence, we have proved that $d(x,y)=0$. As a conclusion, $\phi(Q_{v_k})$ is elliptic in the Bass-Serre tree $T$ of $\Delta_G$, for every $k\in\llbracket 1,n\rrbracket$.

Now, let $w$ be a vertex of $\Delta_Q$, different from the vertices $v_i$, such that $w$ and $v_k$ are linked by an edge with infinite stabilizer. Let $T''$ be the Bass-Serre tree of $\Delta_Q$. For convenience, we still denote by $w$ and $v_k$ two adjacent representatives of $w$ and $v_k$ in $T''$ linked by an edge with infinite stabilizer. We shall prove that $\phi(Q_{v_k})$ is contained in $\phi(Q_w)$. We have proved the existence of a subset $I\subset T$ pointwise-fixed by $\phi(Q_{v_k})$. Since $p_{\vert G_w}$ is inner, $\phi(Q_w)$ fixes a vertex $y=g\cdot w$ of $T$, and $\phi(Q_w)=G_{y}=gG_w{g}^{-1}$. Let $x$ be a point of $T$ such that $d(x,y)=d(I,y)$. Since $y$ is not a translate of $v$, it follows from the definition of a centered splitting that $d(x,y)\leq 1$. Suppose for the sake of contradiction that $d(x,y)=1$. Then we can assume without loss of generality that $x=v$. Hence, $\phi$ induces a non-pinching morphism of finite-by-orbifold groups from $Q_{v_k}$ to $G_v$. Since $\phi$ is non-pinching, the group $\phi\left(Q_{w_{k,i}}\right)$ is infinite, so it is not contained in an extended conical subgroup of $G_v$. If $\phi\left(Q_{w_{k,i}}\right)$ is not contained in an extended boundary subgroup of $G_v$, it follows from Proposition \ref{complexity} that $k(Q_{w_{k,i}})\geq k(G_v)$, with equality if and only if $\phi$ is an isomorphism. On the other hand, the complexity decreases as soon as we cut along a loop or pinch a loop, so $k(G_v)=k(Q_{w_{k,i}})$ and $p$ sends $G_v$ isomorphically to a conjugate of itself. This contradicts the hypothesis. We have proved that $\phi\left(Q_{w_{k,i}}\right)$ is contained in an extended boundary subgroup of $G_v$, so it fixes a point $z$ in $T$ such that $d(x,z)=1$. As a consequence, $z=y$ or $d(z,y)=2$. This last case is impossible since $y$ and $z$ are not translates of $v$, so $z=y$ and this contradicts the definition of $x$. Hence, we have proved that $\phi\left(Q_{w_{k,i}}\right)$ fixes $y$, i.e.\ $\phi(Q_{v_k})<\phi(Q_w)$. 

Now, let $w_1$ and $w_2$ be two vertices of $\Delta_Q$, different from the vertices $v_i$, such that $w_1$ and $v_k$ are linked by an edge with infinite stabilizer, and $w_2$ and $v_k$ are linked by an edge with infinite stabilizer. We have shown that $\phi(Q_{v_k})<\phi(Q_{w_1})$ and $\phi(Q_{v_k})<\phi(Q_{w_2})$. Remark, in addition, that $Q_{v_k}$ has an extended boundary subgroup $C$ such that $\phi(C)$ is infinite. Hence, $\phi(Q_{w_1})\cap \phi(Q_{w_2})$ is infinite. By definition of a centered splitting, if an element of $G$ of infinite order fixes a segment of length $\geq 2$ in $T$, then this segment has length exactly 2 and its endpoints are translates of the central vertex $v$. Therefore, $w_1=w_2$. This completes the proof. 
\end{proof}
\color{black}
\begin{prop}\label{???}Let $G$ be a group possessing a centered splitting $\Delta_G$, with central vertex $v$. Suppose that $G$ is not finite-by-orbifold, and that there exists an endomorphism $p$ of $G$ that is $\Delta_G$-related to the identity of $G$ and that does not send $G_v$ isomorphically to a conjugate of itself. Suppose that there exists a one-ended subgroup $A$ of $G$ such that $G_v<A$ and $p_{\vert A}$ is non-injective. Then $G$ is a strict quasi-floor.
\end{prop}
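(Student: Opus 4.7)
The plan is to follow the pinched-quotient construction developed in this section. Let $S$ be a maximal pinched set of essential simple closed curves on the orbifold $O$ underlying $G_v$ for the homomorphism $p$, form the pinched quotient $Q = G/N$ with quotient map $\pi : G \twoheadrightarrow Q$, and let $\phi : Q \to G$ be the induced non-pinching homomorphism satisfying $p = \phi \circ \pi$. Write $\Delta_Q$ for the pinched decomposition of $Q$ and $\mathcal{F}$ for its set of edges with finite stabilizer. The structural backbone will be Lemma \ref{magique}: for every connected component $Y$ of $\Delta_Q \setminus \mathcal{F}$, the image $\phi(Q_Y)$ is elliptic in the Bass-Serre tree of $\Delta_G$, $Y$ contains at most one vertex $w$ different from the $v_k$'s coming from $v$, and if such a $w$ exists then $\phi(Q_Y) = \phi(Q_w) = g_w G_w g_w^{-1}$ for some $g_w \in G$.

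The first step is to check that $S$ is nonempty. If $S$ were empty then $\phi = p$ would be a non-pinching endomorphism of $G$ that is $\Delta_G$-related to the identity, so $p|_{G_w}$ would be an inner automorphism for every non-central $w$. Combining Lemma \ref{magique} with the complexity inequality of Proposition \ref{complexity} and the hypothesis that $p|_A$ is non-injective while $A$ is one-ended and contains $G_v$, one would be forced to conclude that $p$ sends $G_v$ isomorphically to a conjugate of itself, contradicting the hypothesis. Hence $S \neq \varnothing$, and for any $\alpha \in S$ the element $\alpha^n$ (with $n$ minimal such that $p(\alpha^n) = 1$) is a nontrivial element of $G_v \cap N \subseteq A \cap N$, which will supply strictness once $r$ is defined.

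Next I would build $H$ and the quasi-floor data. For each Type~1 component $Y$ (containing a non-central $w$), Lemma \ref{magique} provides a retraction $Q_Y \twoheadrightarrow Q_w \cong G_w$, namely $\iota_{g_w^{-1}} \circ \phi|_{Q_Y}$, with normal kernel $K_Y \trianglelefteq Q_Y$. For a Type~2 component (an isolated $v_k$ with no adjacent non-central vertex), I would show, using non-pinching together with Proposition \ref{complexity} applied to $\phi|_{Q_{v_k}}$ and the fact that $\phi(Q_{v_k})$ is elliptic in the Bass-Serre tree of $\Delta_G$ but cannot be sent isomorphically to a conjugate of $G_v$ (otherwise contradicting the hypothesis), that $\phi(Q_{v_k})$ is necessarily finite. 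Let $H$ be the quotient of $Q$ by the normal closure of all the $K_Y$'s. Then $H$ inherits a graph of groups structure from $\Delta_Q$ with finite edge groups from $\mathcal{F}$, whose vertex set partitions as $V_H^1$ (Type~1 vertices, labelled by the groups $G_w$ for $w \in V_G \setminus \{v\}$, with a natural bijection $s$) and $V_H^2$ (Type~2 vertices, finite by the argument above). Define $r : G \to H$ as the composition of $\pi$ with the quotient $Q \twoheadrightarrow H$, and let $j : H \to G$ be the homomorphism induced by $\phi$, which factors through $H$ because each $K_Y$ lies in $\ker(\phi)$.

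The quasi-floor axioms follow directly from this setup: $j \circ r = \phi \circ \pi = p$ is $\Delta_G$-related to the identity by hypothesis; $r(G_w) = H_{s(w)}$ for every non-central $w$ holds by construction; and $j$ is injective on each finite vertex group of $V_H^2$ because $\phi$ is non-pinching, hence injective on finite subgroups of $Q$. Strictness follows from the paragraph above, since $\alpha^n \in A \cap N \subseteq A \cap \ker(r)$. The main obstacle I expect is the Type~2 analysis: Lemma \ref{magique} alone does not guarantee that an isolated $v_k$ has finite $\phi$-image, and one must carefully combine the complexity inequality of Proposition \ref{complexity}, the geometry of the Bass-Serre tree of $\Delta_G$, and the non-pinching property of $\phi$ to rule out all remaining possibilities and obtain the desired finiteness.
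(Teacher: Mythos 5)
There are two genuine gaps, and the second one is fatal to the strategy of making $H$ a quotient of $Q$.

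\textbf{The non-pinching case is not a contradiction.} You assert that $S$ must be non-empty, claiming that $S = \varnothing$ would force $p$ to send $G_v$ isomorphically to a conjugate of itself. This is false, and your appeal to Proposition \ref{complexity} is illegitimate here: in the non-pinching case Lemma \ref{magique} places $p(G_v)$ inside a conjugate of $G_w$ for the unique non-central vertex $w$, but $G_w$ is a rigid vertex group, not a finite-by-orbifold group, so Proposition \ref{complexity} simply does not apply and no complexity inequality is available. In fact $S = \varnothing$ is perfectly possible; the paper treats it as a separate and \emph{simpler} case (Part A of the proof), where one conjugates $p$ to a retraction $G \twoheadrightarrow G_w$ and reads off the quasi-floor structure directly — this is the ordinary hyperbolic-floor situation. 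Strictness there comes straight from the hypothesis $p|_A$ non-injective, not from a pinched element.

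\textbf{Isolated closed-orbifold pieces need not have finite image, and $H$ is not a quotient of $Q$.} Your Type 2 analysis breaks down whenever a singleton component $\{v_k\}$ (closed underlying orbifold, so no infinite-stabilizer edges) has $\phi(Q_{v_k})$ contained in a conjugate of a \emph{rigid} vertex group $G_{w_k}$ with $w_k \neq v$. Two things go wrong at once. First, $\phi(Q_{v_k})$ is then necessarily \emph{infinite}: $\phi$ is non-pinching by maximality of $S$, so no essential simple closed curve on the closed orbifold of $Q_{v_k}$ is killed, forcing the image to contain infinite-order elements. Second, Proposition \ref{complexity} is again inapplicable, since the codomain $G_{w_k}$ is not a finite-by-orbifold group (it works only in the sub-case $w_k = v$, which is the paper's Step 3, and indeed only there does the image land in an extended conical subgroup and become finite). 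Consequently your $H$, defined as a quotient of $Q$, retains a vertex $Q_{v_k}$ (or its $\phi$-image) that is infinite and corresponds to no vertex of $\Delta_G \setminus \{v\}$: it cannot be placed in $V_H^1$ (the bijection $s$ has nowhere to send it) nor in $V_H^2$ (which must consist of finite groups). This is exactly the point where the paper's construction departs from a quotient: in Step 4 one adds a stable letter $t$ conjugating $Q''_{v_k}$ into $Q''_{w_k}$ (an HNN extension of $Q''$) and then collapses the new edge, so that $H$ contains $Q''$ as a proper subgroup, $r : G \to H$ is \emph{not} surjective, and $j : H \to G$ is not injective. This is the essential new feature of quasi-floors versus Sela's hyperbolic floors, and it cannot be recovered by quotienting.
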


\begin{proof}
~\

\textbf{A. The non-pinching case.} 

\vspace{1mm}

Suppose that $p$ is non-pinching on $G_v$. By hypothesis, $p$ does not send $G_v$ isomorphically to a conjugate of itself, so it follows from Lemma \ref{magique} that $\Delta_G$ has only one vertex $w$ different from $v$, and that $p(G_{v})<p(G_w)$. Since $p$ is inner on $G_w$, there exists an element $g\in G$ such that $\iota_g\circ p $ is a retraction from $G$ onto $G_w$. Hence, $G$ is a quasi-floor over $G_w$, and this quasi-floor is strict since, by hypothesis, there exists a one-ended subgroup $A$ of $G$ such that $G_v<A$ and $p_{\vert A}$ is non-injective.

\vspace{3mm}

\textbf{B. The pinching case.} 

\vspace{1mm}

\textbf{Step 1: pinching a maximal set of simple loops.}

\vspace{1mm}

Let $S$ be a maximal pinched set for $p$, let $Q$ be the pinched quotient and let $\Delta_{Q}$ be its pinched decomposition. Denote by $v_1,\ldots,v_n$ the new vertices coming from $v$ (see Figure \ref{pinchedquotient2} below). Let $\pi : G \twoheadrightarrow Q$ be the quotient epimorphism. There exists a unique homomorphism $\phi : Q \rightarrow G$ such that $p=\phi\circ\pi$. This homomorphism is non-pinching since $S$ is assumed to be maximal.

\begin{figure}[!h]
\centering
\includegraphics[scale=0.7]{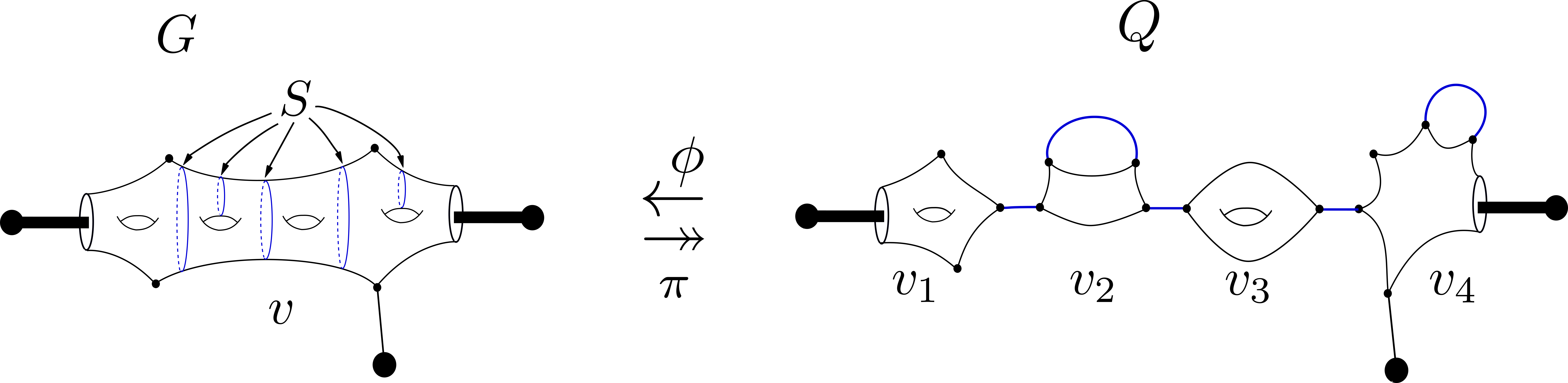}
\caption{Step 1. Edges with infinite stabilizer are depicted in bold.}
\label{pinchedquotient2}
\end{figure}

By Lemma \ref{magique}, for every $k\in\llbracket 1,n\rrbracket$, there exists a vertex $w$ of $\Delta_G$ such that $\phi({Q}_{v_k})$ is contained in a conjugate of ${G_{w}}$. If $w$ is unique, let $w_k:=w$. If $w$ is not unique, then we can assume that $w\neq v$ (since $\Delta_G$ is bipartite), and we let $w_k:=w$.

Our construction consists in eliminating the new vertices $v_1,\ldots ,v_n$ coming from the central vertex $v$. We will illustrate each step of the construction in the case of the example pictured above (Figure \ref{pinchedquotient2}).

\vspace{2mm}

\newpage

\textbf{Step 2: eliminating orbifolds with non-empty boundary.}

\vspace{1mm}

Let $\mathcal{F}$ be the set of edges of $\Delta_Q$ with finite stabilizer. For every $k\in\llbracket 1,n\rrbracket$, we denote by $Y_k$ the connected component of $\Delta_Q\setminus\mathcal{F}$ containing $v_k$.

By Lemma \ref{magique}, $\phi(Q_{Y_k})=\phi\left(Q_{w_k}\right)$ for each $k\in\llbracket 1,n\rrbracket$ such that the underlying orbifold of $Q_{v_k}$ has non-empty boundary. Therefore, the quotient of $Q$ by the subgroup normally generated by \[\Big\lbrace \ker\left(\phi_{\vert Q_{Y_k}}\right) \ \vert \ \text{the underlying orbifold of $Q_{v_k}$ has non-empty boundary}\Big\rbrace\] splits naturally as a graph of groups $\Lambda$ obtained by replacing in $\Delta_Q$ each subgraph $Y_k$ as above by a new vertex labelled by $\phi(Q_{Y_k})= \phi(Q_{w_k})=\phi\circ \pi \left(G_{w_k}\right)=p\left(G_{w_k}\right)=G_{w_k}^g$ for some $g\in G$. For the sake of clarity, this new vertex is still denoted by $w_k$ (see Figure \ref{step2} below). Call $Q'$ the fundamental group of $\Lambda$, and let $\Delta_{Q'}:=\Lambda$. Note that this graph of groups has finite edge groups. Let $\pi' : Q\twoheadrightarrow Q'$ be the quotient epimorphism. There exists a unique homomorphism $\phi' : Q' \rightarrow G$ such that $p=\phi'\circ\pi'\circ\pi$.

\vspace{2mm}

\begin{figure}[!h]
\centering
\includegraphics[scale=0.7]{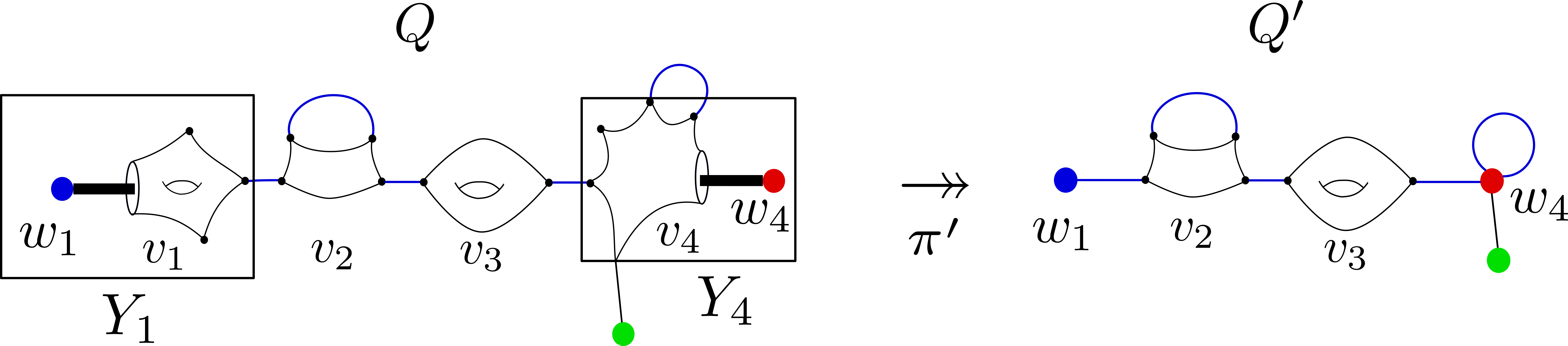}
\caption{Step 2. Edges with infinite stabilizer are depicted in bold. Note that, by construction, $\Delta_{Q'}$ has finite edge groups.}
\label{step2}
\end{figure}

\vspace{2mm}

\textbf{Step 3: eliminating vertices $v_k$ such that $w_k=v$.}

\vspace{1mm}

Let $v_k$ be a vertex such that $w_k=v$. Note that, by definition of $w_k$, $\phi'(Q'_{v_k})$ is not contained in an extended boundary subgroup of $G_v$. Since $\phi'$ is non-pinching on $Q'_{v_k}$ (by maximality of $S$), and since the complexity of $Q'_{v_k}$ is strictly less than the complexity of $G_v$, it follows from Proposition \ref{complexity} that $\phi'(Q'_{v_k})$ is contained in an extended conical subgroup of $G_v^g$. As in the previous step, we replace the vertex $v_k$ by a new vertex labelled by $\phi'(Q'_{v_k})$. This new vertex is called $x_k$ (see Figure \ref{step3} below). We perform the previous operation for each QH vertex $v_k$ such that $w_k=v$. Let $\Lambda$ be the resulting graph of groups. Call $Q''$ its fundamental group, and let $\Delta_{Q''}:=\Lambda$. Let $\pi'' : Q'\twoheadrightarrow Q''$ be the quotient epimorphism. There exists a unique homomorphism $\phi'' : Q'' \rightarrow G$ such that $p=\phi''\circ(\pi''\circ \pi'\circ\pi)$.

\begin{figure}[!h]
\centering
\includegraphics[scale=0.7]{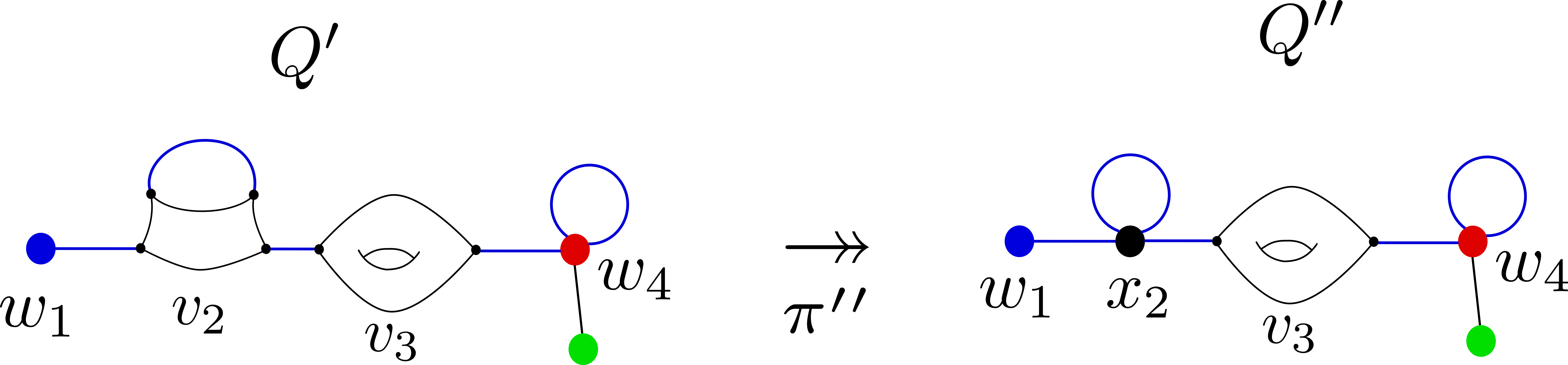}
\caption{Step 3. In this example, $w_2=v$. The vertex $v_2$ is replaced by a vertex $x_2$ labelled by the finite group $\phi'\left(Q'_{v_2}\right)$.}
\label{step3}
\end{figure}

\vspace{2mm}

\textbf{Step 4: eliminating the remaining QH vertices.}

\vspace{1mm}

Denote by $V$ the set of vertices of $\Lambda=\Delta_{Q''}$ coming from $v$ and that have not been treated yet. For each $v_k\in V$, recall that $w_k$ stands for a vertex of $\Delta_G$ such that $\phi({Q}_{v_k})$ is contained in a conjugate of ${G_{w_k}}$. Note that $w_k$ is not a translate of the central vertex $v$ of $\Delta_G$ (see Step 3).

To complete the proof, it is convenient to adopt a topological point of view. Let $X_G$ be a $K(G,1)$ obtained as a graph of spaces using, for each vertex or edge $w$ of $\Delta_G$, a $K(G_w,1)$ denoted by $X_G^w$ (see \cite{SW79}). Let $X_{Q''}$ be a $K(Q'',1)$ obtained in the same way. There exists a continuous map $f : X_{Q''} \rightarrow X_G$ inducing $\phi'': Q''\rightarrow G$ at the level of fundamental groups and such that $f\left(X_{Q''}^{v_k}\right)\subset f\left(X_{Q''}^{w_k}\right)$ for each remaining vertex $v_k$ coming from $v$, and $f$ induces an homeomophism between $X_{Q''}^w$ and $X_G^w$ for each vertex $w$ that does not come from $v$. We define an equivalence relation $\sim$ on $X_{Q''}$ by $x\sim y$ if $x=y$, or if $x\in X_{Q''}^{v_k}$, $y\in X_{Q''}^{w_k}$ and $f(x)=f(y)$. Let $g : X_{Q''}\twoheadrightarrow (X_{Q''}/\sim)$ be the quotient map. There exists a unique continuous function $h : (X_{Q''}/\sim) \rightarrow X_G$ such that $f=h\circ g$. Hence $\phi''=h_{*}\circ g_{*}$. Note that the homomorphism $g_{*}$ is not surjective in general. Call $H$ the fundamental group of $X_{Q''}/\sim$, let $j=h_{*}$ and $r=g_{*}\circ \pi''\circ \pi'\circ\pi$, so that $p=j\circ r$. Note that $X_{Q''}/\sim$ naturally has the structure of a graph of spaces, and denote by $\Delta_H$ the corresponding splitting of $H$. We claim that $G$ is a strict quasi-floor over $H$ (see below).

Let us explain the topological construction above from an algebraic point of view in the case of our example. The only remaining vertex coming from the central vertex $v$ is $v_3$ (see Figure \ref{step3}). Up to replacing $Q''$ by $Q''/\langle\langle\ker(\varphi)\rangle\rangle$, where $\varphi$ stands for the restriction of $\phi''$ to the stabilizer $Q''_{v_3}$ of $v_3$ in $Q''$, we can assume that $\phi''$ is injective on $Q''_{v_3}$. We know that $\phi''(Q''_{v_3})$ is contained in $G_{w_3}^g$ for some $g\in G$. Moreover, $\phi''$ sends $Q''_{w_3}$ isomorphically onto $G_{w_3}^h$ for some $h\in G$. As a consequence, $i:=\left(\phi''\right)^{-1}\circ\iota_{hg^{-1}}\circ \phi'' : {Q''}_{v_3}\rightarrow{Q''}_{w_3}$ is a monomorphism. We add an edge $e$ to the graph of groups $\Delta_{Q''}$ between $v_3$ and $w_3$ identifying ${Q''}_{v_3}$ with its image $i(Q''_{v_3})$ in $Q''_{w_3}$ (see Figure \ref{step4} below). Call $H$ the fundamental group of this graph of groups. In other words, we obtain $H$ by adding a new generator $t$ to $Q''$, as well as the relation $\iota_t(x)=i(x)$ for every $x\in Q''_{v_3}$. Last, we collapse the edge $e$, and we call $\Delta_H$ the resulting splitting of $H$ (see Figure \ref{fin} below). We define $r:G\rightarrow H$ as the composition of $\pi''\circ\pi'\circ\pi : G\rightarrow Q''$ with the natural homomorphism from $Q''$ to $H$. Note that $r$ is not surjective in general. Then, we define a morphism $j$ from $H$ to $G$ that extends $\phi'' : Q''\rightarrow G$ by sending $t$ to $hg^{-1}$. Since $p=\phi''\circ (\pi''\circ\pi'\circ\pi)$, we have $p=j\circ r$.

\begin{figure}[!h]
\centering
\includegraphics[scale=0.7]{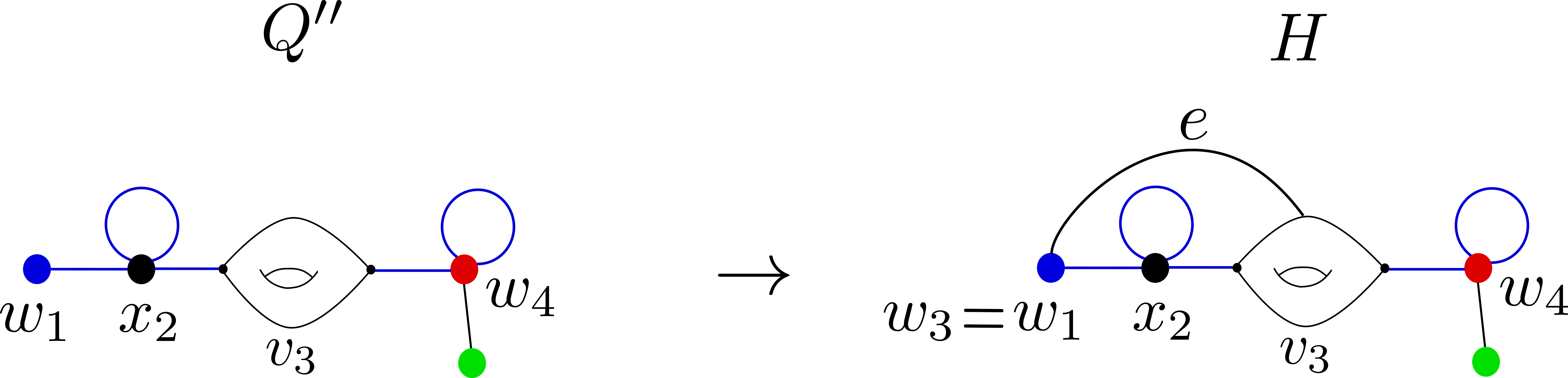}
\caption{In this example, $w_3=w_1$. We define $H$ as the fundamental group of the graph of groups obtained by adding an edge $e$ to the graph $\Delta_{Q''}$, identifying $Q''_{v_3}$ with $i(Q''_{v_3})<Q''_{w_3}$. The natural homomorphism from $Q''$ to $H$ is not surjective in general.}
\label{step4}
\end{figure}

\begin{figure}[!h]
\centering
\includegraphics[scale=0.7]{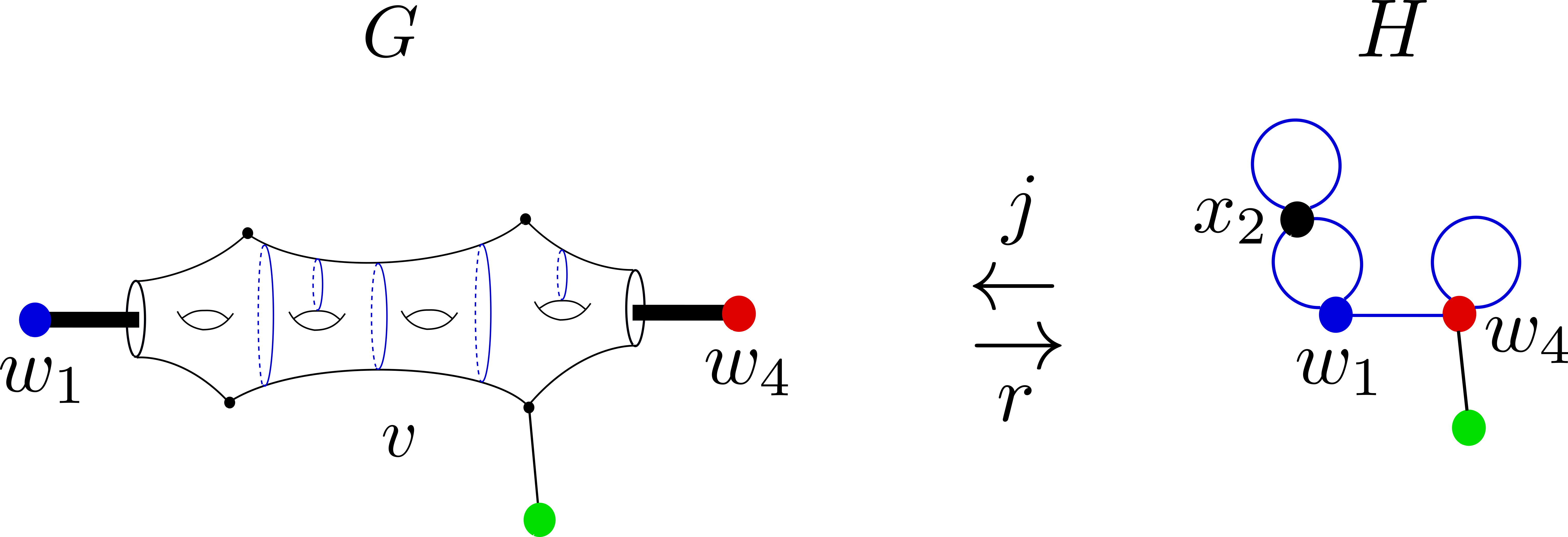}
\caption{After collapsing the edge $e$, we get the desired splitting $\Delta_H$ of $H$.}
\label{fin}
\end{figure}

\newpage

It remains to verify that $(G,H,\Delta_G,\Delta_H,r,j)$ is a strict quasi-floor.
\begin{itemize}
\item[$\bullet$]$j\circ r=p$ is $\Delta$-related to the identity of $G$ by definition of $p$.
\item[$\bullet$]Let $V_G$ be the set of vertices of $\Delta_G$, and let $V_H$ be the set of vertices of $\Delta_H$. Recall that $v$ stands for the central vertex of $\Delta_G$. By construction of $H$ and $\Delta_H$, the homomorphism $r:G\rightarrow H$ induces a bijection $s$ between $V_G\setminus \lbrace v\rbrace$ and a subset $V_1\subset V_H$ such that $r(G_w)={H_{s(w)}}$, for every $w\in V_G\setminus\lbrace v\rbrace$. 
\item[$\bullet$]Let $V_2=V_H\setminus V_1$. By construction (see Step 3 above), for every $w\in V_2$, the vertex group $H_w$ is finite and $j$ is injective on $H_w$.
\item[$\bullet$]By hypothesis, there exists a one-ended subgroup $A$ of $G$ such that $G_v<A$ and $p_{\vert A}$ is non-injective, with $p=j\circ r$. So $A\cap\ker(r)\neq \lbrace 1\rbrace$. Hence, the quasi-floor is strict.
\end{itemize}
\end{proof}

Before proving Proposition \ref{étage}, we need an easy lemma.

\begin{lemme}\label{petitlemme}
Let $G$ be a group with a splitting over finite groups. Denote by $T$ the associated Bass-Serre tree. Let $H$ be a group with a splitting over infinite groups, and let $S$ be the associated Bass-Serre tree. If $p:H\rightarrow G$ is a homomorphism injective on edge groups of $S$, and such that $p(H_v)$ is elliptic in $T$ for every vertex $v$ of $S$, then $p(H)$ is elliptic in $T$.
\end{lemme}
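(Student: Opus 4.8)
The plan is to build an $H$-equivariant map from $S$ to $T$ (where $H$ acts on $T$ through $p$) and to prove that this map is constant. First I would dispose of the degenerate case: if $S$ has no edges, then the splitting of $H$ is trivial, $H$ coincides with a single vertex group $H_{v_0}$, and $p(H)=p(H_{v_0})$ is elliptic by hypothesis. So from now on assume $S$ has at least one edge; being a Bass–Serre tree it is connected, so it has at least two vertices and every vertex has an incident edge.

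For each vertex $v$ of $S$, since $p(H_v)$ is elliptic in $T$ it fixes a non-empty subtree $T_v:=\mathrm{Fix}_T(p(H_v))$. The key observation is that $T_v$ is a single point as soon as $v$ has an incident edge $e$. Indeed $H_e\leq H_v$, hence $p(H_e)\leq p(H_v)$ and therefore $T_v\subseteq\mathrm{Fix}_T(p(H_e))$; but $p(H_e)$ is infinite, because the splitting of $H$ is over infinite groups and $p$ is injective on edge groups. An infinite subgroup of $G$ cannot fix two distinct vertices of $T$: otherwise it would fix the first edge of the geodesic joining them, contradicting the fact that $T$ is the Bass–Serre tree of a splitting over finite groups, so edge stabilizers of $T$ are finite. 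Hence $\mathrm{Fix}_T(p(H_e))$ has at most one point, and since it contains the non-empty set $T_v$ we get $T_v=\mathrm{Fix}_T(p(H_e))$, a single point. In particular this point does not depend on the choice of incident edge $e$; denote it by $f(v)$.

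Next I would check that $v\mapsto f(v)$ is $H$-equivariant and collapses every edge of $S$. Equivariance follows from $H_{hv}=hH_vh^{-1}$, which gives $p(H_{hv})=p(h)\,p(H_v)\,p(h)^{-1}$ and hence $T_{hv}=p(h)\cdot T_v$, so $f(hv)=p(h)\cdot f(v)$. If $v$ and $w$ are the endpoints of an edge $e$, then $T_v=\mathrm{Fix}_T(p(H_e))=T_w$, so $f(v)=f(w)$; thus $f$ is constant on each edge, hence constant on the connected tree $S$. Writing $f\equiv x$, equivariance gives $p(h)\cdot x=f(h\cdot v)=f(v)=x$ for every $h\in H$ and every vertex $v$ of $S$, so $p(H)$ fixes $x$, i.e.\ $p(H)$ is elliptic in $T$.

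There is no serious obstacle in this argument; the only point that requires care is the claim that an infinite subgroup of $G$ fixes at most one vertex of $T$, which is exactly where the hypothesis that $T$ comes from a splitting over \emph{finite} groups is used, together with not forgetting the degenerate case in which $S$ has no edges.
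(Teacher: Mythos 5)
Your proof is correct and follows essentially the same route as the paper's: both arguments hinge on the observation that for adjacent vertices $v,w$ of $S$ with edge $e$, the image $p(H_e)$ is an infinite subgroup of $G$ (so its fixed-point set in $T$ is a single vertex, because edge stabilizers of $T$ are finite), forcing $p(H_v)$ and $p(H_w)$ to fix that same vertex, and then propagating by connectedness of $S$. Your write-up is a touch more careful than the paper's — you package the argument as an $H$-equivariant map $f$ collapsing $S$ to a point and you dispose of the degenerate case where $S$ has no edges — but the underlying idea is identical.
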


\begin{proof}Consider two adjacent vertices $v$ and $w$ in $S$. Let $H_v$ and $H_w$ be their stabilizers. The group $H_v\cap H_w$ is infinite by hypothesis. Moreover, $p$ is injective on edge groups, thus $p(H_v\cap H_w)$ is infinite. Hence $p(H_v)\cap p(H_w)$ is infinite. Since edge groups of $T$ are finite, $p(H_v)$ and $p(H_w)$ fix necessarily the same unique vertex $x$ of $T$. As a consequence, for each vertex $v$ of $S$, $p(H_v)$ fixes $x$. It follows that the group $p(H)$ fixes the vertex $x$.
\end{proof}

We shall now prove the main result of this section.

\begin{prop2}
Let $G$ be a finitely generated $K$-$\mathrm{CSA}$ group that does not contain $\mathbb{Z}^2$. Suppose that $G$ has a one-ended factor $H$ that is not finite-by-orbifold. Let $\Delta$ be the $\mathcal{Z}$-JSJ splitting of $H$. Suppose that there exists a non-injective homomorphism $p:H\rightarrow G$ that is $\Delta$-related to the inclusion of $H$ into $G$. Then $G$ is a strict quasi-floor.
\end{prop2}

\begin{proof}
Let $\Lambda$ be a Stallings-Dunwoody splitting of $G$ containing a vertex $v_H$ with stabilizer $H$. Let $T$ be its Bass-Serre tree. 

\vspace{1mm}

\textbf{Step 1.} We shall prove that there exists a QH vertex $v$ of $\Delta$ such that $H_v$ is not sent isomorphically to a conjugate of itself by $p$. Suppose for the sake of contradiction that each stabilizer $H_v$ of a QH vertex $v$ of $\Delta$ is sent isomorphically to a conjugate of itself by $p$. As a consequence, $p(H_v)$ is elliptic in $T$, for every QH vertex $v$. On the other hand, if $w$ is a non-QH vertex of $\Delta$, $p(H_w)$ is elliptic in $T$ by definition of $\Delta$-relatedness. Therefore, it follows from Lemma \ref{petitlemme} above that $p(H)$ is elliptic in $T$, because $p$ is injective on edge groups of $\Delta$, and $T$ has finite edge groups. Moreover, since $p$ is inner on non-QH vertices of $\Delta$, $p(H)$ is contained in $gHg^{-1}$ for some $g\in G$ (note that there exists at least one non-QH vertex since $H$ is not finite-by-orbifold by hypothesis). Up to composing $p$ by the conjugation by $g^{-1}$, one can thus assume that $p$ is an endomorphism of $H$. Now, by Proposition \ref{lemmeperin}, $p$ is injective. This is a contradiction. Hence, we have proved that there exists a QH vertex $v$ of $\Delta$ such that $H_v$ is not sent isomorphically to a conjugate of itself by $p$.

\vspace{1mm}

\textbf{Step 2.} We shall complete the proof using Proposition \ref{???}. For this purpose, we shall construct a centered splitting of $G$, together with an endomorphism of $G$ satisfying the hypotheses of Proposition \ref{???}.

First, we refine $\Lambda$ by replacing the vertex $v_H$ by the splitting $\Delta$ of $H$. \textcolor{black}{With a little abuse of notation, we still denote by $v$ the vertex of $\Lambda$ corresponding to the QH vertex $v$ of $\Delta$ defined in the previous step.} Then, we collapse to a point every connected component of the complement of $\mathrm{star}(v)$ in $\Lambda$ (where $\mathrm{star}(v)$ stands for the subgraph of $\Lambda$ constituted of $v$ and all its incident edges). The resulting graph of groups (still denoted by $\Lambda$) is non-trivial, since $H$ is not finite-by-orbifold (by hypothesis). So $\Lambda$ is a centered splitting of $G$, with central vertex $v$. 

\textcolor{black}{The homomorphism $p:H\rightarrow G$ is well-defined on $G_v$ because $G_v=H_v$ is contained in $H$. Moreover, $p$ restricts to a conjugation on each edge $e$ of $\Lambda$ incident to $v$. Indeed, either $e$ is an edge coming from $\Delta$, either $G_e$ is a finite subgroup of $H$ ; in each case, $p_{\vert G_e}$ is a conjugation since $p$ is $\Delta$-related to the inclusion of $H$ into $G$. Therefore, one can define an endomorphism $q : G \rightarrow G$ that coincides with $p$ on $G_v=H_v$ and coincides with a conjugation on every vertex group $G_w$ of $\Lambda$, with $w\neq v$. Hence, the endomorphism $q$ is $\Lambda$-related to the identity of $G$ (in the sense of Definition \ref{reliés}), and $q$ does not send $G_v$ isomorphically to a conjugate of itself, by Step 1.} 

Let us prove that the restriction of $q$ to $H$ is non-injective. In the case where $q$ kills an element of $H_v$, the claim is obvious. If the restriction of $q$ to $H_v$ is injective, then $q$ is \textit{a fortiori} non-pinching on $H_v$, and it follows that $q(H_v)$ is elliptic in $T$. Indeed, by Proposition \ref{cutting}, one can cut the underlying orbifold of $H_v$ into connected components that are elliptic in $T$ via $q$ ; but edge groups of $T$ are finite, and $q$ is non-pinching on $H_v$, so $q(H_v)$ is elliptic in $T$ by Lemma \ref{petitlemme}. Again by Lemma \ref{petitlemme}, $q(H)$ is contained in $H$ (up to conjugacy), so $q$ induces an endomorphism of $H$. Now, we are ready to find a non-trivial element in $\ker(q)\cap H$. Let $\Delta'$ be the splitting of $H$ obtained by collapsing every connected component of the complement of $\mathrm{star}(v)$ in the $\mathcal{Z}$-JSJ splitting $\Delta$ of $H$. \textcolor{black}{With abuse of notation, we still denote by $v$ the vertex of $\Delta'$ coming from the vertex $v$ of $\Delta$.} The splitting $\Delta'$ is centered, with central vertex $v$, and $q_{\vert H}$ is an endomorphism of $H$ that does not send $H_v$ isomorphically to a conjugate of itself. \textcolor{black}{Moreover, $q_{\vert H}$ is $\Delta'$-related to the identity of $H$ (in the sense of Definition \ref{reliés}); indeed, if $w$ is a vertex of $\Delta'$ different from $v$, there exists a vertex $\tilde{w}\in\Lambda$ such that $H_{w}$ is contained in $G_{\tilde{w}}$, and $q$ restricts to a conjugation on $G_{\tilde{w}}$ since $q$ is $\Lambda$-related to the identity of $G$, by construction.} So it follows from Lemma \ref{magique} that $\Delta'$ has only one vertex $w$ different from $v$, and that $q(H_v)<q(H_w)$. Since $q$ is inner on $H_w$, there exists an element $h\in H$ such that $\iota_h\circ q$ is a retraction from $H$ onto $H_w$. Let $x$ be an element of $H_v$ that does not belong to $H_w$. Let $y=\iota_h\circ q(x)$; we have seen that $y$ lies in $H_w$, so $\iota_h\circ q(y)=y$. Hence, $\iota_h\circ q(xy^{-1})=1$, with $xy^{-1}\in H\setminus \lbrace 1\rbrace$. We have proved that the restriction of $q$ to $H$ is non-injective. Now, it follows from Proposition \ref{???} that $G$ is a strict quasi-floor. This concludes the proof.
\end{proof}

\renewcommand{\refname}{References}
\bibliographystyle{alpha}
\bibliography{biblio}

\def\cprime{$'$} \def\cprime{$'$}
\begin{thebibliography}{CDP90}

\bibitem[Ago13]{Ago13}
Ian Agol.
\newblock The virtual {H}aken conjecture.
\newblock {\em Doc. Math.}, 18:1045--1087, 2013.
\newblock With an appendix by Agol, Daniel Groves, and Jason Manning.

\bibitem[Bau62]{Bau62}
Gilbert Baumslag.
\newblock On generalised free products.
\newblock {\em Math. Z.}, 78:423--438, 1962.

\bibitem[Bau67]{Bau67}
Benjamin Baumslag.
\newblock Residually free groups.
\newblock {\em Proc. London Math. Soc. (3)}, 17:402--418, 1967.

\bibitem[BF92]{BF92}
M.~Bestvina and M.~Feighn.
\newblock A combination theorem for negatively curved groups.
\newblock {\em J. Differential Geom.}, 35(1):85--101, 1992.

\bibitem[Bow98]{Bow98}
Brian~H. Bowditch.
\newblock Cut points and canonical splittings of hyperbolic groups.
\newblock {\em Acta Math.}, 180(2):145--186, 1998.

\bibitem[Bra99]{Bra99}
Noel Brady.
\newblock Branched coverings of cubical complexes and subgroups of hyperbolic
  groups.
\newblock {\em J. London Math. Soc. (2)}, 60(2):461--480, 1999.

\bibitem[CDP90]{CDP90}
M.~Coornaert, T.~Delzant, and A.~Papadopoulos.
\newblock {\em G\'eom\'etrie et th\'eorie des groupes}, volume 1441 of {\em
  Lecture Notes in Mathematics}.
\newblock Springer-Verlag, Berlin, 1990.
\newblock Les groupes hyperboliques de Gromov. [Gromov hyperbolic groups], With
  an English summary.

\bibitem[GL16]{GL16}
Vincent {Guirardel} and Gilbert {Levitt}.
\newblock {JSJ decompositions of groups}.
\newblock {\em ArXiv e-prints}, February 2016.

\bibitem[Hag08]{Hag08}
Fr{\'e}d{\'e}ric Haglund.
\newblock Finite index subgroups of graph products.
\newblock {\em Geom. Dedicata}, 135:167--209, 2008.

\bibitem[HW15]{HW15}
Tim Hsu and Daniel~T. Wise.
\newblock Cubulating malnormal amalgams.
\newblock {\em Invent. Math.}, 199(2):293--331, 2015.

\bibitem[KM06]{KM06}
Olga Kharlampovich and Alexei Myasnikov.
\newblock Elementary theory of free non-abelian groups.
\newblock {\em J. Algebra}, 302(2):451--552, 2006.

\bibitem[Mar02]{Mar02}
David Marker.
\newblock {\em Model theory}, volume 217 of {\em Graduate Texts in
  Mathematics}.
\newblock Springer-Verlag, New York, 2002.
\newblock An introduction.

\bibitem[Moi13]{Moi13}
Christophe Moioli.
\newblock Graphes de groupes et groupes co-hopfiens.
\newblock {\em PhD thesis}, 2013.

\bibitem[MS84]{MS84}
John~W. Morgan and Peter~B. Shalen.
\newblock Valuations, trees, and degenerations of hyperbolic structures. {I}.
\newblock {\em Ann. of Math. (2)}, 120(3):401--476, 1984.

\bibitem[OH07]{OH07}
Abderezak Ould~Houcine.
\newblock Limit groups of equationally {N}oetherian groups.
\newblock In {\em Geometric group theory}, Trends Math., pages 103--119.
  Birkh\"auser, Basel, 2007.

\bibitem[Os93]{Ol93}
A.~Yu. Ol\cprime~shanski\u\i.
\newblock On residualing homomorphisms and {$G$}-subgroups of hyperbolic
  groups.
\newblock {\em Internat. J. Algebra Comput.}, 3(4):365--409, 1993.

\bibitem[Per11]{Per11}
Chlo{\'e} Perin.
\newblock Elementary embeddings in torsion-free hyperbolic groups.
\newblock {\em Ann. Sci. \'Ec. Norm. Sup\'er. (4)}, 44(4):631--681, 2011.

\bibitem[RW14]{RW14}
Cornelius Reinfeldt and Richard Weidmann.
\newblock Makanin-razborov diagrams for hyperbolic groups.
\newblock 2014.

\bibitem[Sag14]{Sag14}
Michah Sageev.
\newblock {$\rm CAT(0)$} cube complexes and groups.
\newblock In {\em Geometric group theory}, volume~21 of {\em IAS/Park City
  Math. Ser.}, pages 7--54. Amer. Math. Soc., Providence, RI, 2014.

\bibitem[Sel97]{Sel97}
Z.~Sela.
\newblock Structure and rigidity in ({G}romov) hyperbolic groups and discrete
  groups in rank {$1$} {L}ie groups. {II}.
\newblock {\em Geom. Funct. Anal.}, 7(3):561--593, 1997.

\bibitem[Sel01]{Sel01}
Zlil Sela.
\newblock Diophantine geometry over groups. {I}. {M}akanin-{R}azborov diagrams.
\newblock {\em Publ. Math. Inst. Hautes \'Etudes Sci.}, (93):31--105, 2001.

\bibitem[Sel06]{Sel06}
Zlil Sela.
\newblock Diophantine geometry over groups. {VI}. {T}he elementary theory of a
  free group.
\newblock {\em Geom. Funct. Anal.}, 16(3):707--730, 2006.

\bibitem[Sel09]{Sel09}
Zlil Sela.
\newblock Diophantine geometry over groups. {VII}. {T}he elementary theory of a
  hyperbolic group.
\newblock {\em Proc. Lond. Math. Soc. (3)}, 99(1):217--273, 2009.

\bibitem[SW79]{SW79}
Peter Scott and Terry Wall.
\newblock Topological methods in group theory.
\newblock In {\em Homological group theory ({P}roc. {S}ympos., {D}urham,
  1977)}, volume~36 of {\em London Math. Soc. Lecture Note Ser.}, pages
  137--203. Cambridge Univ. Press, Cambridge-New York, 1979.

\bibitem[Szm55]{Szm55}
W.~Szmielew.
\newblock Elementary properties of {A}belian groups.
\newblock {\em Fund. Math.}, 41:203--271, 1955.

\end{thebibliography}

\vspace{5mm}

\textbf{Simon André}

Université de Rennes 1, CNRS, IRMAR - UMR 6625, F-35000 Rennes, France.

E-mail address: \textit{simon.andre@univ-rennes1.fr}

\end{document}